\newcolumntype{L}{>{$}l<{$}}
\newcolumntype{C}{>{$}c<{$}}
\newcommand{\disknw}{120}
\newcommand{\diskne}{60}
\newcommand{\disksw}{240}
\newcommand{\diskse}{300}
\newcommand{\vres}[1][scale=.35]{
\begin{tikzpicture}[#1]
\draw[thick] (0,0) to[out=-70,in=70] (0,-1);
\draw[thick] (1,0) to[out=250,in=110] (1,-1);
\end{tikzpicture}
}
\newcommand{\hres}[1][scale=.35]{
\begin{tikzpicture}[#1]
\draw[thick] (0,0) to[out=-70,in=250] (1,0);
\draw[thick] (0,-1) to[out=70,in=110] (1,-1);
\end{tikzpicture}
}
\newcommand{\modbox}[5][]{ 
\node[align=center](#2) at (#3,#4) {$#5$};
\node[align=center,draw,rectangle,fit=(#2),#1](#2 box) {};
\path (#2 box.north west) -- (#2 box.north east) node[pos=.2, inner sep=0, outer sep=0](#2 box nw){} node[pos=.8, inner sep=0, outer sep=0](#2 box ne){};
\path (#2 box.south west) -- (#2 box.south east) node[pos=.2, inner sep=0, outer sep=0](#2 box sw){} node[pos=.8, inner sep=0, outer sep=0](#2 box se){};
}
\newcommand{\strandsboxtodisk}[2]{ 
\foreach \stpt/\stdir/\endpt/\enddir in
    {#1 box.\disknw/90/#2 disk.\disknw/\diskse,
    #1 box.\diskne/90/#2 disk.\diskne/\disksw,
    #1 box.\disksw/-90/#2 disk.\disksw/\diskne,
    #1 box.\diskse/-90/#2 disk.\diskse/\disknw}
{
\draw[thick] (\stpt) to[out=\stdir,in=\enddir] (\endpt);
}
}
\newcommand{\modboxindisk}[6][1]{ 
\node[align=center](#2) at (#3,#4) {$#5$};
\node[align=center,draw,rectangle,fit=(#2),xscale=#1](#2 box) {};
\node[align=center,draw,circle,dashed,fit=(#2 box),scale=1.1](#2 disk){};
    \foreach \stpt/\stdir/\endpt/\enddir in
            {#2 box.\disknw/90/#2 disk.\disknw/\diskse,
            #2 box.\diskne/90/#2 disk.\diskne/\disksw,
            #2 box.\disksw/-90/#2 disk.\disksw/\diskne,
            #2 box.\diskse/-90/#2 disk.\diskse/\disknw}
        {\draw[thick] (\stpt) to[out=\stdir,in=\enddir] (\endpt);}
\nstrandsalongpath[#6]{#2 box.north}{#2 disk.north}
\nstrandsalongpath[]{#2 box.south}{#2 disk.south}
}
\newcommand{\straightmodboxindisk}[5][]{ 
\node[align=center](#2) at (#3,#4) {$#5$};
\node[align=center,draw,rectangle,fit=(#2),inner sep=1.2,#1](#2 box) {};
\node[align=center,draw,circle,dashed,fit=(#2 box),inner sep=1](#2 disk){};
\foreach \ang/\ns in {\disknw/north,\diskne/north,\disksw/south,\diskse/south} {
    \draw[thick] (#2 disk.\ang)--(#2 disk.\ang|-#2 box.\ns);
    }
\nstrandsalongpath[]{#2 box.north}{#2 disk.north}
}
\newcommand{\emptydisk}[4][1]{ 
\node (#2) at (#3,#4) {};
\node [circle,draw,dashed,fit=(#2),inner sep = 5,scale=#1] (#2 disk) {};
}
\newcommand{\nstrands}[1][n]{ 
\stackanchor[1pt]{$#1$}{$\cdots$}
}
\newcommand{\nstrandsalongpath}[3][n]{ 
\path (#2)--(#3) node[pos=.5,align=center,scale=.7] {\nstrands[#1]};
}
\newcommand{\drawover}[2][thick]{
\draw[line width=2mm,white] #2
\draw[#1] #2
}
\newcommand{\negcros}[5][thick]{
\draw[#1] (#4,#3) to[out=-90,in=90] (#2,#5);
\drawover[#1]{(#2,#3) to[out=-90,in=90] (#4,#5);}
}
\newcommand{\negtwist}[5][thick]{
\tikzmath{
real \xstep,\ystep,\leftx \topy,\rightx,\boty;
\xstep=(#4-#2)/4;
\ystep=(#5-#3)/4;
}

\foreach \i in 
        {0,3}
    {
    \tikzmath{
        \leftx=#2 + (\i*\xstep);
        \topy=#3 + (\i*\ystep);
        \rightx=#2 + ((\i+1)*\xstep);
        \boty=#3 + ((\i+1)*\ystep);
        }
    \negcros[#1]{\leftx}{\topy}{\rightx}{\boty}
    \draw[#1] (\leftx,\boty)--(\leftx,#5);
    \draw[#1] (\rightx,\topy)--(\rightx,#3);
    }
\node at ($.5*(#2,#3)+.5*(#4,#5)+(0,.1)$) {$\ddots$};
\tikzmath{
    \leftx=#2+\xstep;
    \rightx=#4;
    }
\nstrandsalongpath[n-1]{\leftx,#3}{\rightx,#3}
\tikzmath{
    \leftx=#2;
    \rightx=#4-\xstep;
    }
\nstrandsalongpath[]{\leftx,#5}{\rightx,#5}
}
\newcommand{\FTfourEX}{
\tikzmath{
real \xstep,\ystep,\leftx \topy,\rightx,\boty;
\xstep=1;
\ystep=-1;
}

\foreach \i/\j in 
        {0/0,1/1,2/2,0/2,1/3,2/4,0/4,1/5,2/6,0/6,1/7,2/8}
    {
    \tikzmath{
        \leftx=\i*\xstep;
        \topy=\j*\ystep;
        \rightx=(\i+1)*\xstep;
        \boty=(\j+1)*\ystep;
        }
    \negcros{\leftx}{\topy}{\rightx}{\boty}
    }
\foreach \i/\j in {0/-1,0/-3,0/-5,0/-7,0/-8,1/-8,2/0,3/0,3/-1,3/-3,3/-5,3/-7}
    {\draw[thick] (\i,\j) -- (\i,\j-1);}
}
\newcommand{\whitebox}[3][]{ 
\draw[fill=white,#1] (#2) rectangle (#3);
}
\newcommand{\CKcolumnONE}{
    \vcenter{\hbox{\begin{tikzpicture}[xscale=.25,yscale=.4]
        \draw[thick] (0,2)--(0,1) to[out=-90,in=90] (2,0)
                (2,2)--(2,1) to[out=-90,in=90] (4,0)
                (3,2)--(3,1) to[out=-90,in=-90] (4,1)--(4,2)
                (0,0) to[out=90,in=90] (1,0);
        \whitebox{-.2,1}{3.2,1.5}
        \nstrandsalongpath[]{0,1.9}{2,1.9}
        \nstrandsalongpath[]{2,.2}{4,.2}
    \end{tikzpicture}}}
}
\newcommand{\CKcolumnTWO}{
    \vcenter{\hbox{\begin{tikzpicture}[xscale=.25,yscale=.4]
        \draw[thick] (0,2)--(0,0)
                (1,2)--(1,1) to[out=-90,in=90] (3,0)
                (3,2)--(3,1) to[out=-90,in=90] (5,0)
                (4,2)--(4,1) to[out=-90,in=-90] (5,1)--(5,2)
                (1,0) to[out=90,in=90] (2,0);
        \whitebox{-.2,1}{4.2,1.5}
        \nstrandsalongpath[]{1,1.9}{3,1.9}
        \nstrandsalongpath[]{3,.2}{5,.2}
    \end{tikzpicture}}}
}
\newcommand{\CKcolumnFOUR}{
\vcenter{\hbox{\begin{tikzpicture}[xscale=.25,yscale=.4]
        \draw[thick] (0,2)--(0,0)
                (2,2)--(2,0)
                (3,2)--(3,1) to[out=-90,in=90] (5,0)
                (4,2)--(4,1) to[out=-90,in=-90] (5,1)--(5,2)
                (3,0) to[out=90,in=90] (4,0);
        \whitebox{-.2,1}{4.2,1.5}
        \nstrandsalongpath[]{0,1.9}{2,1.9}
        \nstrandsalongpath[]{0,.2}{2,.2}
    \end{tikzpicture}}}
}
\newcommand{\CKcolumnFIVE}{
    \vcenter{\hbox{\begin{tikzpicture}[xscale=.25,yscale=.4]
        \draw[thick] (0,2)--(0,0)
                (2,2)--(2,0)
                (3,2)--(3,1) to[out=-90,in=-90] (4,1)--(4,2)
                (3,0) to[out=90,in=90] (4,0);
        \whitebox{-.2,1}{3.2,1.5}
        \nstrandsalongpath[]{0,1.9}{2,1.9}
        \nstrandsalongpath[]{0,.2}{2,.2}
    \end{tikzpicture}}}
}
\newcommand{\LiftDiagBL}{
    \vcenter{\hbox{\begin{tikzpicture}[xscale=.25,yscale=.8]
        \draw[thick] (0,2)--(0,0)
                (2,2)--(2,0)
                (3,2)--(3,1) to[out=-90,in=-90] (4,1)--(4,2)
                (3,0) to[out=90,in=90] (4,0);
        \whitebox{-.2,1}{3.2,1.5}
        \node[scale=.7] at (1.5,1.25) {$\cP_{n-1}$};
        \nstrandsalongpath[]{0,1.9}{2,1.9}
        \nstrandsalongpath[]{0,.2}{2,.2}
    \end{tikzpicture}}}
}
\newcommand{\LiftDiagBOT}{
\vcenter{\hbox{\begin{tikzpicture}[xscale=.25,yscale=.8]
        \draw[thick] (4,2)--(4,1.2) to[out=-90,in=90,looseness=.2] (-.5,.7);
        \drawover{(0,2)--(0,0);}
        \drawover{(2,2)--(2,0);}
        \drawover{(3,2)--(3,.7) to[out=-90,in=-90,looseness=.2] (-.5,.7);}
        \draw[thick] (3,0) to[out=90,in=90] (4,0);
        \whitebox{-.2,1.2}{4.2,1.7}
        \node[scale=.7] at (2,1.45) {$\augJM_n^{k-1}$};
        \nstrandsalongpath[]{0,1.9}{2,1.9}
        \nstrandsalongpath[]{0,.2}{2,.2}
    \end{tikzpicture}}}
}
\newcommand{\LiftDiagBR}{
\vcenter{\hbox{\begin{tikzpicture}[xscale=.25,yscale=.8]
        \draw[thick] (4,2)--(4,1.2) to[out=-90,in=90,looseness=.2] (-.5,.7);
        \drawover{(0,2)--(0,0);}
        \drawover{(2,2)--(2,0);}
        \drawover{(3,2)--(3,.7) to[out=-90,in=90,looseness=.4] (4,0);}
        \drawover{(3,0) to[out=90,in=-90,looseness=.4] (-.5,.7);}
        \whitebox{-.2,1.2}{4.2,1.7}
        \node[scale=.7] at (2,1.45) {$\augJM_n^{k-1}$};
        \nstrandsalongpath[]{0,1.9}{2,1.9}
        \nstrandsalongpath[]{0,.2}{2,.2}
    \end{tikzpicture}}}
}
\newcommand{\TLiftDiagBL}{
    \vcenter{\hbox{\begin{tikzpicture}[xscale=.25,yscale=.8]
        \draw[thick] (0,2)--(0,0)
                (2,2)--(2,0)
                (3,2)--(3,0);
        \whitebox{-.2,1}{3.2,1.5}
        \node[scale=.7] at (1.5,1.25) {$\cP_{n-1}$};
        \nstrandsalongpath[]{0,1.9}{2,1.9}
        \nstrandsalongpath[]{0,.2}{2,.2}
    \end{tikzpicture}}}
}
\newcommand{\TLiftDiagBOT}{
\vcenter{\hbox{\begin{tikzpicture}[xscale=.25,yscale=.8]
        \draw[thick] (4,1.2) to[out=-90,in=90,looseness=.2] (-.5,.7);
        \drawover{(0,2)--(0,0);}
        \drawover{(2,2)--(2,0);}
        \drawover{(3,2)--(3,.7) to[out=-90,in=-90,looseness=.2] (-.5,.7);}
        \draw[thick] (3,0) to[out=90,in=-90] (5,1)--(5,1.7) to[out=90,in=90] (4,1.7);
        \whitebox{-.2,1.2}{4.2,1.7}
        \node[scale=.7] at (2,1.45) {$\augJM_n^{k-1}$};
        \nstrandsalongpath[]{0,1.9}{2,1.9}
        \nstrandsalongpath[]{0,.2}{2,.2}
    \end{tikzpicture}}}
}
\newcommand{\TLiftDiagBR}{
\vcenter{\hbox{\begin{tikzpicture}[xscale=.25,yscale=.8]
        \draw[thick] (4,1.2) to[out=-90,in=90,looseness=.2] (-.5,.7);
        \drawover{(0,2)--(0,0);}
        \drawover{(2,2)--(2,0);}
        \drawover{(3,2)--(3,.7) to[out=-90,in=-90,looseness=.5] (5,.7)--(5,1.7) to[out=90,in=90] (4,1.7);}
        \drawover{(3,0) to[out=90,in=-90,looseness=.4] (-.5,.7);}
        \whitebox{-.2,1.2}{4.2,1.7}
        \node[scale=.7] at (2,1.45) {$\augJM_n^{k-1}$};
        \nstrandsalongpath[]{0,1.9}{2,1.9}
        \nstrandsalongpath[]{0,.2}{2,.2}
    \end{tikzpicture}}}
}
\newcommand{\ILtikzpic}[2][]{
\vcenter{\hbox{\begin{tikzpicture}[#1]
#2
\end{tikzpicture}}}
}
\newcommand{\ILvres}[1][scale=.35]{
\vcenter{\hbox{   
\begin{tikzpicture}[#1]
\draw[thick] (0,0) to[out=-70,in=70] (0,-1);
\draw[thick] (1,0) to[out=250,in=110] (1,-1);
\end{tikzpicture}
}}
}
\newcommand{\ILfres}[1][xscale=.25,yscale=.4]{
\vcenter{\hbox{
\begin{tikzpicture}[#1]
\draw[thick] (.2,2.5) to[out=-70,in=250] (1.8,2.5);
    \draw[thick] (-.5,.8) to[out=90,in=90,looseness=.3] (2.5,.8);
   \drawover{(.2,0)--(.2,1) to[out=90,in=90] (1.8,1) --(1.8,0);}
    \drawover{(-.5,.8) to[out=-90,in=-90,looseness=.3] (2.5,.8);}
\end{tikzpicture}
}}}
\newcommand{\ILdisres}[1][xscale=.25,yscale=.4]{
\vcenter{\hbox{
\begin{tikzpicture}[#1]
\draw[thick] (.2,2.5) to[out=-70,in=250] (1.8,2.5);
    \draw[thick] (.2,0)--(.2,1) to[out=90,in=90] (1.8,1) --(1.8,0);
    \draw[thick] (-.3,1) to[out=90,in=90,looseness=.8] (-1.7,1) to[out=-90,in=-90,looseness=.8] (-.3,1);
\end{tikzpicture}
}}}
\newcommand{\ILkres}[1][scale=.2]{
\vcenter{\hbox{
        \begin{tikzpicture}
        \emptydisk{M1}{0}{0}
        \node[fit={(M1 disk)}, circle, dashed, draw,yscale=1.3,xscale=1] (TM1) {};
        \draw[thick] (TM1.105) to[out=-90,in=-90,looseness=1.75] (TM1.75);
        \draw[thick] (M1 disk.\disksw) -- (TM1.255);
        \draw[thick] (M1 disk.\diskse) -- (TM1.285);
        \end{tikzpicture}}}
}
\newcommand{\ILhres}[1][scale=.35]{
\vcenter{\hbox{
\begin{tikzpicture}[#1]
\draw[thick] (0,0) to[out=-70,in=250] (1,0);
\draw[thick] (0,-1) to[out=70,in=110] (1,-1);
\end{tikzpicture}
}}
}
\newcommand{\ILhreshres}[1][scale=.35]{
\ILtikzpic[#1]{
    \draw[thick] (0,0) to[out=-70,in=250] (1,0);
    \draw[thick] (0,-2) to[out=70,in=110] (1,-2);
    \draw[thick] (.5,-1) circle (.4);
    }
}
\newcommand{\ILhresDotT}{
\ILtikzpic[scale=.35]{
    \draw[thick] (0,0) to[out=-70,in=250] node[pos=.3,fill=black,circle,scale=.4]{} (1,0);
    \draw[thick] (0,-1) to[out=70,in=110] (1,-1);
    }
}
\newcommand{\ILhresDotB}{
\ILtikzpic[scale=.35]{
    \draw[thick] (0,0) to[out=-70,in=250] (1,0);
    \draw[thick] (0,-1) to[out=70,in=110] node[pos=.7,fill=black,circle,scale=.4]{} (1,-1);
    }
}
\newcommand{\ILrhotop}[1][scale=.35]{
\ILtikzpic[#1]{
    \draw[thick] (-.5,1) to[out=-70,in=250] node[pos=.3,fill=black,circle,scale=.4]{} (.5,1);
    \draw[thick] (-.5,-1) to[out=70,in=110] (.5,-1);
    \draw[thick] (0,0) circle (.4);
    \foreach \i in {45,135,225,315} {
        \draw (\i:.4)--(\i:.55);
        }
    }
}
\newcommand{\ILrhobot}[1][scale=.35]{
\ILtikzpic[#1]{
    \draw[thick] (-.5,1) to[out=-70,in=250] (.5,1);
    \draw[thick] (-.5,-1) to[out=70,in=110] (.5,-1);
    \draw[thick] (0,0) circle (.4);
    \node[fill=black,circle,scale=.4] at (.4,0){};
    \foreach \i in {45,135,225,315} {
        \draw (\i:.4)--(\i:.55);
        }
    }
}
\newcommand{\ILhreshrestop}[1][scale=.35]{
\ILtikzpic[#1]{
    \draw[thick] (-.5,1) to[out=-70,in=250] node[pos=.3,fill=black,circle,scale=.4]{} (.5,1);
    \draw[thick] (-.5,-1) to[out=70,in=110] (.5,-1);
    \draw[thick] (0,0) circle (.4);
    }
}
\newcommand{\ILhreshresmid}[1][scale=.35]{
\ILtikzpic[#1]{
    \draw[thick] (-.5,1) to[out=-70,in=250] (.5,1);
    \draw[thick] (-.5,-1) to[out=70,in=110] (.5,-1);
    \draw[thick] (0,0) circle (.4);
    \node[fill=black,circle,scale=.4] at (.4,0){};
    }
}
\newcommand{\ILhreshressad}[1][scale=.35]{
\ILtikzpic[#1]{
    \draw[thick] (0,0) to[out=-70,in=250] node[pos=.5,inner sep=0](pt1){} (1,0);
    \draw[thick] (0,-2) to[out=70,in=110] (1,-2);
    \draw[thick] (.5,-1) circle (.3);
    \node[inner sep=0](pt2) at (.5,-.7){};
    \draw[thick,red] (pt1)--(pt2);
    }
}
\newcommand{\ILnegcros}{
\,
    \vcenter{\hbox{\begin{tikzpicture}[scale=.35]
    \draw[thick] (1,0) to[out=-90,in=90] (0,-1);
    \node[fill=white,circle,scale=.8] at (.5,-.5){};
    \draw[thick](0,0) to[out=-90,in=90] (1,-1);
    \end{tikzpicture}}}
\,
}
\newcommand{\ILmodbox}[2][]{  
\begin{tikzpicture}[baseline=-.5ex]
\straightmodboxindisk[#1]{mod}{0}{0}{#2}
\end{tikzpicture}
}
\newcommand{\ILbirth}{
\begin{tikzpicture}[baseline=-.5ex,scale=.15]
\draw (0,0) ellipse (.4 and 1);
\draw (0,1) to[out=180,in=90] (-1.75,0) to[out=-90,in=180] (0,-1);
\end{tikzpicture}
}
\newcommand{\ILhorizsaddle}{
\begin{tikzpicture}[baseline=-.5ex,scale=.3]
\draw[thick] (0,.6) to[out=-70,in=70] node[pos=.5,inner sep=0](pt1){} (0,-.6) ;
\draw[thick] (1,.6) to[out=250,in=110] node[pos=.5,inner sep=0](pt2){} (1,-.6) ;
\draw[thick,red] (pt1)--(pt2);
\end{tikzpicture}
}
\newcommand{\ILvertsaddle}[1][scale=.35]{
\vcenter{\hbox{
\begin{tikzpicture}[#1]
\draw[thick] (0,0) to[out=-70,in=250] node[pos=.5,inner sep=0](pt1){} (1,0);
\draw[thick] (0,-1) to[out=70,in=110] node[pos=.5,inner sep=0](pt2){} (1,-1);
\draw[thick,red] (pt1)--(pt2);
\end{tikzpicture}
}}
}
\newcommand{\MW}[1]
{\todo[color=cyan!30,linecolor=cyan!40!black,size=\small]{MW: #1}}
\newcommand{\MWinline}[1]
{\todo[inline,color=cyan!30,linecolor=cyan!40!black,size=\normalsize]{MW: #1}}
\newcommand{\N}{\mathbb{N}}
\newcommand{\Z}{\mathbb{Z}}
\newcommand{\Sp}{\mathtt{Sp}}
\newcommand{\gSp}{\mathtt{g}\Sp}
\newcommand{\Tang}{\mathbb{T}}
\newcommand{\thTang}{\widetilde{\Tang}}
\newcommand{\SpmMod}{\Sp\mathrm{MultiM}}
\newcommand{\Burn}{\mathrm{Burn}}
\newcommand{\gBurn}{\mathtt{g}\Burn}
\newcommand{\divcob}{\mathrm{Cob}_d}
\newcommand{\TL}{\mathcal{TL}}
\newcommand{\Rmod}[1][R]{#1-\mathrm{Mod}}
\newcommand{\Hnmod}{\Rmod[H_n]}
\newcommand{\Kom}{\mathrm{Kom}}
\newcommand{\sarc}{\mathcal{H}} 
\newcommand{\SpArcAlg}[1][2n]{\sarc_{#1}} 
\newcommand{\Kh}{\mathit{Kh}} 
\newcommand{\X}{\mathscr{X}} 
\newcommand{\vertcomp}{\stackrel{v}{\otimes}}
\newcommand{\horizcomp}{\sqcup}
\newcommand{\q}{\mathrm{q}} 
\newcommand{\atq}[2][\Big]{#1\rvert_{q=#2}}
\newcommand{\hocolim}{\mathrm{hocolim}}
\newcommand{\holim}{\mathrm{holim}}
\newcommand{\THH}{\mathrm{THH}}
\newcommand{\cube}{\underline{2}}
\newcommand{\basedcube}{\cube_+}
\newcommand{\chainsfunc}{\mathcal{C}_h}
\newcommand{\T}{\mathcal{T}} 
\newcommand{\FT}{\mathcal{FT}} 
\newcommand{\TWfilt}{\mathscr{F}_{tw}}
\newcommand{\simpT}{\widetilde{\T}} 
\newcommand{\JM}{\mathcal{J}}
\newcommand{\augJM}{\widetilde{\JM}}  
\newcommand{\augJMcone}{\widetilde{K}}  
\newcommand{\closure}[1]{\overline{#1}}  
\newcommand{\CK}[1][n]{\mathcal{CK}_{#1}}  
\newcommand{\CKch}[1][n]{\mathrm{CK}_{#1}}  
\newcommand{\CKfilt}{\mathscr{F}_{\CK[]}}
\newcommand{\CKchfilt}{\mathscr{F}_{\CKch[]}}
\newcommand{\eitop}[1][i]{e_{#1}^{\text{top}}}
\newcommand{\eibot}[1][i]{e_{#1}^{\text{bot}}}
\newcommand{\proj}{\cP}
\newcommand{\projmap}{\mathcal{U}}
\newcommand{\ssimpproj}{\proj^{\mathrm{ssimp}}}
\newcommand{\ssimpBar}{\mathcal{B}^\mathrm{ssimp}}
\newcommand{\Itang}{\mathbb{I}}
\newcommand{\Imod}{\mathcal{I}}
\newcommand{\Imap}{\mathrm{id}}
\newcommand{\sphere}{\mathbb{S}}
\DeclarePairedDelimiter{\ceil}{\lceil}{\rceil}
\newcommand{\bL}{\mathbb{L}}
\newcommand{\bR}{\mathbb{R}}
\newcommand{\bS}{\mathbb{S}}
\newcommand{\cA}{\mathcal{A}}
\newcommand{\cB}{\mathcal{B}}
\newcommand{\cC}{\mathcal{C}}
\newcommand{\cF}{\mathcal{F}}
\newcommand{\cI}{\mathcal{I}}
\newcommand{\cM}{\mathcal{M}}
\newcommand{\cN}{\mathcal{N}}
\newcommand{\cP}{\mathcal{P}}
\newcommand{\cT}{\mathcal{T}}
\newcommand{\Id}{\mathrm{Id}}
\newcommand{\id}{\mathrm{id}}
\newcommand{\HOM}{\mathrm{HOM}}
\newcommand{\Hom}{\mathrm{Hom}}
\newcommand{\End}{\mathrm{End}}
\newcommand{\Cone}{\mathrm{Cone}}
\newcommand{\Ob}{\mathrm{Ob}}
\newcommand{\Ccone}{\mathrm{CCone}}
\newtheorem{theorem}{Theorem}[section]
\newtheorem{Theorem}{Theorem}
\newtheorem{lemma}[theorem]{Lemma}
\newtheorem{proposition}[theorem]{Proposition}
\newtheorem{corollary}[theorem]{Corollary}
\newtheorem{Conjecture}[Theorem]{Conjecture}
\theoremstyle{definition}
\newtheorem{definition}[theorem]{Definition}
\theoremstyle{remark}
\newtheorem{remark}[theorem]{Remark}
\newcommand{\Kc}{\mathit{Kc}}
\newcommand{\st}{\mathscr{T}}
\newcommand{\thst}{\widetilde{\mathscr{T}}}
\newsavebox\mybox
\newsavebox\myotherbox
\newsavebox\mythirdbox
\numberwithin{equation}{section}
\newcommand\rightthreearrow{%
        \mathrel{\vcenter{\mathsurround0pt
                \ialign{##\crcr
                        \noalign{\nointerlineskip}$\rightarrow$\crcr
                        \noalign{\nointerlineskip}$\rightarrow$\crcr
                        \noalign{\nointerlineskip}$\rightarrow$\crcr
                }%
        }}%
}
\newcommand\rightfourarrow{%
        \mathrel{\vcenter{\mathsurround0pt
                \ialign{##\crcr
                        \noalign{\nointerlineskip}$\rightarrow$\crcr
                        \noalign{\nointerlineskip}$\rightarrow$\crcr
                        \noalign{\nointerlineskip}$\rightarrow$\crcr
                        \noalign{\nointerlineskip}$\rightarrow$\crcr
                }%
        }}%
}
\title[Infinite Twist]{Jones-Wenzl projectors and the Khovanov homotopy of the infinite twist}
\author[M. Stoffregen]{Matthew Stoffregen}
\address {Department of Mathematics, Michigan State University, East Lansing, MI 48824}
\email{stoffre1@msu.edu}
\author[M. Willis]{Michael Willis}
\address {Department of Mathematics, Texas A\&M University, College Station, TX  77845}
\email{msw188@tamu.edu}
\begin{document}
	
	\begin{abstract}
		We construct and study a lift of Jones-Wenzl projectors to the setting of Khovanov spectra, and provide a realization of such lifted projectors via a Cooper-Krushkal-like sequence of maps.  We also give a polynomial action on the 3-strand spectral projector allowing a complete computation of the $3$-colored Khovanov spectrum of the unknot, proving a conjecture of Lobb-Orson-Sch\"{u}tz.  As a byproduct, we disprove a conjecture of Lawson-Lipshitz-Sarkar on the topological Hochschild homology of tangle spectra.
	\end{abstract}
	
	\maketitle
	\tableofcontents

\section{Introduction}
\label{sec:intro}
The Temperley-Lieb algebra $TL_n$ on $n\geq 0$ strands \cite{TLalg}, and its categorification $\TL_n:=\Kom(H_{2n}-\mathrm{pmod})$,\footnote{We follow \cite[Definition 2.7]{Cooper-Krushkal}, where the category $\mathrm{Cob}(n)$ corresponds to the category $H_{2n}-\mathrm{pmod}$ of projective modules over Khovanov's arc algebra $H_{2n}$ \cite{Kh-ring_H}, and $\Kom(\cdot)$ indicates the category of complexes which are bounded below in homological degree and of finite rank in each quantum degree.} are large and important areas of study in low dimensional topology with connections to higher representation theory, quantum physics, and more.  To an $(n,n)$-tangle $T$ one may associate the Kauffman bracket $\langle T\rangle\in TL_n$ \cite{Kauff}, which is categorified by the Khovanov complex $\Kc(T)\in\TL_n$ \cite{Kh-ring_H}.  In the case that $n=0$, a $(0,0)$-tangle is just a link $L$, with $\langle L \rangle$ recovering the celebrated Jones polynomial \cite{JonesPoly}.  The categorification is then the usual Khovanov complex $\Kc(L)$ \cite{Khovanov-homology}, whose topological applications include a combinatorial proof of the Milnor conjecture on genus of torus knots \cite{Ras} and, more recently, the proof that the Conway knot is not slice \cite{LPConway}.

In \cite{LLS_tangles}, Lawson-Lipshitz-Sarkar provide a further lift of these invariants to the setting of spectra, defining the spectral arc algebras $\sarc_{2n}$ (for $n\geq 0$), and associating to each $(n,n)$-tangle $T$ a spectral $\sarc_{2n}$-module $\X(T)$, the \emph{Khovanov spectrum} of $T$, such that the reduced chain complex of $\X(T)$ is quasi-isomorphic to $\Kc(T)$.  It is then natural to ask what structural properties of $\Kc(T)$ lift to similar properties of $\X(T)$.  Many properties have been lifted successfully, and in this paper we continue this program, but also find some properties that \emph{fail} to lift.

Our first result concerns the Jones-Wenzl projector $p_n\in TL_n$ \cite{Wenzl}, a special idempotent element used to define the colored Jones polynomial and quantum invariants of 3-manifolds \cite{KL,ReshTuraev}.  Cooper-Krushkal \cite{Cooper-Krushkal} showed that there is a categorical idempotent $P_n\in \TL_n$, the \emph{categorified Jones-Wenzl projector}, whose decategorification is the Jones-Wenzl projector $p_n$.  Rozansky \cite{Rozansky} showed that the categorified Jones-Wenzl projector $P_n\in\TL_n$ can be described via $n$-strand torus braids $T_n^k$ as
\begin{equation}\label{intro eq: Pn is inf torus braid}
	P_n \simeq \Kc(T_n^\infty) := \lim_{k\rightarrow\infty} \Kc(T_n^k).
\end{equation}
A remarkable feature of the Cooper-Krushkal projector is its connection to the Khovanov homology of torus links $T(n,k)$ (see \cite{Rozansky}):
\begin{equation}\label{intro eq:End is Tinfty}
	q^{-n}\End_{H_{2n}}(P_n)\simeq \Kc(T(n,\infty)):= \lim_{k\rightarrow\infty} \Kc(T(n,k)).
\end{equation}
Here $q^{-n}$ denotes a shift in quantum grading, while the limits indicate a type of stable limiting procedure of Khovanov complexes.  The torus link complexes $\Kc(T(n,k))$ have been widely studied \cite{Bar-Natan,shumakovitch-khoho,stosic}, and deep conjectures of Gorsky-Oblomkov-Rasmussen and Gorsky-Oblomkov-Rasmussen-Shende \cite{GOR,GORS} relate $\Kc(T(n,\infty))$ to algebraic geometry and higher representation theory; see also Section \ref{sec:GOR conjecture} further below.  Substantial progress on these conjectures came in \cite{Hog_polyaction} where Hogancamp proved the following theorem.

\begin{theorem}[{\cite{Hog_polyaction}}]
	\label{thm:hog-polyaction}
	Fix $n\geq 2$.  Then there is a representative for $\End(P_n)$ which deformation retracts onto a differential bigraded $\Z[u_1,\dots,u_n]$-module $W_n=\Z[u_1,\dots,u_n]/(u_1^2) \otimes \Lambda[\xi_2,\dots,\xi_n]$ with differential satisfying
	\begin{enumerate}
		\item $d(u_k)=0$ for each $k=1,\dots,n$; and
		\item $d(\xi_k)\in 2u_1u_k + \Z[u_2,\dots,u_{k-1}]$ for each $k=2,\dots,n$.
	\end{enumerate}
	The differential is $\Z[u_1,\dots,u_n]$-linear.\footnote{We note that this a deformation retraction of chain complexes; both $\End(P_n)$ and $W_n$ have algebra structures, but the retraction is not an algebra map.}
\end{theorem}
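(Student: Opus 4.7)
The plan is to exploit Rozansky's identification \eqref{intro eq:End is Tinfty} of $\End(P_n)$ with $q^{-n}\Kc(T(n,\infty))$ and then extract both the polynomial action and the chain-level structure from the combinatorics of the infinite torus link complex. First I would realize each $u_k$ as a cobordism-induced endomorphism living in the bigrading $(2k-2,2k)$: $u_1$ is a standard dot on a distinguished component (so $u_1^2$ vanishes via the Bar-Natan relation $x^2=0$), while the higher $u_k$ for $k\geq 2$ are constructed inductively using the Cooper-Krushkal recursive structure of $P_n$, with the bigrading forced by a minimal cobordism threading $k$ strands.

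Working on the chain level with Rozansky's model $\Kc(T_n^\infty)$, I would then inductively construct closed classes $u_1,\ldots,u_n$ together with primitives $\xi_2,\ldots,\xi_n$ that bound obstructions to relations among them; specifically $\xi_k$ is chosen so that $d(\xi_k)$ cancels the leading polynomial obstruction expressing that $u_1 u_k$ is exact. The coefficient $2$ in $d(\xi_k)=2u_1u_k+\cdots$ should arise from a symmetrization effect: the two natural bounding cobordisms for the obstruction differ by an orientation flip and thus contribute $2$ when one works over $\Z$. The lower-order tail lying in $\Z[u_2,\ldots,u_{k-1}]$ tracks how earlier cancellations propagate through the inductive layers.

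The main technical step is producing the deformation retraction onto $W_n$. For this I would apply repeated $\Z[u_1,\ldots,u_n]$-equivariant Gaussian elimination on the Rozansky model, using positivity of the quantum gradings of $u_k$ and $\xi_k$ to guarantee that in each fixed quantum degree only finitely many monomials in the $u_i, \xi_j$ contribute, so that the elimination converges even though $\Kc(T_n^\infty)$ is unbounded. The relations $d(u_k)=0$ and $d(\xi_k)\in 2u_1u_k+\Z[u_2,\ldots,u_{k-1}]$ then emerge as the residual differential on the retracted module $W_n$, with $\Z[u_1,\ldots,u_n]$-linearity preserved because each elimination step was performed equivariantly.

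The main obstacle is controlling this deformation retraction on an unbounded complex while simultaneously preserving $\Z[u_1,\ldots,u_n]$-linearity and pinning down the precise form of $d(\xi_k)$. Verifying that $d(\xi_k)-2u_1u_k$ actually lies in the strictly smaller subring $\Z[u_2,\ldots,u_{k-1}]$ (in particular, has no $\xi_i$ contribution and no $u_1$ contribution beyond the leading term) is what endows $W_n$ with the flavor of a perturbed Koszul complex and is the key structural input needed for the applications in this paper; this step requires a delicate bidegree and induction argument tied to the specific geometry of torus braids, and I do not see how to bypass it with purely formal homotopical arguments.
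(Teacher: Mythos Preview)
This theorem is not proved in the present paper; it is quoted as a result of Hogancamp (the bracketed citation \cite{Hog_polyaction} in the theorem header indicates this), and the paper uses it as input rather than reproving it. So there is no proof in the paper to compare your proposal against.

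That said, a brief comment on your sketch relative to what the paper says about Hogancamp's actual argument: the paper emphasizes that the $U_n$ (and hence the $u_n$) are tied to the \emph{Cooper--Krushkal recursive model} of $P_n$ in terms of $P_{n-1}$, and Section~\ref{sec:CK-recursion} makes clear that this recursion --- building $P_n$ from the augmented Jucys--Murphy sequence $\augJM_n^k$ --- is the engine that produces the $(2n-2,2n)$-graded periodicity map $U_n$. Your proposal instead takes Rozansky's infinite-twist model $\Kc(T_n^\infty)$ as the primary scaffold and attempts a direct $\Z[u_1,\dots,u_n]$-equivariant Gaussian elimination on it. This is not how Hogancamp proceeds: the polynomial action is built inductively in $n$ via the Cooper--Krushkal filtration, with $W_n$ obtained from $W_{n-1}$ by adjoining $u_n$ and $\xi_n$, and the deformation retraction is controlled through this recursion rather than by eliminating on the raw torus-link complex. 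Your final paragraph correctly identifies the hard step (pinning down $d(\xi_k)$ modulo lower terms and keeping the retraction equivariant), and that is precisely where the inductive Cooper--Krushkal structure does the work that a direct approach on $\Kc(T_n^\infty)$ would struggle to supply.
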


The element $u_n$ in Theorem \ref{thm:hog-polyaction} corresponds to an endomorphism (more precisely, a homotopy class of endomorphism) via Equation \eqref{intro eq:End is Tinfty} which, in our conventions, will be written as a map
\[U_n:\q^{2n}\Sigma^{2n-2}P_n\to P_n.\]
These endomorphisms are closely related to a smaller, recursive model for $P_n$ due to Cooper-Krushkal \cite{Cooper-Krushkal} in terms of $P_{n-1}$.

In this paper, we first follow the notions introduced in \cite{LLS_tangles},\cite{LLS_func} to define the spectral Temperley-Lieb category $\Sp\TL_n$, a subcategory of the category $\sarc_{2n}-\mathrm{Mod}$ of modules over the spectral arc algebra $\sarc_{2n}$.  The category $\Sp\TL_n$ is enriched in (graded) spectra and comes equipped with a \emph{chains} functor $\chainsfunc:\Sp\TL_n\to\TL_n$ which induces a functor on the corresponding homotopy categories (which we also denote $\chainsfunc$).  We then define the notion of spectral Jones-Wenzl projectors $\proj_n\in\Sp\TL_n$, in analogy with the definition of categorified Jones-Wenzl projectors $P_n\in\TL_n$ (for the homotopy theorist, we note that a spectral Jones-Wenzl projector is an instance of Bousfield localization).  We show that any spectral Jones-Wenzl projector $\proj_n$ is a lift of $P_n$ to $\Sp\TL_n$, in the sense that $\chainsfunc(\proj_n)\simeq P_n$.  Furthermore, we show that each $\proj_n$ for $n\geq 1$ is unique up to stable homotopy equivalence.  The details of the definition are delayed to Section \ref{subsec:spectral-projectors}, but an important consequence of the definitions is that a spectral Jones-Wenzl projector $\proj_n$ satisfies an analog of the categorical idempotency of $P_n$.  We show in Theorem \ref{intro thm: spectral Pn is tori} that both Equations \eqref{intro eq: Pn is inf torus braid} and \eqref{intro eq:End is Tinfty} lift to $\Sp\TL_n$ as in \cite{Wil_TorusLinks,Wil_colored} (the notion of stabilization will be explained in Section \ref{sec:construction}).  We note that for any object of $\Sp\TL_n$, the endomorphism algebra is naturally a spectrum, on account of the spectral enrichment of $\Sp\TL_n$.

\begin{theorem}\label{intro thm: spectral Pn is tori}
For fixed $n$, the spectral $\mathcal{H}_{2n}$-modules $\T_n^k:=\X(T_n^k)$ stabilize as $k\rightarrow\infty$, and the corresponding limiting spectra $\T_n^\infty$ are spectral projectors.  Moreover, there is an equivalence
\begin{equation}\label{intro eq: spectral END is Tinfty}
q^{-n}\End_{\sarc_{2n}}(\mathcal{P}_n)\simeq \X(T_n^{\infty}):=\lim_{k\rightarrow\infty}\X(T_n^k).
\end{equation}
\end{theorem}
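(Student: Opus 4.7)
The plan is to establish the three assertions of the theorem in sequence: stabilization of $\T_n^k$, the spectral projector property of the limit, and the endomorphism identification. Stabilization would follow from the analysis of the second author in \cite{Wil_TorusLinks,Wil_colored}: adding a full twist at the bottom of the braid $T_n^k$ gives a natural map $\T_n^k\to\T_n^{k+1}$ of spectral $\sarc_{2n}$-modules, and a cube-of-resolutions analysis on the newly introduced crossings shows this map is a stable equivalence within a range of internal quantum gradings that grows linearly in $k$. Hence $\{\T_n^k\}$ stabilizes in each fixed quantum degree, and $\T_n^\infty$ is defined as the corresponding homotopy limit in $\Sp\TL_n$.

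Next I would check that $\T_n^\infty$ is a spectral projector in the sense of Section \ref{subsec:spectral-projectors}. The key point, lifting Rozansky's chain-level argument, is that $\T_n^\infty$ should absorb turn-backs: its horizontal composition with any cap-cup generator of $\Sp\TL_n$ is contractible. At each finite stage $k$, composing $T_n^k$ with such a turn-back produces a spectral tangle which, via spectral Reidemeister-II simplifications available from \cite{LLS_func}, reduces to a module whose surviving content lies outside the stable range established in the previous step. Passing to the limit gives the required contractibility, and hence the universal property defining a spectral projector. This is the main technical obstacle, since while chain-level idempotency of $P_n\simeq\Kc(T_n^\infty)$ is classical, the spectral setting requires control of higher coherence data in the limiting construction and a careful match with the Bousfield-localization-style universal property from Section \ref{subsec:spectral-projectors}.

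For the endomorphism identification, the plan is to apply a spectral version of the Markov-trace adjunction of \cite{Hog_adjunction} within the $\sarc_{2n}$-module framework of \cite{LLS_tangles,LLS_func}. Once $\T_n^\infty$ is known to absorb braids, i.e.\ $\T_n^k\horizcomp\T_n^\infty\simeq \T_n^\infty$ for each $k$, the adjunction identifies $\End_{\sarc_{2n}}(\T_n^\infty)$ with the inverse limit of Khovanov spectra $\X(T(n,k))$ of the braid closures, i.e.\ of the torus links $T(n,k)$. The quantum shift $q^{-n}$ arises from the standard normalization needed to ensure that this limit is well-defined. This final step should follow essentially formally once the projector property from the previous paragraph is established, so that the only genuinely new input beyond the chain-level story of Rozansky and Hogancamp is the spectral coherence control in Step 2.
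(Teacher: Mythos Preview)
Your overall outline is sound, but you are overcomplicating two of the three steps, and the paper's route is noticeably cleaner.

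For the projector property (your Step 2), you identify turnback absorption as ``the main technical obstacle'' requiring ``control of higher coherence data.'' The paper sidesteps this entirely: Proposition~\ref{prop:turnback killing can be done on chains} observes that a Temperley-Lieb spectrum $\X$ kills turnbacks if and only if $\chainsfunc(\X)$ does, simply because a spectrum with contractible chain complex is contractible by Whitehead. Once you know $\chainsfunc(\T_n^\infty)\simeq P_n$ (Rozansky), turnback killing is automatic---no spectral coherence analysis needed. The genuine work in Theorem~\ref{thm:inf twist is projector} is showing that $\T_n^\infty$ lies in $\Sp\TL_n$ at all (the finiteness condition on cells per quantum degree), which follows from the same cone bounds (Corollary~\ref{cor:Cone of Tm to Tm+1}) that give stabilization; Item~\eqref{itm:CK1} of the projector definition also falls out of those cone descriptions.

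For the endomorphism identification (your Step 3), you propose a spectral Markov-trace adjunction in the style of \cite{Hog_adjunction}, using braid absorption and a limit over torus-link closures. The paper's argument is purely formal from the projector axioms: since $\Cone(\iota)$ has through-degree $<n$ and $\cP_n$ kills turnbacks, one gets $\HOM(\Cone(\iota),\cP_n)\simeq *$, so $\iota^*\colon\HOM(\cP_n,\cP_n)\to\HOM(\Imod_n,\cP_n)$ is an equivalence (Lemma~\ref{lem:END(P) = HOM(I,P)}). Then $n$ applications of the adjunction Theorem~\ref{thm:adjunction} give $\HOM(\Imod_n,\cP_n)\simeq \q^n T^n(\cP_n)$ (Corollary~\ref{cor:END(P) = closure(P)}), and the full trace of the projector is by construction the closed infinite twist. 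No braid absorption or limiting argument over $T(n,k)$ is required.

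Two minor slips: $\T_n^\infty$ is a homotopy \emph{colimit}, not a limit (Definition~\ref{def:inf twist}); and turnbacks are composed vertically ($\vertcomp$), not horizontally, in the paper's conventions.
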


More crucially, in our first main result we present a smaller, recursive model for $\proj_n$ in terms of $\proj_{n-1}$ which serves as a lift of the Cooper-Krushkal model \cite{Cooper-Krushkal}.  We emphasize that the existence of the small Cooper-Krushkal model for $P_n$ does not imply \emph{a priori} that there is a corresponding model for $\proj_n$.  We summarize the model as follows (for the more precise version, see Theorem \ref{thm:Pn simplified in detail}).

\begin{theorem}\label{intro thm:Pn simplified}
	There is a filtration $\CKfilt$ of the spectral projector $\mathcal{P}_n$, viewed as a spectral module over  $\End_{\mathcal{H}_{2(n-1)}}(\mathcal{P}_{n-1})$, which lifts the Cooper-Krushkal recursive formula for the categorified projector $P_n$ illustrated in Figure \ref{fig:CK sequence}.
\end{theorem}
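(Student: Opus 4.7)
The plan is to leverage Theorem \ref{intro thm: spectral Pn is tori}, which identifies $\mathcal{P}_n$ with the stable limit of the spectra $\T_n^k$ of finite $n$-strand torus braids. The Cooper-Krushkal recursion at the chain level decomposes each torus braid $T_n^k$ into a stack involving a Jucys-Murphy-like partial twist $\augJM_n^{k-1}$ acting below the projector $\cP_{n-1}$ placed on the top $n-1$ strands; the key is to lift this decomposition directly to the spectral setting using the planar-algebraic structure on $\Sp\TL_n$ and the tangle cobordism spectra of Lawson-Lipshitz-Sarkar.

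Concretely, for each $k \geq 0$ we would construct a spectral $\End_{\sarc_{2(n-1)}}(\mathcal{P}_{n-1})$-module $\mathcal{P}_n^{(k)}$ obtained by inserting the spectral projector $\mathcal{P}_{n-1}$ on the top $n-1$ strands of $\T_n^k$, with transition maps $\mathcal{P}_n^{(k-1)}\to \mathcal{P}_n^{(k)}$ induced by the inclusion of additional twist generators and the associated cobordism maps of \cite{LLS_func}. Taking the homotopy colimit over $k$ recovers a spectrum equivalent to $\mathcal{P}_n$ by Theorem \ref{intro thm: spectral Pn is tori}, after checking that inserting $\mathcal{P}_{n-1}$ on the top strands does not change the stable limit — a fact which should follow from the spectral idempotency of $\mathcal{P}_{n-1}$ combined with the planar absorption of $\mathcal{P}_{n-1}$ by torus braid pieces. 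The filtration $\CKfilt$ is then defined by these $\mathcal{P}_n^{(k)}$, and applying the chains functor $\chainsfunc$ recovers the Cooper-Krushkal filtration of $P_n$ since $\chainsfunc(\mathcal{P}_n)\simeq P_n$ and $\chainsfunc$ is known to send spectral cobordism maps to the corresponding chain-level cobordism maps (as illustrated by the diagrams in Figure \ref{fig:CK sequence}).

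The main obstacle lies in the homotopy coherence of the filtration maps and their compatibility with the $\End_{\sarc_{2(n-1)}}(\mathcal{P}_{n-1})$-module structure. At the chain level the Cooper-Krushkal maps commute strictly with the $\End(P_{n-1})$-action and satisfy explicit identities; at the spectral level we have only cobordism-induced maps well-defined up to homotopy, so organizing them into a genuine filtration (or at least an $A_\infty$-coherent one) requires additional care. We expect to address this by choosing rigid Burnside functor representatives for the $\T_n^k$ as in \cite{LLS_tangles}, so that both the module action and the twist insertion are implemented strictly by planar composition of Burnside functors. The coherence problem then reduces to identifying two explicit cobordisms between tangles, which can be verified one layer at a time using the induction on $k$. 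A secondary subtlety is that the uniqueness of spectral projectors (used to identify the homotopy colimit with $\mathcal{P}_n$) must be invoked in a way that preserves the module structure; we anticipate this follows from extending the uniqueness argument to the equivariant setting over $\End_{\sarc_{2(n-1)}}(\mathcal{P}_{n-1})$.
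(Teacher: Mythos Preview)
Your overall setup is close to the paper's: both build the filtration from an ``augmented Jucys--Murphy'' spectrum $\augJM_n^k=(\cP_{n-1}\horizcomp\Imod_1)\vertcomp\JM_n^k$ and take a sequential homotopy colimit. The paper shows directly that $\augJM_n^\infty$ is a spectral projector (so is $\simeq\proj_n$ by uniqueness), rather than arguing that inserting $\cP_{n-1}$ into $\T_n^k$ leaves the limit unchanged; your route there is fine too.

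However, there is a genuine gap in the core step. Your filtration $\{\mathcal{P}_n^{(k)}\}$ is indexed by whole twists, whereas the Cooper--Krushkal sequence has one term \emph{per crossing}: the pattern in Figure~\ref{fig:CK sequence} has period $2(n-1)$, matching the $2(n-1)$ crossings of a single $\JM_n$. To lift the CK recursion you need the finer, crossing-by-crossing filtration $\CKfilt^i(\proj_n)\simeq q^{-i}\augJM_n^{\langle i\rangle}$ (one new crossing at each stage), and the substance of the theorem is identifying each associated graded piece $\CKfilt^i/\CKfilt^{i-1}$ with the single diagram $\Sigma^i\CK(i)$ and the attaching map with the specific saddle or dotted-identity map. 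Your proposal does not address this identification at all; saying ``apply $\chainsfunc$'' does not suffice, because you must first show the \emph{spectral} associated graded already has the correct form.

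That identification is where the actual work lies, and it is not routine. The paper's argument proceeds by a three-way case analysis on $i\bmod (n-1)$, using the cofibration sequence for the newly resolved crossing. Two ingredients you are missing are: (a) a contraction lemma showing $\augJM_n^{\langle i-1\rangle}\vertcomp e_{[i+1]}\simeq *$ when $i\not\equiv 0\bmod(n-1)$ (this uses turnback-killing of $\cP_{n-1}$ and is what collapses the filtration quotient to a single CK term); and (b) a separate treatment of the case $i\equiv 0\bmod(n-1)$, where the attaching map is the \emph{difference} of two dotted identity maps rather than a saddle. This last case cannot be handled by a single crossing resolution and instead requires the simplified model of the two-strand full twist $\T_2^2$ developed earlier (Corollary~\ref{cor:T22 simplified as multicone}), which packages the two adjacent crossings so that the map $w=\ILhresDotT-\ILhresDotB$ appears explicitly. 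Without these pieces, your filtration quotients will be homotopy colimits over a $(2n-2)$-cube of flat tangles, not the single CK diagrams you need. The coherence concerns you raise are, by contrast, essentially absent: everything is organized through the strict cofibration sequences \eqref{eq:crossing cofib seq} for resolving crossings, so no Burnside-level rigidification is needed.
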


This smaller model allows us to investigate potential lifts of Hogancamp's endomorphisms $U_n^k$ for various $k$, which we will summarize in Section \ref{sec:more on endomorphism lifts}.  Here we focus on the cases $n=2,3$ where we can use Equation \eqref{intro eq: spectral END is Tinfty}, together with computations of $\X(T(2,\infty))$ \cite{Wil_TorusLinks} and $\X(T(3,k))$ due to Lobb-Orson-Sch\"utz \cite{LOS}, to arrive at the following results.

\begin{theorem}\label{intro thm: n=3 computations}
	In the following statements, we call a spectral map $F$ a lift of a chain map $f$ if $\chainsfunc(F)\simeq f$.
\begin{enumerate}
	\item \label{intro thm item: yes U2} There exists an endomorphism
	\[\q^4\Sigma^2 \proj_2 \xrightarrow{\mathcal{U}_2} \proj_2\]
	which lifts the map $U_2$ corresponding to $u_2$ in Theorem \ref{thm:hog-polyaction}.
	\item \label{intro thm item: no U3 no Hogancamp} There is no endomorphism
	\[\q^6\Sigma^{4}\proj_3 \to \proj_3\]
	which lifts the map $U_3$ corresponding to $u_3$ in Theorem \ref{thm:hog-polyaction}, and thus no general sense in which Theorem \ref{thm:hog-polyaction} can be lifted to spectra.
	\item \label{intro thm item: yes U3sq} There exists an endomorphism
	\[\q^{12}\Sigma^8 \proj_3 \xrightarrow{\mathcal{U}_3^2} \proj_3\]
	which lifts the map $U_3^2$ corresponding to $u_3^2$ in Theorem \ref{thm:hog-polyaction}.
\end{enumerate}
\end{theorem}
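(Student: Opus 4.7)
My plan is to unify all three parts via the equivalence of Theorem~\ref{intro thm: spectral Pn is tori}: a spectral endomorphism $\q^{2n}\Sigma^{2n-2}\proj_n\to\proj_n$ is the same data as a stable homotopy class of $\X(T_n^\infty)$ in a prescribed $(q,h)$-bidegree, and its chain realization is a class in the corresponding bidegree of $\Kh(T(n,\infty))$. A spectral lift of $U_n^k$ thus exists precisely when the chain class $u_n^k$ lies in the image of the Hurewicz (edge) map $\pi_*\X(T(n,\infty))\to\Kh(T(n,\infty))$; equivalently, when $u_n^k$ survives all differentials in the Atiyah-Hirzebruch-type spectral sequence associated to $\X(T(n,\infty))$.

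For part (\ref{intro thm item: yes U2}), I would appeal directly to the second author's computation of $\X(T(2,\infty))$ from \cite{Wil_TorusLinks}, which splits (in each quantum grading) as a wedge of shifted spheres and Moore spaces with explicit attaching data. A direct inspection will place the shifted copy of $u_2$ in a sphere summand, in which case the Hurewicz map is an isomorphism onto $\Z$; pulling back the generator gives the desired $\mathcal{U}_2$.

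Part (\ref{intro thm item: no U3 no Hogancamp}) is the main obstacle. I would construct an obstruction class $\beta_{3,1}$ whose vanishing detects existence of a spectral $\mathcal{U}_3$. Applying the Cooper-Krushkal filtration $\CKfilt$ of Theorem~\ref{intro thm:Pn simplified} to $\proj_3$ yields an induced filtration on the endomorphism spectrum $\End_{\sarc_6}(\proj_3)$, together with a long exact sequence of stable homotopy groups. The class $\beta_{3,1}$ is the image of any candidate lift of $U_3$ under the corresponding connecting map, and by construction it is a well-defined cobordism-induced map of spectra thanks to the functoriality of \cite{LLS_func}. Consulting the Lobb-Orson-Sch\"{u}tz computation \cite{LOS} of $\X(T(3,k))$ (which, after the stabilization process of Theorem~\ref{intro thm: spectral Pn is tori}, yields $\X(T(3,\infty))$), I expect to locate the bidegree of $u_3$ in a Moore-space summand $\Sigma^{s} M(\Z/2,s)$, and to identify $\beta_{3,1}$ with the corresponding nontrivial attaching map. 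The most delicate step will be pinning this attaching-map datum down precisely enough to verify nonvanishing; once achieved, this nonvanishing immediately rules out a general spectrification of Theorem~\ref{thm:hog-polyaction}, since such a spectrification would furnish a $\mathcal{U}_3$.

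Part (\ref{intro thm item: yes U3sq}) then runs parallel to part (\ref{intro thm item: yes U2}): iterating $U_3$ doubles its bidegree, which by the same LOS computation places $u_3^2$ in a region of $\X(T(3,\infty))$ whose only relevant summand (in the appropriate homological degree) is a shifted sphere. The analogously-defined obstruction $\beta_{3,2}$ therefore vanishes, and pulling back a generator yields a spectral lift $\mathcal{U}_3^2$ whose chain realization agrees with $U_3^2$ by construction.
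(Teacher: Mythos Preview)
Your approach for parts~(\ref{intro thm item: yes U2}) and~(\ref{intro thm item: yes U3sq}) is essentially the paper's: via the adjunction $\End_{\sarc_{2n}}(\proj_n)\simeq \q^n\closure{\proj_n}$, one locates the bidegree of $u_n^k$ in the Khovanov spectrum of the $n$-colored unknot, finds a sphere wedge-summand carrying that homology class, and lifts via the Hurewicz map. For $n=2$ this uses that $T(2,k)$ is alternating; for $U_3^2$ the paper appeals to the computer computation in \cite{LOS_colored} that $\X_{col}(\mathscr{U}_3)\atq{9}\simeq X(\eta2,5)\vee S^8$, with $u_3^2$ carried by the free $S^8$.

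For part~(\ref{intro thm item: no U3 no Hogancamp}) your plan has the right shape but a wrong expectation, and the paper's route is genuinely different. The class $u_3$ lives in $H_4$ at $q=3$ as a \emph{free} $\Z$-generator, not in a Moore summand; the relevant spectrum is $\Sigma^{-1}\bR P^5/\bR P^2$, which is indecomposable because $Sq^2\colon H^2\to H^4$ is nonzero (equivalently, the top cell is attached by $\eta$). So your proposed identification of the obstruction with a Moore-space attaching map is off; what actually obstructs the Hurewicz map from hitting $u_3$ is this $\eta$. Carrying this out would require you to compute enough of $\pi_4$ of that stunted projective space to see that only $2u_3$ lifts.

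The paper sidesteps that homotopy-group computation. Theorem~\ref{thm:Unk equiv to lifts} shows that a lift of $U_3$ exists iff a certain saddle map on closures is null-homotopic, which is equivalent to a wedge splitting of the closed $\augJM_3^1$ spectrum (the $(2,1)$-colored Hopf link) into two shifted copies of $\closure{\proj_2}$. The Hopf link spectrum has nontrivial $Sq^2$ by \cite{LOS_colored}, while $\closure{\proj_2}$ is a stable limit of alternating-link spectra, hence has trivial Steenrod squares. This contradiction kills the lift without ever touching $\pi_*$. Your $\beta_{3,1}$ formalism is developed in the paper (Section~\ref{sec:Obstructing U_n^k}), but the actual nonvanishing argument there is this Steenrod-square comparison, not a direct attaching-map identification.
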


As a corollary to Theorem \ref{intro thm: n=3 computations}\eqref{intro thm item: yes U3sq}, we are able to complete the computation of $\X(T(3,\infty))$, proving a conjecture of Lobb-Orson-Sch\"utz.

\begin{corollary}\label{intro cor:3-strand-torus-link}
	The spectrum $\X(T(3,\infty))$ is given by Lobb-Orson-Sch\"{u}tz \cite[Conjecture 4.1]{LOS_colored}.  Namely:
	\begin{center}
		\begin{tabular}{|c|c|}
			\hline
			q-grading & $\X^q(T(3,\infty))$ \\ \hline 
			$-3$ & $S^0$ \\ \hline
			$-1$ & $S^0$ \\ \hline
			$1$ & $S^2$ \\ \hline
			$3$ & $\Sigma^2 M(\mathbb{Z}/2,2)\vee S^4$ \\ \hline
			$6j+5$ for $j\geq 0$ & $S^{4j+3}\vee S^{4j+4}$ \\ \hline
			$6j+1$ for $j\geq 1$ & $S^{4j+1}\vee S^{4j+2}$ \\ \hline
			$12j-3$ for $j\geq 1$ & $X(\eta 2, 8j-3)\vee S^{8j}$ \\ \hline
			$12j+3$ for $j\geq 1$ & $S^{8j+1}\vee X({}_{2}\eta,8j+2).$\\ \hline 
		\end{tabular}
	\end{center}
	Here, $M(\mathbb{Z}/2,2)$ is the Moore spectrum of $\mathbb{Z}/2$ in degree $2$; i.e. the mapping cone of $2\colon S^2\to S^2$.  For the notation $X$, see \cite{LOS_colored}.
	
\end{corollary}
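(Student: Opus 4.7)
The plan is to focus on the $12j\pm 3$ cases, since the other grading families are already established by Lobb-Orson-Sch\"utz via stabilization of $\X(T(3,k))$ for finite $k$. The new tool is the spectral lift $\mathcal{U}_3^2\colon \q^{12}\Sigma^8\proj_3\to\proj_3$ provided by Theorem \ref{intro thm: n=3 computations}\eqref{intro thm item: yes U3sq}, together with the identification $\q^{-3}\End_{\sarc_6}(\proj_3)\simeq \X(T(3,\infty))$ from Theorem \ref{intro thm: spectral Pn is tori}. Under this identification, $\mathcal{U}_3^2$ represents a spectral element of bidegree $(t,q)=(8,12)$ in the endomorphism ring spectrum, and multiplication by it gives a self-map of $\X(T(3,\infty))$ which shifts the bigrading by $(8,12)$.

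Next, I would use Hogancamp's Theorem \ref{thm:hog-polyaction} in the case $n=3$ to realize $\Kh(T(3,\infty))$ as the homology of $W_3=\Z[u_1,u_2,u_3]/(u_1^2)\otimes \Lambda[\xi_2,\xi_3]$ with the prescribed differential, and compute the action of $u_3^2$ in each quantum grading. Direct inspection of this model shows that for all $j\geq 2$, multiplication by $u_3^2$ sends $\Kh^{12(j-1)\pm 3}(T(3,\infty))$ isomorphically onto $\Kh^{12j\pm 3}(T(3,\infty))$. Since each quantum-graded summand of $\X(T(3,\infty))$ is a finite, bounded-below spectrum, a spectral map inducing an isomorphism on integral homology between such summands is a stable homotopy equivalence by Whitehead for spectra. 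Hence $\mathcal{U}_3^2$ induces stable equivalences
\[\X^{12j\pm 3}(T(3,\infty))\simeq \Sigma^{8}\X^{12(j-1)\pm 3}(T(3,\infty))\qquad\text{for all }j\geq 2.\]

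Iterating these equivalences reduces the claim to the base cases $j=1$, namely $q=9$ and $q=15$. For these, I would identify $\X^{9}(T(3,\infty))\simeq X(\eta 2,5)\vee S^8$ and $\X^{15}(T(3,\infty))\simeq S^9\vee X({}_2\eta,10)$ from the finite-$k$ computations of Lobb-Orson-Sch\"utz \cite{LOS}, using the fact that these wedge summands persist under the stabilization maps $\X(T(3,k))\to \X(T(3,k+1))$ for $k$ sufficiently large and so pass to the colimit $\X(T(3,\infty))$. Combined with the period-$(8,12)$ equivalences above, this yields the stated formulas for all $j\geq 1$.

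The main obstacle is the explicit computation in Hogancamp's model $W_3$ to verify that $u_3^2$ acts by isomorphism on the $q=12j\pm 3$ graded homology for all $j\geq 2$. Since the differential on $\xi_3$ is only specified modulo $\Z[u_2]$ by Theorem \ref{thm:hog-polyaction}, one must check that the undetermined correction term does not disrupt this isomorphism; a careful analysis of which monomials in $u_1,u_2,u_3,\xi_2,\xi_3$ survive to homology in each affected quantum grading will be required. A secondary issue is that the $X$-type spectra of \cite{LOS_colored} are distinguished from wedges of Moore spectra by finer data than integral homology, so the identification of the base-case spectra relies on direct inspection of the stable attaching maps appearing in the finite torus knot computations, rather than on a purely homological argument.
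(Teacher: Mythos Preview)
Your proposal is correct and takes essentially the same approach as the paper's proof of Theorem~\ref{thm:3colored unknot}: establish periodicity via the spectral lift $\mathcal{U}_3^2$ and Whitehead's theorem, reducing to finitely many base cases handled by Lobb--Orson--Sch\"utz.  The two ``obstacles'' you flag at the end are genuine, but the paper does not resolve them by direct $W_3$ calculation either --- it simply cites Hogancamp's bounded representative for $Q_3$ \cite[Theorem 1.2, Example 3.41]{Hog_polyaction} for the chain-level homology isomorphism, and \cite{LOS_colored} for the base-case identifications.

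One small correction to your framing: the paper's periodicity $\Sigma^8\closure{\proj_3}\atq{j} \simeq \closure{\proj_3}\atq{j+12}$ holds for \emph{all} odd $j\geq 7$, not just those of the form $12j\pm 3$.  In the gradings $j\not\equiv 3\pmod 6$ the spectrum is a Moore space (wedge of spheres) determined by homology alone, so these are the ``easy'' cases; only the $j\equiv 3\pmod 6$ gradings involve the finer $X$-type attaching data that your proposal focuses on.  So your restriction of attention to the $12j\pm 3$ cases loses no generality in practice, but the periodicity argument itself is uniform in $j$.
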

The only new part of the corollary is the $12j\pm 3$ cases. 

Our last main result concerns a property of the Hochschild homology of Khovanov complexes due to Rozansky.

\begin{theorem}[{\cite{Roz_S1S2}}]
Let $T$ be an $(n,n)$-tangle with $n$ even.  Then the Hochschild homology $HH(H_{2n};\Kc(T))$ is an invariant of the corresponding link $\hat{T}$ in $S^1\times S^2$, illustrated in Figure \ref{fig:s1s2}.
\end{theorem}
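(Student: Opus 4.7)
The plan is to reduce the theorem to verifying invariance under a finite list of local moves on $(n,n)$-tangles. Since $S^1 \times S^2$ arises from $S^3$ by $0$-surgery on a meridian unknot (cf.\ Figure \ref{fig:s1s2}), standard Kirby calculus shows that two tangles $T, T'$ give isotopic links $\hat{T} \simeq \hat{T}'$ if and only if they are related by a finite sequence of (i) ambient isotopies of the tangle rel boundary, (ii) cyclic rearrangements $T_1 T_2 \leftrightarrow T_2 T_1$, which correspond to sliding the closing arcs around the $S^1$-factor, and (iii) handle-slide moves over the $0$-framed surgery component. Accordingly, I would check $HH$-invariance under each of these three moves separately.

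For move (i), invariance is immediate from the tangle invariance of the Khovanov complex: ambient isotopy produces a quasi-isomorphism of $H_{2n}$-bimodule complexes $\Kc(T) \simeq \Kc(T')$, and $HH(H_{2n};-)$ descends to the homotopy category. For move (ii), I would use the multiplicativity of the Khovanov complex under tangle composition, $\Kc(T_1 \cdot T_2) \simeq \Kc(T_1) \otimes^{L}_{H_{2n}} \Kc(T_2)$, combined with the fundamental cyclic trace symmetry of Hochschild homology,
\[
HH\bigl(H_{2n};\, M \otimes^{L}_{H_{2n}} N\bigr) \simeq HH\bigl(H_{2n};\, N \otimes^{L}_{H_{2n}} M\bigr),
\]
to conclude that cyclic rotation of the tangle preserves the Hochschild homology.

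The hard part will be move (iii), the handle-slide invariance, which is where the geometry of $S^1 \times S^2$ really enters. Geometrically, passing an arc of the link through the meridian disk of the surgered unknot modifies $T$ by pre-composing with a distinguished ``braid around the $S^1$'' element before closure. To handle this, I would construct an explicit quasi-isomorphism of $H_{2n}$-bimodule complexes matching $\Kc(T)$ with the handle-slid version, assembled from Khovanov's cobordism-induced chain maps, and then verify that it induces a quasi-isomorphism after applying $HH$. The main obstacle lies precisely in carrying this out rigorously: one must exploit the particular bimodule structure of the arc algebra $H_{2n}$ to show that the extra braiding is absorbed by the trace, which ultimately reduces to a contractibility statement for auxiliary complexes built from $H_{2n}$-bimodule relations. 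The parity hypothesis $n$ even enters here to guarantee that the boundary matchings of the tangle and closure are compatible so that these bimodule identifications are defined on the nose.
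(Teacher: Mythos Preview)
This theorem is cited from Rozansky \cite{Roz_S1S2} rather than proved in the present paper, so there is no proof here to compare against directly. That said, Section~\ref{sec:THH} describes Rozansky's method in enough detail to make a comparison.

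Your treatment of moves (i) and (ii) is correct and standard. The genuine gap is in move (iii). You propose to ``construct an explicit quasi-isomorphism of $H_{2n}$-bimodule complexes matching $\Kc(T)$ with the handle-slid version.'' But no such bimodule quasi-isomorphism exists: the handle-slide replaces $T$ by $T\cdot J$ for a Jucys--Murphy--type wrap $J$ of one strand around the others, and $\Kc(T)$ and $\Kc(T)\otimes^L \Kc(J)$ are genuinely inequivalent bimodules (already for $T$ the identity tangle). The invariance is a phenomenon that only appears \emph{after} taking the Hochschild trace; it is not witnessed by any map of bimodule complexes, so your proposed step cannot be carried out as stated.

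Rozansky's actual argument, which the paper references and exploits, goes through a projector instead. He constructs a ``lowest weight projector'' $\mathbf{P}_n$ (the chain analogue of the paper's $\proj_{2,0}$; see the discussion around Lemma~\ref{lem:P20 computes THH} and \cite[Theorem 6.5, Equation 6.4]{Roz_S1S2}) which furnishes a projective resolution of the identity bimodule. Hence $HH$ with coefficients in $\Kc(T)$ is computed by the ordinary Khovanov homology of the closure of $T$ stacked with $\mathbf{P}_n$. Handle-slide invariance then becomes the concrete statement that a meridian circle linking the strands can be unlinked through $\mathbf{P}_n$ without changing the Khovanov homology of the closure --- precisely the statement that the paper shows \emph{fails} for spectra in Equations~\eqref{eq:P2 cannot unlink meridian} and~\eqref{eq:P20 cannot unlink meridian}, while holding on homology. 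Finally, the parity hypothesis is simpler than you suggest: the arc algebra on $n$ points is only defined when $n$ is even, since its idempotents are indexed by crossingless matchings.
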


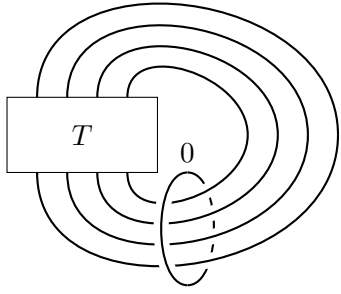
\begin{figure}
	\[\begin{tikzpicture}[xscale=.4,yscale=.5]
		\draw[thick] (0,-1) to[out=0,in=0] (0,-4);
		
		\foreach \i in {2,3,4,5}
		{
			\drawover{(-\i,-1) to[out=-90,in=-90] (\i,0);}
			\draw[thick] (-\i,1) to[out=90,in=90] (\i,0);
		}
		
		\drawover{(0,-1) to[out=180,in=180] (0,-4);}
		\node[above] at (0,-1) {0};
		
		\draw (-6,1) rectangle (-1,-1);
		\node at (-3.5,0) {$T$};
		
	\end{tikzpicture}\]
	\caption{An $(n,n)$-tangle $T$ determines the link $\hat{T}\subset S^1\times S^2$ shown here, where the 0-surgery on the meridian unknot gives $S^1\times S^2$.}
	\label{fig:s1s2}
\end{figure}

In \cite{LLS_tangles} after defining the spectral lift $\X(T)$ for such tangles, Lawson-Lipshitz-Sarkar asked whether or not the topological Hochschild homology similarly gives an invariant of $\hat{T}\subset S^1\times S^2$.  In this paper we show, perhaps surprisingly, that the answer is negative.

\begin{theorem}\label{intro thm:THH bad}
The topological Hochschild homology $THH(\sarc_{2n};\X(T))$ is not an invariant of the link $\hat{T}$ in $S^1\times S^2$.  
\end{theorem}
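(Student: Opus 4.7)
I would argue by contradiction, assuming $THH(\sarc_{2n};\X(T))$ depends only on the closure $\hat{T}\subset S^1\times S^2$, and then deriving the existence of a spectral lift of $U_3$, contradicting Theorem \ref{intro thm: n=3 computations}\eqref{intro thm item: no U3 no Hogancamp}. Throughout, the 2-strand spectral projector $\proj_2$ plays the bridging role: on one side it controls $THH$ over $\sarc_4$ via Theorem \ref{intro thm: spectral Pn is tori}, and on the other it enters the Cooper-Krushkal recursion for $\proj_3$ (Theorem \ref{intro thm:Pn simplified}), where the obstruction $\beta_{3,1}$ to lifting $U_3$ naturally arises.

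First, I would set up the spectral analog of Rozansky's Hochschild trace, identifying $THH(\sarc_4;\X(T))$ for suitable 2-tangles $T$ (in particular the torus braids $T_2^k$ and their stable limit) with an explicit spectrum built from $\proj_2$ and its spectral endomorphism $\mathcal{U}_2$ from Theorem \ref{intro thm: n=3 computations}\eqref{intro thm item: yes U2}. Using Theorem \ref{intro thm: spectral Pn is tori} to replace torus braids by $\proj_2$ itself reduces this to a cyclic-bar-style computation. If $THH$ were an invariant of $\hat{T}$, then for any two tangles $T_1, T_2$ with $\hat{T}_1 = \hat{T}_2$ the resulting spectra would have to agree; in particular, closure-preserving moves on tangles, such as inserting or removing twists along the braid axis, would force identities among endomorphisms of this $THH$ spectrum.

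Second, I would exploit the Cooper-Krushkal recursion of Theorem \ref{intro thm:Pn simplified} to express the obstruction $\beta_{3,1}$ in terms of the same $THH$ spectra. The recursion realizes $\proj_3$ as built from $\proj_2$ via a filtration whose first attaching datum records precisely $\beta_{3,1}$; tracing this through the $THH$ computation, the assumed invariance under the closure-preserving moves above would force $\beta_{3,1}$ to be null-homotopic as a map of spectra. The main obstacle is this final identification: one must produce an explicit pair $(T_1, T_2)$ of tangles with coincident $S^1\times S^2$ closures whose Khovanov spectra, after applying $THH$, differ by exactly the class of $\beta_{3,1}$, rather than by some other uncontrolled cobordism class. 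Once this explicit identification is in place, Theorem \ref{intro thm: n=3 computations}\eqref{intro thm item: no U3 no Hogancamp} (which asserts $\beta_{3,1}\not\simeq 0$) completes the contradiction and hence the theorem.
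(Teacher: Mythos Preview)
Your plan has the right intuition---the connection to $\proj_2$ and the Lobb--Orson--Sch\"utz computations is exactly what drives the argument---but the logical architecture you propose is both more circuitous than the paper's and missing its key technical ingredient.

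The paper does \emph{not} argue by contradiction through the $U_3$ obstruction. Instead it proves the result directly, via a lemma (Lemma~\ref{lem:P20 computes THH}) identifying $THH(\sarc_2;\X(T)_{2,2})$ with the Khovanov spectrum of the $S^3$-closure of $T$ stacked with the ``lowest weight projector'' $\proj_{2,0}:=\Cone(\Imod_2\hookrightarrow\proj_2)$. This is the spectral analogue of Rozansky's chain-level formula, proved by comparing the bar resolution with an explicit semisimplicial model for $\proj_{2,0}$ and checking the comparison map is a homology isomorphism. Your ``cyclic-bar-style computation'' gesture does not capture this, and it is the genuine technical content of the section.

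Once Lemma~\ref{lem:P20 computes THH} is in hand, the tangle pair is immediate: $T_1$ is the identity $(2,2)$-tangle with a meridian circle linking both strands, and $T_2$ is the identity tangle with a disjoint circle. Their $S^1\times S^2$ closures are isotopic (slide the meridian over the $0$-handle), but closing with $\proj_{2}$ gives, on one side, the $(2,1)$-colored Hopf link spectrum---which has nontrivial $Sq^2$ by \cite{LOS_colored}---and on the other, the split union with a circle, which does not. Since the $\Imod_2$ part affects only finitely many $q$-degrees, the $\proj_{2,0}$ closures (i.e.\ the THH values) also disagree in infinitely many degrees.

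You are correct that this is the \emph{same} LOS computation that obstructs $U_3$ (compare the proof of Theorem~\ref{thm:U3 does not lift}), so your proposed implication ``THH invariant $\Rightarrow$ $U_3$ lifts'' could in principle be threaded together. But the paper simply cites the LOS computation directly rather than routing through Theorem~\ref{thm:Unk equiv to lifts} and the $\beta_{3,1}$ formalism. Your approach would add steps without avoiding the hard part, which is Lemma~\ref{lem:P20 computes THH}.
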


Although Theorem \ref{intro thm:THH bad} may appear unrelated at first, we will see that ideas involving 2-strand projectors, together with computations of Lobb-Orson-Sch\"utz \cite{LOS}, will provide many of the essential ingredients of the proof.

\subsection{More on spectral lifts of the endomorphisms $U_n^k$}\label{sec:more on endomorphism lifts}

In Section \ref{sec:Obstructing U_n^k}, we will give an obstruction to lifting each of Hogancamp's endomorphisms $U_n$ to \emph{spectral} endomorphisms in $\End_{\mathcal{H}_{2n}}(\mathcal{P}_n)$.  For each $U_n^k$ we identify an obstruction class $\beta_{n,k}$; $U_n^k$ lifts to a map of spectra from $q^{2n}\Sigma^{2n-2}\mathcal{P}_n$ to $\mathcal{P}_n$ (instead of only chain complexes) exactly when the obstruction class vanishes.  The $\beta_{n,k}$ are (homotopy types of) maps of Khovanov spectra induced by cobordisms, which are well-defined by the recent \cite{LLS_func}.  For a precise statement, see Corollary \ref{cor:interesting-cobordism-map}.

A key consequence of Hogancamp's Theorem \ref{thm:hog-polyaction} is that there is a categorification $Q_n$ (the ``symmetric projector") of a `normalization' $(\prod_{k=2}^n(1-q^{2k}))p_n$ of $p_n\in TL_n$, by clearing denominators.  This $Q_n$ is a finite chain complex, obtained from $P_n$ by taking the iterated mapping cone of all of the $U_i$ for $1\leq i \leq n$.  The $\beta_{n,k}$ obstructions do not all vanish, so that there does not appear to be a spectral version of these $Q_n$.  Indeed, since $\beta_{3,1}\not\simeq 0$, we have that $Q_3$ does not have a lift to spectra in a suitable sense.  However it seems natural to ask if there is a polynomial $f(q)$ such that there is a natural finite $\mathcal{H}_{2n}$-module spectrum, the decategorification of whose chain complex is $f(q)p_n\in TL_n$.  As a particular form of this, we conjecture:
\begin{Conjecture}\label{intro conj:finiteness}
	For each $n\geq 2$, there exists some $k>0$ for which $\beta_{n,k}\simeq 0$.  Equivalently, some power $U_n^k$ of the chain map $U_n\colon \q^{2n}\Sigma^{2n-2} P_n\to P_n$ lifts to a map of spectra $\mathcal{P}_n\to \mathcal{P}_n$. 
\end{Conjecture}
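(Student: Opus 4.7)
This statement is an open conjecture, so the plan below is speculative. I would proceed by induction on $n$, combining the Cooper--Krushkal recursive structure from Theorem \ref{intro thm:Pn simplified} with an obstruction-theoretic analysis of how the classes $\beta_{n,k}$ vary with $k$. The base cases are immediate: $n=2$ follows from Theorem \ref{intro thm: n=3 computations}(1) with $k=1$, and $n=3$ from Theorem \ref{intro thm: n=3 computations}(3) with $k=2$.

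First I would record a simple multiplicativity property: if $U_n^k$ admits a spectral lift $\mathcal{U}_n^k$, then $U_n^{mk}$ admits a spectral lift $(\mathcal{U}_n^k)^{\circ m}$ for every $m\geq 1$. Consequently the set $S_n:=\{k\geq 1 : \beta_{n,k}\simeq 0\}$ is either empty or closed under multiplication, and the conjecture amounts to showing $S_n\neq \emptyset$. Via Equation \eqref{intro eq: spectral END is Tinfty}, each $\beta_{n,k}$ can be viewed as a stable homotopy class in $\X(T_n^{\infty})$ in a $(\q,\Sigma)$-bidegree that depends linearly on $k$, so the question becomes whether at least one class in this infinite family is null.

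Next, I would exploit the filtration $\CKfilt$ of $\proj_n$ over $\End_{\sarc_{2(n-1)}}(\proj_{n-1})$ produced by Theorem \ref{intro thm:Pn simplified}. The endomorphism $U_n$ respects the cap--cup data used to build $\proj_n$ from $\proj_{n-1}$, and an attempted spectral lift of $U_n^k$ therefore decomposes along $\CKfilt$ into pieces governed by the endomorphism theory of $\proj_{n-1}$. By induction some power $U_{n-1}^{\ell}$ lifts, so taking $k$ to be a large multiple of $\ell$ should kill the filtration-associated pieces of $\beta_{n,k}$, leaving a residual obstruction supported on a smaller piece of the spectrum. One then hopes to analyze that residual piece directly via the explicit cobordism-map description from Corollary \ref{cor:interesting-cobordism-map}.

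The main obstacle will be controlling this residual obstruction, since it carries genuinely new $n$-strand combinatorial information not reducible to $(n-1)$-strand data. Two complementary strategies suggest themselves. The first is algebraic: the relations $d(\xi_k)\in 2u_1 u_k+\Z[u_2,\ldots,u_{k-1}]$ from Theorem \ref{thm:hog-polyaction} hint that the $\beta_{n,k}$ should be expressible as iterated Massey-type products in lower obstructions $\beta_{i,j}$ with $i<n$, which, combined with multiplicativity, would close the induction. The second is topological: one could try to bound the torsion order of the $\beta_{n,k}$ within a fixed bidegree range of the endomorphism spectrum, using finite-generation statements for $\pi_*\X(T(n,\infty))$ in each quantum grading (in the spirit of the calculations behind Corollary \ref{intro cor:3-strand-torus-link}), and then force nullhomotopy after iterating. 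Either route appears to require substantially better control of $\X(T_n^{\infty})$ for $n\geq 4$ than is presently available, which is precisely why the statement is left as a conjecture.
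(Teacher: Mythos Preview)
The paper does not prove this statement: it is stated as an open conjecture, and the only cases established are $n=2$ and $n=3$ (via Theorems~\ref{thm:U2 lifts} and~\ref{thm:U3^2 lifts}, which you correctly cite). There is therefore no proof in the paper to compare your proposal against. Your submission is appropriately labeled as speculative, and the base cases and the multiplicativity observation (if $U_n^k$ lifts then so does $U_n^{mk}$) are correct.

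One comment on the speculative inductive strategy: the hope that taking $k$ a large multiple of the inductive $\ell$ would ``kill the filtration-associated pieces of $\beta_{n,k}$'' is not obviously justified. The obstruction $\beta_{n,k}$ is a single cobordism-induced map (Corollary~\ref{cor:interesting-cobordism-map}), and while the Cooper--Krushkal filtration of $\proj_n$ is built from $\proj_{n-1}$-data, it is not clear that the obstruction itself decomposes along that filtration in a way that isolates lower-strand contributions. In particular, the paper's analysis for $n=3$ does not proceed by reducing to $n=2$ data but rather by direct computation of the relevant piece of $\X(T(3,\infty))$; the inductive mechanism you sketch would need an additional structural input relating $\beta_{n,k}$ to the $\beta_{n-1,j}$ that is not established here.
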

Theorem \ref{intro thm: n=3 computations} verifies Conjecture \ref{intro conj:finiteness} for $n=2,3$.  Note however that even if spectral lifts of the $U_n^k$ exist, unlike in the chain setting, they are not guaranteed to be well-defined.

\subsection{The GOR conjecture}
\label{sec:GOR conjecture}
The conjecture of Gorsky-Oblomkov-Rasmussen referenced in Section \ref{sec:intro}, which we will refer to as the \emph{GOR Conjecture}, can be stated as follows.
\begin{Conjecture}[{\cite{GOR}}]\label{conj:gor}
	The unreduced Khovanov chain complex $\mathit{Kc}(T(n,\infty))$ is equivalent as a chain complex to the graded-commutative differential graded algebra $A_n$ generated by variables $u_1,\ldots, u_{n}$ and $\xi_2,\ldots,\xi_{n-1}$, subject to $u_1^2=0$, and with differential 
	\[
	d(\xi_m)=\sum^{m}_{i=1} u_{i}u_{m+1-i},\qquad \mbox{ and } \qquad d(u_k)=0.
	\]
	The $u_i$ have grading $(2i-2,2i)$ and the $\xi_i$ have grading $(2i-1,2i+2)$.
\end{Conjecture}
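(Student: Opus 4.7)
The plan is to work on the endomorphism side via Equation \eqref{intro eq:End is Tinfty}, which reduces the conjecture to a statement about $\End_{H_{2n}}(P_n)$ up to a grading shift, and then to bridge to Hogancamp's Theorem \ref{thm:hog-polyaction}. That theorem already supplies a bigraded deformation retract $W_n$ with almost the right shape: the polynomial generators $u_1,\ldots,u_n$ match by bigrading, and the exterior generators $\xi_2,\ldots,\xi_n$ contain the GOR list $\xi_2,\ldots,\xi_{n-1}$ as a subset. So the first step is to absorb the extra Hogancamp generator $\xi_n$ (of bigrading $(2n-1,2n+2)$) --- for instance by showing, inductively using the Cooper-Krushkal recursion of Theorem \ref{intro thm:Pn simplified} applied at the chain level, that $\xi_n$ is chain-equivalent to a polynomial in the $u_i$ and lower $\xi_j$ after a further retraction, so that the reduced model matches the GOR variable list exactly.

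Second, I would need to pin down the exact form of the differential. Hogancamp only asserts $d(\xi_k) \in 2u_1 u_k + \Z[u_2,\ldots,u_{k-1}]$, whereas GOR predicts the symmetric form $d(\xi_k) = \sum_{i=1}^k u_i u_{k+1-i} = 2u_1 u_k + 2u_2 u_{k-1} + \cdots$. The leading term matches; what remains is to compute the lower-order corrections. The natural tool is again the CK recursion: unfolding $\xi_k$ as an endomorphism built recursively from $\End(P_{n-1})$, one can attempt to express $d(\xi_k)$ in terms of chain homotopies of the recursion and simplify using the inductive GOR identification for $n-1$. Choosing a retraction onto $W_n$ adapted to this recursion should, in principle, produce the symmetric formula directly.

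The principal obstacle is that Conjecture \ref{conj:gor} is only a chain equivalence, not a DGA equivalence --- the remark following it records that the analog $\xi_2^2 = u_2^3 \neq 0$ holds in $\End(P_2)$ whereas $\xi_2^2 = 0$ in $A_2$. Consequently, one cannot use algebraic rigidity (Leibniz together with $d^2=0$) as an organizing principle to constrain $d(\xi_k)$, and the proof must construct the chain quasi-isomorphism $A_n \xrightarrow{\simeq} \End_{H_{2n}}(P_n)$ essentially by hand, checking the specific form of the differential directly. The combinatorial explosion of the CK recursion for $n \geq 3$ and the need to make coherent choices of chain homotopies at every stage is what, to our knowledge, has kept this conjecture open in general; any proof will likely need a more structural organizing idea --- perhaps via the affine Lie algebra or matrix factorization interpretations of $A_n$ alluded to in the introduction --- than the direct retraction strategy sketched above.
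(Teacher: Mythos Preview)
The statement you are addressing is a \emph{conjecture}, not a theorem, and the paper does not prove it. On the contrary, the paper states explicitly that ``Conjecture~\ref{conj:gor} remains wide open in general'' and that substantial but incomplete progress is represented by Hogancamp's Theorem~\ref{thm:hog-polyaction}. So there is no proof in the paper to compare against, and your proposal --- which you yourself acknowledge does not constitute a proof --- is an outline of obstacles rather than an argument.

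That said, there is a genuine mathematical error in your first step. You propose to ``absorb the extra Hogancamp generator $\xi_n$'' by showing it is chain-equivalent to a polynomial in lower generators after a further retraction. This cannot work: $\xi_n$ is an exterior generator, so removing it halves the rank of the complex in every fixed bidegree where it contributes, and no deformation retract can change ranks. In fact the discrepancy you noticed between the variable list $\xi_2,\ldots,\xi_{n-1}$ in the conjecture and $\xi_2,\ldots,\xi_n$ in Theorem~\ref{thm:hog-polyaction} is almost certainly a typo in the statement of the conjecture --- note that the remark immediately following the conjecture discusses $\xi_2$ in the case $n=2$, which would not exist under the stated indexing. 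The intended generator list is $\xi_2,\ldots,\xi_n$, matching Hogancamp, and there is nothing to absorb.

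Your second paragraph correctly identifies the remaining gap between Hogancamp's result and the full conjecture: Theorem~\ref{thm:hog-polyaction} gives $d(\xi_k)\in 2u_1u_k+\Z[u_2,\ldots,u_{k-1}]$, whereas the conjecture predicts the specific symmetric sum. Pinning down these lower-order terms is exactly what is not known, and your concluding assessment --- that a direct retraction strategy is unlikely to succeed and a more structural idea is needed --- is consistent with the paper's own discussion in \S\ref{sec:GOR conjecture}.
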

\begin{remark}
	The complex $\mathit{Kc}(T(n,\infty))$ is itself a differential graded algebra (using `stacking' of torus braids); Conjecture \ref{conj:gor} does not give a homotopy equivalence of dg-algebras.  Indeed, the conjecture has been established for $n=2$, but the Khovanov chains $\xi_2,u_2$ in $T(2,\infty)$ satisfy $\xi_2^2=  u_2^3$.
\end{remark}
\begin{remark}
	For reduced $\mathfrak{sl}_N$ theories with $N\geq 2$, there are related conjectures; see \cite{GORS}.
\end{remark}

We were unable to find a generalization of Conjecture \ref{conj:gor} to a description of $\End_{\sarc_{2n}}(\cP_n)=\X(T(n,\infty))$, but we think this merits further attention, given the explicit nature of Conjecture \ref{conj:gor}.

In search of $\X(T(n,\infty))$ we list two descriptions of the algebra $A_n$ from Conjecture \ref{conj:gor}, both from \cite{GOR}.  First,  $H_*(A_n)$ is the Hochschild homology of the category of matrix factorizations for a certain potential $W$; this description does not have a concrete restatement in spectra, as far as the authors are aware.

Perhaps closer to topology, $A_n$ is a certain subquotient of the affine Lie algebra $\widehat{\mathfrak{sl}_2}$, associated to the integrable representation of $\widehat{\mathfrak{sl}_2}$ at level $1$.  Integrability here means that this representation also arises as a representation of the affine loop group of $SU(2)$, see \cite{pressley-segal}.  This raises the question if it is possible to naturally associate, to the representation $V_{(0,0,1)}$ of $\widehat{\mathfrak{sl}_2}$, a naturally-occurring spectrum to fit into a spectral version of Conjecture \ref{conj:gor}.

\subsection{Outline}
This paper is organized as follows.  In section \ref{sec:preliminaries} we review what we will need on spectra and homotopy colimits.  In Section \ref{sec:planar algebraic spectra} we will recall some facts about Khovanov homology and spectra of tangles, which will be the technical building blocks for the paper.  In Section \ref{sec:spTL cat}, we introduce the spectral Temperley-Lieb category and its operations and properties, as well as define the \emph{spectral projectors}, which are our objects of interest.  In Section \ref{sec:construction} we construct the spectral projectors.  In Section \ref{sec:CK-recursion} we show that the Cooper-Krushkal \cite{Cooper-Krushkal} presentation of the ordinary projector admits a generalization to spectral projectors; this together with Section \ref{sec:Obstructing U_n^k}, forms the technical heart of the paper.  In Section \ref{sec:Obstructing U_n^k} we use the spectral Cooper-Krushkal recursion to describe certain endomorphisms of the spectral projector, associated with cobordism maps of links.  In Section \ref{sec:3-strands}, we apply the work of the previous sections to determine the stable Khovanov spectra of $3$-stranded torus links.  In Section \ref{sec:THH}, we use the same calculations to show that topological Hochschild homology of Khovanov spectra of tangles does not yield a link invariant for links in $S^1\times S^2$.  

\subsection{Acknowledgments}
It is a pleasure to thank Anna Marie Bohmann, Teena Gerhardt, Kristen Hendricks, Matt Hogancamp, Inbar Klang, Slava Krushkal, Tyler Lawson, Robert Lipshitz, Cary Malkiewich, Mona Merling, Sucharit Sarkar, Hirofumi Sasahira, Dirk Sch\"{u}tz, Ian Zemke, and Melissa Zhang  for helpful conversations.  
The first author was supported by NSF DMS-2203828.

\section{Preliminaries on spectra}\label{sec:preliminaries}
	

\subsection{Symmetric spectra and homotopy colimits}\label{sec:spectra}

We will require certain facts about spectra, see e.g. \cite{barnes-roitzheim}.  Following \cite{LLS-Burnside,LLS_tangles}, we work in the category of symmetric spectra \cite{HSS-symmetric}.  We list some of the basic facts and notations we will use below.
\begin{itemize}
    \item The category of symmetric spectra will be denoted $\Sp$.  This category is symmetric monoidal, where the monoidal structure is given by smash product, denoted $\wedge$.  
    \item The sphere spectrum will be denoted $\sphere$.  The (trivial) basepoint spectrum will be denoted by $*$.
    \item Given $X,Y\in\Sp$, we let $\Hom(X,Y)$ denote the set of maps of spectra $X\to Y$.  Meanwhile, we use $[X,Y]$ to denote the set of homotopy classes of maps $X\to Y$.
    \item The category $\Sp$ is \emph{enriched} over itself; given $X,Y\in\Sp$, we let $\HOM(X,Y)$ denote the morphism \emph{spectrum} between them. If $X$ and $Y$ are cofibrant, fibrant, respectively, then $\pi_0(\HOM(X,Y)):=[\sphere,\HOM(X,Y)]=[X,Y].$ (We will often have made implicit choices of cofibrant, or fibrant, replacements of the relevant $X,Y$).  
    \item The (reduced) \emph{chains functor} described in \cite[Section 2.7]{LLS_tangles} will be denoted by $\Sp \xrightarrow{\chainsfunc} \Kom$, where $\Kom$ denotes the category of chain complexes of abelian groups.  This satisfies $\chainsfunc(\sphere)\simeq\Z$, where $\simeq$ indicates quasi-isomorphism.
    \item The (reduced) \emph{homology functor} will be denoted by $\Sp\xrightarrow{H_*}\Rmod[\Z]$, and satisfies $H_*(X) \cong H_*(\chainsfunc(X))$.
\end{itemize}

To identify equivalences of spectra we will often make use of Whitehead's theorem.

\begin{theorem}[Whitehead's theorem]\label{thm:Whitehead for spectra}
Let $X\xrightarrow{f}Y$ be a map of bounded below symmetric spectra such that the induced map on homology
\[H_*(X)\xrightarrow{f_*}H_*(Y)\]
is an isomorphism.  Then $f$ is a homotopy equivalence of spectra.
\end{theorem}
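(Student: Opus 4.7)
The plan is to reduce the statement to a vanishing result on homotopy groups by passing to the mapping cone of $f$, then invoking the Hurewicz theorem and the classical (homotopy-group) form of Whitehead's theorem in the stable homotopy category. In other words, it suffices to show that a bounded below symmetric spectrum $C$ with $H_*(C)=0$ is contractible, since the long exact sequence on homology for the cofiber sequence $X\xrightarrow{f} Y\to C(f)$ identifies $H_*(C(f))=0$ from the hypothesis that $f_*$ is an isomorphism, and $C(f)$ is bounded below because both $X$ and $Y$ are.

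To carry this out, I would first (up to stable equivalence) take a cofibrant-fibrant replacement, so that $\Hom$-sets in the homotopy category agree with homotopy classes of maps, and so that the mapping cone $C(f)$ models the homotopy cofiber. Next, I would invoke the Hurewicz theorem for spectra: if $C$ is a bounded below spectrum and $n_0$ is the smallest integer with $\pi_{n_0}(C)\ne 0$, then the Hurewicz map $\pi_{n_0}(C)\to H_{n_0}(C)$ is an isomorphism. Since $H_*(C(f))=0$, this forces the hypothetical lowest nonvanishing homotopy group to be zero, a contradiction, so $\pi_*(C(f))=0$.

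Finally, I would apply the classical form of Whitehead's theorem in the stable homotopy category: a (cofibrant-fibrant) symmetric spectrum whose stable homotopy groups all vanish is stably contractible. Applied to $C(f)$, this yields that $f$ itself is a stable equivalence, hence a homotopy equivalence in the relevant sense. The only genuinely delicate step is the Hurewicz input, and even there the standard proof is straightforward once one has reduced to a bounded below CW (or cellular) spectrum, by attaching cells in order of increasing dimension and tracking the effect on $\pi_*$ and $H_*$ in the lowest degree; I would simply cite this standard fact (e.g.\ from \cite{barnes-roitzheim}) rather than reprove it.
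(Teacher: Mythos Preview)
The paper does not actually prove this statement: Theorem~\ref{thm:Whitehead for spectra} is stated in the preliminaries section as a named classical result (``Whitehead's theorem'') and is used as a black box throughout, with the reference \cite{barnes-roitzheim} given at the start of that section for background on spectra. Your argument --- pass to the cofiber, use the Hurewicz theorem for bounded below spectra to conclude the homotopy groups vanish, then invoke the homotopy-group form of Whitehead --- is exactly the standard proof and is correct; it is what the paper implicitly relies on when it cites this result.
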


A commuting diagram of spectra is just an ordinary functor $\cC\xrightarrow{}\Sp$ from a small category $\cC$ to spectra.  However, we will also require diagrams of spectra which commute only up to homotopies, which themselves are coherent only up to higher homotopies, and so on.  Following \cite[Section 4]{LLS-Burnside}, we have the following summary.

\begin{definition}[{\cite[Definition 4.6]{LLS-Burnside}, following \cite{vogt}}]\label{def:homotopy coherent diagram}
A \emph{homotopy coherent diagram} of spectra, denoted $\cC\xrightarrow{F}\Sp$, consists of a small category $\cC$ and the following assignments.
\begin{itemize}
    \item For each object $x\in\cC$, a spectrum $F(x)\in\Sp$.
    \item For each $n\geq 1$ and each sequence of composable maps
    \[x_0\xrightarrow{f_1}x_1\xrightarrow{f_2}\cdots\xrightarrow{f_n}x_n\]
    in $\cC$, a morphism (homotopy)
    \[[0,1]^{n-1}_+\wedge F(x_0) \xrightarrow{F(f_1,\cdots,f_n)} F(x_n)\]
    which acts as a homotopy between $F(f_n)\circ \cdots \circ F(f_1)$ and $F(f_n\circ\cdots\circ f_1)$.  For a full list of the properties required see \cite[Definition 4.6]{LLS-Burnside}.
\end{itemize}
\end{definition}

We do not distinguish between homotopy coherent diagrams and ordinary functors (which are just homotopy coherent diagrams with `identity homotopies') in our notation, but it will be clear in context whether or not a given diagram is only homotopy coherent.

We will often be working with functors (and homotopy coherent diagrams) out of certain cube-shaped categories, which we explain here.

\begin{definition}\label{def:cube}
    The category $\cube^1$ consists of two objects $\{0,1\}$ and one non-identity morphism, as shown below.
    \[ 0 \rightarrow 1 \]
    The \emph{$m$-dimensional cube category} $\cube^m$ is the $m$-fold product of $\cube^1$.  Objects are called vertices $v\in\cube^m$, and are written as sequences of $0$'s and $1$'s (we write $v_i$ for the $i$-th term in such a sequence).  `Length one' morphisms between two vertices whose sequences differ only in a single entry are called \emph{edges}.
    
    The \emph{$m$-dimensional based cube category $\basedcube^m$} has objects
    \[\Ob(\basedcube^m) = \Ob(\cube^m)\sqcup \{*\},\]
    and all of the morphisms of $\cube^m$, together with one morphism $v\rightarrow *$ for each vertex $v\in\cube^m\setminus \{1^m\}$. Homotopy coherent diagrams $\basedcube^m\xrightarrow{F}\Sp$ will always be required to satisfy $F(*)=*\in\Sp$.

    Note that, associated to any homotopy coherent diagram $\cube^m\xrightarrow{F}\Sp$, there is a \emph{based extension} $\basedcube^m\xrightarrow{F_+}\Sp$ defined by assigning $*\in\Sp$ to the new object $*\in\basedcube^m$ (the extension of morphisms and higher homotopies, if present, is determined by this assignment).
\end{definition}

For $k\leq m$, there is a `projection' functor $\basedcube^{m-k}\times \basedcube^k \xrightarrow{}\basedcube^m$ sending vertices $(v,w)$ to $vw$ in $\cube^m$, and sending any term $(*,v)$ or $(v,*)$ to $*$.

\begin{lemma}\label{lem:products of cubes}
For any $k\leq m$, the projection $\basedcube^{m-k}\times\basedcube^k \xrightarrow{} \basedcube^m$ is homotopy cofinal.
\end{lemma}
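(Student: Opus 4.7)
The strategy is Quillen's Theorem A: to show $p$ is homotopy cofinal, I would verify that for each object $d \in \basedcube^m$ the comma category $p \downarrow d$ has weakly contractible nerve.

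First consider $d = vw \in \cube^m$ with $v \in \cube^{m-k}$ and $w \in \cube^k$. Since the only morphism in $\basedcube^m$ out of the basepoint $*$ is the identity, any object $((v', w'), f\colon p(v', w') \to vw)$ of $p \downarrow d$ must have $(v', w')$ with both coordinates being actual vertices of $\cube^{m-k}$ and $\cube^k$ respectively. Then $p(v', w') = v'w'$, and the morphism $f$ exists iff $v' \le v$ and $w' \le w$ componentwise. Consequently $p \downarrow d$ is isomorphic to the product of sub-cubes $(\cube^{m-k})_{\le v} \times (\cube^k)_{\le w}$, which has initial object $(0^{m-k}, 0^k)$ and hence contractible nerve.

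Next consider $d = * \in \basedcube^m$. An object $((v', w'), f\colon p(v', w') \to *)$ appears iff $p(v', w') \ne 1^m$, which for pairs of actual vertices excludes only $(1^{m-k}, 1^k)$ and for pairs involving $*$ is automatic. So $p \downarrow *$ is the full subcategory of $\basedcube^{m-k} \times \basedcube^k$ on all objects other than $(1^{m-k}, 1^k)$. For $1 \le k \le m-1$, I would verify that $(0^{m-k}, 0^k)$ is an initial object: morphisms to a pair with actual coordinates use the componentwise cube ordering, while morphisms to a pair involving $*$ use the morphism $0^j \to *$ that exists in $\basedcube^j$ whenever $j \ge 1$. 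Hence the nerve is again contractible.

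The main subtlety is bookkeeping: one must track carefully which pairs $(v', w')$ admit a morphism to $d$ in $\basedcube^m$ and verify initiality at the interface between the ``cube'' and ``basepoint'' regions of the product. The edge cases $k \in \{0, m\}$, where $\basedcube^0$ consists of two disjoint objects, are where the Quillen criterion ceases to hold directly: here $p \downarrow *$ acquires an extra component (an ``extra copy'' of $\basedcube^m$). In these cases I would fall back on a direct hocolim argument: since $F \circ p$ is constant at $*$ on the extra component and $F(*) = *$ by convention, that component contributes trivially and the induced map on hocolims is still an equivalence.
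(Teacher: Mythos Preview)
Your argument checks the wrong comma category. Homotopy cofinality---the property ensuring $\hocolim_{\basedcube^m} F \simeq \hocolim(F\circ p)$---is detected by contractibility of the \emph{under}-categories $d\downarrow p$, whose objects are pairs $\bigl(c,\,f\colon d\to p(c)\bigr)$. What you verify is contractibility of the \emph{over}-categories $p\downarrow d$ (objects $\bigl(c,\,f\colon p(c)\to d\bigr)$), which is the criterion for $p$ to be homotopy \emph{initial} and hence to preserve homotopy \emph{limits}. These are genuinely different conditions: e.g.\ the inclusion $\{0\}\hookrightarrow\{0\to 1\}$ has all over-categories contractible but is not cofinal.

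The distinction changes the analysis substantially. For a vertex $d=vw\neq 1^m$, the under-category $vw\downarrow p$ contains every pair $(v',w')$ with a basepoint coordinate, since $p(v',w')=*$ and $vw\to *$ exists; these need not receive a map from $(v,w)$ (for instance if $w=1^k$ there is no morphism $(v,1^k)\to(v,*)$), so your product-of-subcubes picture with initial object $(0,0)$ does not carry over. Likewise, for $d=*$ the under-category consists only of those $c$ with $p(c)=*$, namely $\{*\}\times\basedcube^k\cup\basedcube^{m-k}\times\{*\}$, which is the paper's description---not the complement of $(1^{m-k},1^k)$ that you obtain. The paper argues (tersely) that these under-categories have contractible nerve; proving that for the vertex case requires more than identifying an initial object.
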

\begin{proof}
    The undercategory of each entry $(i,j)\in \cube^{m-k}\times\cube^k\subset \basedcube^m$ is a (nonempty) product of categories of the form $X^\alpha\times Y^\beta$ where $X,Y$ are either of $\cube$ or $\basedcube$; the nerves of such categories are contractible.  The undercategory of $*\in \basedcube^m$ is the subcategory $*\times\basedcube^k\cup \basedcube^{m-k}\times *$; the nerve of which is also contractible.  
\end{proof}

We can also define subcubes of the cube category, as well as based versions.

\begin{definition}\label{def:subcube}
Fix a vertex $v\in\cube^N$.  Fix some subset of indices $I\subset \{1,\dots,N\}$ such that $v_i=0$ for all $i\in I$.  The \emph{subcube} $\cube^N|_{v,I}$ is the subcategory of $\cube^N$ containing all vertices $w$ such that $w_j=v_j$ for all $j\not\in I$, and all edges between such vertices.  In other words, it is the subcube which `starts' at $v$ and contains all edges in directions indicated by $I$.  Note that this is isomorphic as a category to $\cube^{|I|}$.

The \emph{based subcube} $\basedcube^N|_{v,I}$ is then the based version, consisting of $\cube^N|_{v,I} \sqcup \{*\}$ and including morphisms $w\rightarrow\{*\}$ for all $w$ other than the `ending' vertex determined by
\[w_j=\begin{cases}
    v_j & \text{if $j\not\in I$}\\
    1 & \text{if $j\in I$}
\end{cases}.\]
This based subcube is isomorphic as a category to $\basedcube^{|I|}$.
\end{definition}

Now a map of spectra $X\xrightarrow{}Y$ is equivalent to a functor $\cube^1\xrightarrow{}\Sp$.  A generalization of this observation provides the following notion of a natural transformation.

\begin{definition}\label{def:nat transf}
A \emph{natural transformation} from one functor (respectively homotopy coherent diagram) $\cC\xrightarrow{F}\Sp$ to another $\cC\xrightarrow{G}\Sp$ is a functor (respectively homotopy coherent diagram)
\[\cC\times\cube^1 \xrightarrow{\eta} \Sp\]
such that the restrictions $\eta|_{\cC\times\{0\}}$ and $\eta|_{\cC\times\{1\}}$ recover $F$ and $G$.
\end{definition}

With these ideas in place, we can discuss in slightly more detail various notions related to homotopy colimits.  See  \cite[Section 2.5, Proposition 2.10]{LLS_tangles} and \cite[Section 4.2, Definition 4.10]{LLS-Burnside} for further discussion on these constructions and properties.

\begin{proposition}\label{prop:hocolim properties}
    Let $\cC\xrightarrow{F}\Sp$ denote either a functor or a homotopy coherent diagram (henceforth \emph{diagram}) from a small category $\cC$ to symmetric spectra $\Sp$. Then there is a well-defined \emph{homotopy colimit} $\hocolim\, F\in \Sp$ which satisfies the following properties.
    \begin{enumerate}[label=(H-\arabic*),leftmargin=1cm]
        \item \label{it:hocolim functorial} Homotopy colimits are functorial, in that a natural transformation $F\to F'$ induces a map $\hocolim(F)\to \hocolim(F')$.  Moreover, if the natural transformation gives an equivalence on all objects $F(x)\simeq F'(x) \,\forall\, x\in\cC$, then the induced map is also an equivalence $\hocolim(F)\simeq \hocolim(F')$.
        \item \label{it:hocolim of product} For a diagram $\cC\times \mathcal{D} \xrightarrow{F} \Sp$, there is a natural equivalence
        \[\hocolim_{c\in\cC} (\hocolim_{d\in\mathcal{D}} F(c,d) ) \simeq \hocolim_{\cC\times\mathcal{D}} F.\]
        \item \label{it:smash preserves hocolim} The smash product $\wedge$ preserves homotopy colimits in each variable.
        \item \label{it:cofinal preserves hocolim} Cofinal functors preserve homotopy colimits, in the sense that if $\cC\xrightarrow{G}\mathcal{D}$ is cofinal, then for any diagram $\mathcal{D}\xrightarrow{F}\Sp$, we have
            \[\hocolim F \simeq \hocolim (F\circ G).\]
        In particular, hocolims of functors from $\basedcube^m$ can be computed using corresponding product functors from $\basedcube^{m-k}\times \basedcube^{k}$ for any $k\leq m$.
        \item \label{it:chains preserve hocolim} There is also a notion of homotopy colimits of chain complexes, and the reduced chains functor $\chainsfunc$ satisfies that there is a natural equivalence:
        \[
        \hocolim (\chainsfunc \circ F) \to \chainsfunc (\hocolim F).
        \]
    \end{enumerate}
\end{proposition}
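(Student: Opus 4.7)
The plan is to first fix a concrete model for the homotopy colimit and then verify each property in turn; almost all of the work has been done in the references cited in the statement, so the proposal is largely to explain \emph{which} construction to use and which standard argument handles each clause. Following \cite{LLS-Burnside,LLS_tangles}, I would use the bar construction: for a strict functor $F\colon \cC\to\Sp$, take $\hocolim F$ to be the realization of the simplicial spectrum whose $n$-simplices are $\bigvee_{x_0\to\cdots\to x_n} F(x_0)$, and for a homotopy coherent diagram (in the sense of Definition \ref{def:homotopy coherent diagram}) apply the Vogt rectification to reduce to this strict case, as in \cite[Section 4.2]{LLS-Burnside}. Once this model is fixed, each clause is a direct consequence of a standard formal property.

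For \ref{it:hocolim functorial}, a natural transformation in the sense of Definition \ref{def:nat transf} induces a map of bar constructions in the obvious way, and objectwise stable equivalences induce stable equivalences on each simplicial level (after cofibrant replacement), hence on realizations; this is precisely \cite[Proposition 2.10]{LLS_tangles}. For \ref{it:hocolim of product}, the bar construction of a product diagram over $\cC\times\cD$ can be computed as the diagonal of a bisimplicial object whose two iterated realizations recover the two iterated homotopy colimits, giving the Fubini equivalence. For \ref{it:smash preserves hocolim}, the smash product with a fixed (cofibrant) spectrum is a left Quillen functor and commutes with the wedge sums and geometric realization appearing in the bar construction, so the claim follows from the usual argument that left adjoints commute with colimits at the derived level.

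Property \ref{it:cofinal preserves hocolim} is the standard Bousfield--Kan cofinality theorem: given a cofinal $G\colon\cC\to\cD$, the comparison map $\hocolim(F\circ G)\to\hocolim F$ is a levelwise equivalence on bar constructions because each overcategory $G/d$ has contractible nerve. The statement about $\basedcube^m$ versus $\basedcube^{m-k}\times\basedcube^k$ is then an application of Lemma \ref{lem:products of cubes}. Finally, for \ref{it:chains preserve hocolim}, the reduced chains functor $\chainsfunc$ is a left adjoint and sends wedges to direct sums and geometric realization of simplicial spectra to the totalization of a double complex; this matches the bar model for $\hocolim$ of chain complexes, yielding the natural equivalence.

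The main subtlety, and the only step that requires genuine care rather than a citation, is ensuring that the bar construction on a homotopy coherent diagram agrees (up to natural equivalence) with the bar construction on its Vogt rectification, so that the properties phrased for strict functors transport to homotopy coherent diagrams. I would handle this by invoking the fact that Vogt rectification is a Dwyer--Kan equivalence between the category of homotopy coherent diagrams and strict functors on a cofibrant replacement of $\cC$, which is exactly the framework set up in \cite[Section 4]{LLS-Burnside}; the naturality statements in \ref{it:hocolim functorial} and \ref{it:chains preserve hocolim} then require tracking the rectification through the relevant squares, which is the only nontrivial bookkeeping in the proof.
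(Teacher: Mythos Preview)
Your proposal is correct in spirit and substantially more detailed than what the paper itself provides. The paper does not give a proof of this proposition at all: it is stated as a summary of standard properties, with a pointer to \cite[Section 2.5, Proposition 2.10]{LLS_tangles} and \cite[Section 4.2, Definition 4.10]{LLS-Burnside} just before the statement, and the text then moves directly on to the definition of the mapping cone. In other words, the authors treat Proposition~\ref{prop:hocolim properties} as a black box imported from the cited references.

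So the comparison is not ``same approach vs.\ different approach'' but rather ``you supplied an outline where the paper supplied none.'' Your outline (bar construction model, Vogt rectification for coherent diagrams, Fubini via bisimplicial diagonals, left-Quillen arguments for smash and chains, Bousfield--Kan cofinality) is exactly the standard package one would use and is consistent with the framework in the cited LLS papers. There is nothing to correct; if anything, you could shorten your write-up to a single sentence deferring to those references, matching the paper's level of detail.
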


\begin{definition}\label{def:mapping cone}
The \emph{mapping cone} of a map of spectra $X\xrightarrow{f} Y$ is defined to be the homotopy colimit over the following diagram of spectra
   \[
   \Cone(f) \simeq \hocolim\left(
        \begin{tikzcd}
            X \ar[r,"f"] \ar[d] & Y\\
            * & 
        \end{tikzcd}
         \right).
    \]
The mapping cone comes equipped with a cofibration sequence
\begin{equation}\label{eq:cone cofib general}
Y \to \Cone(f) \to \Sigma X
\end{equation}
and $f$ is an equivalence of spectra if and only if $\Cone(f)\simeq *$.
\end{definition}

In parallel with generalizing maps via natural transformations, we have the following proposition for mapping cones arising in that manner.

\begin{proposition}\label{prop:cones of hocolims}
Suppose $\cC\times\cube^1\xrightarrow{\eta}\Sp$ is a natural transformation between two diagrams $\cC\xrightarrow{F}\Sp$ and $\cC\xrightarrow{G}\Sp$.  Denote the induced map on hocolims via item \ref{it:hocolim functorial} of Proposition \ref{prop:hocolim properties} by
\[\hocolim \, F\xrightarrow{\eta_*}\hocolim \, G.\]
Define an extension $\cC\times\basedcube^1\xrightarrow{\eta_+}\Sp$ by setting $\eta_+(x,*)=*$ for all $x\in\cC$.  Then we have
\[\Cone(\eta_*) \simeq \hocolim_{\cC\times\basedcube^1} \eta_+.\]
\end{proposition}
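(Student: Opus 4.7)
The plan is to reduce the claim to the defining formula for the mapping cone by performing the homotopy colimit iteratively, taking $\cC$ first and $\basedcube^1$ second.

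First I would apply property \ref{it:hocolim of product} of Proposition \ref{prop:hocolim properties} to obtain a natural equivalence
\[
\hocolim_{\cC\times\basedcube^1}\eta_+ \;\simeq\; \hocolim_{t\in\basedcube^1}\Bigl(\hocolim_{x\in\cC}\eta_+(x,t)\Bigr).
\]
The inner hocolims are evaluated at the three objects of $\basedcube^1$. At $t=0$ and $t=1$ the restrictions of $\eta_+$ to $\cC\times\{t\}$ are simply $F$ and $G$, so the inner hocolims are $\hocolim F$ and $\hocolim G$. At $t=*$ the functor $\eta_+(-,*)$ is constantly the basepoint spectrum, whose hocolim over $\cC$ is itself $*$ (since smashing any pointed space with $*$ yields $*$, while cofinality considerations or a direct argument both confirm $\hocolim_\cC *\simeq *$).

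Next I would identify the resulting $\basedcube^1$-diagram with the diagram defining $\Cone(\eta_*)$. The edge $0\to 1$ of $\basedcube^1$ contributes, after taking the inner hocolim, the map $\hocolim F\to\hocolim G$ induced by the natural transformation $\eta$; this is by construction $\eta_*$ in the sense of property \ref{it:hocolim functorial}. The edge $0\to *$ contributes the unique map $\hocolim F\to *$. Thus the outer hocolim is indexed on exactly the diagram
\[
\begin{tikzcd}
\hocolim F \ar[r,"\eta_*"] \ar[d] & \hocolim G\\
* &
\end{tikzcd}
\]
which by Definition \ref{def:mapping cone} is $\Cone(\eta_*)$.

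The main subtlety, and the only real content, is verifying that the map obtained between the inner hocolims genuinely agrees with $\eta_*$ as defined through property \ref{it:hocolim functorial}; this is essentially a compatibility check between the two possible orders of performing the iterated hocolim and the two possible descriptions (as a functoriality map versus as the edge map in a $\basedcube^1$-diagram). Once that compatibility is recorded---which follows because both constructions are built from the same edge maps of $\eta_+$ together with the same higher coherences---the equivalence of spectra is immediate from combining the two displayed equivalences above.
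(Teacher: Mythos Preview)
Your argument is correct and is exactly the approach the paper takes: the paper's proof consists of the single sentence ``This follows from \ref{it:hocolim of product},'' and you have simply spelled out that deduction in detail.
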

\begin{proof}
    This follows from \ref{it:hocolim of product}.
\end{proof}

We will use the notation $\gSp$ to denote the category of \emph{$\Z$-graded spectra}, where all of the spectra and maps between them are graded.  All of the constructions and results of this section adapt to this setting readily.

\subsection{Sums and differences of maps}\label{sec:sums-and-differences}

For symmetric spectra $X,Y$, the set of homotopy classes of maps $[X,Y]$ is an abelian group by the following.  For maps $f,g\colon X \to Y$, we may suspend to obtain maps $\Sigma X\to \Sigma Y$.  We also have a map of spectra $\Sigma X \to \Sigma X\vee \Sigma X$ via the pinch map $p\colon S^1\to S^1\vee S^1$.  There is of course also an `unpinching' $u\colon S^1\vee S^1 \to S^1$, sending each circle factor to $S^1$.  Additionally, there is a well-defined map $\Sigma f\vee \Sigma g \colon (\Sigma X)\vee (\Sigma X)\to (\Sigma Y)\vee (\Sigma Y)$.  Combining these maps, we can add $f,g$ as follows:
\[
\Sigma f\vee \Sigma g\in [\Sigma X \vee \Sigma X,\Sigma Y\vee \Sigma Y]\xrightarrow{u_*} [\Sigma X\vee \Sigma X,\Sigma Y]\xrightarrow{p^*} [\Sigma X,\Sigma Y]\xrightarrow{\Sigma^{-1}} [X,Y]
\]
That is, $f+g=\Sigma^{-1}(u \circ (\Sigma f\vee \Sigma g)\circ p)$.

Additive inverses are defined using the map $r\colon S^1\to S^1$ sending $e^{i\theta}\to e^{-i\theta}$.  We leave it to the reader to check that $f+(-f)\in [X,Y]$ is homotopic to zero.

It seems a rather trivial point, but will be consequential for us, that $f+(-f)$ is not canonically zero.  That is, there is not a canonical null-homotopy of the sum $f+(-f)$, or in particular the map $1-1=\mathrm{Id}+(-\mathrm{Id})\colon \mathbb{S}\to\mathbb{S}$.  Indeed, the set of nullhomotopies of $1-1\simeq 0\colon \mathbb{S}\to \mathbb{S}$ is non-canonically identified with $\pi_1(\mathbb{S})=\mathbb{Z}/2$, in that there are essentially two choices for this nullhomotopy, up to homotopy.  We must choose nullhomotopies for this map several times in the simplification of spectral Cooper-Krushkal projectors. 

\subsection{The Spectral Module Category}\label{sec:spectral-modules}
The invariants we study are graded spectral modules over certain graded spectral algebras (or, equally well, spectral categories).  We review the relevant definitions here.

\begin{definition}\label{def:spectral cat}
A \emph{spectral category} is a small category $\cA$ enriched in symmetric spectra.  The morphism spectrum from object $a_i$ to $a_j$ will be denoted $\HOM(a_i,a_j)\in\Sp$.  A spectral category is called \emph{finite} if it has finite object set, and its morphism spectra are weakly equivalent to finite CW spectra.  A spectral category is \emph{graded} if its morphism spectra are $\Z$-graded.  We will often view a finite graded spectral category $\cA$ as a graded \emph{spectral algebra}, abusing notation slightly to write
\begin{equation}\label{eq:spectral cat as algebra}    
\cA:=\bigvee_{a_i,a_j\in \mathrm{Ob}(\cA)} \HOM_{\cA}(a_i,a_j) \in \gSp,
\end{equation}
where morphism composition defines graded \emph{multiplication} $\cA\wedge\cA\rightarrow\cA$.  For two morphisms which are not composable in the spectral category, the product is trivial.  Said differently, the multiplication map
\[
\HOM_{\cA}(a_i,a_j)\wedge \HOM_{\cA}(a_k,a_\ell)\to \HOM_{\cA} (a_i,a_\ell)
\]
is trivial if $a_j\neq a_k$.

The \emph{smash product} of two spectral categories $\cA\wedge\cB$ is the spectral category with $\mathrm{Ob}(\cA\wedge\cB):= \mathrm{Ob}(\cA)\times\mathrm{Ob}(\cB)$ and morphism spaces
\[\HOM_{\cA\wedge\cB}((a,b),(a',b')):= \HOM_{\cA}(a,a') \wedge \HOM_{\cB}(b,b').\]
\end{definition}

\begin{definition}\label{def:spec multimodule}
    A \emph{spectral multimodule $\cM$} over a set of specified spectral categories $\cA_1,\dots,\cA_k$ and $\cB$ is a functor
    \[(\cA_1\wedge\cdots\wedge \cA_k)^{\mathrm{op}}\wedge \cB \xrightarrow{\cM} \Sp.\]
    A \emph{graded} spectral multimodule $\cM$ is one whose source and target categories are all graded, with $\cM$ respecting this grading.
    
    In more detail, for each (ungraded) object $(a_1,\dots,a_k,b)$, we have a graded spectrum
    \[\cM(a_1,\dots,a_k,b) \in \gSp.\]
    Then for each (graded) morphism spectrum
    \[\left(\bigwedge_i \cA_i(a_i',a_i)\right) \bigwedge \cB(b,b') = \HOM_{(\cA_1\wedge\cdots\wedge \cA_k)^{\mathrm{op}}\wedge \cB}((a_1,\dots,a_k,b),(a_1',\dots,a_k',b')),\]
    we have a grading-preserving map
    \[\left(\bigwedge_i \cA_i(a_i',a_i)\right) \bigwedge \cB(b,b') \xrightarrow{\cM} \HOM_{g\Sp}( \cM(a_1,\dots,a_k,b) , \cM(a_1',\dots,a_k',b') ),\]
    which itself is equivalent to the data of a grading-preserving `multi-action map' of (graded) spectra
    \[\left(\bigwedge_i \cA_i(a_i',a_i) \right) \bigwedge \cM(a_1,\dots,a_k,b) \bigwedge \cB(b,b') \rightarrow \cM(a_1',\dots,a_k',b').\]

    Abusing notation slightly as in Equation \eqref{eq:spectral cat as algebra}, we will often view a spectral multimodule as the (graded) spectrum $\cM$ built out of the values of the corresponding functor
    \begin{equation}\label{eq:spectral functor as multim}
    \cM:=\bigvee_{(a_1,\dots,a_k,b)\in\mathrm{Ob}((\cA_1\wedge\cdots\wedge\cA_k)^{\mathrm{op}}\wedge\cB)} \cM(a_1,\dots,a_k,b) \in\gSp,
    \end{equation}
    with commuting actions of the $\cA_1,\dots,\cA_k$ and $\cB$ given by the values on morphism spectra.  
\end{definition}

\begin{definition}\label{def:cat of spec multimod over fixed algs}
Fix spectral categories $\cA_1,\dots,\cA_k$ and $\cB$.  The \emph{derived category of (graded) spectral multimodules over $\cA_1,\dots,\cA_k$ and $\cB$}, which we denote by $\SpmMod(\cA_1,\dots,\cA_k;\cB)$, is defined as follows.
\begin{itemize}
    \item Objects are (graded) spectral multimodules $\cM$ over $\cA_1,\dots,\cA_k$ and $\cB$ as in Definition \ref{def:spec multimodule}.
    \item Using the viewpoint of Equation \eqref{eq:spectral functor as multim}, a natural transformation is a collection of morphisms of (graded) spectra, running over $x\in \mathrm{Ob}(\cA_1\wedge\cdots\wedge \cA_k)^{\mathrm{op}}\wedge \cB $, by $F(x)\colon \cM(x)\xrightarrow{}\mathcal{N}(x)$, so that the following diagram commutes:
    \begin{equation}\label{eq:nat-trans-spec-cat}
    \begin{tikzpicture}
        \node (a0) at (0,0) {$\left(\bigwedge_i \cA_i(a_i',a_i) \right) \bigwedge \cM(a_1,\dots,a_k,b) \bigwedge \cB(b,b')$};
        \node (a1) at (6,0) {$\cM(a_1',\dots,a_k',b')$};
        \node (b0) at (0,-2) {$\left(\bigwedge_i \cA_i(a_i',a_i) \right) \bigwedge \cN(a_1,\dots,a_k,b) \bigwedge \cB(b,b')$};
            \node (b1) at (6,-2) {$\cN(a_1',\dots,a_k',b')$};

        \draw[->] (a0) -- (a1) ;
        \draw[->] (a0) -- (b0) node[pos=.5,anchor=east] {\scriptsize $\mathrm{id}\wedge F(a_1,\ldots, a_k,b)\wedge \mathrm{id}$};
        \draw[->] (a1) -- (b1) node[pos=.5,anchor= west] {\scriptsize $F(a_1',\ldots,a_k',b')$};
        \draw[->] (b0) -- (b1);
    \end{tikzpicture}
    \end{equation}
    A morphism $M\to N$ in the derived category $\SpmMod(\cA_1,\dots,\cA_k)$ is a homotopy class of maps as in (\ref{eq:nat-trans-spec-cat}) between a cofibrant replacement for $M$ and a fibrant replacement for $N$.

\end{itemize}

More generally, we define the (graded) \emph{derived multicategory of spectral multimodules $\SpmMod$} to be the multicategory with:
    \begin{itemize}
        \item Objects are finite, graded spectral categories.
        \item Each multimorphism category $\SpmMod(\cA_1,\dots,\cA_k;\cB)$ is defined as above.
        \item Multicomposition is given by derived smash product over the relevant spectral categories (denoted by $\otimes_{\cA}$).
    \end{itemize}
    Strictly speaking, this multicategory is built by choosing cofibrant replacement functors for each tuple $(\cA_1,\dots,\cA_k,\cB)$, and making a slight further adjustment (see \cite[Section 4.2.1]{LLS_func}) so that the derived smash product is strictly associative, functorial, and unital.  See \cite[Section 4.2.1 and Definition 4.20]{LLS_func}; in \cite{LLS_func}, the multicategory written $\SpmMod$ here is denoted $\mathrm{SBim}$.  We will elide these details here.
\end{definition}

  More generally, for a ring-spectrum $R$, and module spectra $M,N$ over $R$, there is a \emph{morphism spectrum} $\HOM_{R}(M,N)$ of $R$-equivariant morphisms $M\to N$.  If $R$ is as in \ref{eq:spectral cat as algebra} and $M,N$ are (cofibrant/fibrant replacements of the objects) as in (\ref{eq:spectral functor as multim}), then $\pi_0(\HOM_{R}(M,N))$ is the set of homotopy classes as in (\ref{eq:nat-trans-spec-cat}).  We will sometimes replace the module spectra with which we work with cofibrant/fibrant replacements without further comment.

All of the results of Sections \ref{sec:spectra} and \ref{sec:sums-and-differences} have analogues in the setting of spectral multimodules.  In particular, we can consider homotopy colimits of spectral multimodules, and Proposition \ref{prop:hocolim properties} continues to hold, as does the discussion of signs in Section \ref{sec:sums-and-differences}.  

\section{Planar Algebraic Structure of Khovanov Spectra}\label{sec:planar algebraic spectra}

In this section we review the gluing theory from \cite[Section 4]{LLS_func}.  This is very similar in spirit to the framework of \cite{Bar-Natan} and especially Jones' planar algebras from \cite{Jones}, as used in \cite{roberts-planar}.

\subsection{Punctured Tangles and Planar Spectra}

In what follows we will often be manipulating tangles and their corresponding Khovanov spectra using the structure of a planar algebra as in \cite{Jones,Bar-Natan}. We summarize the main ideas here, starting with the tangles themselves.

\begin{definition}[{\cite[Definition 4.1]{LLS_func}}]\label{def:punc tang}
    A \emph{punctured tangle} (called a \emph{diskular tangle} in \cite{LLS_func}) is a (unoriented) tangle diagram $T$ drawn in a disk $D^2\backslash (D_1^{\circ}\cup\ldots \cup D_k^{\circ})$ with some collection $\{D_i\}_{i=1,\ldots,k}$ of subdisks removed, with an even number of endpoints on each boundary component.  In addition, on each boundary component, the tangle endpoints are required to miss the right-most point of the boundary (this induces an ordering on the tangle endpoints).  See Figure \ref{fig:punc tang diag} for an example.  As a matter of convention, if the tangle $T$ has $n$ boundary points on a particular boundary circle of $D^2\backslash (D_1^{\circ}\cup\ldots \cup D_k^{\circ})$, we require that the $n$ boundary points lie at the non-unity $(n+1)$-first roots of unity (this assumption is made so that tangles that have the same number of boundary points may be readily glued).  For a more complete definition, see \cite[Definition 4.1]{LLS_func}.
\end{definition}

\begin{figure}[h!]
\centering \def\svgwidth{6in}
\begingroup%
  \makeatletter%
  \providecommand\color[2][]{%
    \errmessage{(Inkscape) Color is used for the text in Inkscape, but the package 'color.sty' is not loaded}%
    \renewcommand\color[2][]{}%
  }%
  \providecommand\transparent[1]{%
    \errmessage{(Inkscape) Transparency is used (non-zero) for the text in Inkscape, but the package 'transparent.sty' is not loaded}%
    \renewcommand\transparent[1]{}%
  }%
  \providecommand\rotatebox[2]{#2}%
  \newcommand*\fsize{\dimexpr\f@size pt\relax}%
  \newcommand*\lineheight[1]{\fontsize{\fsize}{#1\fsize}\selectfont}%
  \ifx\svgwidth\undefined%
    \setlength{\unitlength}{569.06510961bp}%
    \ifx\svgscale\undefined%
      \relax%
    \else%
      \setlength{\unitlength}{\unitlength * \real{\svgscale}}%
    \fi%
  \else%
    \setlength{\unitlength}{\svgwidth}%
  \fi%
  \global\let\svgwidth\undefined%
  \global\let\svgscale\undefined%
  \makeatother%
  \begin{picture}(1,0.31423623)%
    \lineheight{1}%
    \setlength\tabcolsep{0pt}%
    \put(0,0){\includegraphics[width=\unitlength,page=1]{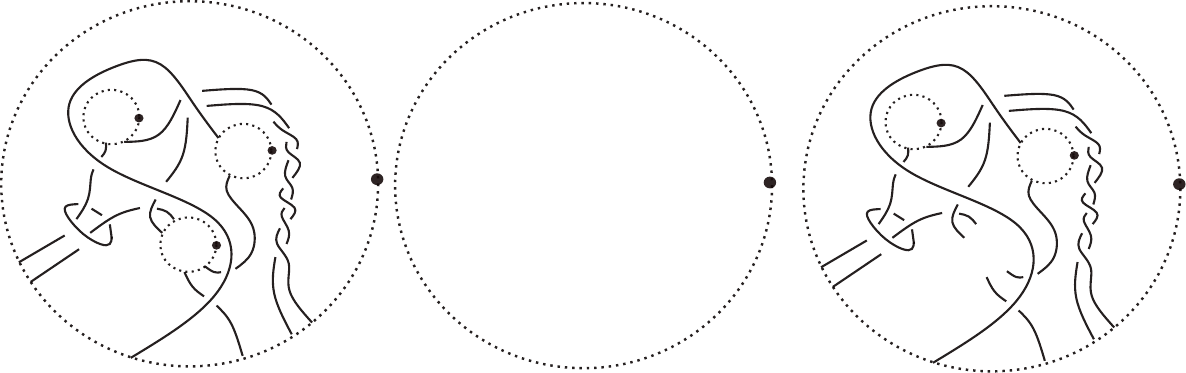}}%
    \put(0.08530229,0.21180946){\color[rgb]{0.03921569,0.01568627,0.01568627}\transparent{0.88888901}\makebox(0,0)[lt]{\lineheight{1.25}\smash{\begin{tabular}[t]{l}1\end{tabular}}}}%
    \put(0.76099285,0.21197732){\color[rgb]{0.03921569,0.01568627,0.01568627}\transparent{0.88888901}\makebox(0,0)[lt]{\lineheight{1.25}\smash{\begin{tabular}[t]{l}1\end{tabular}}}}%
    \put(0.20204993,0.18192203){\color[rgb]{0.03921569,0.01568627,0.01568627}\transparent{0.88888901}\makebox(0,0)[lt]{\lineheight{1.25}\smash{\begin{tabular}[t]{l}2\end{tabular}}}}%
    \put(0.87469563,0.17622518){\color[rgb]{0.03921569,0.01568627,0.01568627}\transparent{0.88888901}\makebox(0,0)[lt]{\lineheight{1.25}\smash{\begin{tabular}[t]{l}2\end{tabular}}}}%
    \put(0.15348291,0.10346761){\color[rgb]{0.03921569,0.01568627,0.01568627}\transparent{0.88888901}\makebox(0,0)[lt]{\lineheight{1.25}\smash{\begin{tabular}[t]{l}3\end{tabular}}}}%
    \put(0,0){\includegraphics[width=\unitlength,page=2]{tangle-multi-2.pdf}}%
  \end{picture}%
\endgroup%

\caption{Left is a diskular $(2,2,4;6)$ tangle $S$, in the middle is a diskular $(;4)$ tangle $T$, and on the right is the composition $T\circ_3 S$, which is a diskular $(2,2;6)$ tangle; compare with Figure 4.1 of \cite{LLS_func}.  }
\label{fig:punc tang diag}
\end{figure}

It is clear that one punctured tangle can be `inserted' into the puncture of a second punctured tangle, assuming that the corresponding boundary points can be matched up.  Indeed, the insertion of a tangle $S$ into the $i$-th missing disk of a tangle $T$ will be denoted $T\circ_i S$ (see Figure \ref{fig:punc tang diag}; note that the tangle $T$ being inserted may also have `holes', i.e. may be a $(i_1,\ldots,i_k;j)$-tangle with $k>0$).  Intuitively then, we would like to assign so-called \emph{planar spectra} to our punctured tangles, together with gluing instructions on how the planar spectum of a tangle obtained by `inserting' is related to the spectra of the constituent tangles.  In the style of a TQFT, we accomplish this by assigning:
\begin{center}
\begin{tabular}{| c | c |}
\hline 
    Points on boundaries & Spectral categories (equivalently, spectral algebras) \\ \hline
    Punctured tangles &  Spectral multimodules over the spectral algebra of the boundary\\ \hline
    Insertion & Smash product\\ \hline
\end{tabular}
\end{center}

We will also require some extra data to encode possible grading shifts once orientations are taken into account.  The precise terminology is summarized below.  See \cite[Section 4]{LLS_func} for more details.

\begin{definition} \label{def:tang multicat}
    The (unoriented) \emph{tangle movie multicategory} $\Tang$ is the multicategory (enriched in categories) defined as follows.

    \begin{itemize}
        \item The objects of $\Tang$ are the non-negative, even integers $n$.
        \item An object in the multimorphism category $\Tang(m_1,\dots, m_k;n):=\Hom_{\Tang}(m_1,\dots, m_k;n)$ is a pair $(T,P)$ where $P$ is an integer (to be used to encode grading shifts), and $T$ is a punctured tangle having the correct number $m_i$ of endpoints on each puncture boundary, and $n$ endpoints on the outer boundary.
        \item A morphism $(T,P)\xrightarrow{\Sigma}(T',P')$ in $\Tang(m_1,\dots m_k;n)$ consists of a cobordism $\Sigma\colon T\rightarrow T'$, arranged as a sequence of elementary cobordisms such that
        \begin{gather*}P'-P=P(\Sigma):= \text{\#(R1/R2 moves creating positive crossings)}\\-\text{\#(R1/R2 moves removing positive crossings)}\end{gather*}
        Some further relations on sequences of elementary cobordisms are imposed in \cite{LLS_func}.
        \item Given two punctured tangles and integers
        \[(T',P')\in\Tang(p_1,\dots,p_\ell;m_i), \,\, (T,P)\in\Tang(m_1,\dots,m_k;n),\]
        the $i^{\text{th}}$ composition
        \[(T',P')\circ_i(T,P):=(T'\circ_i T,P+P')\in\Tang(m_1,\dots, m_{i-1}, p_1, \dots, p_\ell, m_{i+1}, \dots, m_k; n)\]
        is defined by inserting $T'$ into the $i^{\text{th}}$ puncture in $T$ (which has $m_i$ endpoints), resulting in a new punctured tangle $T'\circ_i T$ as illustrated in Figure \ref{fig:punc tang diag}.
    \end{itemize}
    See \cite[Definitions 4.1 and 4.4]{LLS_func} for a more complete definition. 
\end{definition}

Due to the lax nature of the constructions of Khovanov spectra, we must pass to a certain enlargement of $\Tang$ where we allow multimorphism objects to consist of entire trees of composable multimorphisms without actually performing the composition.  
 
\begin{definition}[{\cite[Definition 4.8]{LLS_func}}]\label{def:thickened tang multicat}
The \emph{thickened (unoriented) tangle movie multicategory} $\thTang$ is the canonical thickening of $\Tang$ as summarized below.
\begin{itemize}
\item Objects of $\thTang$ are the same as objects of $\Tang$.
\item A \emph{basic} multimorphism in $\thTang$ is a multimorphism category in $\Tang$.
\item A more general multimorphism object in $\thTang$ is a planar rooted tree, whose edges are labelled by the objects of $\thTang$ and whose vertices are labeled by basic multimorphisms, whose inputs and output correspond to the edge labels (see \cite{LLS_func} for more details).  For such a labeled tree $T$, we can form the composite of all the multimorphisms in $T$ (as a composition in $\Tang$) to obtain a multimorphism object $\circ(T)$ in $\Tang$.
\item For labeled trees $T_1,T_2$ in $\thTang(x_1,\ldots, x_n;y)$, a multimorphism morphism $T_1$ to $T_2$ is a $\Tang$ multimorphism morphism $\circ(T_1)\to \circ(T_2)$. 
\item Multicomposition of general multimorphism objects in $\thTang$ (trees of composable punctured tangles) is induced by stacking labeled trees.  Multi-composition of multimorphism morphisms is similar.
\end{itemize}
See \cite[Definition 4.8 and Remark 4.10]{LLS_func} for more details.
\end{definition}

Moreover, if we consider the multicategories $\Tang$ or $\thTang$ and identify, on the morphism morphisms, isotopic (Rel boundary) cobordisms, we obtain quotient multicategories $\st$ and $\thst$ respectively.

With these definitions in place, together with the definitions in Section \ref{sec:spectral-modules}, we can give a name to the type of object that we wish to consider.

\begin{definition}
A (graded) \emph{planar spectral algebra} is a functor $\thTang\xrightarrow{\cF}\SpmMod$.  We shall also refer to any specific multimodule $\cF(T,P)$, for such an $\cF$, as a \emph{planar spectrum}.
\end{definition}

\subsection{Khovanov spectra}

In \cite[Section 4.3]{LLS_func}, Lawson-Lipshitz-Sarkar constructed a planar Khovanov spectrum $\X(T,P)$ for pairs of punctured tangles $T$ and integers $P$.  When the tangle $T$ is oriented, $P$ can be chosen so that the homology of $\X(T,P)$ recovers the planar algebraic Khovanov invariant $Kh(m(T))$ of the mirror tangle $m(T)$ itself viewed as a multimodule over the relevant arc algebras assigned to the various boundaries.  In this section, we recall a few features and properties of their construction.  

Using the language developed above, we have the following.

\begin{definition}\label{def:Khov spec}
The \emph{(planar) Khovanov spectral algebra} is the functor
\[\X\colon \thTang\rightarrow\SpmMod\]
defined in \cite[Definition 4.24]{LLS_func} and summarized below.
\begin{itemize}
    \item On an object $n\in 2\Z_{> 0}$, $\X(n)$ is the graded spectral Khovanov arc algebra $\sarc_n$ on $n$ (ordered) points defined in \cite[Section 4]{LLS_tangles}.
    \item On a basic multimorphism object (pair of punctured tangle diagram $T$ with integer $P$), $\X(T,P)$ is the Khovanov spectral multimodule defined in \cite[Section 4.3]{LLS_func} which we will summarize further below.
    \item On a more general multimorphism object (tree of composable elementary objects $(T_i,P_i)$), the value of $\X$ is given by the composition (derived smash product) of the values $\X(T_i,P_i)$ in $\SpmMod$.
    \item On a multimorphism morphism (movie of punctured tangle diagrams) $\Sigma$, $\X(\Sigma)$ is a composition of elementary movie multimodule morphisms also defined in \cite[Section 4.3]{LLS_func}.
\end{itemize}
The functor $\X$ descends to define a projective functor $\X\colon \thst\to \SpmMod$ (projective meaning that composition is well-defined only up to a sign).  
\end{definition}

Before giving a summary of the main points of the construction, we highlight some key features of the Khovanov spectra.  Throughout this summary we will be viewing $\X(n)$ and $\X(T,P)$ as graded spectra with extra structures through the lens of Equations \eqref{eq:spectral cat as algebra} and \eqref{eq:spectral functor as multim}.

First, the Khovanov spectra are $\Z$-graded; we refer to this internal grading as the \emph{quantum degree} or $q$-degree.  Thus we may write, for each $n\in2\Z$ and pair $(T,P)$ of punctured tangle and integer,
\begin{equation}\label{eq:gradings overview}
\X(n)=\bigvee_{j=0}^{2n} \X^j(n),\quad \X^j(T,P)=\bigvee_{j\in\Z} \X^j(T,P),
\end{equation}
where $\X^j(n)$ and $\X^j(T,P)$ are the component of $\X(n)$ (resp. $\X(T,P)$) in grading $j$.  Note that each $\X(n)$ is supported in finitely many $q$-degrees.  The same is true for any $\X(T,P)$, where all but finitely many of the summands $\X^j(T,P)$ are a point.  Note also that any action map $\X(n)\wedge \X(T,P)\to\X(T,P)$ is a map of graded spectra, so that it decomposes into components $\X^j(n) \wedge \X^k(T,P) \to \X^{j+k}(T,P)$.  Note that the wedge sum decomposition for $\X(T,P)$ in \eqref{eq:gradings overview} is really as a spectrum, not as a multimodule.  For a graded spectrum $S$, we will also use the notation $S\atq{j}$ to refer to the spectrum of $S$ in grading $j$.  

In addition, the integer $P$ corresponds to a shift in both homological and $q$-degree.  Using the notations $\Sigma$ to denote suspension and $\q$ to denote $q$-degree shift, we have, by definition:
\[\X(T,P) = \q^{-3P}\Sigma^{-P}\X(T,0), \quad \text{or equivalently,} \quad \X(T,P)^{j} = \Sigma^{-P}\X(T,0)^{j+3P}.\]
These gradings allow us to recover the graded algebraic Khovanov invariants as follows.  We write $\Kc^{i,j}(T)$ for the Khovanov complex of a tangle $T$, with $i$ referring to the homological grading, and $j$ the quantum grading.  We write $\Kh^{i,j}(T)$ for the Khovanov homology.

\begin{proposition}\label{prop:X lifts Kh}
Let $T^o$ be an oriented punctured tangle diagram, with $P(T^o)$ positive crossings.  Let $T$ denote the punctured tangle diagram obtained from $T^o$ by forgetting orientation.  Then we define
\[\X(T^o):=\X(T,P(T^o))\]
which satisfies
\begin{equation}\label{eq:X lifts Khov}
\widetilde{H}_i(\X(T^o)\atq[\big]{j})\cong Kh^{i,j}(m(T^o))
\end{equation}
where $m(T^o)$ is the mirror of the oriented tangle diagram $T^o$.
\end{proposition}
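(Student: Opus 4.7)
The proposition is essentially a book-keeping statement: it asserts that the gradings built into the Lawson--Lipshitz--Sarkar spectral multimodule $\X(T,P)$ recover the standard Khovanov complex of the mirror tangle once the shift $P$ is set equal to the positive crossing count $P(T^o)$. My plan is therefore to reduce the statement to the known chain-level result for $\X$ and then track gradings.

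First, I would invoke the defining property of the chains functor $\chainsfunc$ from Section \ref{sec:spectra}, together with the construction of $\X$ from \cite[Section 4.3]{LLS_func}. For a basic multimorphism $(T,0)$ built from an unoriented tangle diagram, the construction in \cite{LLS_tangles,LLS_func} proceeds by building a cube-shaped diagram of Burnside-type data indexed by resolutions of the crossings of $T$, realizing it to spectra, and taking the homotopy colimit (organized as a spectral multimodule over the appropriate $\sarc_n$'s). The key input I would cite is that, after applying $\chainsfunc$ and passing to homology, one recovers precisely the cube-of-resolutions definition of a Khovanov-type chain complex of $T$, with the $q$-grading on $\X^\ast(T,0)$ matching the quantum grading of the corresponding generators up to a shift determined by $(T,0)$. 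This step is the one whose proof lies entirely in \cite{LLS_tangles,LLS_func}; I would not reprove it, just quote it carefully.

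Next, I would translate the unoriented result into the oriented language. For an oriented diagram $T^o$ with $n_+=P(T^o)$ positive crossings and $n_-$ negative crossings, the standard Khovanov complex $\Kc(T^o)$ is obtained from the cube of resolutions by a homological shift of $-n_-$ and a quantum shift of $n_+-2n_-$. By contrast, the spectrum $\X(T,0)$ of the unoriented diagram carries only the ``natural'' grading from the cube. The definition $\X(T^o):=\X(T,P(T^o))$ introduces a suspension by $-P(T^o)$ and a $q$-shift by $-3P(T^o)$; I would simply compare the resulting bigraded Poincar\'e polynomial vertex-by-vertex with the standard Khovanov conventions. A short calculation shows that, with the LLS sign/orientation convention for how crossings are resolved, the homological grading matches the Khovanov convention for the mirror $m(T^o)$, and the $q$-shift $-3P(T^o)$ combines with the cube-grading to give the quantum shift appropriate for $m(T^o)$. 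This is where the mirror appears: it is forced by the particular convention in \cite{LLS_func} that the $0$-resolution and positive crossing are identified in a way opposite to the usual Khovanov picture, so that the unshifted cube naturally computes the mirror.

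Finally, I would verify that the identification is compatible with the multimodule structure: the action of $\sarc_n$ on $\X(T^o)$ induces, under $\chainsfunc$ and the shifts above, the standard $H_n$-action on $\Kc(m(T^o))$. This is immediate from the corresponding statement for $(T,0)$, since the grading shifts are global and do not affect how the arc algebra acts. The main obstacle, such as it is, will be nothing more than careful sign and grading conventions; no new homotopical input is required beyond the results already quoted from \cite{LLS_tangles,LLS_func}.
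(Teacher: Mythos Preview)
Your proposal is correct and matches the paper's treatment: the paper states this proposition without proof, as it is a bookkeeping consequence of the LLS construction and grading conventions from \cite{LLS_tangles,LLS_func}, which is exactly what you outline.
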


\begin{remark}
The grading conventions chosen here match those in \cite{LLS_func}, and are chosen in part to ensure that spectral algebras are built, rather than coalgebras.  Mirrors are then taken in order to recover Khovanov's original invariants which were built via cochain complexes.
\end{remark}

Next, being a functor, $\X$ must naturally respect multicomposition, implying that for any punctured tangles $T_1,T_2$ composable along a boundary with $m_i\in 2\Z$ points, we must have a canonical equivalence
\begin{equation}\label{eq:gluing equiv}
\X(T_1\circ T_2,P_1+P_2) \simeq \X(T_1,P_1)\otimes_{\X(m_i)}\X(T_2,P_2)
\end{equation}
for any choice of integers $P_1,P_2$, where $T_1\circ T_2$ indicates the proper insertion of punctured tangle diagrams.

Furthermore, much like Khovanov's algebraic invariants, the planar Khovanov spectra are well-defined (up to stable equivalence) for tangles, not just tangle diagrams.  In our unoriented tangle language, one way of writing this is as follows.

\begin{theorem}\label{thm:X well-defined}
Let $T\xrightarrow{\Sigma}T'$ be a punctured tangle isotopy (i.e. a corbordism of punctured tangle diagrams consisting of only Reidemeister moves).  Then for any integer $P$, the corresponding morphism $(T,P)\xrightarrow{\Sigma}(T',P+P(\Sigma))$ induces a stable homotopy equivalence
\[\X(\Sigma):\X(T,P)\xrightarrow{\simeq}\X(T',P+P(\Sigma)).\]
In particular, if both $T$ and $T'$ are isotopic rel boundary as oriented punctured tangles, then $\X(T)\simeq\X(T')$ as graded planar spectra.
\end{theorem}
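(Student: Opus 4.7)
The plan is to reduce a general Reidemeister isotopy $\Sigma$ to a sequence of elementary Reidemeister moves inside small subdisks of the punctured tangle diagram, and then verify the equivalence statement one local move at a time. By functoriality of $\X$ (Definition \ref{def:Khov spec}), a composition of stable homotopy equivalences is a stable homotopy equivalence, so this reduction is legitimate.

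Concretely, I would first localize each elementary move $\Sigma_D$ to a small subdisk $D$ in which the Reidemeister move takes place, disjoint from all other crossings and punctures. Using the gluing equivalence of Equation \eqref{eq:gluing equiv}, one obtains a decomposition
\[
\X(T,P) \simeq \X(T\setminus D^{\circ}, P-P_D) \otimes_{\X(m)} \X(T\cap D, P_D),
\]
where $m$ is the number of endpoints of $T$ on $\partial D$ and $P_D$ is chosen so that $P(\Sigma)$ is absorbed entirely on the disk factor. Since $\Sigma$ is the identity outside $D$, the morphism $\X(\Sigma)$ factors through this decomposition as the identity on the first tensor factor smashed over $\X(m)$ with the local movie map $\X(T\cap D, P_D)\to \X(T'\cap D, P_D+P(\Sigma_D))$. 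It then suffices to verify the statement for each of the three local Reidemeister moves, applied to small-disk punctured tangles with no additional crossings.

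For each local move, I would apply Whitehead's theorem (Theorem \ref{thm:Whitehead for spectra}): both source and target are bounded below graded spectra of finite type, so it is enough to show that the local map $\X(\Sigma_D)$ induces an isomorphism on reduced homology. Applying the chains functor $\chainsfunc$ and using Proposition \ref{prop:X lifts Kh}, this induced map is identified with the classical Khovanov Reidemeister chain map (after mirroring), which is well known to be a chain homotopy equivalence. The grading shift by $P(\Sigma_D)$ built into the definition of $\thTang$ precisely absorbs the standard normalization required for R1 and R2 invariance on Khovanov chains.

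The main obstacle is not the reduction itself but rather the identification of $\chainsfunc\circ \X(\Sigma_D)$ with the classical Khovanov Reidemeister chain map. This identification is effectively the content of the construction of the elementary movie multimodule maps in \cite[Section 4.3]{LLS_func}, which were designed precisely so that under $\chainsfunc$ they recover the usual Bar-Natan/Khovanov cobordism maps up to an overall sign. This sign ambiguity is exactly why $\X$ descends only to a projective functor $\thst\to \SpmMod$, and it explains why the equivalence in the theorem is asserted up to stable homotopy rather than canonically.
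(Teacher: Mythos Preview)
The paper does not actually supply a proof of this theorem: it is stated in Section~\ref{sec:planar algebraic spectra} as one of the properties being \emph{recalled} from the construction in \cite[Section~4.3]{LLS_func}, alongside Proposition~\ref{prop:X lifts Kh} and Proposition~\ref{prop:crossing cofib seq}. So there is nothing in the paper to compare your argument against directly.

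That said, your outline is the standard route and is essentially how invariance is established in \cite{LLS_tangles,LLS_func}: localize to a disk using the gluing equivalence, reduce to a single Reidemeister move, and then invoke Whitehead by identifying the induced map on chains with the classical Khovanov/Bar-Natan Reidemeister map. You correctly flag the genuine content as the identification $\chainsfunc(\X(\Sigma_D))\simeq$ (classical Reidemeister chain map), which is exactly what \cite{LLS_func} builds into the definition of the elementary movie maps. One small caution: your appeal to Whitehead (Theorem~\ref{thm:Whitehead for spectra}) should be made in the category of spectral $\X(m)$-modules rather than bare spectra, since you need the equivalence to be one of multimodules before tensoring back up; this is harmless here because the local Reidemeister maps are constructed as module maps and an $\X(m)$-module map that is a weak equivalence on underlying spectra is an equivalence of modules.
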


Finally, any crossing in $T$ gives rise to a cofibration sequence lifting the corresponding short exact sequence of Khovanov complexes.

\begin{proposition}\label{prop:crossing cofib seq}
    Let $T$ be a punctured tangle diagram with a specified crossing, and let $T_i$ ($i=0,1$) be the punctured tangle diagram resulting from taking the $i$-resolution of this crossing.  Then for any integer $P$, we have a cofibration sequence
    \begin{equation}\label{eq:crossing cofib seq}
        \X(T_1,P)\rightarrow \q^{-1}\X(T,P) \rightarrow \q\Sigma\X(T_0,P).
    \end{equation}
    Equivalently, the planar spectrum $\X(T,P)$ can be written as a mapping cone
    \begin{equation}\label{eq:crossing mapping cone}
        \X(T,P) \simeq \Cone\left( \q^2\X(T_0,P) \xrightarrow{\X(s)} \q\X(T_1,P) \right)
    \end{equation}
    where $s$ is the saddle cobordism going from $T_0$ to $T_1$.
\end{proposition}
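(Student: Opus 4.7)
The plan is to reduce to the single-crossing case via the planar algebra structure, and then handle the single crossing by unwinding the definition of $\X$ as a cube-of-resolutions hocolim.

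First, I would isolate the distinguished crossing as its own diskular tangle. Let $c$ denote the $(;4)$-tangle consisting of just that crossing, and let $T'$ be the punctured tangle obtained from $T$ by cutting out a small disk around $c$ and inserting a $4$-point puncture there, so that $T = T' \circ_i c$ for the appropriate $i$. Choosing any splitting $P = P_1 + P_2$, the gluing equivalence \eqref{eq:gluing equiv} yields
\[
\X(T,P) \;\simeq\; \X(T',P_1) \,\otimes_{\X(4)}\, \X(c,P_2),
\]
and likewise $\X(T_v,P) \simeq \X(T',P_1) \otimes_{\X(4)} \X(T_v^c,P_2)$ for $v = 0,1$, where $T_v^c$ denotes the $v$-resolution of $c$.

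Next, I would verify the statement directly for the single crossing $c$. By the construction of \cite{LLS_tangles} and \cite[Section 4.3]{LLS_func}, the spectrum $\X(c,P_2)$ is built as a homotopy colimit over $\basedcube^1$ whose two non-basepoint vertices are assigned $\q^2 \X(T_0^c,P_2)$ and $\q \X(T_1^c,P_2)$, with the edge $0 \to 1$ realized by the saddle map $\X(s)$. By Definition \ref{def:mapping cone}, this hocolim is exactly
\[
\X(c,P_2) \;\simeq\; \Cone\bigl(\q^2 \X(T_0^c,P_2) \xrightarrow{\X(s)} \q\X(T_1^c,P_2)\bigr).
\]

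Finally, I would smash both sides of this identity with $\X(T',P_1)$ over $\X(4)$. By item \ref{it:smash preserves hocolim} of Proposition \ref{prop:hocolim properties}, the derived smash product preserves homotopy colimits, hence commutes with formation of mapping cones; combined with the gluing equivalences above, this yields \eqref{eq:crossing mapping cone}. The cofibration sequence \eqref{eq:crossing cofib seq} then follows from the general cone cofibration sequence \eqref{eq:cone cofib general} after an overall $\q^{-1}$ shift. The main hurdle will be bookkeeping of the $\q$- and $P$-shifts to confirm that the weights $\q^2, \q$ on the two cube vertices match the Lawson-Lipshitz-Sarkar conventions realizing Khovanov homology as in Proposition \ref{prop:X lifts Kh}; once these shifts are tracked, the proposition is essentially formal from the planar multimodule structure of $\X$ together with the definition of a mapping cone as a based-cube hocolim.
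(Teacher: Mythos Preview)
The paper does not actually supply a proof of this proposition; it is stated in Section~3.2 as one of the structural features inherited from the Lawson--Lipshitz--Sarkar construction, and then used freely. Your argument is correct and is essentially the standard justification: isolate the crossing via the gluing equivalence~\eqref{eq:gluing equiv}, observe that the single-crossing spectrum is by construction a $\basedcube^1$-hocolim (i.e.\ a mapping cone), and push the cone through the derived smash product using~\ref{it:smash preserves hocolim}.

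A slightly more direct variant, closer in spirit to how the paper itself manipulates cubes, avoids the gluing step entirely: since $\X(T,P)$ is already built as a hocolim over $\basedcube^C$, one can invoke Lemma~\ref{lem:products of cubes} and item~\ref{it:cofinal preserves hocolim} to factor $\basedcube^C$ as $\basedcube^{C-1}\times\basedcube^1$ along the distinguished crossing, and then Proposition~\ref{prop:cones of hocolims} exhibits the result directly as a mapping cone of the saddle map between the two $(C-1)$-cube hocolims. This is equivalent to what you wrote but sidesteps the need to pass through the spectral arc algebra $\X(4)$ and the bimodule tensor product. Either route is fine; the bookkeeping of the $\q$-shifts you flag is indeed the only real content.
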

We will use both Theorem \ref{thm:X well-defined} and Proposition \ref{prop:crossing cofib seq} extensively throughout our arguments.

We now present a very brief outline of the construction of the spectral multimodules $\X(T,P)$.  A punctured disk with specified boundary point counts $m_1,\dots,m_k,n$ on the various boundaries gives rise to a so-called \emph{tangle-shape multicategory} $\cT_{m_1,\dots,m_k;n}$ (see \cite[Section 4.3]{LLS_func}, based upon \cite[Section 3.2.2]{LLS_tangles}).  In short, the objects of this category correspond to the idempotent decompositions of the various Khovanov arc algebras at each boundary, together with the decomposition of the Khovanov multimodule for \emph{any} flat  tangle that would fit in the given punctured disk.  The multimorphisms are either single element sets or empty sets, depending on whether or not the chosen idempotents are lined up properly for non-zero multiplication.  Then, as described in \cite{LLS_func, LLS_tangles}, the data of a spectral multimodule over the relevant spectral arc algebras is equivalent to a multifunctor $\cT_{m_1,\dots,m_k;n}\xrightarrow{\X_T} \gSp$.

The multifunctor $\X_T$ is built in stages.  Let $C$ be the number of crossings $T$.  Then the tangle $T$ (together with some auxiliary data of \emph{pox}, see \cite{LLS_tangles}) can be used to define a functor
\[\cT_{m_1,\dots,m_k;n}\tilde{\times} \basedcube^C \xrightarrow{F_T} \divcob'\]
where $\cT_{m_1,\dots,m_k;n}\tilde{\times} \basedcube^C$ denotes a type of thickened product of the two categories (see \cite[Section 3.2.4]{LLS_tangles}), and $\divcob'$ denotes (a canonical thickening of) the \emph{divided cobordism category} defined in \cite[Section 3.1]{LLS_tangles}.  The thickened $\divcob'$ admits a further multifunctor $V_{HKK}$ to the graded Burnside multicategory $\gBurn$ based upon the arguments of Hu-Kriz-Kriz \cite{HKK}.  From $\gBurn$, we can reach $\gSp$ via Elmendorff-Mandell $K$-theory \cite{Elmendorf-Mandell} as in \cite{LLS_tangles}, giving us the composition of multifunctors
\[\cT_{m_1,\dots,m_k;n}\tilde{\times} \basedcube^C \xrightarrow{F_T} \divcob' \xrightarrow{V_{HKK}} \gBurn \xrightarrow{K} \gSp.\]

At this stage, a rectification result of Elmendorff-Mandell produces a strict (un-thickened) multifunctor $(\cT_{m_1,\dots,m_k;n}\tilde{\times} \basedcube^C)^0 \rightarrow \gSp$, and we take a hocolim over the cubical maps in $\basedcube^C$ to arrive at a multifunctor
\[\cT_{m_1,\dots,m_k;n} \xrightarrow{\X_T} \gSp\]
as desired.  The final invariant is a shifted version of this, which we may write loosely as
\[\X(T,P) = \q^{\frac{n}{2}+2C-3P}\Sigma^{-P}\X_T.\]

\section{The spectral Temperley-Lieb category}\label{sec:spTL cat}
In this section we define the spectral Temperley-Lieb category and describe some of its properties.  We close the section by defining spectral Cooper-Krushkal projectors.

\subsection{Definition and basic properties}\label{sec:spTL basics}

Often we want to think of tangles within a single, non-punctured disk as having $n$ endpoints each at the `top' and `bottom' boundaries.  Such tangles then have various operations available, such as concatenation by stacking one tangle vertically (in the page) atop the other, and (partial) traces.  Similarly, we can view spectral $\X(2n)=\SpArcAlg$-modules $\cM\in\SpmMod(\varnothing;\X(2n))$ as sitting in a disk with $n$ endpoints each on the top and bottom, and then having these same operations available.  These ideas amount to special cases of the more general planar algebraic framework, which we codify now.  The following definition is a special case of Definition \ref{def:punc tang}, but we make it a separate statement to emphasize that the $(n,n)$-tangles, defined below, will be used for vertical stacking.

\begin{definition} \label{def:TLn tang}
Recall that a \emph{flat} tangle is a tangle diagram with no crossings.  An $(n,n)$-tangle (called an \emph{$n$-diskular tangle} in \cite{LLS_func}) is a punctured tangle $T\in\Tang(\varnothing;2n)$ where $n$ of the boundary points are drawn on the `upper semi-circle' (i.e. viewing the boundary of the disk as $\partial D^2=[0,2\pi]/(0\sim 2\pi)$, these $n$ points lie in $(0,\pi)$), while the remaining $n$ boundary points are drawn on the `lower semi-circle' (i.e. $(\pi,2\pi)$).  A flat $(n,n)$-tangle $\delta$ is also referred to as a \emph{Temperley-Lieb diagram}; the subset of $\Tang(\varnothing;2n)$ consisting of all Temperley-Lieb diagrams will be denoted $\TL_n$.  Such a $\delta\in\TL_n$ has a corresponding spectral $\X(2n)$-module $\X(\delta)\in\SpmMod(\varnothing;\X(2n))$, which we sometimes call a \emph{flat-tangle module}.  In line with Definition \ref{def:punc tang}, the endpoints of the tangle should be at roots of unity, however, we typically do not draw the tangle this way, to  emphasize stacking.  
\end{definition}

Our preferred category will be spectral modules built out of flat tangle modules in the following manner.

\begin{definition}\label{def:spectral TL cat}
Let $\Sp\TL_n$, the \emph{spectral Temperley-Lieb category}, be the full subcategory of $\SpmMod(\varnothing;\X(2n))$ consisting of objects (right) spectral $\X(2n)$-modules constructed in the following way.  If $\delta$ is any flat $(n,n)$-tangle and $f\colon \Sigma^\ell \q^{k}X(\delta)\to \cM_0$ is a map of $\X(2n)$-modules, we say that $\Cone(f)$ is the result of attaching a \emph{cell} $\Sigma^\ell \q^{k}\X(\delta)$ to $\cM_0$ along $f$.  The objects of $\Sp\TL_n$ are the spectra that are homotopy equivalent to any spectrum produced by a (possibly infinite) sequence of attachings as above, where for all $k_0$, there are only finitely many cells attached with $k\leq k_0$. 

	Any such object $\cM$ is referred to as a \emph{Temperley-Lieb spectrum}, and will be represented visually as $\ILmodbox{\cM}$.  We write $\q^k\cM$ for the object $\cM\in \Ob(\Sp\TL_n)$ with grading shifted up by $k$.  
\end{definition}
In particular, note that the Khovanov spectrum of any $(n,n)$-tangle $T$ is a Temperley-Lieb spectrum $\X(T,P)\in\Sp\TL_n$ (for any choice of integer $P$), since $\X(T,P)$ is built as a homotopy colimit (over the finite cube category $\underline{2}^N$) of various flat tangle modules (according to the various resolutions of the crossings of $T$).


\begin{remark}
    The spectral Temperley-Lieb category $\Sp\TL_n$ can be viewed as an analog of the category of spaces that have the homotopy type of a finite CW complex.  The role of ordinary cells is taken by ``cells'' $\Sigma^\ell\q^k\X(\delta)$.  One consequence of the finiteness condition is that the underlying spectra of our $\X(2n)$-modules are finite spectra in each individual quantum grading.
\end{remark}

We note, in particular, that the homotopy colimit of a diagram of \emph{cells} (or wedge sums of cells) $\mathbb{S}^\ell\wedge q^k\X(\delta)$ (where we treat the basepoint $*$ as a cell)  is an object of $\Sp\TL_n$.  Moreover, if $X,Y$ are in $\Sp\TL_n$, and $f\colon X \to Y$ is a map of $\X(2n)$-spectra, then $\Cone(f)$ is in $\Sp\TL_n$.  This can be seen inductively by attaching one cell of $X$ to $Y$ (along $f$) at a time.   

We will write $[X,Y]$ (or $[X,Y]_{\Sp\TL_n}$) for the set of homotopy classes of maps between objects $X,Y$ of $\Sp\TL_n$.  

The composition of multimorphisms in $\Tang$, as respected by the Khovanov functor $\X$, endows the categories $\Sp\TL_n$ with various spectral versions of the operations in the usual Temperley-Lieb category which we list now.  First we set some notation.

\begin{definition}\label{def:id tang and id mod}
    The \emph{identity $(n,n)$-tangle}, denoted $\Itang_n$, is the flat tangle consisting of $n$ strands going straight from top to bottom.  The corresponding \emph{identity spectral module} is $\Imod_n:=\X(\Itang_n,0).$ 
\end{definition}

\begin{definition}\label{def:vert comp}
    \emph{Vertical composition} is the operation \[\Sp\TL_n\times\Sp\TL_n\xrightarrow{\vertcomp}\Sp\TL_n\]
    defined via planar operations (that is, multicomposition) in $\Tang$ as follows.  Let $\cF^v_n\in\Tang(2n,2n;2n)$ denote the flat tangle illustrated in Figure \ref{fig:vertical comp}.  Given $\cM_1,\cM_2\in\Sp\TL_n$, we define $\cM_1\vertcomp\cM_2\in\Sp\TL_n$ via multicomposition with the spectral multimodule $\X(\cF^v_n,0)\in\SpmMod(\X(2n),\X(2n);\X(2n))$ also illustrated in Figure \ref{fig:vertical comp}.  We can write this multicomposition as
    \[\cM_1\vertcomp \cM_2 := (\cM_1,\cM_2) \otimes_{(\X(2n),\X(2n))} \X(\cF^v_n,0).\]
\end{definition}

\begin{remark}
The pair $(\Sp\TL_n,\vertcomp)$ is not quite a monoidal category, but instead can be viewed as a multifunctor from the canonical groupoid enrichment of the associative operad $\mathrm{Assoc}'$ to the category of categories, with $\Imod_n$ playing the role of the identity.
\end{remark}

\begin{remark}
One may also work with the category $\Sp\TL^m_n\subset\SpmMod(\varnothing;\X(m+n))$ with objects built from attaching cells $\Sigma^\ell\q^k\X(\delta)$ for $(m,n)$-tangles $\delta$.  In this setting there is a corresponding notion of vertical composition via stacking
\[\Sp\TL^m_n \times \Sp\TL^n_p \xrightarrow{\vertcomp} \Sp\TL^m_p\]
which we will occasionally make use of.  It will be clear from context what version of $\vertcomp$ is being used in any given situation.  All of the operations below also have similar generalizations to $\Sp\TL_n^m$.
\end{remark}

\begin{figure}
    \[ \cF^v_n:= \begin{tikzpicture}[baseline=-.5ex]
        \emptydisk{E1}{0}{.6}
        \emptydisk{E2}{0}{-.6}
        \node[align=center,fit={(E1 disk)(E2 disk)},draw,circle,xscale=.7,yscale=1.2,dashed] (E1E2) {};
        \draw[thick] (E1 disk.\diskne) to[out=90,in=\disksw] (E1E2.\diskne);
        \draw[thick] (E1 disk.\disknw) to[out=90,in=\diskse] (E1E2.\disknw);
        \draw[thick] (E2 disk.\disksw) to[out=-90,in=\diskne] (E1E2.\disksw);
        \draw[thick] (E2 disk.\diskse) to[out=-90,in=\disknw] (E1E2.\diskse);
        \draw[thick] (E1 disk.\disksw)--(E2 disk.\disknw);
        \draw[thick] (E1 disk.\diskse)--(E2 disk.\diskne);
        \foreach \innerdisk/\ns in {E1 disk/north,E2 disk/south}{
            \nstrandsalongpath{\innerdisk.\ns}{E1E2.\ns}
            }
        \nstrandsalongpath[]{E1 disk.south}{E2 disk.north}
    \end{tikzpicture}
    \,,\quad\quad
    \ILmodbox{\cM_1} \vertcomp \ILmodbox{\cM_2} = \begin{tikzpicture}[baseline=-.5ex]
        \straightmodboxindisk{M1}{0}{.8}{\cM_1}
        \straightmodboxindisk{M2}{0}{-.8}{\cM_2}
        \node[align=center,fit={(M1 disk)(M2 disk)},draw,circle,xscale=.7,yscale=1.1,dashed] (M1M2) {};
        \draw[thick] (M1 disk.\diskne) to[out=90,in=\disksw] (M1M2.\diskne);
        \draw[thick] (M1 disk.\disknw) to[out=90,in=\diskse] (M1M2.\disknw);
        \draw[thick] (M2 disk.\disksw) to[out=-90,in=\diskne] (M1M2.\disksw);
        \draw[thick] (M2 disk.\diskse) to[out=-90,in=\disknw] (M1M2.\diskse);
        \draw[thick] (M1 disk.\disksw)--(M2 disk.\disknw);
        \draw[thick] (M1 disk.\diskse)--(M2 disk.\diskne);
            \foreach \innerdisk/\ns in {M1 disk/north,M2 disk/south}{
            \nstrandsalongpath{\innerdisk.\ns}{M1M2.\ns}
            }
    \end{tikzpicture}
    \]
    \caption{The flat tangle $\cF^v_n\in\Tang(2n,2n;2n)$, together with a visual representation of the vertical composition $\cM_1\vertcomp\cM_2$.  The dots indicate $n$ total strands.}
    \label{fig:vertical comp}
\end{figure}

\begin{definition}\label{def:horiz comp}
    \emph{Horizontal composition} is an operation
    \[\Sp\TL_{n_1}\times \Sp\TL_{n_2} \xrightarrow{\horizcomp}{\Sp\TL_{n_1+n_2}}\]
    defined via planar operations in $\Tang$ as follows.  Let  $\cF^{\sqcup}_{n_1,n_2}\in\Tang(2n_1,2n_2;2(n_1+n_2))$ denote the flat tangle illustrated in Figure \ref{fig:horizontal comp}.  Given $\cM_i\in\Sp\TL_{n_i}$, $i=1,2$, we define $\cM_1\horizcomp\cM_2\in\TL_{n_1+n_2}$ via multicomposition with the spectral multimodule $\X(\cF^{\sqcup}_{n_1,n_2},0)$ also illustrated in Figure \ref{fig:horizontal comp}.  We can write this multicomposition as
    \[\cM_1\horizcomp\cM_2 := (\cM_1,\cM_2) \otimes_{(\X(2n_1),\X(2n_2))} \X(\cF^{\sqcup}_{n_1,n_2},0) .\]
\end{definition}

\begin{figure}
    \[  
    \cF^\sqcup_{n_1,n_2}:=
        \begin{tikzpicture}[baseline=-.5ex]
        \emptydisk{E1}{-.6}{0}
        \emptydisk{E2}{.6}{0}
        \node[fit={(E1 disk)(E2 disk)},draw,circle,yscale=.8,xscale=1.3,dashed] (E1E2) {};
        \foreach \a/\b/\c/\d in {\disknw/90/\diskse/\disknw, \diskne/90/-90/95, \disksw/-90/\diskne/\disksw, \diskse/-90/90/265}
            {
            \draw[thick] (E1 disk.\a) to[out=\b,in=\c] (E1E2.\d);
            }
        \foreach \a/\b/\c/\d in {\disknw/90/-90/85, \diskne/90/\disksw/\diskne, \disksw/-90/90/275, \diskse/-90/\disknw/\diskse}
            {
            \draw[thick] (E2 disk.\a) to[out=\b,in=\c] (E1E2.\d);
            }
        \foreach \pta/\ptb/\num in {E1 disk.north/E1E2.105/n_1, E1 disk.south/E1E2.255/n_1, E2 disk.north/E1E2.75/n_2, E2 disk.south/E1E2.285/n_2}
            {
            \path (\pta)--(\ptb) node[pos=.5,align=center,scale=.7] {\nstrands[\num]};
            }
        \end{tikzpicture}
    \,,\quad\quad
    \ILmodbox{\cM_1}\horizcomp\ILmodbox{\cM_2}=
        \begin{tikzpicture}[baseline=-.5ex]
        \straightmodboxindisk{E1}{-.8}{0}{\cM_1}
        \straightmodboxindisk{E2}{.8}{0}{\cM_2}
        \node[fit={(E1 disk)(E2 disk)},draw,circle,yscale=.8,xscale=1.1,dashed] (E1E2) {};
        \foreach \a/\b/\c/\d in {\disknw/90/\diskse/\disknw, \diskne/90/-90/95, \disksw/-90/\diskne/\disksw, \diskse/-90/90/265}
            {
            \draw[thick] (E1 disk.\a) to[out=\b,in=\c] (E1E2.\d);
            }
        \foreach \a/\b/\c/\d in {\disknw/90/-90/85, \diskne/90/\disksw/\diskne, \disksw/-90/90/275, \diskse/-90/\disknw/\diskse}
            {
            \draw[thick] (E2 disk.\a) to[out=\b,in=\c] (E1E2.\d);
            }
        \foreach \pta/\ptb/\num in {E1 disk.north/E1E2.105/n_1, E1 disk.south/E1E2.255/n_1, E2 disk.north/E1E2.75/n_2, E2 disk.south/E1E2.285/n_2}
            {
            \path (\pta)--(\ptb) node[pos=.5,align=center,scale=.7] {\nstrands[\num]};
            }
        \end{tikzpicture}
    \]
    \caption{The flat tangle $\cF^\sqcup_{n_1,n_2}\in\Tang(2n,2n;2n)$, together with a visual representation of the horizontal composition $\cM_1\horizcomp\cM_2$.  The dots indicate various numbers of total strands.}
    \label{fig:horizontal comp}
\end{figure}

\begin{definition}\label{def:partial trace}
    The \emph{partial trace} is an operation
    \[\Sp\TL_n\xrightarrow{T}\Sp\TL_{n-1}\]
    defined via planar operations in $\Tang$ as follows.  Let $\cF^{T}_n\in\Tang(2n;2(n-1))$ denote the flat tangle illustrated in Figure \ref{fig:partial trace}.  Given $\cM\in\Sp\TL_n$, we define $T(\cM)\in\Sp\TL_{n-1}$ via multicomposition with spectral multimodule $\X(\cF^T_n,0)$ also illustrated in Figure \ref{fig:partial trace}.  We can write this multicomposition as
    \[T(\cM):=\cM \otimes_{\X(2n)} \X(\cF^T_n,0).\]    
    More generally, the \emph{$k$th partial trace} $T^k(\cM)\in\Sp\TL_{n-k}$ (for $k\leq n$) is defined by applying $T$ $k$ times.
\end{definition}

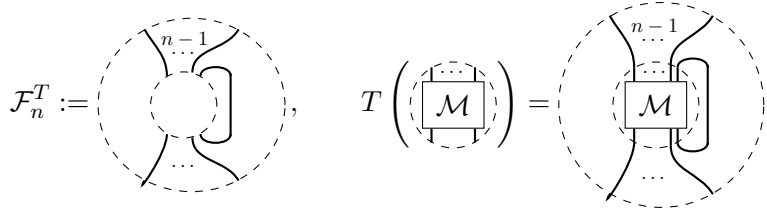
\begin{figure}
    \[
    \cF^T_n:=
        \begin{tikzpicture}[baseline=-.5ex]
        \emptydisk{M1}{0}{0}
        \draw[thick] (M1 disk.\diskne) to[out=90,in=90] ++(.4,0) node[inner sep=0] (newpt1){};
        \draw[thick] (M1 disk.\diskse) to[out=-90,in=-90] ++(.4,0) node[inner sep=0] (newpt2){};
        \draw[thick] (newpt1.north)--(newpt2.south);
        \node[fit={(M1 disk)(newpt1)(newpt2)}, circle, dashed, draw,yscale=1.3,xscale=1.4] (TM1) {};
        \foreach \stpt/\stdir/\endpt/\enddir in {\disknw/90/\disknw/\diskse, \disksw/-90/\disksw/\disksw, 75/90/\diskne/\disksw, 285/-90/\diskse/\disknw}
            {
            \draw[thick] (M1 disk.\stpt) to[out=\stdir,in=\enddir] (TM1.\endpt);
            }
        \nstrandsalongpath[n-1]{M1 disk.100}{TM1.north}
        \nstrandsalongpath[]{M1 disk.260}{TM1.south}
        \end{tikzpicture}
    \,,\quad\quad T\left(\ILmodbox{\cM}\right)=
        \begin{tikzpicture}[baseline=-.5ex]
        \straightmodboxindisk{M1}{0}{0}{\cM}
        \draw[thick] (M1 disk.\diskne) to[out=90,in=90] ++(.4,0) node[inner sep=0] (newpt1){};
        \draw[thick] (M1 disk.\diskse) to[out=-90,in=-90] ++(.4,0) node[inner sep=0] (newpt2){};
        \draw[thick] (newpt1.north)--(newpt2.south);
        \node[fit={(M1 disk)(newpt1)(newpt2)}, circle, dashed, draw,yscale=1.3,xscale=1.3] (TM1) {};
        \foreach \stpt/\stdir/\endpt/\enddir in {\disknw/90/\disknw/\diskse, \disksw/-90/\disksw/\disksw, 70/90/\diskne/\disksw, 290/-90/\diskse/\disknw}
            {
            \draw[thick] (M1 disk.\stpt) to[out=\stdir,in=\enddir] (TM1.\endpt);
            }
        \draw[thick](M1 disk.70)--(M1 disk.70|-M1 box.north);
        \draw[thick](M1 disk.290)--(M1 disk.290|-M1 box.south);
        \nstrandsalongpath[n-1]{M1 disk.100}{TM1.north}
        \nstrandsalongpath[]{M1 disk.260}{TM1.south}
        \end{tikzpicture}
    \]
    \caption{The flat tangle $\cF^T_n\in\Tang(2n;2(n-1))$, together with a visual representation of the partial trace $T(\cM)$.  The dots indicate the various numbers of total strands.}
    \label{fig:partial trace}
\end{figure}

Finally, we have a fundamental adjunction in $\Sp\TL_n$ using traces and horizontal compositions as in \cite[Theorem 2.31]{Hog_polyaction}.

\begin{theorem}\label{thm:adjunction}
Let $\cM\in \Sp\TL_n$ and $\cN\in \Sp\TL_{n+1}$.  Then there are natural homotopy equivalences of morphism spectra:
	\[
	\HOM_{\Sp\TL_{n+1}}(\cM\horizcomp \Imod_1,\cN)\simeq \HOM_{\Sp\TL_{n}}(\cM,\q T(\cN))
	\]
	and
	\[
	\HOM_{\Sp\TL_{n+1}}(\cN,\cM\horizcomp \Imod_1)\simeq \HOM_{\Sp\TL_{n}}(T(\cN),\q \cM).
	\]
\end{theorem}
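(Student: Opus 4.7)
The plan is to prove the first equivalence; the second is analogous after swapping the roles of unit and counit.

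\textbf{Step 1 (Reduction to cells).} Any object of $\Sp\TL_n$ is a homotopy colimit of cells $\Sigma^\ell\q^k\X(\delta)$ for $\delta$ a flat $(n,n)$-tangle (Definition \ref{def:spectral TL cat}), and similarly for $\Sp\TL_{n+1}$. The operations $(-)\horizcomp\Imod_1$ and $T$ are defined by multicomposition with fixed spectral multimodules, so by Proposition \ref{prop:hocolim properties}\ref{it:smash preserves hocolim} they commute with homotopy colimits. The functor $\HOM$ turns homotopy colimits in the first argument into homotopy limits and preserves homotopy limits in the second. Hence both sides of the claimed equivalence are natural in $\cM,\cN$ in a way that reduces the problem to cells $\cM=\X(\delta)$ with $\delta\in\TL_n$ and $\cN=\X(\gamma)$ with $\gamma\in\TL_{n+1}$, the grading shifts being absorbed into $\q^k,\Sigma^\ell$.

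\textbf{Step 2 (Spectral unit).} Next I construct a natural unit map
\[
\eta_\cM\colon \cM\longrightarrow \q\,T(\cM\horizcomp\Imod_1).
\]
Geometrically, the flat tangle obtained by first adjoining a vertical strand on the right and then applying the trace arc closes off the added strand into a disjoint circle adjacent to $\cM$; thus $T(\cM\horizcomp\Imod_1)$ is equivalent to $\cM$ with an extra disjoint circle. Define $\eta_\cM$ as the image under $\X$ of the planar cobordism that is the identity on $\cM$ and births the disjoint circle (labelled by the Frobenius unit). This movie lives in $\thTang$, and Theorem \ref{thm:X well-defined} together with the planar cobordism functoriality of Section \ref{sec:planar algebraic spectra} produces the desired spectrum map. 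For a map $f\colon \cM\horizcomp\Imod_1\to\cN$, set $\widehat{f}:=\q T(f)\circ \eta_\cM$; this yields a morphism of $\HOM$-spectra
\[
\Phi\colon \HOM_{\Sp\TL_{n+1}}(\cM\horizcomp\Imod_1,\cN)\longrightarrow \HOM_{\Sp\TL_n}(\cM,\q T(\cN)).
\]

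\textbf{Step 3 (Equivalence via Whitehead).} To prove $\Phi$ is an equivalence, I will apply Whitehead's theorem \ref{thm:Whitehead for spectra} (the relevant $\HOM$-spectra are bounded below since objects of $\Sp\TL$ have cells of bounded-below $q$-degree). By Proposition \ref{prop:hocolim properties}\ref{it:chains preserve hocolim} and the compatibility of $\X$ with Khovanov's chain-level cobordism maps, $\chainsfunc(\eta_\cM)$ agrees with the chain-level unit employed in \cite[Theorem 2.31]{Hog_polyaction}. Therefore $\chainsfunc(\Phi)$ recovers Hogancamp's classical adjunction isomorphism, and Whitehead's theorem upgrades this quasi-isomorphism to a spectrum-level equivalence.

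\textbf{Main obstacle.} The delicate point is Step 2: verifying that $\eta_\cM$ and the induced $\Phi$ are genuinely natural maps of $\HOM$-spectra rather than merely homotopy classes defined up to sign. Because $\X$ is only projectively functorial on cobordisms (Definition \ref{def:Khov spec}), this requires a sign-coherent lift of the birth movie representing $\eta_\cM$ to an actual morphism in the multicategorical framework. This coherence is precisely what the planar algebraic setup of \cite{LLS_func}, reviewed in Section \ref{sec:planar algebraic spectra}, provides; once one works strictly within $\thTang$ and records the data of the birth cobordism as a multimorphism, naturality of $\Phi$ becomes a formal consequence of the functoriality established there.
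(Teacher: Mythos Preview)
Your overall strategy---construct the unit $\eta_\cM$ via a birth cobordism, form $\Phi$, then verify it is an equivalence---is reasonable in spirit, but the verification in Step~3 has a genuine gap, and the paper's route is both simpler and avoids it.

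The paper does not reduce to cells or invoke Whitehead. Instead it constructs the same map $\phi$ you describe (via the birth cobordism) together with an explicit inverse $\psi$ built from a \emph{saddle} cobordism: given $g\colon \cM\to \q T(\cN)$, one adjoins a vertical strand to the source and then merges it back into the traced strand of $\cN$ via a saddle. That $\psi\circ\phi\simeq\Id$ and $\phi\circ\psi\simeq\Id$ follows from two elementary planar facts: the birth and saddle occur in disjoint subdisks, so they commute past $f$ (respectively $g$); and the composite birth-then-saddle is isotopic to the identity cobordism. No cell reduction, no Whitehead, and no appeal to the chain-level result are needed.

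Your gap is in Step~3. You assert that $\chainsfunc(\Phi)$ recovers Hogancamp's classical adjunction isomorphism, but $\Phi$ is a map of morphism \emph{spectra}, so $\chainsfunc(\Phi)$ is a map between the chain complexes $\chainsfunc\bigl(\HOM_{\Sp\TL_{n+1}}(\cdots)\bigr)$ and $\chainsfunc\bigl(\HOM_{\Sp\TL_n}(\cdots)\bigr)$. Hogancamp's map, by contrast, goes between the chain-level morphism complexes $\RHom_{\TL_{n+1}}(\cdots)$ and $\RHom_{\TL_n}(\cdots)$. These are not the same object: the chains functor does not automatically commute with internal $\HOM$. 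To close the gap you would need a natural equivalence $\chainsfunc\bigl(\HOM_{\sarc_{2m}}(A,B)\bigr)\simeq\RHom_{H_{2m}}(\chainsfunc A,\chainsfunc B)$ for the relevant modules---this does hold when $A$ is dualizable, which flat-tangle cells are, but you have neither stated nor invoked this. Even granting it, you must still check that the map induced by $\Phi$ under this identification agrees with Hogancamp's map, which is precisely the compatibility the paper's direct argument delivers (see the Remark immediately following the theorem).

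In short: construct the counit $\psi$ explicitly as a saddle map and verify the triangle identities geometrically. That is both the paper's method and the shortest route; your Whitehead detour can likely be made to work, but it requires extra input you have not supplied.
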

\begin{proof}
We prove the first equivalence only; the second is analogous.  The proof is visual in nature, making use of the planar algebraic structure of $\Sp\TL_n$ within $\SpmMod$ heavily.  As a first example of such proofs in this paper, we will provide some details which will be glossed over in later arguments.

We fix Temperley-Lieb spectra $\ILmodbox{\cM}\in\Sp\TL_n$ and $\ILmodbox{\cN}\in\Sp\TL_{n+1}$, and seek to define maps
\begin{gather*}
    \HOM_{\Sp\TL_{n+1}}(\cM\horizcomp \Imod_1,\cN)\xrightarrow{\phi} \HOM_{\Sp\TL_{n}}(\cM,\q T(\cN)),\\
    \HOM_{\Sp\TL_{n}}(\cM,\q T(\cN))\xrightarrow{\psi}\HOM_{\Sp\TL_{n+1}}(\cM\horizcomp \Imod_1,\cN).
\end{gather*}
To define $\phi$, we fix a map 
\[
\left(
    \begin{tikzpicture}[baseline=-.5ex]
    \node(M) at (0,0) {$\cM$};
    \node[fit=(M),draw,rectangle] (M box) {};
    \node[inner sep=0](pt1) at ($(M box.north east) +(.2,0)$){};
    \node[inner sep=0](pt2) at ($(M box.south east) +(.2,0)$){};
    \draw[thick] (pt1.north)--(pt2.south);
    \node[fit={(M box)(pt1)(pt2)},draw,circle,dashed] (M disk){};
    \foreach \stpt/\stdir/\endpt/\enddir in
            {M box.\disknw/90/M disk.\disknw/\diskse,
            M box.\diskne/90/M disk.\diskne/\disksw,
            pt1.north/90/M disk.50/-90,
            M box.\disksw/-90/M disk.\disksw/\diskne,
            M box.\diskse/-90/M disk.\diskse/\disknw,
            pt2.south/-90/M disk.310/90}
        {
        \draw[thick] (\stpt) to[out=\stdir,in=\enddir] (\endpt);
        }
    \nstrandsalongpath{M box.north}{M disk.north}
    \nstrandsalongpath[]{M box.south}{M disk.south}
    \end{tikzpicture}
\overset{f}{\longrightarrow}
    \begin{tikzpicture}[baseline=-.5ex]
    \modboxindisk[1.3]{N}{0}{0}{\cN}{n+1}
    \end{tikzpicture}
\right)
\in\HOM_{\Sp\TL_{n+1}}(\cM\horizcomp \Imod_1,\cN)
\]
We define $\phi(f)\in\HOM_{\Sp\TL_{n}}(\cM,qT(\cN))$ to be the following composition:
\begin{equation}\label{eq:adj-pic}
    \begin{tikzpicture}[baseline=-.5ex]
    \modboxindisk{M}{0}{0}{\cM}{n}
    \end{tikzpicture}    
\xrightarrow{\Imap\otimes\X\left(\ILbirth\right)}
    \begin{tikzpicture}[baseline=-.5ex]
    \modbox{M}{0}{0}{\q\cM}
    \node[fit=(M box),draw,dashed,circle]{};
    \emptydisk{E}{1.5}{0}
    \draw[thick] (E) circle(.3);
    \node[fit={(M disk)(E disk)},draw,dashed,circle,yscale=.7,xscale=.9] (O disk){};
    \strandsboxtodisk{M}{O}
    \nstrandsalongpath{M box.north}{M disk.85}
    \nstrandsalongpath[]{M box.south}{M disk.275}

    \end{tikzpicture}
\simeq
    \begin{tikzpicture}[baseline=-.5ex]
    \node (M) at (0,0){$\q\cM$};
    \node[fit=(M),draw,rectangle] (M box) {};
    \node[inner sep=0] at ($(M box.north east)+(.2,0)$) (NWpt){};
    \node[inner sep=0] at ($(M box.north east)+(1,0)$) (NEpt){};
    \node[inner sep=0] at ($(M box.south east)+(.2,0)$) (SWpt){};
    \node[inner sep=0] at ($(M box.south east)+(1,0)$) (SEpt){};
    \node[fit={(M box)(NWpt)(SWpt)},draw,dashed,circle,yscale=.8] (TM disk){};
    \draw[thick] (SWpt.south)--(NWpt.north) to[out=90,in=90] (NEpt.north) -- (SEpt.south) to[out=-90,in=-90] (SWpt.south);
    \node[fit={(TM disk)(NEpt)(SEpt)},draw,dashed,circle,xscale=.9,yscale=.85](O disk){};
    \strandsboxtodisk{M}{O}
    \nstrandsalongpath{M disk.north}{O disk.85}
    \nstrandsalongpath[]{M disk.south}{O disk.275}
    \end{tikzpicture}
\xrightarrow{f\otimes \Imap}
    \begin{tikzpicture}[baseline=-.5ex]
    \node (N) at (0,0){$\q\cN$};
    \node[fit=(N),draw,rectangle] (N box) {};
    \node[fit=(N box),draw,dashed,circle,scale=.9] (N disk) {};
    \node[inner sep=0] at ($(N box.north east)+(.4,0)$) (NEpt){};
    \node[inner sep=0] at ($(N box.south east)+(.4,0)$) (SEpt){};
    \node[fit={(N box)(NEpt)(SEpt)},draw,dashed,circle,scale=1.1] (O disk){};
    \strandsboxtodisk{N}{O}
    \foreach \stpt/\stdir/\endpt/\enddir in
            {N box.50/90/NEpt.north/90,
            N box.310/-90/SEpt.south/-90}
        {
        \draw[thick] (\stpt) to[out=\stdir,in=\enddir] (\endpt);
        }
        \draw[thick] (SEpt.south)--(NEpt.north);
    \nstrandsalongpath{N disk.north}{O disk.85}
    \nstrandsalongpath[]{N disk.south}{O disk.275}
    \end{tikzpicture}.
\end{equation}
Really, to define a morphism 
\[\phi\colon \HOM_{\Sp\TL_{n+1}}(\cM\horizcomp\Imod,\cN)\to \HOM_{\Sp\TL_n}(\cM,qT(\cN))
\]
of spectra, we should define more than the action of $\phi$ on particular morphisms in $\HOM_{\Sp\TL_{n+1}}(\cM\horizcomp\Imod,\cN)$, however, the definition as in (\ref{eq:adj-pic}) can be translated to a morphism between the $\HOM$ spectra without any complication; we prefer the description in (\ref{eq:adj-pic}) as it makes the idea of the construction more apparent.  The first map is the image of the birth cobordism $\ILbirth$ under the Khovanov spectrum functor $\X$.  The planar algebraic structure allows our maps $\X(\ILbirth)$ and $f$ to be viewed locally within any given subdisk of the overall disk, which we may adjust at our leisure as in the middle equivalence.  The initial map also implicitly uses the fact that $\cM\simeq\cM\horizcomp\X(\emptyset,0)$ since $\X(\emptyset,0)$ is just the standard sphere spectrum.  The grading shifts are included within the boxes to avoid clutter, but in fact this is equivalent to shifting the entire module by $q$.

Similarly, to define $\psi$ we fix a map
\[
\left(
    \begin{tikzpicture}[baseline=-.5ex]
    \modboxindisk{M}{0}{0}{\cM}{n}
    \end{tikzpicture}
\overset{g}{\longrightarrow}
    \begin{tikzpicture}[baseline=-.5ex]
    \node (N) at (0,0){$\q\cN$};
    \node[fit=(N),draw,rectangle] (N box) {};
    \node[inner sep=0] at ($(N box.north east)+(.4,0)$) (NEpt){};
    \node[inner sep=0] at ($(N box.south east)+(.4,0)$) (SEpt){};
    \node[fit={(N box)(NEpt)(SEpt)},draw,dashed,circle] (O disk){};
    \strandsboxtodisk{N}{O}
    \foreach \stpt/\stdir/\endpt/\enddir in
            {N box.50/90/NEpt.north/90,
            N box.310/-90/SEpt.south/-90}
        {
        \draw[thick] (\stpt) to[out=\stdir,in=\enddir] (\endpt);
        }
        \draw[thick] (SEpt.south)--(NEpt.north);
    \nstrandsalongpath{N box.north}{O disk.north}
    \nstrandsalongpath[]{N box.south}{O disk.south}
    \end{tikzpicture}
\right)
\in\HOM_{\Sp\TL_n}(\cM,qT(\cN))
\]
and define $\psi(g)\in\HOM_{\Sp\TL_{n+1}}(\cM\horizcomp \Imod_1,\cN)$ to be the following composition:
\[
    \begin{tikzpicture}[baseline=-.5ex]
    \modbox{M}{0}{0}{\cM}
    \node[fit=(M box),draw,dashed,circle,xscale=.9,yscale=.7] (M disk){};
    \node[inner sep=0] at ($(M disk.east)+(.2,.5)$) (NEpt){};
    \node[inner sep=0] at ($(M disk.east)+(.2,-.5)$) (SEpt){};
    \node[fit={(M disk)(NEpt)(SEpt)},draw,dashed,circle,scale=.9] (O disk){};
    \strandsboxtodisk{M}{O}
    \foreach \stpt/\stdir/\endpt/\enddir in
            {O disk.40/-90/O disk.320/90}
        {
        \draw[thick] (\stpt) to[out=\stdir,in=\enddir] (\endpt);
        }
    \nstrandsalongpath{M box.north}{O disk.north}
    \nstrandsalongpath[]{M box.south}{O disk.south}
    \end{tikzpicture}
\xrightarrow{g\otimes \Imap}
    \begin{tikzpicture}[baseline=-.5ex]
    \modbox{N}{0}{0}{\q\cN}
    \node[inner sep=0] at ($(N box.north east)+(.4,0)$) (NEpt){};
    \node[inner sep=0] at ($(N box.south east)+(.4,0)$) (SEpt){};
    \node[fit={(N box)(NEpt)(SEpt)},draw,dashed,circle,xscale=.9] (I disk){};
    \nstrandsalongpath{N box.north}{O disk.north}
    \nstrandsalongpath[]{N box.south}{O disk.south}
    \node[inner sep=0] at ($(NEpt)+(.4,0)$) (NEEpt){};
    \node[inner sep=0] at ($(SEpt)+(.4,0)$) (SEEpt){};
    \node[fit={(I disk)(NEEpt)(SEEpt)},draw,dashed,circle,scale=.8] (O disk){};
    \strandsboxtodisk{N}{O}
    \foreach \stpt/\stdir/\endpt/\enddir in
            {N box.50/90/NEpt.north/90,
            N box.310/-90/SEpt.south/-90,
            O disk.40/-90/O disk.320/90}
        {
        \draw[thick] (\stpt) to[out=\stdir,in=\enddir] (\endpt);
        }
    \draw[thick] (SEpt.south)--(NEpt.north);
    \end{tikzpicture}
\simeq
    \begin{tikzpicture}[baseline=-.5ex]
    \modbox{N}{0}{0}{\q\cN}
    \node[inner sep=0] at ($(N box.north east)+(.4,-.2)$) (NEpt){};
    \node[inner sep=0] at ($(N box.south east)+(.4,.2)$) (SEpt){};
    \node[inner sep=0] at ($(NEpt)+(.4,0)$) (NEEpt){};
    \node[inner sep=0] at ($(SEpt)+(.4,0)$) (SEEpt){};
    \node[fit={(NEpt)(NEEpt)(SEpt)(SEEpt)},draw,dashed,circle,scale=.7] (I disk){};
    \node[fit={(I disk)(N box)},draw,dashed,circle] (O disk){};
    \strandsboxtodisk{N}{O}
        \foreach \stpt/\stdir/\endpt/\enddir in
            {N box.50/90/NEpt.north/90,
            N box.310/-90/SEpt.south/-90,
            O disk.40/-90/NEEpt.north/90,
            O disk.320/90/SEEpt.south/-90,
            NEpt.north/-90/SEpt.south/90,
            NEEpt.north/-90/SEEpt.south/90}
        {
        \draw[thick] (\stpt) to[out=\stdir,in=\enddir] (\endpt);
        }
    \nstrandsalongpath{N box.north}{O disk.95}
    \nstrandsalongpath[]{N box.south}{O disk.265}
    \end{tikzpicture}
\xrightarrow{\Imap\otimes\X\left(\ILhorizsaddle\right)}
    \begin{tikzpicture}[baseline=-.5ex]
    \modbox{N}{0}{0}{\cN}
    \node[inner sep=0] at ($(N box.north east)+(.4,-.2)$) (NEpt){};
    \node[inner sep=0] at ($(N box.south east)+(.4,.2)$) (SEpt){};
    \node[inner sep=0] at ($(NEpt)+(.4,0)$) (NEEpt){};
    \node[inner sep=0] at ($(SEpt)+(.4,0)$) (SEEpt){};
    \node[fit={(NEpt)(NEEpt)(SEpt)(SEEpt)},draw,dashed,circle,scale=.7] (I disk){};
    \node[fit={(I disk)(N box)},draw,dashed,circle] (O disk){};
    \strandsboxtodisk{N}{O}
        \foreach \stpt/\stdir/\endpt/\enddir in
            {N box.50/90/NEpt.north/90,
            N box.310/-90/SEpt.south/-90,
            O disk.40/-90/NEEpt.north/90,
            O disk.320/90/SEEpt.south/-90,
            NEpt.north/-90/NEEpt.south/-90,
            SEpt.north/90/SEEpt.south/90}
        {
        \draw[thick] (\stpt) to[out=\stdir,in=\enddir] (\endpt);
        }
    \nstrandsalongpath{N box.north}{O disk.95}
    \nstrandsalongpath[]{N box.south}{O disk.265}
    \end{tikzpicture}
\]
where $\X\left(\ILhorizsaddle\right)$ indicates the image of a saddle cobordism under the Khovanov spectrum functor.

In this way, given $f\in\HOM_{\Sp\TL_{n+1}}(\cM\horizcomp \Imod_1,\cN)$, we see that $\psi\circ\phi(f)$ is the composition
\[
    \begin{tikzpicture}[baseline=-.5ex]
    \modbox{M}{0}{0}{\cM}
    \emptydisk[.8]{E}{1}{0}
    \node[inner sep=0] at ($(E disk.north east)+(.4,-.2)$) (NEpt){};
    \node[inner sep=0] at ($(E disk.south east)+(.4,.2)$) (SEpt){};
    \node[fit={(M box)(NEpt)(SEpt)},draw,dashed,circle,scale=.9] (O disk){};
    \strandsboxtodisk{M}{O}
    \draw[thick] (O disk.50) to[out=-90,in=90] (NEpt.north) -- (SEpt.south) to[out=-90,in=90] (O disk.310);
    \nstrandsalongpath{M box.north}{O disk.95}
    \nstrandsalongpath[]{M box.south}{O disk.265}
    \end{tikzpicture}
\xrightarrow{\X\left(\ILbirth\right)}
    \begin{tikzpicture}[baseline=-.5ex]
    \modbox{M}{0}{0}{\q\cM}
        \foreach \ptn/\stpt/\xoffset/\yoffset in 
            {NEpt/north east/.1/-.2,
            NEEpt/north east/.5/-.2,
            NEEEpt/north east/.6/-.2,
            SEpt/south east/.1/.2,
            SEEpt/south east/.5/.2,
            SEEEpt/south east/.6/.2}
        {
        \node[inner sep=0] at ($(M box.\stpt) + (\xoffset,\yoffset)$) (\ptn){};
        }
    \node[fit={(M box)(NEpt)(SEpt)},draw,dashed,circle,xscale=.85,yscale=.7] (I disk){};
    \node[fit={(M box)(NEEEpt)(SEEEpt)},draw,dashed,circle] (O disk){};
    \strandsboxtodisk{M}{O}
        \foreach \stpt/\outdir/\endpt/\enddir in
            {NEpt.north/90/NEEpt.north/90,
            SEpt.south/-90/SEEpt.south/-90,
            NEEEpt.north/90/O disk.50/-90,
            SEEEpt.south/-90/O disk.310/90,
            NEpt.north/-90/SEpt.south/90,
            NEEpt.north/-90/SEEpt.south/90,
            NEEEpt.north/-90/SEEEpt.south/90}
        {\draw[thick] (\stpt) to[out=\outdir,in=\enddir] (\endpt);}
        \nstrandsalongpath{I disk.north}{O disk.north}
        \nstrandsalongpath[]{I disk.south}{O disk.south}
    \end{tikzpicture}
\xrightarrow{f}
    \begin{tikzpicture}[baseline=-.5ex]
    \modbox{N}{0}{0}{\q\cN}
        \foreach \ptn/\stpt/\xoffset/\yoffset in 
            {NEpt/north east/.3/-.1,
            NEEpt/north east/.5/-.2,
            SEpt/south east/.3/.1,
            SEEpt/south east/.5/.2}
        {
        \node[inner sep=0] at ($(N box.\stpt) + (\xoffset,\yoffset)$) (\ptn){};
        }
    \node[fit={(NEpt)(SEpt)(NEEpt)(SEEpt)},draw,dashed,circle,scale=.5] (I disk){};
    \node[fit={(M box)(NEEpt)(SEEpt)},draw,dashed,circle] (O disk){};
    \strandsboxtodisk{N}{O}
        \foreach \stpt/\outdir/\endpt/\enddir in
            {N box.50/90/NEpt.north/90,
            N box.310/-90/SEpt.south/-90,
            NEpt.north/-90/SEpt.south/90,
            NEEpt.north/-90/SEEpt.south/90,
            NEEpt.north/90/O disk.50/-90,
            SEEpt.south/-90/O disk.310/90}
        {\draw[thick] (\stpt) to[out=\outdir,in=\enddir] (\endpt);}
    \nstrandsalongpath{N box.north}{O disk.95}
    \nstrandsalongpath[]{N box.south}{O disk.265}
    \end{tikzpicture}
\xrightarrow{\X\left(\ILhorizsaddle\right)}
    \begin{tikzpicture}[baseline=-.5ex]
    \modbox{N}{0}{0}{\cN}
        \foreach \ptn/\stpt/\xoffset/\yoffset in 
            {NEpt/north east/.3/-.2,
            NEEpt/north east/.5/-.2,
            SEpt/south east/.3/.2,
            SEEpt/south east/.5/.2}
        {
        \node[inner sep=0] at ($(N box.\stpt) + (\xoffset,\yoffset)$) (\ptn){};
        }
    \node[fit={(M box)(NEEpt)(SEEpt)},draw,dashed,circle] (O disk){};
    \strandsboxtodisk{N}{O}
        \foreach \stpt/\outdir/\endpt/\enddir in
            {N box.50/90/NEpt.north/90,
            N box.310/-90/SEpt.south/-90,
            NEpt.north/-90/NEEpt.north/-90,
            SEpt.south/90/SEEpt.south/90,
            NEEpt.north/90/O disk.50/-90,
            SEEpt.south/-90/O disk.310/90}
        {\draw[thick] (\stpt) to[out=\outdir,in=\enddir] (\endpt);}
    \nstrandsalongpath{N box.north}{O disk.95}
    \nstrandsalongpath[]{N box.south}{O disk.265}
    \end{tikzpicture}
\]
where we have suppressed the various tensor products with identity maps from the notations.  In each picture, the inner dashed disk indicates the location where the upcoming map is taking place.  The key point is then to notice that $f$ and $\X\left(\ILhorizsaddle\right)$ are being applied in \emph{disjoint disks}, so that they must commute.  Thus the map $\psi\circ\phi(f)$ above which we may write as $\X\left(\ILhorizsaddle\right) \circ f \circ \X\left(\ILbirth\right)$ is equivalent to the composition $f \circ \X\left(\ILhorizsaddle\right) \circ \X\left(\ILbirth\right)$ which itself is equivalent to $f$ since the birth and saddle cobordism is equivalent to an identity cobordism.

Similar reasoning shows that, for any $g\in\HOM_{\Sp\TL_n}(\cM,\q T(\cN))$, we have
\[\phi\circ\psi(g) = \X\left(\ILhorizsaddle\right) \circ g \circ \X\left(\ILbirth\right) = \X\left(\ILhorizsaddle\right) \circ \X\left(\ILbirth\right) \circ g = g\]
as required.
\end{proof}

The chains functor $\chainsfunc\colon \Sp\to \Kom$ also defines a functor from $\Sp\TL_{n}$ to $\TL_n=\Kom(H_{2n}-\mathrm{pmod})$ where $H_{2n}-\mathrm{pmod}$ is the category of projective modules over Khovanov's arc algebra on $2n$ points, and $\Kom(\cdot)$ indicates the category of chain complexes with the same finiteness conditions as in Definition \ref{def:spectral TL cat} (our $\TL_n$ corresponds to $\Kom^+(n)$ in Hogancamp's notation).  It is an important result, \cite[Proposition 4.2]{LLS_tangles}, that, for a tangle $T$, $\chainsfunc(\X(T))$ is quasi-isomorphic to Khovanov's tangle invariant of $T$ (compatibly with gluing \cite{LLS_func}), which we will frequently use without mention.

We note that the functors $(-\horizcomp \Imod_1)\colon \Sp\TL_{n}\to \Sp\TL_{n+1}$ and $T\colon \Sp\TL_{n+1}\to \Sp\TL_n$ are compatible with the chains functor $\mathcal{C}_h$; i.e. there are equivalences $\mathcal{C}_h(-\horizcomp \Imod_1)\to \mathcal{C}_h(-)\horizcomp\mathcal{C}_h(\Imod_1)$ and similarly for $T$.  We then have a commutative diagram, on $\pi_0$:
    \begin{equation}\label{eq:chains-adjunction}
    \begin{tikzpicture}[baseline=(current bounding box.center)]
        \node (a0) at (0,0) {$[\cM\horizcomp \Imod_1,\cN]_{\Sp\TL_{n+1}}$};
        \node (a1) at (6,0) {$[\cM,q T(\cN)]_{\Sp\TL_{n}}$};
       \node (b0) at (0,-2) {$[\mathcal{C}_h(\cM\horizcomp \Imod_1),\mathcal{C}_h\cN]_{\TL_{n+1}}$};
        \node (b1) at (6,-2) {$[\mathcal{C}_h\cM,\mathcal{C}_h(q T(\cN))]_{\TL_{n}}$};

        \draw[->] (a0) -- (a1);
        \draw[->] (b0) -- (b1);
        \draw[->] (a0) -- (b0) node[midway,anchor=east] {\scriptsize $\mathcal{C}_h$};
        \draw[->] (a1) -- (b1) node[midway,anchor=west] {\scriptsize $\mathcal{C}_h$};
    \end{tikzpicture}
    \end{equation}
\begin{remark}\label{rmk:adjunction-chain-compatibility}
    The proof of Theorem \ref{thm:adjunction} is compatible with the argument for chains \cite[Theorem 2.31]{Hog_polyaction} in an even stronger sense, namely that the homotopies used in the proof of the equivalence of Theorem \ref{thm:adjunction}, when passing to chains, agree with the (more tautalogical) homotopies in \cite{Hog_polyaction}.
\end{remark}
\subsection{Duals and mirrors}\label{sec:duals and mirrors}
We recall some material from \cite{LLS_func}.

\begin{definition}
    Let $A$ be a spectral algebra.  A left spectral $A$-module $X$ is \emph{dualizable} if, for all $A$-modules $Z$, the natural map
    \[
\HOM_A(X,A)\otimes_A Z \to \HOM_{A}(X,Z)
    \]
    is a weak equivalence.  There is a similar notion for a right spectral $A$-module $X$, namely that the natural map 
    \[
    Z\otimes_A \HOM_A(X,A)\to \HOM_{A}(X,Z)
    \]
    is a weak equivalence (here $\HOM_A(X,A)$ is the \emph{right} $A$-module morphisms, which inherits a left $A$-module structure by acting by the left on the range $A$).  
\end{definition}

In Section \ref{sec:spTL basics} we considered punctured tangles $T\in\Tang(\varnothing;2n)$ in a disc with $n$ endpoints on the `top' and `bottom' of the boundary.  In this section however we will briefly need to consider punctured tangles $T\in\Tang(m;n)$ in annuli, with $m$ endpoints on the `inner' puncture and $n$ endpoints on the `outer' puncture.

Such an $(m;n)$-tangle $T$ gives rise to an $(\X(m),\X(n))$-bimodule $\X(T,P)$, and \cite[Proposition 5.8]{LLS_func} establishes that $\X(T,P)$ is dualizable over both $\X(m)$ and $\X(n)$.  In addition, by flipping the radial direction in the annulus containing $T$, we obtain an $(n;m)$-tangle $\hat{T}$, the \emph{mirror} of $T$.  See Figure \ref{fig:mnTang and mirror}.

\begin{figure}
\[\Tang(m;n) \ni T:= \begin{tikzpicture}[baseline=-.5ex]
        \emptydisk{E}{0}{.8}
        \straightmodboxindisk{T}{0}{-.8}{\text{T}}
        \node[align=center,fit={(E disk)(T disk)},draw,circle,xscale=.7,yscale=1.2,dashed] (ET) {};
        \draw[thick] (T disk.\disksw) to[out=-90,in=\diskne] (ET.\disksw);
        \draw[thick] (T disk.\diskse) to[out=-90,in=\disknw] (ET.\diskse);
        \draw[thick] (E disk.\disksw)--(T disk.\disknw);
        \draw[thick] (E disk.\diskse)--(T disk.\diskne);
        \nstrandsalongpath[n]{T disk.south}{ET.south}
        \node[above,scale=.7] at (T disk.north) {$m$};
    \end{tikzpicture}
    \quad\xrightarrow{\mathrm{mirror}}\quad
    \begin{tikzpicture}[baseline=-.5ex]
        \emptydisk{E}{0}{.8}
        \straightmodboxindisk{T}{0}{-.8}{\scalebox{1}[-1]{T}}
        \node[align=center,fit={(E disk)(T disk)},draw,circle,xscale=.7,yscale=1.2,dashed] (ET) {};
        \draw[thick] (T disk.\disksw) to[out=-90,in=\diskne] (ET.\disksw);
        \draw[thick] (T disk.\diskse) to[out=-90,in=\disknw] (ET.\diskse);
        \draw[thick] (E disk.\disksw)--(T disk.\disknw);
        \draw[thick] (E disk.\diskse)--(T disk.\diskne);
        \nstrandsalongpath[m]{T disk.south}{ET.south}
        \node[above,scale=.7] at (T disk.north) {$n$};
    \end{tikzpicture}=:\hat{T}\in\Tang(n;m)
\]
\caption{An $(m;n)$-tangle $T$, viewed in the once-punctured disc (annulus), can be mirrored in the radial direction to arrive at an $(n;m)$-tangle $\hat{T}$.}
\label{fig:mnTang and mirror}
\end{figure}
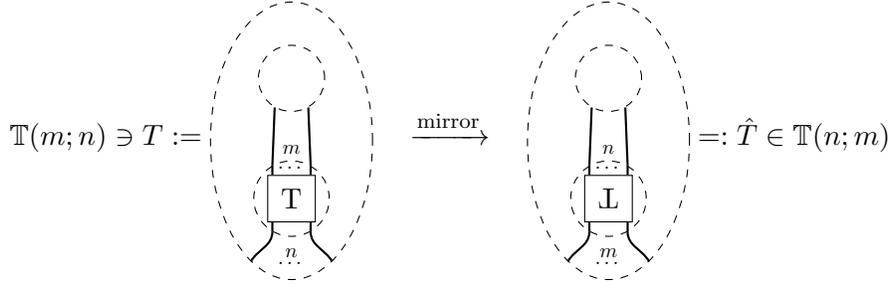

Note that for a $(R,R')$-module spectrum $M$ and a $(R,R)$-module spectrum $N$, that $\HOM_{R}(M,N)$ is a $(R',R)$-module spectrum.  With this preparation, we have the following proposition:

\begin{proposition}[{\cite[Proposition 5.11]{LLS_func}}]\label{prop:lls-mirror}
    Let $T$ be an $(m;n)$-tangle with $N$ crossings and $\hat{T}$ its mirror.  Then there is a weak equivalence
    \[
    \HOM_{\mathscr{X}(m)}(\mathscr{X}(T,P),\mathscr{X}(m))_q\simeq \mathscr{X}^{q+\frac{n-m}{2}}(\hat{T},N-P).
    \]
\end{proposition}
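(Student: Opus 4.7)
The plan is to prove this by induction on the number $N$ of crossings in $T$, with flat tangles (the case $N=0$) as the base case.

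For the inductive step, fix a crossing of $T$ and let $T_0, T_1$ denote its two resolutions. Proposition \ref{prop:crossing cofib seq} provides a cofibration sequence
\[
\X(T_1,P)\longrightarrow \q^{-1}\X(T,P)\longrightarrow \q\Sigma\X(T_0,P)
\]
of $(\X(m),\X(n))$-bimodules. Since $\X(T,P)$ is dualizable over $\X(m)$, the functor $\HOM_{\X(m)}(-,\X(m))$ carries this cofibration sequence to a fibration sequence, equivalently a cofibration sequence in the stable setting (up to a suspension). Mirroring reverses the sign of the selected crossing, and accordingly in $\hat{T}$ the roles of the $0$- and $1$-resolutions are interchanged, with resolutions $\widehat{T_1}$ and $\widehat{T_0}$ respectively. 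Matching the two cofibration sequences via the inductive hypothesis applied to $T_0$ and $T_1$ (each with $N-1$ crossings) yields the claim for $T$, provided the quantum and homological shifts on both sides agree.

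For the base case, $T$ is flat and $\X(T,P)=\q^{-3P}\Sigma^{-P}\X(T,0)$, with $\X(T,0)$ constructed directly from the planar algebraic structure of Definition \ref{def:Khov spec}. Here the bimodule $\X(T,0)$ is built from idempotent summands of $\X(m)$ and $\X(n)$ indexed by crossingless matchings, with attaching structure coming from the tangle-shape multicategory $\cT_{m;n}$. The spectrum $\HOM_{\X(m)}(\X(T,0),\X(m))$ can then be computed summand by summand using the self-duality of the idempotent pieces of the arc algebra, and the resulting $(\X(n),\X(m))$-bimodule is identified with $\X(\hat{T},0)$ up to a single overall quantum shift. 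The shift $\q^{(n-m)/2}$ reflects the convention that $\X(T,0)$ carries a $\q^{n/2}$ contribution from the outer boundary and no contribution from the inner boundary, while mirroring swaps inner and outer, producing the $(n-m)/2$ offset.

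The main obstacle is the careful bookkeeping of grading shifts and signs. One must reconcile the $\q$-shifts arising from the $-3P$ versus $-3(N-P)$ terms, the homological suspension incurred when rewriting a fibration sequence as a cofibration sequence, the $(n-m)/2$ boundary offset, and the projective (sign) ambiguity of $\X$ on morphisms (cf.\ Definition \ref{def:Khov spec}). A useful consistency check at each stage is to pass through the chains functor $\chainsfunc$ and compare against the analogous chain-level duality for Khovanov's arc-algebra tangle invariants, which is standard and unambiguous; any discrepancy on the spectral side must then be a choice of null-homotopy as in Section \ref{sec:sums-and-differences} rather than a genuine obstruction.
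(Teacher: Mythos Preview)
This proposition is not proved in the present paper at all: it is simply quoted from \cite[Proposition~5.11]{LLS_func} and used as a black box.  So there is no ``paper's own proof'' to compare with; what can be compared is your sketch against the argument in \cite{LLS_func}, which is carried out at the level of the underlying Burnside/divided-cobordism multifunctors rather than by induction on the number of crossings.

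That said, your inductive scheme has a real gap beyond bookkeeping of shifts.  In the inductive step you apply $\HOM_{\X(m)}(-,\X(m))$ to the crossing cofibration sequence and then say that ``matching the two cofibration sequences via the inductive hypothesis applied to $T_0$ and $T_1$'' yields the claim for $T$.  But knowing the two outer terms up to equivalence is not enough to identify the middle term: you must also know that the inductively given equivalences for $T_0$ and $T_1$ intertwine the dual of the saddle map $\X(s)^{\vee}$ with the saddle map $\X(\hat{s})$ for the mirror.  In other words, the induction hypothesis has to be strengthened to a \emph{natural} equivalence (at least natural with respect to elementary cobordisms), and establishing this naturality is essentially the whole content of the proposition.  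Without it the ``five-lemma'' style matching of triangles does not produce a canonical, or even well-defined, equivalence for $T$.  The approach in \cite{LLS_func} sidesteps this issue by constructing the duality directly and uniformly at the level of the multifunctor that defines $\X$, where such naturality is built in.
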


\begin{lemma}[cf. {\cite[Corollary 2.32]{Hog_polyaction}}]
    Let $(-)^\vee$ denote the map (of sets) taking Temperley-Lieb diagrams to their reflection through the $x$-axis, and reversing $q$-grading shifts.  Then, for each $(n,n)$ Temperley-Lieb diagram $\delta$, we have a natural isomorphism
    \begin{equation}\label{eq:TL-adjunction}
    \HOM_{\Sp\TL_n}(A\vertcomp \X(\delta),B)\to \HOM_{\Sp\TL_n}(A,B\vertcomp \X(\delta^\vee)),
    \end{equation}
    for $A,B$ objects of $\Sp\TL_n$.
\end{lemma}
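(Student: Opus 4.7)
\emph{Plan.} We mirror the strategy of Theorem \ref{thm:adjunction}, where $\delta$ is essentially a single cap--cup pair and the adjunction is implemented by a birth cobordism $\X(\ILbirth)$ and a saddle cobordism $\X(\ILhorizsaddle)$ playing the roles of unit and counit. The same template works for arbitrary $\delta\in\TL_n$, with those two elementary cobordisms replaced by more elaborate ``snake'' cobordisms. Concretely, for any $\delta$ the vertical composition $\delta\vertcomp\delta^\vee$ is isotopic rel boundary to $\Itang_n$ together with a disjoint collection of closed circles (one for each cup of $\delta$), and similarly for $\delta^\vee\vertcomp\delta$, so there are natural cobordisms
\[
\eta_\delta\colon \Itang_n\longrightarrow \delta\vertcomp\delta^\vee,\qquad \epsilon_\delta\colon \delta^\vee\vertcomp\delta\longrightarrow \Itang_n,
\]
built by composing a birth (for each circle) with a sequence of saddles. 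The $q$-grading shifts on $\delta^\vee$ are chosen precisely so that $\X(\eta_\delta)$ and $\X(\epsilon_\delta)$ are grading preserving; this is exactly the content of the ``reversal of $q$-grading shifts'' in the definition of $\delta^\vee$.

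\emph{Construction of the comparison maps.} Given a map $f\colon A\vertcomp\X(\delta)\to B$, define the forward map of the adjunction to be the composite
\[
A\;\simeq\; A\vertcomp\X(\Itang_n)\xrightarrow{\mathrm{id}\vertcomp\X(\eta_\delta)} A\vertcomp\X(\delta)\vertcomp\X(\delta^\vee)\xrightarrow{f\vertcomp\mathrm{id}} B\vertcomp\X(\delta^\vee),
\]
where the gluing equivalence \eqref{eq:gluing equiv} is used implicitly to identify $\X(\delta\vertcomp\delta^\vee)\simeq\X(\delta)\vertcomp\X(\delta^\vee)$. Dually, given $g\colon A\to B\vertcomp\X(\delta^\vee)$, define the reverse map $A\vertcomp\X(\delta)\to B$ by
\[
A\vertcomp\X(\delta)\xrightarrow{g\vertcomp\mathrm{id}} B\vertcomp\X(\delta^\vee)\vertcomp\X(\delta)\xrightarrow{\mathrm{id}\vertcomp\X(\epsilon_\delta)} B.
\]
Both assignments upgrade to maps of $\HOM$-spectra by functoriality of $\X$ and the planar-algebraic structure, exactly as in the proof of Theorem \ref{thm:adjunction}.

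\emph{Snake identity and main obstacle.} To verify that the two composites are homotopic to the identity, one unpacks the composition and uses the observation (the crux of the proof of Theorem \ref{thm:adjunction}) that $\X$ of cobordisms supported in disjoint sub-disks commute with one another. This rearranges the composite into $f\circ(\mathrm{id}\vertcomp\X(\sigma_\delta))$, where
\[
\sigma_\delta=(\mathrm{id}_\delta\vertcomp\epsilon_\delta)\circ(\eta_\delta\vertcomp\mathrm{id}_\delta)\colon\delta\longrightarrow\delta
\]
is the classical snake cobordism. The snake is isotopic rel boundary to $\mathrm{id}_\delta$ (verified locally, one cap--cup pair at a time), so Theorem \ref{thm:X well-defined} gives $\X(\sigma_\delta)\simeq\mathrm{id}$, and the composite is homotopic to $f$; the other direction is symmetric. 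The main obstacle is purely bookkeeping: keeping the $q$-grading shifts, sign conventions, and the homotopy coherence of the disjoint-support commutations straight throughout. A more modular alternative, which may be easier to write out rigorously, is to induct on the number of cups of $\delta$: carve off one cap--cup pair adjacent to the boundary (horizontal-composed with the rest of $\delta$), apply Theorem \ref{thm:adjunction} directly to that piece, and induct on the remainder.
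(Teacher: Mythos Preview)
Your approach is correct in outline but takes a genuinely different route from the paper.  You build the adjunction geometrically, by exhibiting unit and counit cobordisms $\eta_\delta$ and $\epsilon_\delta$ and verifying the snake identities at the level of cobordisms (exactly as in Theorem~\ref{thm:adjunction}, of which this is the natural generalisation).  The paper instead argues algebraically: it invokes the dualizability of $\X(\delta)$ as an $\X(2n)$-module (Proposition~\ref{prop:lls-mirror} from \cite{LLS_func}) and then runs the abstract tensor--hom adjunction for ring spectra, tracking the various $\X(2n)$-actions through the planar multimodule $\X(\cF^v_n)$.  Your version is more hands-on and stays closer to the cobordism picture; the paper's version imports heavier machinery but requires no case analysis or snake computation.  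Your proposed induction on the number of cups, reducing to Theorem~\ref{thm:adjunction} one cap--cup at a time, is yet a third route and arguably the cleanest to write out.

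Two small corrections.  First, the assertion that $\delta\vertcomp\delta^\vee$ is \emph{isotopic rel boundary} to $\Itang_n$ with disjoint circles is false in general (already for $\delta=e_1$ one has $e_1\vertcomp e_1\simeq e_1\sqcup\text{circle}$, not $\Itang_2\sqcup\text{circle}$).  This does not break your argument, since you correctly build $\eta_\delta,\epsilon_\delta$ from births \emph{and saddles}; just drop the isotopy claim and describe $\eta_\delta$ directly as the ``bent cylinder'' on $\delta$.  Second, the reference to Theorem~\ref{thm:X well-defined} for $\X(\sigma_\delta)\simeq\mathrm{id}$ is not quite right: that theorem concerns tangle isotopies, whereas here you need that isotopic \emph{cobordisms} induce homotopic maps.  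The relevant fact is that $\X$ descends to a (projective) functor on $\thst$, stated at the end of Definition~\ref{def:Khov spec}; this is exactly what the proof of Theorem~\ref{thm:adjunction} uses implicitly when it says ``the birth and saddle cobordism is equivalent to an identity cobordism.''
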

\begin{proof}
    Since $\mathscr{X}(\delta)$ is dualizable, we have
    \[
    B\otimes_{\mathscr{X}(n)}\HOM_{\mathscr{X}(n)}(\X(\delta),\mathscr{X}(n))\simeq \HOM_{{\mathscr{X}(n)}}(\X(\delta),B).
    \]
    Moreover, Proposition \ref{prop:lls-mirror} implies that $\HOM_{\mathscr{X}(n)}(\X(\delta),\mathscr{X}(n))\simeq \mathscr{X}(\delta^\vee).$

    We use the tensor-hom adjunction of ring spectra, as follows.  Let $R,R'$ be ring spectra, and $M$ a $(R,R')$-bimodule, $N$ an $R'$-module, and $P$ an $R$-module.  Then
    \[
    \HOM_R(M\wedge_{R'} N,P)\cong \HOM_{R'}(N,\HOM_{R}(M,P)).
    \]
By definition, $A\vertcomp \X(\delta)$ is $(A,\X(\delta)) \otimes \X(\cF^v_n)$.  We will distinguish the $\X(2n)$-algebras that enter into $\cF^v_n$ as a multimodule, writing them $\X(2n)_i$ with $i=0,1,2$, where $0$ refers to the outer circle of Figure \ref{fig:vertical comp}, $1$ the upper inner circle and $2$ the lower inner circle.   We have
\[
\HOM_{\X(2n)_0}((A,\X(\delta))\otimes \X(\cF^v_n),B)\cong\HOM_{\X(2n)_0}(A\otimes_{\X(2n)_1} (\X(\delta)\otimes_{\X(2n)_2}\X(\cF^v_n)),B).
    \]
    Applying the tensor-hom adjunction, that is:
    \[
    \HOM_{\X(2n)_1}(A,\HOM_{\X(2n)_0}(\X(\delta)\otimes_{\X(2n)_2} \X(\cF^v_n) ,B)).
    \]
    By the gluing rule, $\X(\delta)\otimes_{\X(2n)_2}\X(\cF^v_n)$ is weakly equivalent to $\X(\epsilon)$ for a flat tangle $\epsilon$ obtained by inserting $\delta$ into the $2$-labelled circle of $\cF^v_n$.  It follows that $\X(\delta)\otimes_{\X(2n)_2}\X(\cF^v_n)$ is right dualizable, so:
    \[
    \HOM_{\X(2n)_0}(\X(\delta)\otimes_{\X(2n)_2}\X(\cF^v_n) ,B)= B \otimes_{\X(2n)_0} \HOM_{\X(2n)_0}(\X(\delta)\otimes_{\X(2n)_2}\X(\cF^v_n),\X(2n)_0). 
    \]
    We are using $\X(2n)_0$ as a $(\X(2n)_0,\X(2n)_0)$-bimodule here.  The $\X(2n)_1$-module structure on both sides agree and are induced by that of $\X(\cF^v_n)$.  
    
    By Proposition \ref{prop:lls-mirror}, 
    \[
    \HOM_{\X(2n)_0}(\X(\delta)\otimes_{\X(2n)_2}\X(\cF^v_n),\X(2n)_0)=\X(\hat{\epsilon}),
    \]
    where $\epsilon$ is the tangle described above (note that there is no grading shift in this application of duality).  At this point, we have identified the left-hand side of (\ref{eq:TL-adjunction}) with 
    \begin{equation}\label{eq:almost-TL-adjunction}
    \HOM_{\X(2n)_1}(A, B \otimes_{\X(2n)_0} \X(\hat{\epsilon}) ).
    \end{equation}
    We note that $\hat{\epsilon}$ has its ``outer" boundary naturally identified with the ``inner" boundary of $\epsilon$, so that (roughly) the $\X(2n)_1$-action on $\X(\hat{\epsilon})$ comes from the outer boundary.
    
    Using the gluing formula for the tangles $\hat{\epsilon}$ and $\cF^v_n$, and the definition of $\vertcomp$, we have 
    $B\otimes_{\X(2n)_0}\X(\hat{\epsilon})= B \vertcomp \X(\delta^\vee).$ Thus (\ref{eq:almost-TL-adjunction}) is identified with
    \[
    \HOM_{\X(2n)}(A, B\vertcomp \X(\delta^\vee)),
    \]
    completing the proof.
\end{proof}

\subsection{Spectral projectors}\label{subsec:spectral-projectors}

In this section we provide the definition and formal properties of a (highest weight) spectral projector in $\Sp\TL_n$.  These are all lifts of similar definitions and properties of the categorified projectors in \cite{Cooper-Krushkal}.  Some of these results are also particular instances of the much more general situation of (smashing) Bousfield localizations; we have included (elementary) proofs for some results in this section that can also be deduced in much greater generality using the language of Bousfield localization - see \cite{Lawson-Bousfield} for a survey.

\begin{definition}
The \emph{through-degree} of a flat $(n,n)$-tangle $\delta$, denoted $\tau(\delta)$, is the number of strands with endpoints on opposite ends (top and bottom) of the disk.

We say that a Temperley-Lieb spectrum $\cM\in\Sp\TL_n$ has \emph{through-degree} $<k$, denoted $\tau(\cM)<k$, if $\cM$ is homotopy equivalent to a homotopy colimit of flat-tangle modules $\X(\delta)$ with each $\tau(\delta)< k$ (We allow some of the entries of the homotopy colimit to be $*$).  Note that $\tau(\cM\vertcomp\cN)\leq\min(\tau(\cM),\tau(\cN))$, using that homotopy colimits commute with smash product.
\end{definition}

\begin{definition}
We say that $\cM\in\Sp\TL_n$ \emph{kills turnbacks from above (respectively from below)} if, for all $\cN\in\Sp\TL_n$ with $\tau(\cN)<n$, we have
\[\cN\vertcomp \cM \simeq *\]
(respectively $\cM\vertcomp \cN \simeq *$).  We simply say $\cM$ \emph{kills turnbacks} if $\cM$ kills turnbacks from both above and below.
\end{definition}

As usual, all of this can be said using certain elementary turnback diagram modules.  Let $e$ denote the unique non-identity flat $\TL_2$-diagram consisting of a turnback on top and bottom.

\begin{definition}\label{def:ei turnbacks}
For $i=1,\dots,n-1$, the \emph{$i^\mathrm{th}$ elementary turnback module} is the flat-tangle module $\X(e_i)\in\Sp\TL_n$ where $e_i=\Itang_{i-1}\horizcomp e \horizcomp \Itang_{n-i-1}\in\TL_n$, illustrated in Figure \ref{fig:elem turnback}.
\end{definition}

\begin{figure}
    \[
    e_i := \vcenter{\hbox{
        \begin{tikzpicture}[scale=.8]
        \foreach \i in
                {0,1.5,3.5,5}
            {\draw[thick] (\i,0)--(\i,2);}
        \draw[thick] (2,0)--(2,.5) to[out=90,in=90] (3,.5)--(3,0);
        \draw[thick] (2,2)--(2,1.5) to[out=-90,in=-90] (3,1.5)--(3,2);
        \nstrandsalongpath[i-1]{0,1}{1.5,1}
        \nstrandsalongpath[n-i-1]{3.5,1}{5,1}
        \node[below,scale=.7] at (2,0) {$i$};
        \node[below,scale=.7] at (3,0) {$i+1$};
        \end{tikzpicture}
        }}
    \]
    \caption{The elementary turnback tangle $e_i\in\TL_n$.}
    \label{fig:elem turnback}
\end{figure}

\begin{proposition}\label{prop:killing turnbacks}
    An object $\cM\in\Sp\TL_n$ kills turnbacks from above (respectively from below) if and only if $\cM\vertcomp\X(e_i)\simeq *$ (respectively $\X(e_i)\vertcomp\cM\simeq *$) for all $i=1,\dots,n-1$.
\end{proposition}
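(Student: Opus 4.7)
The plan is to prove the forward direction by a direct specialization, and to handle the converse by reducing $\cN\vertcomp\cM\simeq *$ (or $\cM\vertcomp\cN\simeq *$) for general $\cN$ with $\tau(\cN)<n$ to the case of a single flat-tangle module $\X(\delta)$, and then factoring $\delta$ through a single $e_i$ by a planar isotopy.

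The forward direction is immediate: each $\X(e_i)$ is itself a Temperley-Lieb module with $\tau(\X(e_i))=n-2<n$, so specializing the hypothesis that $\cM$ kills turnbacks from above (respectively from below) to $\cN=\X(e_i)$ yields the corresponding vanishing statement.

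For the converse, suppose the stated hypothesis holds for every $i$ and let $\cN\in\Sp\TL_n$ satisfy $\tau(\cN)<n$. By Definition \ref{def:spectral TL cat}, $\cN$ is equivalent to a homotopy colimit of shifted flat-tangle modules $\q^{k_\alpha}\Sigma^{\ell_\alpha}\X(\delta_\alpha)$ in which every $\tau(\delta_\alpha)<n$. Since $\vertcomp$ is a derived smash product over $\X(2n)$, it commutes with homotopy colimits in each variable by items \ref{it:smash preserves hocolim} and \ref{it:hocolim functorial} of Proposition \ref{prop:hocolim properties}, so it suffices to establish $\X(\delta)\vertcomp\cM\simeq *$ (respectively $\cM\vertcomp\X(\delta)\simeq *$) for each individual flat $(n,n)$-tangle $\delta$ with $\tau(\delta)<n$.

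For the "from above" case, the condition $\tau(\delta)<n$ forces the bottom of $\delta$ to contain at least one cup; choose one that is innermost in the planar sense, so that its endpoints occupy consecutive positions $i,i+1$. A planar isotopy rel.\ boundary then exhibits $\delta$ as a vertical composition $\delta=\delta_0\vertcomp e_i$, where $\delta_0$ is the flat $(n,n)$-tangle obtained by extruding the chosen cup downward so that it becomes the cup of $e_i$, with the two extruded strands rejoined above by the cap of $e_i$. Applying the gluing equivalence \eqref{eq:gluing equiv} together with the hypothesis yields
\[
\X(\delta)\vertcomp\cM \;\simeq\; \X(\delta_0)\vertcomp\bigl(\X(e_i)\vertcomp\cM\bigr) \;\simeq\; \X(\delta_0)\vertcomp * \;\simeq\; *,
\]
as required. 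The "from below" case is entirely symmetric: the top of $\delta$ must contain at least one cap, and choosing an innermost one at consecutive positions $i,i+1$ factors $\delta=e_i\vertcomp\delta_0'$, after which the same calculation applies.

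The only genuinely content-bearing step is the planar-isotopy factorization $\delta=\delta_0\vertcomp e_i$ (or $\delta=e_i\vertcomp\delta_0'$). This is a local statement about flat tangles and is elementary once the cup or cap in question is chosen to be innermost, so that no other strand obstructs the isotopy. I expect this is the only step that requires any real care; the remainder of the argument is a standard reduction to cells combined with the interchange of $\vertcomp$ with homotopy colimits.
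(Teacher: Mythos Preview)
Your proof is correct and follows essentially the same approach as the paper's. The paper's version is terser: it invokes the standard fact that every Temperley-Lieb diagram with $\tau(\delta)<n$ is a vertical concatenation of the $e_i$, then appeals to the commutation of $\vertcomp$ with homotopy colimits and the contractibility of homotopy colimits with contractible entries. Your factorization $\delta=\delta_0\vertcomp e_i$ via an innermost cup is a slightly more economical version of the same idea (one $e_i$ suffices rather than a full word in the $e_i$), and it has the minor advantage of transparently handling any closed loops in $\delta$ by absorbing them into $\delta_0$.
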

\begin{proof}
It is a basic fact that all Temperley-Lieb diagrams $\delta$ with $\tau(\delta)<n$ can be built by concatenating the various $e_i$.  The result follows since homotopy colimits commute with tensor products, and homotopy colimits of diagrams with contractible objects are contractible.
\end{proof}

On the other hand, we also have the following Proposition which reduces turnback killing to statements on chain complexes.

\begin{proposition}\label{prop:turnback killing can be done on chains}
A Temperley-Lieb spectrum $\X\in\Sp\TL_n$ kills turnbacks if and only if its chain complex $\chainsfunc(\X)\in\TL_n$ kills turnbacks as in \cite{Cooper-Krushkal}.
\end{proposition}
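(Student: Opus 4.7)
The plan is to reduce, on both sides, to the statement that the elementary turnbacks $\X(e_i)$ (respectively $\Kc(e_i)$) are killed, then transfer between the spectral and chain settings using the compatibility of $\chainsfunc$ with vertical composition, and finally invoke Whitehead's theorem for the harder direction.

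First, by Proposition \ref{prop:killing turnbacks}, $\X$ kills turnbacks if and only if $\X\vertcomp \X(e_i)\simeq *$ and $\X(e_i)\vertcomp \X\simeq *$ for every $i=1,\dots,n-1$. The standard chain-level analogue (appearing in \cite{Cooper-Krushkal}) gives the same reduction for $\chainsfunc(\X)$ in terms of the elementary turnback complexes $\Kc(e_i)$. Next, since vertical composition on either side is implemented by multicomposition with the same flat tangle $\cF^v_n$, and $\chainsfunc$ is compatible with multicomposition of flat-tangle modules (being compatible with homotopy colimits and sending $\X$ of a flat tangle to the corresponding Khovanov chain module), there are natural equivalences
\[
\chainsfunc(\X\vertcomp \X(e_i))\simeq \chainsfunc(\X)\vertcomp \Kc(e_i),\qquad \chainsfunc(\X(e_i)\vertcomp \X)\simeq \Kc(e_i)\vertcomp \chainsfunc(\X).
\]

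For the forward direction, if $\X$ kills turnbacks, then $\X\vertcomp \X(e_i)\simeq *$, so applying $\chainsfunc$ and the equivalence above yields $\chainsfunc(\X)\vertcomp \Kc(e_i)\simeq 0$, and symmetrically on the other side; hence $\chainsfunc(\X)$ kills turnbacks. For the reverse direction, suppose $\chainsfunc(\X)\vertcomp \Kc(e_i)\simeq 0$. Then $\chainsfunc(\X\vertcomp \X(e_i))\simeq 0$, which in particular says $H_*(\X\vertcomp \X(e_i))=0$ in every quantum grading. Restricting to a fixed quantum grading $j$, the spectrum $(\X\vertcomp \X(e_i))\atq{j}$ is bounded below: the finiteness condition in Definition \ref{def:spectral TL cat} on the cell structure of $\X$ implies that in each $q$-grading there are only finitely many cells up to any fixed $q$-value, and vertical composition with the flat-tangle module $\X(e_i)$ (which is supported in a bounded range of $q$-degrees) preserves this property. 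Whitehead's theorem (Theorem \ref{thm:Whitehead for spectra}) applied grading by grading then yields $\X\vertcomp \X(e_i)\simeq *$, and the argument for $\X(e_i)\vertcomp \X$ is identical.

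The main technical obstacle is justifying the bounded-below hypothesis needed to apply Whitehead's theorem in the reverse direction. This is not deep, but it does require tracking how the cell-counting condition built into Definition \ref{def:spectral TL cat} interacts with vertical composition by a fixed flat-tangle module; the essential point is that tensoring with $\X(e_i)$ only shifts the cell $q$-degrees by a bounded amount and introduces only a bounded multiplicity per cell, so the ``finitely many cells below any fixed $q$-value'' property is preserved.
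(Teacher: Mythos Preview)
Your proof is correct and follows essentially the same approach as the paper's: both arguments rest on the compatibility of $\chainsfunc$ with vertical composition and on Whitehead's theorem (applied to the basepoint map) for the reverse direction. The paper's proof is terser---it simply notes that a spectrum with contractible chain complex is itself contractible via Whitehead applied to a basepoint map---and does not pause to verify the bounded-below hypothesis you carefully track; your extra discussion of how the cell-counting condition in Definition~\ref{def:spectral TL cat} interacts with vertical composition by $\X(e_i)$ is a reasonable elaboration of a point the paper leaves implicit.
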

\begin{proof}
    The chains functor commutes with tensor products, and a spectrum with contractible chain complex is itself contractible via the Whitehead theorem applied to a basepoint map.
\end{proof}

\begin{definition}\label{def:spectral projector}
A \emph{spectral Cooper-Krushkal projector} (or, when it will not cause confusion, simply a \emph{spectral projector}) is a pair $(\cP_n,\iota)$ consisting of an object $\cP_n\in \Sp\TL_n$ and a morphism $\Imod_n\xrightarrow{\iota} \cP_n$ so that: 
	\begin{enumerate}
		\item The mapping cone $\Cone(\iota)$ has through-degree $\tau(\Cone(\iota))<n$.\label{itm:CK1}
		\item The spectral multimodule $\cP_n$ kills turnbacks (or equivalently $\chainsfunc(\cP_n)$ kills turnbacks; see Proposition \ref{prop:turnback killing can be done on chains}). \label{itm:CK2}
	\end{enumerate}
Similarly, a \emph{Cooper-Krushkal projector} is a pair $(P_n,\iota)$, where $P_n\in \TL_n$, with the analogous properties in chain complexes.
\end{definition}

\begin{corollary}\label{cor:turnback-killing-chains}
    Let $(\mathcal{P}_n,\iota)$ be a pair with $\mathcal{P}_n\in \Sp\TL_n$ and $\iota\colon \Imod_n\to\mathcal{P}_n$.  If $(\mathcal{P}_n,\iota)$ is a spectral projector, then $(\mathcal{C}_h(\mathcal{P}_n),\mathcal{C}_h(\iota))$ is a Cooper-Krushkal projector.
\end{corollary}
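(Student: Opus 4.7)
The plan is to verify the two defining properties of a Cooper-Krushkal projector for the pair $(\mathcal{C}_h(\mathcal{P}_n),\mathcal{C}_h(\iota))$ by transporting each property through the chains functor.

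For property \eqref{itm:CK2}, turnback killing is essentially immediate: Proposition \ref{prop:turnback killing can be done on chains} already asserts that a Temperley-Lieb spectrum $\X$ kills turnbacks if and only if $\chainsfunc(\X)$ does. So from the hypothesis that $\mathcal{P}_n$ kills turnbacks we obtain directly that $\chainsfunc(\mathcal{P}_n)$ kills turnbacks in $\TL_n$, which is property \eqref{itm:CK2} of the Cooper-Krushkal definition.

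For property \eqref{itm:CK1}, I would first use that the chains functor commutes with homotopy colimits (item \ref{it:chains preserve hocolim} of Proposition \ref{prop:hocolim properties}), and in particular with mapping cones (Definition \ref{def:mapping cone}), to identify
\[
\chainsfunc(\Cone(\iota)) \;\simeq\; \Cone(\chainsfunc(\iota)).
\]
Next I would establish the auxiliary fact that if $\cM\in\Sp\TL_n$ has through-degree $<k$, then $\chainsfunc(\cM)\in\TL_n$ has through-degree $<k$. By definition $\cM$ is homotopy equivalent to a homotopy colimit of flat-tangle modules $\X(\delta)$ with $\tau(\delta)<k$ (plus possibly basepoint entries). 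Applying $\chainsfunc$ and again invoking item \ref{it:chains preserve hocolim}, we obtain a homotopy colimit expression for $\chainsfunc(\cM)$ whose non-trivial terms are of the form $\chainsfunc(\X(\delta))$. By \cite[Proposition 4.2]{LLS_tangles} each such term is quasi-isomorphic to the Khovanov tangle invariant of $\delta$, i.e.\ the image of the flat tangle $\delta$ in $\TL_n$, which has through-degree $\tau(\delta)<k$. Thus $\chainsfunc(\cM)$ is built as a homotopy colimit of flat tangles of through-degree less than $k$, which is precisely what it means for $\chainsfunc(\cM)$ to have through-degree less than $k$ in $\TL_n$.

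Combining these two observations with $k=n$ applied to $\cM=\Cone(\iota)$ yields property \eqref{itm:CK1} for the chain-level pair, finishing the proof. There is essentially no obstacle here: both properties of a spectral projector were defined in a manner that transports cleanly through $\chainsfunc$, the only thing to check is the compatibility between the spectral notion of through-degree (formulated via homotopy colimits of cells $\X(\delta)$) and the chain notion (formulated in $\TL_n$), and this is exactly what the interplay between $\chainsfunc$, homotopy colimits, and the identification $\chainsfunc(\X(\delta))\simeq \delta$ provides.
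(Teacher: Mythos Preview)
Your proof is correct and follows essentially the same approach as the paper's: both use Proposition~\ref{prop:turnback killing can be done on chains} for property~\eqref{itm:CK2} and the compatibility of $\chainsfunc$ with mapping cones for property~\eqref{itm:CK1}. Your version is slightly more detailed in that you explicitly spell out why through-degree is preserved under $\chainsfunc$, whereas the paper leaves this implicit.
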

\begin{proof}
    The second property in the definition of spectral and (chain) Cooper-Krushkal projectors are identified by Proposition \ref{prop:turnback killing can be done on chains}.  Since mapping cone commutes, up to quasi-isomorphism, with the chains functor, we have that item \eqref{itm:CK1} for spectra implies the analog in chains.  This completes the proof.  
\end{proof}

\begin{remark}
Note that, by definition of through-degree, the spectral multimodule $\Cone(\iota)$ is merely homotopy equivalent to a spectral multimodule built out of non-identity flat-tangle modules.  This is slightly weaker than the definition originally given in \cite{Cooper-Krushkal}.  In \cite{Hog_polyaction}, this is phrased as the difference between being a projector and a \emph{strong projector}.
\end{remark} 

\begin{lemma}\label{lem:END(P) = HOM(I,P)}
Given a spectral projector $(\cP_n,\iota)$, the pullback
\[\HOM(\cP_n,\cP_n)\xrightarrow{\iota^*}\HOM(\Imod_n,\cP_n)\]
is a stable homotopy equivalence.
\end{lemma}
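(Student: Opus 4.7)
The plan is to argue via the cofibration sequence $\Imod_n \xrightarrow{\iota} \cP_n \to \Cone(\iota)$. Applying the functor $\HOM(-,\cP_n)$ yields the fibration sequence of spectra
\[\HOM(\Cone(\iota),\cP_n) \to \HOM(\cP_n,\cP_n) \xrightarrow{\iota^*} \HOM(\Imod_n,\cP_n),\]
so it suffices to prove that the fiber $\HOM(\Cone(\iota),\cP_n)$ is contractible. The two properties in Definition~\ref{def:spectral projector} are then tailor-made to kill this fiber: property~(\ref{itm:CK1}) constrains the shape of $\Cone(\iota)$, while property~(\ref{itm:CK2}) ensures $\cP_n$ vanishes against low through-degree flat tangles.

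To leverage these, I would first use property~(\ref{itm:CK1}), $\tau(\Cone(\iota))<n$, to present $\Cone(\iota)$ as a homotopy colimit of cells $\Sigma^\ell\q^k\X(\delta)$ with each $\tau(\delta)<n$. Since $\HOM(-,\cP_n)$ converts homotopy colimits in its first argument to homotopy limits, and a homotopy limit of contractible spectra is itself contractible, the problem reduces to showing $\HOM(\X(\delta),\cP_n)\simeq *$ for every flat $(n,n)$-tangle $\delta$ with $\tau(\delta)<n$ (grading and suspension shifts play no role since $\HOM$ handles them formally).

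For this remaining step I would invoke the Temperley-Lieb adjunction~(\ref{eq:TL-adjunction}). Since $\Imod_n$ is the unit for vertical composition, $\X(\delta)\simeq\Imod_n\vertcomp\X(\delta)$, and hence
\[\HOM(\X(\delta),\cP_n)\simeq\HOM(\Imod_n\vertcomp\X(\delta),\cP_n)\simeq\HOM(\Imod_n,\cP_n\vertcomp\X(\delta^\vee)).\]
Reflection through the horizontal axis preserves through-degree, so $\tau(\delta^\vee)<n$, and then property~(\ref{itm:CK2}) together with Proposition~\ref{prop:killing turnbacks} gives $\cP_n\vertcomp\X(\delta^\vee)\simeq *$, yielding contractibility of the morphism spectrum. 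The only real subtlety I foresee is verifying that the cellwise vanishing assembles correctly through the homotopy limit --- this follows from general properties of holims, but must be invoked carefully since the indexing category presenting $\Cone(\iota)$ is only required to be a homotopy colimit diagram (possibly infinite) of cells. I expect this assembly step, rather than the algebraic identification itself, to be the only point that requires care.
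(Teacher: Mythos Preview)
Your proposal is correct and follows essentially the same route as the paper's proof: identify the (co)fiber of $\iota^*$ with $\HOM(\Cone(\iota),\cP_n)$, expand $\Cone(\iota)$ as a hocolim of low through-degree flat-tangle cells, convert to a holim of $\HOM$-spectra, and kill each term via the adjunction~(\ref{eq:TL-adjunction}) together with turnback-killing. The paper presents exactly this chain of equivalences in a single display; your concern about the holim assembly is not an issue, since a holim of contractible spectra is contractible.
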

\begin{proof}
    Because the $\HOM$ functor commutes past homotopy colimits (and cones in particular) in the first entry, we can use items \eqref{itm:CK1} and \eqref{itm:CK2} of Definition \ref{def:spectral projector} to see
    \begin{align*}
    \Cone(\iota^*) &\simeq \HOM(\Cone(\iota),\cP_n)\\
    &\simeq \HOM(\hocolim( \X(\delta) ), \cP_n)\quad\text{with $\tau(\delta)<n$ for all $\delta$ in the hocolim}\\
    &\simeq \holim(\HOM(\X(\delta),\cP_n))\quad\text{with $\tau(\delta)<n$ for all $\delta$ in the hocolim}\\
    &\simeq \holim( \HOM(\Imod_n, \cP_n\vertcomp\X(\delta^\vee)) )\quad\text{with $\tau(\delta^\vee)<n$ for all $\delta$ in the hocolim}\\
    &\simeq \holim( \HOM(\Imod_n,*) ) \simeq *,
    \end{align*}
    and thus $\iota^*$ is an equivalence.
\end{proof}

\begin{corollary}\label{cor:END(P) = closure(P)}
There is an equivalence of spectra
\[\HOM(\cP_n,\cP_n)\simeq \q^nT^n(\cP_n).\]
In other words, the endomorphism spectrum of $\cP_n$ is equivalent to the closure (full trace) of $\cP_n$.
\end{corollary}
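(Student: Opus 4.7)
The plan is to combine Lemma \ref{lem:END(P) = HOM(I,P)} with an $n$-fold iteration of the adjunction in Theorem \ref{thm:adjunction}. By Lemma \ref{lem:END(P) = HOM(I,P)}, pullback along $\iota$ gives a stable equivalence
\[
\HOM_{\Sp\TL_n}(\cP_n,\cP_n)\xrightarrow{\iota^*}\HOM_{\Sp\TL_n}(\Imod_n,\cP_n),
\]
so it suffices to identify $\HOM_{\Sp\TL_n}(\Imod_n,\cP_n)$ with $\q^n T^n(\cP_n)$.

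First I would observe that the identity tangle satisfies $\Itang_n = \Itang_{n-1}\horizcomp \Itang_1$ as planar (flat) tangles, which at the level of Khovanov spectra yields a canonical equivalence $\Imod_n \simeq \Imod_{n-1}\horizcomp \Imod_1$ in $\Sp\TL_n^{n-1}$ (working with the more general mixed category briefly mentioned after Definition \ref{def:vert comp}). Applying the adjunction of Theorem \ref{thm:adjunction} gives
\[
\HOM_{\Sp\TL_n}(\Imod_{n-1}\horizcomp \Imod_1,\cP_n)\simeq \HOM_{\Sp\TL_{n-1}}(\Imod_{n-1},\q T(\cP_n)).
\]
Iterating this $n-1$ more times (peeling off one identity strand at each step, and collecting the grading shifts and partial traces as they accumulate), I would arrive at
\[
\HOM_{\Sp\TL_n}(\Imod_n,\cP_n)\simeq \HOM_{\Sp\TL_0}(\Imod_0,\q^n T^n(\cP_n)).
\]

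The final step is to identify $\HOM_{\Sp\TL_0}(\Imod_0,X)\simeq X$ for any $X\in\Sp\TL_0$. This is straightforward since $\X(0)\simeq\sphere$, so $\Sp\TL_0$ is essentially the category of (graded) spectra, $\Imod_0\simeq \sphere$ is the unit, and $\HOM(\sphere,X)\simeq X$. Combining this with the earlier chain of equivalences completes the proof.

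The main subtlety to verify carefully is the bookkeeping in the iterated adjunction: one must ensure that at the $k^\mathrm{th}$ step one is applying Theorem \ref{thm:adjunction} to $\cP_n$ with the last $k$ strands already partially traced, so that the outer boundary has exactly $2(n-k)$ endpoints, and that the grading shifts compose as $\q\cdot\q\cdots \q=\q^n$. This is essentially formal once one is comfortable with the planar diagrammatic rewriting used in the proof of Theorem \ref{thm:adjunction}; no new obstruction arises.
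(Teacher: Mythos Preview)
Your proposal is correct and follows essentially the same route as the paper: the paper's proof simply states that the result follows from Lemma \ref{lem:END(P) = HOM(I,P)} together with $n$ applications of Theorem \ref{thm:adjunction}, which is exactly what you carry out in more detail.
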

\begin{proof}
    This follows from Lemma \ref{lem:END(P) = HOM(I,P)} and $n$ applications of Theorem \ref{thm:adjunction}
\end{proof}

\begin{proposition}[Uniqueness]\label{prop:projector unique}
If $(\cP_n,\iota),(\cP'_n,\iota')\in\Sp\TL_n$ are two spectral projectors, then $\cP_n\simeq\cP'_n$.  In fact, there is a homotopy commutative diagram, with a homotopy-equivalence $\phi$ (well-defined up to homotopy) as follows.
\begin{equation}\label{eq:projector-canonical}
\begin{tikzpicture}[baseline={([yshift=-.8ex]current  bounding  box.center)},xscale=1.5,yscale=1]
\node (a0) at (0,0) {$\cI_n$};
\node (a1) at (1,1) {$\cP_n$};
\node (b1) at (1,-1) {$\cP'_n$};

\draw[->] (a0) -- (a1) node[pos=0.5,anchor=south] {\scriptsize
  $\iota$}; 
\draw[->] (a0) -- (b1) node[pos=0.5,anchor=north] {\scriptsize$\iota'$};
\draw[->] (a1) -- (b1) node[pos=0.5,anchor=west] {\scriptsize$\phi$};

\end{tikzpicture}
\end{equation}
\end{proposition}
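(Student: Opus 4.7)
The plan is to deduce uniqueness from a mild generalization of Lemma~\ref{lem:END(P) = HOM(I,P)}. Concretely, I claim that for \emph{any} spectral projector $(\cP'_n,\iota')$ (not necessarily the same one) the pullback map
\[
\iota^*\colon \HOM_{\Sp\TL_n}(\cP_n,\cP'_n)\longrightarrow \HOM_{\Sp\TL_n}(\Imod_n,\cP'_n)
\]
is a stable equivalence. The proof is word-for-word that of Lemma~\ref{lem:END(P) = HOM(I,P)}: the cone $\Cone(\iota^*)\simeq \HOM(\Cone(\iota),\cP'_n)$ can be written, after replacing $\Cone(\iota)$ with a homotopy colimit of $\X(\delta)$'s with $\tau(\delta)<n$ and applying the adjunction of Equation~\eqref{eq:TL-adjunction} together with the duality of Section~\ref{sec:duals and mirrors}, as a homotopy limit of spectra of the form $\HOM(\Imod_n, \cP'_n\vertcomp \X(\delta^\vee))$. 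Each of these is contractible since $\cP'_n$ kills turnbacks and $\tau(\delta^\vee)<n$, so $\Cone(\iota^*)\simeq *$.

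First, I apply the generalized lemma with source $\cP_n$ and target $\cP'_n$: since $\iota'\in[\Imod_n,\cP'_n]$, there exists a morphism $\phi\colon \cP_n\to\cP'_n$, unique up to homotopy, such that $\phi\circ\iota\simeq \iota'$. This produces the diagram in Equation~\eqref{eq:projector-canonical}. Symmetrically, swapping the roles of $\cP_n$ and $\cP'_n$ yields $\psi\colon \cP'_n\to\cP_n$ with $\psi\circ\iota'\simeq\iota$, unique up to homotopy.

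Next I show $\phi$ is a homotopy equivalence. The composition $\psi\circ\phi\colon \cP_n\to\cP_n$ satisfies $(\psi\circ\phi)\circ\iota\simeq\psi\circ\iota'\simeq\iota$. But $\mathrm{id}_{\cP_n}\circ\iota=\iota$ as well, so applying the uniqueness part of the generalized lemma (now with both source and target equal to $\cP_n$, which recovers Lemma~\ref{lem:END(P) = HOM(I,P)} exactly) we conclude $\psi\circ\phi\simeq \mathrm{id}_{\cP_n}$. Symmetrically $\phi\circ\psi\simeq \mathrm{id}_{\cP'_n}$, so $\phi$ is a homotopy equivalence with homotopy inverse $\psi$, and is well-defined up to homotopy by its characterization as the preimage of $\iota'$ under the equivalence $\iota^*$.

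The only step that requires care is verifying that the argument of Lemma~\ref{lem:END(P) = HOM(I,P)} genuinely works with two distinct projectors in source and target; the key input is that the proof only ever used property~\eqref{itm:CK1} of the \emph{source} projector and property~\eqref{itm:CK2} of the \emph{target} projector. Once this separation of roles is made explicit, the rest of the argument is a formal triangle-chasing in the homotopy category, entirely parallel to the classical uniqueness of Cooper-Krushkal projectors in $\TL_n$.
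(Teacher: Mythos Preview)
Your proof is correct and complete. The route differs from the paper's, though both ultimately rest on the same input: property~\eqref{itm:CK1} for one projector and property~\eqref{itm:CK2} for the other.

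The paper constructs $\phi$ \emph{monoidally}: it observes that the map $\iota\vertcomp\Imap\colon \Imod_n\vertcomp\cP'_n\to\cP_n\vertcomp\cP'_n$ has cone $\Cone(\iota)\vertcomp\cP'_n\simeq *$, and similarly for $\Imap\vertcomp\iota'$, so that $\cP_n\simeq\cP_n\vertcomp\cP'_n\simeq\cP'_n$; a commutative square built from the monoidal unit maps and $\iota,\iota'$ then pins down $\phi$ and shows the triangle commutes. Uniqueness of $\phi$ is argued separately, by exactly the special case of Lemma~\ref{lem:END(P) = HOM(I,P)} you invoke at the end.

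Your approach instead bypasses the monoidal zig-zag entirely: you extend Lemma~\ref{lem:END(P) = HOM(I,P)} to the ``mixed'' statement that $\iota^*\colon\HOM(\cP_n,\cP'_n)\to\HOM(\Imod_n,\cP'_n)$ is an equivalence, and then both existence and uniqueness of $\phi$ drop out immediately as the preimage of $\iota'$. This is cleaner and more in the spirit of a Yoneda/localization argument; the paper's version has the minor advantage of exhibiting $\phi$ concretely as a composite of explicit equivalences through $\cP_n\vertcomp\cP'_n$, which resonates with the idempotency statement (Proposition~\ref{prop:projectors are idempotents}) proved immediately afterward. Your observation that the proof of Lemma~\ref{lem:END(P) = HOM(I,P)} uses only~\eqref{itm:CK1} of the source and~\eqref{itm:CK2} of the target is exactly right and is the key insight.
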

\begin{remark}
    In fact, the equivalence $\phi$ can be made even more coherent, but we will not need this.
\end{remark}
\begin{proof}
The map $\iota:\Imod_n\rightarrow\cP_n$ gives rise to a map
\[\Imod_n\vertcomp\cP'_n\xrightarrow{\iota\vertcomp \Imap} \cP_n\vertcomp\cP'_n\]
whose cone satisfies
\[\Cone(\iota\vertcomp \Imap) \simeq \Cone(\iota)\vertcomp \cP'_n\simeq *\]
where the last equivalence uses items \eqref{itm:CK1} and \eqref{itm:CK2} of Definition \ref{def:spectral projector}.  The result follows.

To see that we can choose $\phi$ essentially uniquely, consider the following commutative diagram, where the arrows either come from the monoidal structure of $\X(2n)$-modules or the morphisms $\iota,\iota'$:
\begin{equation}\label{eq:iotaiota}
\begin{tikzpicture}[baseline={([yshift=-.8ex]current  bounding  box.center)},xscale=2.5,yscale=1]
\node (a0) at (0,1) {$\cP_n$};
\node (a1) at (1,1) {$\cI_n\vertcomp\cP_n$};
\node (b0) at (-0.5,0) {$\cI_n$};
\node (b1) at (0.5,0) {$\cI_n\vertcomp\cI_n$};
\node (b2) at (1.5,0) {$\cP'_n\vertcomp\cP_n$};
\node (c0) at (0,-1) {$\cP'_n$};
\node (c1) at (1,-1) {$\cP'_n\vertcomp \cI_n$};

\draw[->] (a0) -- (a1); 
\draw[->] (b0) -- (a0);
\draw[->] (b1) -- (a1);
\draw[->] (a1) -- (b2);
\draw[->] (b0) -- (b1);
\draw[->] (b1) -- (b2);
\draw[->] (b0) -- (c0);
\draw[->] (b1) -- (c1);
\draw[->] (c1) -- (b2);
\draw[->] (c0) -- (c1);

\end{tikzpicture}
\end{equation}
Choosing a homotopy inverse for either $\id \vertcomp \iota$ or $\iota'\vertcomp \id$ determines the homotopy class $\phi$; using (\ref{eq:iotaiota}), it follows that the diagram of the Proposition statement is homotopy commutative.  For uniqueness of $\phi$ up to homotopy, suppose that $\phi_1,\phi_2$ both fit into (\ref{eq:projector-canonical}).  Let $\psi_2$ be a homotopy inverse for $\phi_2$.  Then $(\Id-\psi_2\phi_1)\iota\simeq *$ by construction, and Lemma \ref{lem:END(P) = HOM(I,P)} allows us to conclude that $\Id-\psi_2\phi_1$ is nullhomotopic, and so $\phi_1\simeq\phi_2$, as needed.
\end{proof}

\begin{proposition}\label{prop:projectors are idempotents}
If $(\cP_n,\iota)$ is a spectral projector, then $(\cP_n\vertcomp\cP_n,\iota\vertcomp\iota)$ is also a spectral projector.  In particular, $\cP_n\vertcomp\cP_n\simeq\cP_n$.
\end{proposition}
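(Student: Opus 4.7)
The plan is to verify the two defining properties of a spectral projector for $(\cP_n\vertcomp\cP_n,\iota\vertcomp\iota)$ and then invoke the uniqueness result (Proposition \ref{prop:projector unique}) to deduce the homotopy equivalence $\cP_n\vertcomp\cP_n\simeq\cP_n$.

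First I would verify the turnback-killing condition (\ref{itm:CK2}). Given any $\cN\in\Sp\TL_n$ with $\tau(\cN)<n$, associativity of $\vertcomp$ and the hypothesis that $\cP_n$ kills turnbacks give
\[
\cN\vertcomp(\cP_n\vertcomp\cP_n)\simeq(\cN\vertcomp\cP_n)\vertcomp\cP_n\simeq *\vertcomp\cP_n\simeq *,
\]
and analogously from above. So $\cP_n\vertcomp\cP_n$ kills turnbacks. (Alternatively, $\chainsfunc(\cP_n\vertcomp\cP_n)\simeq\chainsfunc(\cP_n)\vertcomp\chainsfunc(\cP_n)$ kills turnbacks in $\TL_n$ by the same formal argument, and one invokes Proposition \ref{prop:turnback killing can be done on chains}.)

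Next I would verify the through-degree condition (\ref{itm:CK1}) on $\Cone(\iota\vertcomp\iota)$. The key observation is the factorization
\[
\Imod_n\simeq\Imod_n\vertcomp\Imod_n\xrightarrow{\iota\vertcomp\id}\cP_n\vertcomp\Imod_n\xrightarrow{\id\vertcomp\iota}\cP_n\vertcomp\cP_n,
\]
whose composition is $\iota\vertcomp\iota$ (up to the canonical unit equivalence $\Imod_n\vertcomp\Imod_n\simeq\Imod_n$, which requires a brief check using the planar algebraic structure but is standard). Because $\vertcomp$ commutes with homotopy colimits in each variable (Proposition \ref{prop:hocolim properties}\ref{it:smash preserves hocolim}), we have
\[
\Cone(\iota\vertcomp\id)\simeq\Cone(\iota)\vertcomp\Imod_n\simeq\Cone(\iota),\quad \Cone(\id\vertcomp\iota)\simeq\cP_n\vertcomp\Cone(\iota),
\]
both of which have through-degree $<n$ (the second because $\tau(\cM\vertcomp\cN)\leq\min(\tau(\cM),\tau(\cN))$). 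The standard cofibration sequence associated to a composition (``octahedral axiom'' for cones in a stable setting) then yields
\[
\Cone(\iota\vertcomp\id)\to\Cone(\iota\vertcomp\iota)\to\Cone(\id\vertcomp\iota),
\]
exhibiting $\Cone(\iota\vertcomp\iota)$ as a mapping cone of objects of through-degree $<n$, hence itself of through-degree $<n$.

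Finally, with both axioms verified, $(\cP_n\vertcomp\cP_n,\iota\vertcomp\iota)$ is a spectral projector; applying Proposition \ref{prop:projector unique} to the two spectral projectors $(\cP_n,\iota)$ and $(\cP_n\vertcomp\cP_n,\iota\vertcomp\iota)$ produces the desired equivalence $\cP_n\vertcomp\cP_n\simeq\cP_n$. The main subtlety to be careful about is the cofibration sequence comparing the three cones in our setting of spectral multimodules; the cleanest route is to realize both $\iota\vertcomp\id$ and $\id\vertcomp\iota$ as maps in $\SpmMod$ and build the comparison as a homotopy colimit over a based square, invoking Proposition \ref{prop:cones of hocolims} to identify each piece.
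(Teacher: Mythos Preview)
Your proposal is correct and follows essentially the same approach as the paper: verify the two axioms and then invoke uniqueness. The only difference is a minor streamlining in the paper's treatment of (\ref{itm:CK1}). Where you factor $\iota\vertcomp\iota$ and apply the octahedral axiom to sandwich $\Cone(\iota\vertcomp\iota)$ between two cones of through-degree $<n$, the paper observes (via the diagram \eqref{eq:iotaiota} from the proof of Proposition \ref{prop:projector unique}) that one of the two maps in your factorization, namely $\iota\vertcomp\Id\colon \Imod_n\vertcomp\cP_n\to\cP_n\vertcomp\cP_n$, is already an \emph{equivalence}: its cone is $\Cone(\iota)\vertcomp\cP_n\simeq *$ since $\cP_n$ kills turnbacks. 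Hence $\Cone(\iota\vertcomp\iota)\simeq\Cone(\Id\vertcomp\iota)\simeq\Cone(\iota)$ directly, without needing to pass through a cofiber sequence of cones. Your argument is perfectly fine; the paper's just saves a step.
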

\begin{proof}
It is clear from the monoidal structure that $\cP_n\vertcomp\cP_n$ will kill turnbacks.  The diagram \eqref{eq:iotaiota} shows that $\Cone(\iota\vertcomp\iota) \simeq \Cone(\Id\vertcomp\iota)\simeq\Cone(\iota)$.
\end{proof}

We will also require a similar statement about `absorbing smaller projectors'.

\begin{proposition}\label{prop:projector absorbs smaller projectors}
For any $\ell \leq n$, let $\proj_\ell\in\Sp\TL_\ell$ and $\proj_n\in\Sp\TL_n$ be spectral projectors.  Then
\[\proj_n\vertcomp (\proj_\ell \horizcomp \Imod_{n-\ell}) \simeq \proj_n.\]
That is to say, `projectors absorb smaller projectors'.
\end{proposition}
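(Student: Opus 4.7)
The plan is to reduce the claim to the two defining features of $\proj_n$ (Definition \ref{def:spectral projector}), by feeding it a map whose cone has small through-degree coming from $\proj_\ell$.

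First, I would use the structure map $\iota_\ell\colon \Imod_\ell \to \proj_\ell$, whose cone $C_\ell := \Cone(\iota_\ell)$ has through-degree $<\ell$. Horizontally composing with $\Imod_{n-\ell}$ produces
\[
\Imod_\ell \horizcomp \Imod_{n-\ell} \xrightarrow{\iota_\ell \horizcomp \Imap} \proj_\ell \horizcomp \Imod_{n-\ell},
\]
and since $\horizcomp$ (being multicomposition in $\thTang$) commutes with homotopy colimits in each slot, the cone of this map is equivalent to $C_\ell \horizcomp \Imod_{n-\ell}$. Any Temperley-Lieb diagram $\delta$ of through-degree $<\ell$ horizontally composed with the $(n-\ell)$-strand identity tangle yields a flat $(n,n)$-tangle of through-degree at most $(\ell-1)+(n-\ell) = n-1$, so $\tau(C_\ell\horizcomp \Imod_{n-\ell})<n$. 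Under the obvious identification $\Imod_\ell \horizcomp \Imod_{n-\ell} \simeq \Imod_n$ of identity modules, we therefore obtain a map $\Imod_n \to \proj_\ell \horizcomp \Imod_{n-\ell}$ whose cone has through-degree $<n$.

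Now I would vertically compose on the left with $\proj_n$. Because $\vertcomp$ also commutes with homotopy colimits, the induced map
\[
\proj_n \vertcomp \Imod_n \longrightarrow \proj_n \vertcomp (\proj_\ell \horizcomp \Imod_{n-\ell})
\]
has cone $\proj_n \vertcomp (C_\ell \horizcomp \Imod_{n-\ell})$. By property (\ref{itm:CK2}) of Definition \ref{def:spectral projector}, $\proj_n$ kills turnbacks from below, so this cone is contractible. Since $\proj_n \vertcomp \Imod_n \simeq \proj_n$ (the $\vertcomp$-unit), the displayed map is an equivalence, giving the desired $\proj_n \vertcomp (\proj_\ell \horizcomp \Imod_{n-\ell})\simeq \proj_n$.

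There is no serious obstacle here; the only subtlety worth emphasizing is the additivity of through-degree under $\horizcomp$ applied to a homotopy colimit presentation of $C_\ell$, which is where the inequality $<\ell$ for $C_\ell$ is converted into the inequality $<n$ needed to invoke the turnback-killing property of $\proj_n$.
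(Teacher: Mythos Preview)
Your proof is correct and is essentially identical to the paper's own argument: both use $\iota_\ell\horizcomp\Imap$ to produce a map $\Imod_n\to\proj_\ell\horizcomp\Imod_{n-\ell}$ whose cone has through-degree $<n$, then vertically compose with $\proj_n$ and invoke turnback-killing to conclude the cone is contractible. The only difference is that you spell out the through-degree bookkeeping under $\horizcomp$ more explicitly than the paper does.
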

\begin{proof}
The smaller projector $\proj_\ell$ comes equipped with a map $\Imod_\ell\xrightarrow{\iota_\ell}\proj_\ell$ whose cone is of lesser through-degree.  Therefore the map
\[\proj_n\vertcomp (\proj_\ell \horizcomp \Imod_{n-\ell}) \xrightarrow{ \id\vertcomp (\iota_\ell\horizcomp \id) } \proj_n\vertcomp\Imod_n\]
will be an equivalence since its cone will be contractible (since $\proj_n$ kills turnbacks).
\end{proof}

There are similar equivalences for composing $\proj_n$ with a smaller projector on any of the strands, not just the first $\ell$ of them.  The precise statements are left to the reader.

A similar proof shows that $\proj_n$ is able to `absorb crossings' as in the following proposition.
\begin{proposition}\label{prop:projectors absorb crossings}
Let $\cP_n\in\Sp\TL_n$ be a spectral projector, and let $\X(\sigma_i^{\pm 1})\in\Sp\TL_n$ denote the Khovanov spectrum associated to a braid group generator (crossing) $\sigma_i^{\pm 1}$.
\begin{itemize}
    \item For right-handed crossings, $\cP_n\vertcomp\X(\sigma_i) \simeq \X(\sigma_i)\vertcomp\cP_n \simeq \q^2\Sigma\cP_n$, and
    \item For left-handed crossings, $\cP_n\vertcomp\X(\sigma_i^{-1}) \simeq \X(\sigma_i^{-1})\vertcomp\cP_n \simeq \q\cP_n$.
\end{itemize}
\end{proposition}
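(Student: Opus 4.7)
My plan is to prove this by applying the crossing cofibration sequence of Proposition \ref{prop:crossing cofib seq} and using the turnback-killing property of $\cP_n$ to collapse one side. For a positive crossing $\sigma_i$, the 0- and 1-resolutions are (locally) $\Imod_n$ and $\X(e_i)$ respectively, so Proposition \ref{prop:crossing cofib seq} gives
\[
\X(\sigma_i) \;\simeq\; \Cone\bigl(\q^2\,\Imod_n \xrightarrow{\X(s)} \q\,\X(e_i)\bigr),
\]
where $\X(s)$ is the saddle. For the negative crossing $\sigma_i^{-1}$ the roles of the two resolutions are swapped:
\[
\X(\sigma_i^{-1}) \;\simeq\; \Cone\bigl(\q^2\,\X(e_i) \xrightarrow{\X(s')} \q\,\Imod_n\bigr).
\]

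Next, I would vertically compose on the left with $\cP_n$. Since $\vertcomp$ is a derived smash product over $\X(2n)$, item \ref{it:smash preserves hocolim} of Proposition \ref{prop:hocolim properties} implies that $\cP_n\vertcomp(-)$ preserves homotopy colimits and in particular mapping cones. Combined with the unit equivalence $\cP_n\vertcomp \Imod_n\simeq \cP_n$ and the turnback-killing equivalence $\cP_n\vertcomp\X(e_i)\simeq *$ supplied by Proposition \ref{prop:killing turnbacks}, this yields
\[
\cP_n\vertcomp\X(\sigma_i) \;\simeq\; \Cone\bigl(\q^2\,\cP_n \to *\bigr) \;\simeq\; \q^2\Sigma\cP_n,
\]
and
\[
\cP_n\vertcomp\X(\sigma_i^{-1}) \;\simeq\; \Cone\bigl(* \to \q\,\cP_n\bigr) \;\simeq\; \q\,\cP_n,
\]
as desired. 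The equivalences $\X(\sigma_i^{\pm 1})\vertcomp\cP_n\simeq \q^2\Sigma\cP_n$ and $\q\cP_n$ follow by the symmetric argument, composing on the right instead of the left and using that $\cP_n$ kills turnbacks from above as well as from below.

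I expect the argument to be essentially formal given the machinery already developed; the only point requiring care is tracking the grading conventions so that the shifts $\q^2\Sigma$ (positive crossing) and $\q$ (negative crossing) come out correctly, which amounts to remembering which resolution is labelled $0$ versus $1$ in each case (this is dictated by the convention in Proposition \ref{prop:crossing cofib seq}, namely that the saddle goes from $T_0$ to $T_1$). Apart from that bookkeeping, there is no essential obstacle: once the crossing is written as a two-term mapping cone and one term is rendered contractible by the projector, the cone degenerates into either a shifted suspension or the surviving term itself.
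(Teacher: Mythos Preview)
Your argument is correct and is exactly the approach the paper takes: apply the crossing cofibration sequence of Proposition~\ref{prop:crossing cofib seq}, note that one of the two resolutions is the turnback $e_i$, and use that $\cP_n$ kills turnbacks to collapse the cone to a shifted copy of $\cP_n$. The paper's proof is a one-line sketch of precisely this; your version simply fills in the grading bookkeeping.
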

\begin{proof}
The equivalences are induced by those in the cofibration sequence of Equation \eqref{eq:crossing cofib seq}, since one of the three terms contains a turnback.
\end{proof}

\section{Construction of the Spectral Projector via Infinite Twists}\label{sec:construction}

In \cite{Wil_colored}, the second author generalized an argument of Rozansky \cite{Rozansky} to show that the Khovanov spectrum of an infinite twist on $n$ strands satisfies the properties of a spectral projector.  At that time the Khovanov spectrum was not yet defined as a spectral multimodule, and so the theorems in \cite{Wil_colored} are stated for infinite twists with specified closures, and the proofs rely on restricting to specific $q$-gradings.  In $\Sp\TL_n$ however, the graded action of $\X(2n)=\sarc_{2n}$ prevents this approach from being directly applicable.

Nevertheless, the arguments in \cite{Wil_colored} can be adapted to the spectral multimodule setting, and we will use them to show that the infinite twist spectrum continues to play the role of a spectral projector in $\Sp\TL_n$.  In the process, we will improve some of the bounds involved.

In diagrams in this and following sections, we will sometimes write $T$ for the Khovanov spectrum $\X(T)$ associated to a tangle $T$, where it will not cause confusion.  In particular we will often write $e_i$ for the Khovanov spectrum $\X(e_i)$ associated to the elementary turnback (Definition \ref{def:ei turnbacks}) $e_i$.

\begin{definition}\label{def:frac twist}
The \emph{left-handed fractional twist spectrum on $n$ strands}, denoted $\T_n$, is the Khovanov spectrum associated to the $(n,n)$ tangle shown in Figure \ref{fig:frac twist} (with grading shift $P=0$).  Superscripts indicate vertical composition:
\[\T_n^m:=\T_n\vertcomp \T_n \vertcomp \cdots \vertcomp \T_n,\]
with $\T_n^0:=\Imod_n$, the spectrum for the identity tangle of vertical lines.  The \emph{left-handed full twist spectrum} is $\FT_n:=\T_n^n$, with similar superscript notation $\FT_n^k:=\T_n^{nk}$; an example is also shown in Figure \ref{fig:frac twist}.

By convention, $\T_0^k:=\sphere$ is the module assigned to the empty diagram.
\end{definition}

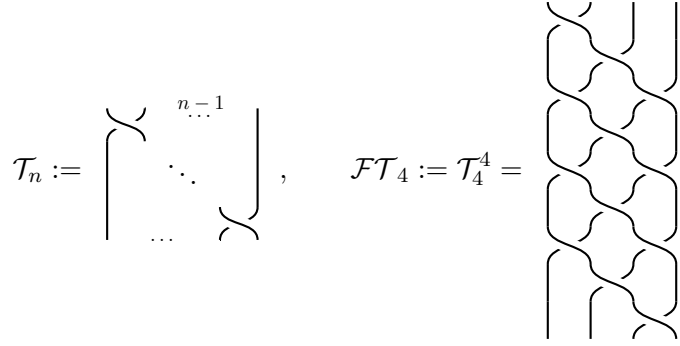
\begin{figure}
    \[
    \T_n:=
    \vcenter{\hbox{
        \begin{tikzpicture}
            \negtwist{0}{0}{2}{-1.75}
        \end{tikzpicture}
        }}
    \,,\quad\quad
    \FT_4:=\T_4^4=\vcenter{\hbox{
        \begin{tikzpicture}[xscale=.2mm,yscale=.175mm]
            \FTfourEX
        \end{tikzpicture}
        }}
    \]
    \caption{The left-handed fractional twist spectrum $\T_n$ is the (Khovanov spectrum associated to) the braid on the left, where one strand passes over the other $n-1$ strands, creating $n-1$ left-handed crossings in the process.  We also show an example of a full twist $\FT_n$ in the case $n=4$.}
    \label{fig:frac twist}
\end{figure}

The cofibration sequence of Proposition \ref{prop:crossing cofib seq} can be redrawn for our left-handed crossings appearing in $\T_n$ as

\begin{equation}\label{eq:cofib seq for negcros}
\X\left( \ILvres ,P\right) \rightarrow
\q^{-1}\X\left( \ILnegcros ,P \right) \rightarrow
\q\Sigma\X\left( \ILhres ,P \right).
\end{equation}

\begin{definition}
\label{def:inf twist}
For any $n\in\N$, the \emph{left-handed infinite twist spectrum} $\T_n^\infty$ is defined as the homotopy colimit of the sequence
\begin{equation}
\label{eq:inf twist seq def}
\T_n^\infty:= 
\hocolim \left( \T_n^0 \rightarrow \q^{-(n-1)}\T_n^1 \rightarrow\cdots \q^{-C(\T_n^m)}\T_n^m\rightarrow\cdots \right)
\end{equation}
where $C(\T_n^m)=m(n-1)$ denotes the number of crossings in $\T_n^m$, and each of the maps is a composition of maps arising from Equation \eqref{eq:cofib seq for negcros}.
\end{definition}

The key to understanding $\T_n^\infty$ comes from understanding the cones of the maps involved in the sequence \eqref{eq:inf twist seq def}.  The arguments are essentially equivalent to those given in \cite{Wil_colored}, but the notation and conventions in that paper are slightly different.  For completeness we provide a streamlined version here.

We begin with a well-known topological lemma on sliding turnbacks through finite twists.  Fix a number of strands $n$.  Let $\eitop,\eibot$ denote the individual turnbacks at the top and bottom of the standard turnback $e_i=\eitop\vertcomp\eibot$.  We also introduce the notational convention 
\[\eitop[0]:=
\ILtikzpic[scale=.6]{
\draw[thick] (0,2)--(0,0);
\draw[thick] (3,2)--(3,0);
\drawover{ (-1,2) to[out=-90,in=180] (0,1) -- (3,1) to[out=0,in=-90] (4,2); }
\nstrandsalongpath[n-2]{0,2}{3,2}
}.
\]

\begin{lemma}\label{lem:pull tb thru twist}
For any $0\leq r<n$ and $0<i<n$, we have
\[\T_n^{nk+r}\vertcomp e_i \simeq \q^{6k(n-1)+s_i} \eitop[i'] \vertcomp\T_{n-2}^{(n-2)k+r_i'} \vertcomp \eibot[i],\]
where $i'=i+r \mod n$ and $r'_i,s_i$ are determined as follows.
\begin{enumerate}
    \item If $i<n-r$, then $r'_i=r$ and $s_i=3r$. \label{it:pull tb case less than}
    \item If $i=n-r$, then $r'_i=r-1$ and $s_i=3r$. \label{it:pull tb case equal}
    \item If $i>n-r$, then $r'_i=r-2$ and $s_i=3(n+r-2)$. \label{it:pull tb case greater than}
\end{enumerate}
\end{lemma}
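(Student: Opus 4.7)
The plan is to prove Lemma 5.3 by induction on $k$ with a sub-induction on $r$, realizing the claimed equivalence through an explicit isotopy of $(n,n)$-tangles and invoking Theorem \ref{thm:X well-defined}. The grading shift $q^{6k(n-1)+s_i}$ is tracked via the relation $\X(T,P)=q^{-3P}\Sigma^{-P}\X(T,0)$ applied to the R1/R2 moves used in the isotopy, where any $\Sigma$-shift cancels against contributions from both sides so that only a $q$-shift remains.

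For the base case $k=0$, I would sub-induct on $r$; the case $r=0$ is immediate since both sides of the claimed equivalence reduce to $e_i$. In the inductive step, I would write $\T_n^{r+1}\vertcomp e_i = \T_n\vertcomp(\T_n^r\vertcomp e_i)$, apply the inductive hypothesis to the inner composition, and then analyze how the outer fractional twist $\T_n$ interacts with the existing top turnback $\eitop[i']$. The three sub-cases of the lemma correspond to where the ``passover strand'' of the new $\T_n$ (the single strand that passes over the other $n-1$) lands relative to this turnback. In the case $i<n-r$ the passover strand misses the turnback region entirely, so $\T_n$ slides past $\eitop[i']$ via R3 moves alone, with no shift, adding a new fractional twist on the inner $(n-2)$ strands. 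In the case $i=n-r$ the passover strand lands exactly at the turnback, where a sequence of R2 cancellations produces the shift $q^{3r}$ and reduces the inner twist count by one; this case introduces the wrapping notation $\eitop[0]$. In the case $i>n-r$ the passover strand crosses fully through the turnback region, so additional R1/R2 moves are needed, reducing the inner twist count by two and yielding the larger shift $q^{3(n+r-2)}$.

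For the inductive step on $k$, I would write $\T_n^{n(k+1)+r}\vertcomp e_i = \FT_n\vertcomp(\T_n^{nk+r}\vertcomp e_i)$, apply the inductive hypothesis to the bottom, and then reduce to a separately established ``full twist absorption'' identity
\[\FT_n\vertcomp\eitop[j]\vertcomp\cN \simeq q^{6(n-1)}\eitop[j]\vertcomp\FT_{n-2}\vertcomp\cN\]
valid for any suitably-shaped $(n-2)$-strand piece $\cN$. This identity follows from the same style of isotopy: the two strands of the full twist that interact with the turnback can be pulled up through it via a cascade of R2 cancellations, leaving $\FT_{n-2}$ in the center and contributing precisely the per-full-twist shift $q^{6(n-1)}$.

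The main obstacle is the careful bookkeeping of the grading shift. In each of the three sub-cases one must count exactly which R1/R2 moves appear in the isotopy, verify that the $\Sigma$-contributions cancel, and check that the $q$-contributions sum to the stated values. Closely related computations are carried out in \cite{Wil_colored} for closed-link Khovanov spectra; the present statement refines them by making explicit the dependence of the shift on the fractional part $r$ through the three cases for $s_i$ and $r'_i$.
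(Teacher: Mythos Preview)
Your approach---realizing the equivalence via an explicit tangle isotopy and tracking the Reidemeister moves---is exactly the paper's. The paper, however, organizes it more directly and avoids induction: it first pulls the turnback through all $k$ full twists at once (using $2k$ R1 moves and $2k(n-2)$ R2 moves, each contributing $\q^3$, for the total $\q^{6k(n-1)}$ and leaving the turnback still at position $i$), and then pulls through the remaining fractional piece $\T_n^r$ in a single step, counting moves in each of the three cases: $r$ R2 moves in case~(1); one R1 and $r-1$ R2 moves in case~(2); two R1 and $n+r-4$ R2 moves in case~(3).

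Your single-step description in case~(1) contains an error. The passover strand of a single $\T_n$ crosses \emph{every} other strand, so it always crosses both legs of the turnback; there is no sense in which it ``misses the turnback region entirely.'' Pulling the turnback past it requires one R2 move (not R3 alone), contributing a $\q^3$ shift per fractional twist---exactly what sums to $\q^{3r}$ over $r$ twists. Your proposed per-step ``no shift'' would give the wrong total. Beyond this, inducting on $r$ is awkward because the turnback position $i'$ advances by one at each step, so a single inductive run can pass through several of the three cases; the paper's direct count avoids this bookkeeping.
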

\begin{proof}
We pull the turnback through twists, recording the Reidemeister moves used.  We first pull through the full twists $\T_n^{nk}$, so that
\begin{align*}
\T_n^{nk+r} \vertcomp e_i &=
\T_n^r \vertcomp \T_n^{nk} \vertcomp \eitop \vertcomp \eibot \\
&\cong \q^{6k(n-1)} \T_n^r \vertcomp \eitop \vertcomp \T_{n-2}^{(n-2)k} \vertcomp \eibot.
\end{align*}
This isotopy consists of $2k$ Reidemeister I moves and $2k(n-2)$ Reidemeister II moves, each of which shifts $q$-grading by $3$.  Note that at this stage, the placement $i$ of the turnback has not changed.

We then pull through the remaining fractional twist $\T_n^r$, and check case by case the three statements above.  In case \eqref{it:pull tb case less than} the turnback does not need to `sweep around the back', and we accomplish the isotopy using only $r$ Reidemeister II moves.  In case \eqref{it:pull tb case greater than} the turnback must `sweep around the back', using $2$ Reidemeister I moves and $n+r-4$ Reidemeister II moves.  Finally, case \eqref{it:pull tb case equal} describes the case where the turnback only sweeps `halfway' around the back, leading to the need for $\eitop[0]$.  This case uses $1$ Reidemeister I move and $r-1$ Reidemeister II moves.  
\end{proof}

\begin{corollary}\label{cor:Cone of Tm to Tm+1}
Fix a number of strands $n$.  Then for any $m=nk+r$ (with $r<n$), the mapping cone
\[\mathcal{C}_{m+1}:=\Cone \left( \q^{-C(\T_n^{m})}\T_n^{m}\rightarrow \q^{-C(\T_n^{m+1})}\T_n^{m+1} \right)\]
is equivalent to a homotopy colimit
\[\mathcal{C}_{m+1}\simeq \hocolim \left(\basedcube^{n-1}\rightarrow \Sp\TL_n\right)\]
where every non-trivial term is of the form
\begin{equation}\label{eq:simplified terms in twist cone}
\q^{a_i-C(\T_n^{m+1})+6k(n-1)+s_i} \, \eitop[i'] \vertcomp\T_{n-2}^{(n-2)k+r_i'} \vertcomp \eibot[i] \vertcomp \delta
\end{equation}
for some flat $(n,n)$-tangle module $\delta$ and some $a_i\geq i-1$ (here $r_i',s_i$ are as defined in Lemma \ref{lem:pull tb thru twist}).

In particular $\mathcal{C}_{m+1}$ is equivalent to a homotopy colimit of shifted flat tangle modules $\q^{b_\epsilon}\Sigma^{c_\epsilon}\epsilon$ with a global bound on $\q$-grading shifts
\begin{equation}\label{eq:twist cone minimal q degree}
b_\epsilon \geq B_{m+1}:= m+nk+1-n.
\end{equation}
\end{corollary}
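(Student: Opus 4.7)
The strategy has three steps: first, realize $\mathcal{C}_{m+1}$ as a based cubical homotopy colimit; second, identify each non-trivial vertex via an $e_i$-factorization and Lemma \ref{lem:pull tb thru twist}; third, bound the global $q$-shift.

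I would begin by iterating the cone description \eqref{eq:crossing mapping cone} over the $n-1$ crossings of $\T_n$ to present $\T_n$ as a homotopy colimit indexed by a based cube built from $n-1$ copies of $\basedcube^1$ (which, by Lemma \ref{lem:products of cubes}, is cofinally $\basedcube^{n-1}$). Under the paper's convention for negative crossings, the vertex $\vec{v}$ corresponding to all vertical (identity) resolutions contributes $q^{n-1}\Imod_n$, while each other vertex contributes $q^{2(n-1)-|\vec{v}|}\Sigma^{c_{\vec{v}}}\X(\delta_{\vec{v}})$ for a flat $(n,n)$-tangle $\delta_{\vec{v}}$ containing at least one elementary turnback. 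Smashing with $\T_n^m$ and using \ref{it:smash preserves hocolim} of Proposition \ref{prop:hocolim properties} gives a parallel cubical description of $\T_n^{m+1}$, and after accounting for the shift $-C(\T_n^{m+1}) = -C(\T_n^m)-(n-1)$, the first map of \eqref{eq:inf twist seq def} is identified with the inclusion of the all-vertical vertex. The cofiber of this inclusion is then itself a homotopy colimit over $\basedcube^{n-1}$ (with that vertex replaced by $\ast$), by Definition \ref{def:mapping cone} together with Proposition \ref{prop:cones of hocolims}.

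Next, I would analyze each non-trivial vertex. For a non-identity resolution $\vec{v}$, let $i$ be the smallest crossing index that is horizontally resolved. The first $i-1$ crossings are then vertically resolved to identity strands, and the $i$-th horizontal resolution creates an elementary turnback $e_i$ at the top of $\T_n$, followed by a flat tangle $\delta$ assembled from the remaining resolutions of crossings $i+1, \dots, n-1$. This yields a factorization $\X(\delta_{\vec{v}}) \simeq \X(e_i) \vertcomp \X(\delta)$, so the vertex contribution becomes $\T_n^m \vertcomp \X(e_i) \vertcomp \X(\delta)$. Lemma \ref{lem:pull tb thru twist} then rewrites $\T_n^m \vertcomp \X(e_i)$ as $q^{6k(n-1)+s_i}\eitop[i']\vertcomp\T_{n-2}^{(n-2)k+r_i'}\vertcomp\eibot[i]$, producing precisely the form asserted in \eqref{eq:simplified terms in twist cone}. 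The constant $a_i$ collects the cube's intrinsic $q$-shift at $\vec{v}$: since the first $i-1$ vertical resolutions contribute $q^{i-1}$ and the $i$-th horizontal contributes $q^2$, we have $a_i \geq i+1$, in particular $a_i \geq i-1$.

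Finally, the global bound \eqref{eq:twist cone minimal q degree} would follow by inserting $a_i \geq i-1$ and $C(\T_n^{m+1}) = (m+1)(n-1)$ into the total $q$-shift expression $a_i - C(\T_n^{m+1}) + 6k(n-1) + s_i$ and minimizing over $i \in \{1,\dots,n-1\}$ according to the three cases of Lemma \ref{lem:pull tb thru twist}. Substituting $m = nk+r$, a direct calculation case-by-case yields the minimum $B_{m+1} = m+nk+1-n$.

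I expect the main obstacle to lie in rigorously constructing the $\basedcube^{n-1}$-indexed description of $\mathcal{C}_{m+1}$: the cofiber arises from the inclusion of the all-vertical vertex rather than the $\vec{0}$-vertex the based cube is designed for, so care is needed in either reindexing the cube or invoking Proposition \ref{prop:cones of hocolims} together with Lemma \ref{lem:products of cubes} to collapse the product of based cubes. The three independent sources of $q$- and $\Sigma$-shifts (the cube itself, the global $-C(\T_n^{m+1})$, and the Lemma's $6k(n-1)+s_i$ depending on the case of $i$ relative to $n-r$) must also be tracked carefully throughout the minimization argument.
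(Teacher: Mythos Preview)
Your first two steps match the paper's argument essentially verbatim: realize $\mathcal{C}_{m+1}$ as the cube of resolutions for the new $\T_n$ with the identity vertex blanked out, then factor each non-trivial vertex as $\T_n^m\vertcomp e_i\vertcomp\delta$ and invoke Lemma~\ref{lem:pull tb thru twist}.

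The third step, however, has a genuine gap. The expression you propose to minimize, $a_i - C(\T_n^{m+1}) + 6k(n-1) + s_i$, is only the $q$-shift \emph{prefactor} appearing in \eqref{eq:simplified terms in twist cone}. But the object sitting behind that prefactor, $\eitop[i']\vertcomp\T_{n-2}^{(n-2)k+r_i'}\vertcomp\eibot[i]\vertcomp\delta$, is \emph{not} a flat tangle module: $\T_{n-2}^{(n-2)k+r_i'}$ still has $(n-3)\bigl((n-2)k+r_i'\bigr)$ crossings, and $\eitop[i']$ has $n-2$ crossings in case~\eqref{it:pull tb case equal}. To reach flat modules $\epsilon$ you must further expand these into their own cubes of resolutions, and each such resolution contributes an additional $q$-shift of at least $C(\T_{n-2}^{(n-2)k+r_i'})+C(\eitop[i'])$. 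Without this contribution your expression behaves like $(n-1)(6-n)k$ for large $k$, which for $n>6$ goes to $-\infty$ rather than to $B_{m+1}=2nk+r+1-n$. The paper handles exactly this by adding $C(\T_{n-2}^{(n-2)k+r_i'})+C(\eitop[i'])$ to the prefactor before minimizing, which produces the crucial $+2nk$ term after simplification.
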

\begin{proof}
The desired cone $\mathcal{C}_{m+1}$ is a hocolim taken over the cube of resolutions for $\T_n$ (but with the identity term replaced by a trivial basepoint term) concatenated with the remaining $\T_n^m$.  Thus every non-trivial term in this hocolim has the form
\[\q^{a_i-C(\T_n^{m+1})} \T_n^{nk+r} \vertcomp \X(e_i) \vertcomp \X(\delta),\]
where $a_i\geq i-1$ (because the $i-1$ crossing resolutions `to the left' of $e_i$ were all 0-resolutions) and $\delta$ is some flat $(n,n)$-tangle.  We then apply Lemma \ref{lem:pull tb thru twist} to arrive at terms of the form \eqref{eq:simplified terms in twist cone}.

To arrive at the global bound for minimal $\q$-shifts appearing amongst all such terms, we consider that for any tangle $T$, the minimal $\q$-shift appearing in the cube of resolutions for $\X(T)$ is precisely $C(T)$, the number of crossings in $T$; see Equation \eqref{eq:crossing mapping cone}.  Therefore the minimum shift appearing for a term of the form \eqref{eq:simplified terms in twist cone} will be
\[\q_{\min}(i):= a_i-C(\T_n^{m+1})+6k(n-1)+s_i + C(\T_{n-2}^{(n-2)k+r_i'}) +C(\eitop[i']).\]
(Note that $\eibot$ has no crossings, but $\eitop[i']$ may have crossings.)  We simplify this expression down to
\[\q_{\min}(i)= 2nk-2r -(n-3)(r-r_i') +C(\eitop[i']) + s_i +a_i +1-n,\]
and then take the minimum across all $i$ using Lemma \ref{lem:pull tb thru twist}, while noting that $a_i\geq i-1\geq 0$ and $C(\eitop[i'])=n-2$ precisely in Case \eqref{it:pull tb case equal} of Lemma \ref{lem:pull tb thru twist} (and is zero otherwise), to show that
\[\min_i(\q_{\min}(i)) \geq 2nk+r+1-n = B_{m+1}\]
as desired.
\end{proof}

The next two theorems provide the proof of the first portion of Theorem \ref{intro thm: spectral Pn is tori} from the Introduction; the second portion of Theorem \ref{intro thm: spectral Pn is tori} about endomorphisms then follows from Corollary \ref{cor:END(P) = closure(P)}.

\begin{theorem}
\label{thm:inf twist is projector}
For any $n\in\N$, the infinite twist spectrum $\T_n^\infty$ is a spectral projector.
\end{theorem}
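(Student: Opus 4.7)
The plan is to verify the two conditions in Definition \ref{def:spectral projector} for the pair $(\T_n^\infty, \iota)$, where $\iota\colon \Imod_n = \T_n^0 \to \T_n^\infty$ is the canonical map into the hocolim of Definition \ref{def:inf twist}. I would proceed by induction on $n$, with the base cases $n\leq 1$ being trivial since no nontrivial turnbacks exist and $\T_n^\infty\simeq\Imod_n$.

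For condition \eqref{itm:CK1}, I would identify $\Cone(\iota)$ with a hocolim of the telescoped cones $\mathcal{C}_{m+1}$ via Proposition \ref{prop:cones of hocolims} together with item \ref{it:hocolim of product} of Proposition \ref{prop:hocolim properties}. Corollary \ref{cor:Cone of Tm to Tm+1} expresses each $\mathcal{C}_{m+1}$ as a hocolim of shifted terms of the form $\eitop[i'] \vertcomp \T_{n-2}^{\cdot} \vertcomp \eibot[i] \vertcomp \delta$, all of which visibly have through-degree at most $n-2 < n$, so this bound passes to $\Cone(\iota)$.

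For condition \eqref{itm:CK2}, Proposition \ref{prop:killing turnbacks} reduces the question to showing $\T_n^\infty \vertcomp \X(e_i) \simeq *$ (and the mirror statement) for each $i$. Since $\vertcomp$ commutes with hocolims, we have $\T_n^\infty \vertcomp \X(e_i) \simeq \hocolim_m\bigl(\q^{-C(\T_n^m)}\T_n^m \vertcomp \X(e_i)\bigr)$, and I would check contractibility one quantum grading $j$ at a time. The bound \eqref{eq:twist cone minimal q degree} shows the cones $\mathcal{C}_{m+1}\vertcomp \X(e_i)$ have minimum $q$-grading tending to $+\infty$ (composing with the fixed $\X(e_i)$ only shifts this bound by a bounded amount), so the telescope stabilizes in each fixed $q$-grading. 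Hence it suffices to show that each individual $\q^{-C(\T_n^m)}\T_n^m \vertcomp \X(e_i)$ vanishes in $q$-grading $j$ for $m$ large.

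Here I would apply Lemma \ref{lem:pull tb thru twist} to rewrite $\q^{-C(\T_n^m)}\T_n^m \vertcomp \X(e_i)$, for $m = nk+r$, as an outer $q$-shift by $A_{m,i}\sim 2nk$ applied to $\eitop[i'] \vertcomp \q^{-C(\T_{n-2}^{(n-2)k+r'_i})}\T_{n-2}^{(n-2)k+r'_i} \vertcomp \eibot[i]$. By the inductive hypothesis, the normalized inner twist stabilizes in each $q$-grading to $\T_{n-2}^\infty$, an object of $\Sp\TL_{n-2}$ with bounded-below $q$-grading support; together with the growth of $A_{m,i}$, this forces the $q$-grading $j$ part of the whole expression to vanish for $m$ sufficiently large. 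The main technical obstacle is bookkeeping these quantitative rates: the linear-in-$k$ growth of $A_{m,i}$ must dominate both the a priori unbounded $q$-grading support of the inner normalized twist before stabilization and the (also linear in the inner twist length) inductive stabilization rate from Corollary \ref{cor:Cone of Tm to Tm+1} applied to $\T_{n-2}^\infty$. Composition with the fixed tangles $\eitop[i'], \eibot[i]$ only shifts $q$-grading by bounded amounts, so these are absorbed into the bookkeeping.
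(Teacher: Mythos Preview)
Your argument is correct. For condition~\eqref{itm:CK1} it matches the paper: both rely on Corollary~\ref{cor:Cone of Tm to Tm+1} to see that each cone $\mathcal{C}_{m+1}$ is built from turnback-containing cells.

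For condition~\eqref{itm:CK2} you take a different route. The paper uses Proposition~\ref{prop:turnback killing can be done on chains} to reduce turnback-killing to the chain level, where it follows from Rozansky's result \cite{Rozansky} that the infinite twist complex is a categorified projector. Your direct spectral argument via Lemma~\ref{lem:pull tb thru twist} is self-contained and avoids appealing to \cite{Rozansky}, at the cost of the bookkeeping you flag. In fact your induction on $n$ is not strictly needed: after extracting the outer shift $A_{m,i}\sim 2nk$, the inner factor $\eitop[i']\vertcomp \q^{-C(\T_{n-2}^\ell)}\T_{n-2}^\ell\vertcomp\eibot[i]$ already has $q$-support bounded below independently of $\ell$ (each normalized twist $\q^{-C}\T_{n-2}^\ell$ is built from cells with nonnegative $\q$-shift, hence has minimal $q$-grading at least $-(n-2)$), which alone forces vanishing in any fixed $q$-grading once $m$ is large.

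One point the paper addresses explicitly that you pass over: the bulk of the paper's written proof is devoted to showing that $\T_n^\infty$ actually lies in $\Sp\TL_n$ in the sense of Definition~\ref{def:spectral TL cat}, i.e.\ that the iterated cell attachments satisfy the required $q$-finiteness. This follows from the same bound $B_{m+1}\to\infty$ of Corollary~\ref{cor:Cone of Tm to Tm+1} that you already invoke, so it is not a genuine gap in your outline, but it is worth making explicit.
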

\begin{proof}
We can describe $\T_n^\infty$ as an iterated homotopy colimit in the following way.  We will write $\mathcal{S}^m$ for certain inductively-constructed spectra which are homotopy-equivalent to $\q^{-C(\T_n^{m})} \T_n^m$, but constructed with fewer cells.  Let $\mathcal{S}^1=\q^{-C(\T_n^{1})}\T_n^1$, and say that $\mathcal{S}^\ell$ have been constructed for $\ell\leq m$.  Using the notation of Corollary \ref{cor:Cone of Tm to Tm+1}, we have the exact triangle
\[\q^{-C(\T_n^{m})}\T^m_n \rightarrow \q^{-C(\T_n^{m+1})}\T^{m+1}_n\rightarrow \mathcal{C}_{m+1}\]
implying that $\q^{-C(\T_n^{m+1})}\T^{m+1}_n$ is homotopy-equivalent to the cone of a map
\[
\mathcal{C}_{m+1}\xrightarrow{\psi_m} \T^m_n.
\]
By Corollary \ref{cor:Cone of Tm to Tm+1}, there is a spectrum $\mathcal{C}'_{m+1}$ homotopy-equivalent to $\mathcal{C}_{m+1}$, but which is a cell complex all of whose cells have $q$-shift greater than $B_{m+1}$. Write $\mathcal{C}_{m+1}'\xrightarrow{\psi_m'} \mathcal{S}^m$ for the map obtained from $\psi_m$ by pre- and post-composition by the equivalences $\mathcal{C}_{m+1}'\simeq \mathcal{C}_{m+1}$ and $q^{-C(\T^m_n)}\T_n^m\simeq \mathcal{S}^m$.  We define $\mathcal{S}^{m+1}$ to be the cone of $\psi_m'$; it follows from the definitions that $\q^{-C(\T_n^{m+1})}\T^{m+1}_n\simeq \mathcal{S}^{m+1}$.  


Thus we have that $\mathcal{S}^{m+1}$ is obtained from $\mathcal{S}^m$ by attaching a finite collection of cells with $q$-shifts at least $B_{m+1}$.  Then because $B_{m+1}\rightarrow\infty$ as $m\rightarrow\infty$, iterating this procedure shows that $\mathcal{S}^\infty\simeq\T_n^\infty$ is obtained as an iterated attaching as in Definition \ref{def:spectral TL cat}.
\end{proof}


Compare the following theorem about stabilization of the underlying spectra in the sequence \eqref{eq:inf twist seq def} to \cite[Proposition 3.8]{Wil_colored} and particularly \cite[Theorem 1.1]{Wil_TorusLinks}, which gives a similar but slightly less sharp bound on the amount of twisting required.

\begin{theorem}
\label{thm:inf twist seq stabilizes}
Given a fixed $j\in\Z$, define the symbols $m_0,k_0,$ and $r_0<n$ by
\[m_0=nk_0+r_0 := \ceil*{\frac{j}{2}}.\]
Then for any $m\geq m_0+\min(r_0,n-r_0)$, the map
\[\q^{-C(\T_n^m)}\T_n^m\atq{j}\rightarrow \q^{-C(\T_n^{m+1})}\T_n^{m+1}\atq{j}\]
in the sequence \eqref{eq:inf twist seq def} is an equivalence of underlying spectra in $q$-degree $j$.  In particular we have
\begin{equation}
\label{eq:first stable twist}
\T_n^\infty\atq{j} \simeq \T_n^\mu\atq{j+C(\T_n^\mu)} \quad \text{for} \quad\mu=\mu(j):=m_0+\min(r_0,n-r_0),
\end{equation}
so that $\T_n^\infty\atq{-n}\simeq\mathbb{S}$ and $\T_n^\infty\Big\rvert_{q<-n}\simeq *$.
\end{theorem}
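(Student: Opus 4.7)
I would reduce the stabilization claim to the vanishing of the cone $\mathcal{C}_{m+1}$ from Corollary \ref{cor:Cone of Tm to Tm+1} in $q$-degree $j$: the map $\q^{-C(\T_n^m)}\T_n^m\atq{j}\to \q^{-C(\T_n^{m+1})}\T_n^{m+1}\atq{j}$ is an equivalence precisely when $\mathcal{C}_{m+1}\atq{j}\simeq *$, so the plan is to verify this vanishing for all $m\geq \mu:=m_0+\min(r_0,n-r_0)$.

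Using Corollary \ref{cor:Cone of Tm to Tm+1}, each non-trivial cell in the cube indexing $\mathcal{C}_{m+1}$ has the explicit form $\q^{a_i-C(\T_n^{m+1})+6k(n-1)+s_i}\,\eitop[i']\vertcomp\T_{n-2}^{(n-2)k+r_i'}\vertcomp\eibot[i]\vertcomp\delta$, with the $q$-shift determined by which of the three cases of Lemma \ref{lem:pull tb thru twist} applies. The coarse bound $B_{m+1}$ from that Corollary is attained only in Case \eqref{it:pull tb case less than} at $i=1$ and by itself gives a threshold strictly weaker than $\mu$; to strengthen it, I would combine the per-case $q$-shift with the $q$-support of each underlying Temperley-Lieb spectrum. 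Since every crossing in the tangle $\eitop[i']\vertcomp\T_{n-2}^{(n-2)k+r_i'}\vertcomp\eibot[i]\vertcomp\delta$ is negative, its $q$-support begins at $C(\T_{n-2}^{(n-2)k+r_i'})+C(\eitop[i'])$, producing refined lower bounds on the $q$-degrees at which that cell can contribute to $\mathcal{C}_{m+1}$.

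A direct arithmetic comparison would then show that the combined lower bound (minimum across the three cases) strictly exceeds $j$ whenever $m\geq \mu$, so $\mathcal{C}_{m+1}\atq{j}\simeq *$. The appearance of $\min(r_0,n-r_0)$ in the formula for $\mu$ reflects the symmetry between the two possible sweeping directions: Case \eqref{it:pull tb case less than} (the turnback swept forward $r$ strands) and Case \eqref{it:pull tb case greater than} (swept around the back $n-r$ strands) yield bounds on $m$ that grow with $r$ and with $n-r$ respectively, with the optimal threshold determined by the smaller sweeping distance, while Case \eqref{it:pull tb case equal} is subsumed by these.

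The stabilized value $\T_n^\infty\atq{j}\simeq \T_n^\mu\atq{j+C(\T_n^\mu)}$ then follows from the hocolim definition of $\T_n^\infty$ (Definition \ref{def:inf twist}) together with Proposition \ref{prop:hocolim properties}. For the concluding identifications, substituting $j=-n$ and $j<-n$ into the formula for $\mu$ produces $\mu\leq 0$, so the stabilized value is $\Imod_n\atq{j}$; the result then follows from the standard identification $\Imod_n\atq{-n}\simeq\mathbb{S}$ and $\Imod_n|_{q<-n}\simeq *$ for the identity tangle module. The main obstacle is the careful bookkeeping of $q$-shifts across the three cases of Lemma \ref{lem:pull tb thru twist} combined with the $q$-support contributions from the twisted pieces $\T_{n-2}^{(n-2)k+r_i'}$, since the naive bound $B_{m+1}$ alone is strictly weaker than the claimed threshold.
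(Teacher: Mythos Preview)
Your overall strategy---reduce to the vanishing of $\mathcal{C}_{m+1}$ in $q$-degree $j$ and invoke Corollary~\ref{cor:Cone of Tm to Tm+1}---is exactly the paper's approach. But there is a confusion in your proposed ``refinement.'' The per-case contribution you describe, namely adding $C(\T_{n-2}^{(n-2)k+r_i'})+C(\eitop[i'])$ to the shift of each term~\eqref{eq:simplified terms in twist cone}, is precisely what the proof of Corollary~\ref{cor:Cone of Tm to Tm+1} already does to obtain the quantity $\q_{\min}(i)$ and then $B_{m+1}=\min_i\q_{\min}(i)$. So the ``refinement'' you outline re-derives $B_{m+1}$ rather than sharpening it; there is no additional per-case $q$-support information to extract beyond what is already packaged into $B_{m+1}$.

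Relatedly, your assertion that the $q$-support of $\eitop[i']\vertcomp\T_{n-2}^{\cdots}\vertcomp\eibot[i]\vertcomp\delta$ \emph{begins} at $C(\T_{n-2}^{\cdots})+C(\eitop[i'])$ conflates two separate contributions: resolving the crossings produces the $q$-\emph{shift} of at least $C$, but the resulting flat $(n,n)$-tangle modules themselves have $q$-support beginning at $-n$, not at $0$. The correct minimal $q$-degree is therefore $C-n$, and combining this with the per-case shift reproduces exactly the paper's bound $B_{m+1}-n$ on the minimal $q$-degree of $\mathcal{C}_{m+1}$. The paper's (very brief) proof records just these two ingredients: the bound~\eqref{eq:twist cone minimal q degree} on the shifts $b_\epsilon$, and the fact that a flat $(n,n)$-tangle module has minimal nonempty $q$-grading $-n$. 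Your proposal, once the $q$-support claim is corrected, is identical to this.
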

\begin{proof}
The proof follows from the bound of Equation \eqref{eq:twist cone minimal q degree}, together with the fact that a flat $(n,n)$-tangle module $\epsilon$ can have minimal non-empty $q$-grading $-n$ (corresponding to closures of $\epsilon$ having at most $n$ circles, which could all be labelled by $1$).
\end{proof}


\subsection{A smaller model for $\proj_2$}
\label{sec:P2 simplified}
Although the construction above for $\T_2^\infty$ can be used to build $\proj_2$, experts often use the following simplified expression in the homological setting (this diagram should be interpreted as a chain complex in the (chain) Temperley-Lieb category \cite{Hog_polyaction}):
\begin{equation}\label{eq:P2 simp homological}
P_2 \simeq \left( \cdots
\xrightarrow{\ILhresDotT \, - \, \ILhresDotB}
\q^5\ILhres[scale=.5]
\xrightarrow{\ILhresDotT \, + \, \ILhresDotB}
\q^3\ILhres[scale=.5]
\xrightarrow{\ILhresDotT \, - \, \ILhresDotB}
\q\ILhres[scale=.5]
\xrightarrow{\ILvertsaddle}
\ILvres[scale=.5]
\right).
\end{equation}

In this section we will show that this smaller model continues to make sense for the spectral projector $\proj_2$.  In this setting however, because we a priori need to keep track of higher nullhomotopies between various compositions of these maps, we will phrase things in terms of semisimplicial objects.  To be more precise, we will construct homotopy coherent diagrams from $\Delta_{inj}^{op}$ to spectra that are the analog of \eqref{eq:P2 simp homological}.  Write $\Delta_{inj}$ for the semisimplex category, whose objects are the finite totally ordered sets $[i]=\{0,1,\ldots,i\}$, together with order-preserving injections as morphisms.

\begin{definition}\label{def:P2 simplified}
A \emph{simplified infinite 2-twist functor} $F_2^\infty$ is a homotopy coherent diagram $\Delta_{inj}^{op}\xrightarrow{F_2^\infty} \Sp\TL_2$ satisfying the following properties.  We will abuse notation somewhat and refer to homotopy coherent diagrams from $\Delta_{inj}^{op}$ to spectra as \emph{semisimplicial spectra}.
\begin{enumerate}

    \item \label{it:P2 vertices} At each simplicial level $m\geq 0$ we have
        \[ F_2^\infty ([m]) = \begin{cases} \Imod_2 & \text{if $m=0$}\\
                        \q^{2m-1} e_1 & \text{otherwise}\\
        \end{cases}\]
    \item \label{it:P2 edge maps} The `zero-th' face maps on each level $[m+1]\xrightarrow{d_0} [m]$ are defined to be
    \[ F(d_0) = \begin{cases}
            \q e_1 \xrightarrow{\ILvertsaddle} \Imod_2 & \text{if $m=0$}\\
            \q^{2m+1}e_1 \xrightarrow{\ILhresDotT \, - \, \ILhresDotB} \q^{2m-1}e_1 & \text{if $m=2n+1$ for $n\geq0$}\\
            \q^{2m+1}e_1 \xrightarrow{\ILhresDotT \, + \, \ILhresDotB} \q^{2m-1}e_1 & \text{if $m=2n$ for $n>0$}
        \end{cases}\]
    All other face maps $F(d_i)$ for $i\neq 0$ are trivial.
    \item \label{it:P2 nullhomotopies} For each $m\geq 0$, the composition $F_2^\infty(m+2) \xrightarrow{F(d_0)} F_2^\infty(m+1) \xrightarrow{F(d_0)} F_2^\infty(m)$ gives a difference of two identical cobordism maps $\psi$.  The simplicial relations require a null-homotopy between such a composition and the trivial $d_0\circ d_1=*$, which we obtain as a tensor product of $\psi$ with a choice of nullhomotopy of $\sphere\xrightarrow{(1-1)}\sphere$.  There are no higher coherences to build between various nullhomotopies used in 3-fold compositions or more, because the relevant mapping spectra are contractible, on account of the $q$-shifts (to see this, use Theorem \ref{thm:adjunction}).  
\end{enumerate}
\end{definition}

Simplified infinite 2-twist functors are not unique, due to the choices of nullhomotopies indicated in Item \eqref{it:P2 nullhomotopies} of Definition \ref{def:P2 simplified}.  We do not keep track of these choices in the notation due to the following theorem.

\begin{theorem}\label{thm:P2 simp}
The 2-strand spectral projector is stably homotopy equivalent to the geometric realization of any choice of simplified infinite 2-twist functor:
\[\proj_2\simeq \left|F_2^\infty\right|.\]
Visually we represent this as
\begin{equation}\label{eq:simp P2 seq def}
\proj_2\simeq \left( \cdots
\xrightarrow{\ILhresDotT \, - \, \ILhresDotB}
\q^5\ILhres[scale=.5]
\xrightarrow{\ILhresDotT \, + \, \ILhresDotB}
\q^3\ILhres[scale=.5]
\xrightarrow{\ILhresDotT \, - \, \ILhresDotB}
\q^1\ILhres[scale=.5]
\xrightarrow{\ILvertsaddle}
\ILvres[scale=.5]
\right).
\end{equation}
\end{theorem}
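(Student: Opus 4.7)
The plan is to identify $|F_2^\infty|$ directly as a spectral projector by verifying the two conditions of Definition \ref{def:spectral projector}, and then invoke uniqueness via Proposition \ref{prop:projector unique} to conclude $|F_2^\infty|\simeq \proj_2$.

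First I would check that $|F_2^\infty|\in\Sp\TL_2$. Viewing the geometric realization as a homotopy colimit over $\Delta_{inj}^{op}$ and filtering by simplicial degree gives a sequential build-up starting from $\Imod_2$ and successively attaching cells equivalent to $\q^{2m-1}\Sigma^{m}\X(e_1)$ for $m\geq 1$. Since $2m-1\to\infty$ as $m\to\infty$, the quantum-grading finiteness condition of Definition \ref{def:spectral TL cat} is satisfied. The $0$-simplex inclusion then furnishes a natural map $\iota\colon \Imod_2\to |F_2^\infty|$, and by construction $\Cone(\iota)$ is the realization of the reduced semisimplicial object (with $F_2^\infty([0])$ replaced by $*$). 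This cone is built entirely from shifted copies of $\X(e_1)$, which has through-degree $0<2$, so item \eqref{itm:CK1} of Definition \ref{def:spectral projector} is verified.

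Next I would handle turnback killing using the chains functor. By Proposition \ref{prop:hocolim properties}\ref{it:chains preserve hocolim}, chains commutes with homotopy colimits, so $\chainsfunc(|F_2^\infty|)$ is the realization of $\chainsfunc\circ F_2^\infty$. Because all face maps $F(d_i)$ for $i\geq 1$ are trivial, the realization totalizes to a one-sided chain complex whose only nontrivial differential is (up to sign) induced by $d_0$; the nullhomotopies of Item \eqref{it:P2 nullhomotopies} become canonical $0$ on chains since $1-1$ is canonically zero there. This identifies $\chainsfunc(|F_2^\infty|)$ with the standard chain-level simplified model of $P_2$ shown in \eqref{eq:P2 simp homological}, which is well known to kill turnbacks. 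By Proposition \ref{prop:turnback killing can be done on chains}, $|F_2^\infty|$ kills turnbacks, which is item \eqref{itm:CK2} of Definition \ref{def:spectral projector}.

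Having verified $(|F_2^\infty|,\iota)$ is a spectral projector, Proposition \ref{prop:projector unique} combined with Theorem \ref{thm:inf twist is projector} yields $|F_2^\infty|\simeq \T_2^\infty\simeq \proj_2$. The main obstacle is the bookkeeping in the second step: one must verify that the totalization of the semisimplicial spectrum $F_2^\infty$ produces, at the chain level, precisely the one-sided complex \eqref{eq:P2 simp homological}, carefully tracking how signs, grading shifts, and the choices of nullhomotopies interact with the realization. Once this identification is made, the remaining verifications are formal consequences of the structural results of Sections \ref{sec:spTL cat} and \ref{sec:construction}.
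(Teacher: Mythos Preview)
Your proposal is correct and follows essentially the same argument as the paper: verify that $(|F_2^\infty|,\iota)$ satisfies both conditions of Definition \ref{def:spectral projector} (the cone condition directly, and turnback killing by passing to chains via Proposition \ref{prop:turnback killing can be done on chains} and recognizing $P_2$), then invoke uniqueness via Proposition \ref{prop:projector unique}. The paper's proof is terser but structurally identical; your added remarks about membership in $\Sp\TL_2$ and the bookkeeping of signs and nullhomotopies are reasonable elaborations rather than a different approach.
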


Note that in Equation \eqref{eq:simp P2 seq def} the right-hand side should not be interpreted as a homotopy colimit of only the maps pictured.
\begin{proof}
We have a natural map
\[\Imod_2 = F_2^\infty([0]) \rightarrow \left|F_2^\infty\right|\]
whose cone consists of the terms $F_2^\infty([m])$ for $m>0$, each of which has through-degree zero, and thus $\left|F_2^\infty\right|$ satisfies Item \eqref{itm:CK1} of Definition \ref{def:spectral projector} for a spectral projector.  Meanwhile, on chains we see that $\chainsfunc(\left|F_2^\infty\right|) \simeq \left|\chainsfunc(F_2^\infty)\right|$ recovers $P_2$ as in Equation \eqref{eq:P2 simp homological}, which kills turnbacks.  Thus $\left| F_2^\infty \right|$ is a spectral projector on 2 strands, which is unique via Proposition \ref{prop:projector unique}.
\end{proof}

\subsection{A smaller model for finite twisting $\T_2^k$}

Although Theorem \ref{thm:P2 simp} provides the simplified model for $\proj_2\simeq \T_2^\infty$, it is also desirable to have correspondingly simple models for the finite twists $\T_2^k$ which `build' $\proj_2$.  The simplification of $\T_2^2$ in particular will also play a special role in Section \ref{sec:CK-recursion}.

Compare the following to Definition \ref{def:P2 simplified}.

\begin{definition}\label{def:T2k simplified}
For each $k\in\N$, a \emph{simplified 2-twist functor} is a homotopy coherent diagram $\basedcube^k\xrightarrow{F_2^k}\Sp\TL_2$ defined as follows.
\begin{enumerate}
    \item\label{it:T2k simp terms}  Non-basepoint objects (vertices) $v\in\basedcube^k$ are assigned spectra as follows:
        \[ F_2^k (v) = \begin{cases} \Imod_2 & \text{if $v=1^k$}\\
                        \q^{2t-1} e_1 & \text{if $v=0^t 1^{k-t}$ for some $1\leq t\leq k$}\\
                        * & \text{otherwise}
        \end{cases}\]

    \item\label{it:T2k simp edges}  Edge maps of the form $0^t1^{k-t} \xrightarrow{f_t} 0^{t-1}1^{k-t+1}$ are assigned maps as follows
    \[ F_2^k (f_t) = \begin{cases}
            \q e_1 \xrightarrow{\ILvertsaddle} \Imod_2 & \text{if $t=1$}\\
            \q^{2t-1}e_1 \xrightarrow{\ILhresDotT \, - \, \ILhresDotB} \q^{2t-3}e_1 & \text{if $t=2m$ for $m>0$}\\
            \q^{2t-1}e_1 \xrightarrow{\ILhresDotT \, + \, \ILhresDotB} \q^{2t-3}e_1 & \text{if $t=2m+1$ for $m>0$}
        \end{cases}\]
    Other edge maps are trivial.
    
    \item\label{it:T2k simp square faces} Two dimensional subcubes of the form
    \[\begin{tikzcd}
        \cdots 00\cdots \ar[r,"f"] \ar[d] & \cdots 01 \cdots \ar[d,"g"] \\
        \cdots 10\cdots \ar[r] & \cdots 11\cdots
    \end{tikzcd}\]
    must have the vertex $\cdots 10 \cdots$ assigned a basepoint.  So long as the other three vertices are non-trivial, the composition $g\circ f$ is canonically homotopic to a difference of two identical cobordism maps $\psi$.  A null-homotopy of this map is obtained as a tensor product of $\psi$ with a choice of nullhomotopy of $\sphere\xrightarrow{(1-1)}\sphere$.  
    This nullhomotopy allows us to `fill' the corresponding 2-dimensional subcube.  All other 2-dimensional subcubes are filled with trivial homotopies.

    \item\label{it:T2k simp no higher nullhomotopies} For any subcube of dimension $d>2$, the $d$-fold compositions of the various edges from the initial vertex to the final vertex canonically give the null map (by looking at $q$-degree).  All $d>2$ dimensional subcubes are filled with trivial homotopies (in fact, by $q$-degree, these are the only allowed fillings).  
\end{enumerate}
See Figure \ref{fig:simplified T22} for a diagram illustrating $F_2^2$.
\end{definition}

\begin{figure}
\[\begin{tikzpicture}[scale=1.3]

    \node (base) at (-1,-1) {$*$};
    \node (term 0) at (0,0){$\q^5\,\hres$};

    \node (term 1) at (2,0) {$\q^3\,\hres$};

    \node (term 2) at (0,-2) {$*$};

    \node (term 3) at (2,-2) {$\q^2\,\vres$};

    \savebox{\mybox}{$\ILhresDotT \, - \, \ILhresDotB$}
    \draw[->] (term 0) -- (term 1) node[pos=.5,above] {\usebox\mybox};

    \draw[->] (term 0) -- (term 2);
    \draw[->] (term 2) -- (term 3);

    \savebox{\mybox}{$\ILvertsaddle$}
    \draw[->] (term 1) -- (term 3) node[pos=.5,right] {\usebox\mybox};

    \draw[->] (term 0) -- (base);
    \draw[->] (term 1) -- (base);
    \draw[->] (term 2) -- (base);
\end{tikzpicture}\]
    \caption{The diagram for $\basedcube^2\xrightarrow{F_2^2}\Sp\TL_2$.  The composite of the two nontrivial maps in the diagram is the difference of two identical dotted saddle maps $\ILhres\xrightarrow{\psi}\ILvres$.  A null homotopy of this composite is obtained as a tensor product of a null homotopy of $\sphere \xrightarrow{(1-1)}\sphere$ with the cobordism map $\psi$.}
    \label{fig:simplified T22}
\end{figure}

As before, simplified 2-twist functors $F_2^k$ are not unique due to the choices of nullhomotopies in Item \ref{it:T2k simp square faces}.  We continue to suppress these choices from the notation since they will not effect the homotopy type of the homotopy colimit.

\begin{lemma}\label{lem:simp P2 induction}
Fix a simplified 2-twist functor $F_2^k$.  Then for any choice of simplified 2-twist functor $F_2^{k+1}$ which matches $F_2^k$ on the subcube $\cube^k\times\{1\}\subset\basedcube^{k+1}$ we have
\begin{equation}\label{eq:simp P2 induction}
\hocolim\, F_2^{k+1} \simeq (\hocolim\, F_2^k) \vertcomp \T_2^1.
\end{equation}
In particular we have maps
\begin{equation}\label{eq:simp P2 seq maps}
\hocolim F_2^k \rightarrow \hocolim F_2^{k+1}
\end{equation}
induced by the usual map for a crossing $\Imod_2\rightarrow \T_2^1$.
\end{lemma}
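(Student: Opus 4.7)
The plan is to simultaneously unfold both sides of \eqref{eq:simp P2 induction} into iterated mapping cones built from the same basic cells. By Proposition \ref{prop:crossing cofib seq} applied to the single crossing in $\T_2$, we have $\T_2^1 \simeq \Cone(\q^2 \X(\ILhres) \to \q \X(\ILvres))$ with attaching map induced by the saddle cobordism. Since vertical composition commutes with homotopy colimits (Proposition \ref{prop:hocolim properties}\ref{it:smash preserves hocolim}), this yields
\[
(\hocolim F_2^k) \vertcomp \T_2^1 \simeq \Cone\bigl(\q^2 \hocolim_{v \in \basedcube^k}(F_2^k(v) \vertcomp e_1) \to \q \hocolim F_2^k\bigr),
\]
reducing the problem to a cell-by-cell understanding of $F_2^k(v) \vertcomp e_1$. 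The key local identities are $\Imod_2 \vertcomp e_1 \simeq e_1$ and $e_1 \vertcomp e_1 \simeq \q e_1 \vee \q^{-1} e_1$, the latter being a manifestation of the Frobenius algebra structure of Khovanov homology on an extra closed component.

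Next, by Lemma \ref{lem:products of cubes}, $\hocolim F_2^{k+1}$ may be computed over the product $\basedcube^k \times \basedcube^1$, where the new $\basedcube^1$ direction corresponds to the resolution cube of the added crossing. Comparing vertex-by-vertex, the cells of $(\hocolim F_2^k) \vertcomp \T_2^1$ on the two sides of the outer cone match the non-basepoint cells of $F_2^{k+1}$ on the slices $\cube^k \times \{1\}$ and $\cube^k \times \{0\}$; note that the only cell on the $\cube^k \times \{0\}$ side with a non-basepoint target is at position $0^{k+1}$, coming from $F_2^k(0^k) \vertcomp e_1$ after canceling one summand of the $e_1 \vertcomp e_1$ splitting against the other basepoint edges out of $0^{k+1}$. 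The saddle map $\ILvertsaddle$ tensored with an $e_1$-cell decomposes under this splitting into the two dot-decorated saddles $\ILhresDotT$ and $\ILhresDotB$ with signs matching the alternating pattern of Definition \ref{def:T2k simplified}\eqref{it:T2k simp edges}.

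The main obstacle will be reconciling the higher null-homotopies specified in Definition \ref{def:T2k simplified}\eqref{it:T2k simp square faces}, which depend on non-canonical choices of null-homotopies of $(1-1) \colon \sphere \to \sphere$ as discussed in Section \ref{sec:sums-and-differences}. However, Theorem \ref{thm:adjunction} together with the grading constraints implies that at the relevant $q$-degrees the mapping spectra hosting these higher homotopies are sufficiently sparse that any two choices produce homotopy-equivalent homotopy colimits, so the hypothesis that $F_2^{k+1}$ matches $F_2^k$ on $\cube^k\times\{1\}$ is enough to pin down the equivalence up to homotopy. Finally, the induced map in \eqref{eq:simp P2 seq maps} arises from vertically composing $\hocolim F_2^k$ with the canonical map $\Imod_2 \to \T_2^1$ from the cofibration sequence \eqref{eq:cofib seq for negcros}, completing the construction.
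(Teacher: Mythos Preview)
Your overall strategy---rewrite $(\hocolim F_2^k)\vertcomp\T_2^1$ as a cone via the crossing cofibration, then identify the two slices with the two slices of $F_2^{k+1}$---is the same as the paper's. The gap is in how you collapse the $\cube^k\times\{0\}$ slice.

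You assert that after splitting $e_1\vertcomp e_1\simeq \q e_1\vee \q^{-1}e_1$, the diagram $F_2^k\vertcomp e_1$ reduces to a single cell $\q^{2k+1}e_1$ at $0^k$ by ``canceling one summand against the other basepoint edges out of $0^{k+1}$.'' But the basepoint edges carry trivial maps and cancel nothing; the diagram $F_2^k\vertcomp e_1$ has nontrivial cells at \emph{every} vertex $0^t1^{k-t}$, and the actual contraction must cascade along the nontrivial maps $F_2^k(f_t)\vertcomp\id_{e_1}$. In the chain setting this is a routine delooping/Gaussian elimination, but in the homotopy-coherent spectral setting you cannot simply cancel cells---you must produce an explicit map from a one-cell diagram into $F_2^k\vertcomp e_1$ and prove it is an equivalence. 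The paper does exactly this: it defines an auxiliary functor $E_2^k$ (basepoints everywhere except $\q^{2k+1}e_1$ at $0^k$) and a natural transformation $\rho^k\colon E_2^k\to F_2^k\vertcomp e_1$ given at the nontrivial vertex by the sum $\rho^\bullet+(-1)^k\rho_\bullet$ of a dotted-top birth and a dotted birth. Making $\rho^k$ homotopy coherent forces a \emph{new} choice of nullhomotopy for the composite $F_2^k(f_k)\circ\rho^k$, and the equivalence on hocolims then follows from Whitehead after checking homology (which is $k$ Reidemeister~I moves). The composite $s\circ\rho^k$ then visibly yields the dotted-identity sum/difference required for $F_2^{k+1}(f_{k+1})$, and the paper identifies the resulting cone diagram as an honest $F_2^{k+1}$, with the new nullhomotopy in $F_2^{k+1}$ corresponding precisely to the nullhomotopy chosen for $\rho^k$.

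Your final paragraph also inverts the logic on the nullhomotopies. You appeal to sparsity of mapping spectra to argue any two $F_2^{k+1}$'s have equivalent hocolims, but the relevant $\pi_1$ is $\Z/2$, so a direct comparison natural transformation need not exist. The paper instead shows the \emph{correspondence of choices}: each nullhomotopy choice for $\rho^k$ produces some $F_2^{k+1}$, and since $(\hocolim F_2^k)\vertcomp\T_2^1$ is independent of that choice, every resulting $\hocolim F_2^{k+1}$ is equivalent to it. This is what actually covers ``any choice of $F_2^{k+1}$'' in the statement.
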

\begin{proof}

Fixing a choice of simplified 2-twist functor $F_2^k$, we can write the right hand side of Equation \eqref{eq:simp P2 induction} as a mapping cone
\[(\hocolim\, F_2^k) \vertcomp \T_2^1 \simeq \Cone( \hocolim\, F_2^k\vertcomp e_1 \xrightarrow{s} \hocolim\, F_2^k),\]
where we view $F_2^k\vertcomp e_1$ as a homotopy coherent diagram $\basedcube^k\rightarrow \Sp\TL_2$ and the map $s$ as being induced by a natural transformation $F_2^k\vertcomp e_1 \xrightarrow{s} F_2^k$.  Now we define an auxiliary functor $\basedcube^k\xrightarrow{E_2^k}\Sp\TL_2$
satisfying
\[E_2^k(v) = \begin{cases}
    \q^{2k+1} e_1 & \text{if $v=0^k$}\\
    * & \text{otherwise}
\end{cases},\]
with every morphism trivial.  Our goal is to build and study a natural transformation $E_2^k\xrightarrow{\rho^k}F_2^k\vertcomp e_1$ which induces a weak equivalence on hocolims.

The map $\rho^k$ is based upon iterated Reidemeister I equivalences.  On the only non-trivial object of $E_2^k$ we define $\rho^k$ as
\[E_2^k( 0^k) = \q^{2k+1} e_1 \xrightarrow{ \rho^\bullet + (-1)^{k} \rho_\bullet} \q^{2k-1} e_1\vertcomp e_1 = F_2^k(0^k)\vertcomp e_1,\]
where $\rho^\bullet$ consists of an undotted birth together with a dot on the `top sheet', while $\rho_\bullet$ consists of a dotted birth.  In a slight abuse of notation, we will use $\rho^k$ to indicate this sum/difference of cobordism maps as well.  See Figure \ref{fig:simp P2 induction} for the case of $\rho^2$.

Unlike in the case of the natural transformation for $s$ (and also unlike in the homological setting), we require a new homotopy to make the diagram for $\rho^k$ coherent.  For $k>1$ the composition
\[E_2^k( 0^k) = \q^{2k+1} e_1 \xrightarrow{ \rho^k} \q^{2k-1} e_1\vertcomp e_1 \xrightarrow{ F_2^k(f_k)} \q^{2k-3} e_1 \vertcomp e_1 = F_2^k(0^{k-1}1)\vertcomp e_1\]
is not trivial on the nose, but instead gives a sum of four dotted cobordism maps.  Two of them have two dots on single sheets, and are thus trivial (not just null-homotopic).  The other two will be the same dotted cobordism $\rho^\bullet_\bullet$ (consisting of a dotted birth together with a dot on the top sheet), but with differing signs.  Thus we can give a null-homotopy for this composition which consists of tensoring $\rho^\bullet_\bullet$ with any choice of null-homotopy for $\sphere\xrightarrow{(1-1)}\sphere$.  
Similarly when $k=1$, the composition
\[E_2^1( 0) = \q^{3} e_1 \xrightarrow{ \rho^1} \q e_1\vertcomp e_1 \xrightarrow{ F_2^1(f_1)} e_1 = F_2^1(1)\vertcomp e_1\]
consists of two identical dotted cobordisms with opposite sign, which can be null-homotoped in the same fashion.  Note that no choice of the new null-homotopies described here involves the `lower' copy of $e_1$.

Finally, we define $\rho^k$ as a morphism $E_2^k\to F_2^k\vertcomp e_1$, that is to say, a homotopy-coherent diagram 
\[
\basedcube^{k}\times \basedcube^1 \xrightarrow{\rho^k}\Sp\TL_2
\]
restricting to $E_2^k$ on $\basedcube^k\times \{0\}$ and $F_2^k\vertcomp e_1$ on $\basedcube^k \times \{1\}$.  For this, we need to define $\rho^k$ on all higher-dimensional faces.  However, the composites of length $1$ and $2$ maps constructed so far are identically null, and we choose the homotopies among these to be trivial as well, giving a definition of $\rho^k$ (again, there are no further choices, using the $q$-degree).

Verifying that $\rho^k$ induces an isomorphism on homology (of hocolims) is equivalent to performing $k$ Reidemeister I moves, and is a computation well-known to experts.  Whitehead's theorem then implies that $\rho^k$ induces a weak equivalence, allowing us to write 
\[(\hocolim\, F_2^k)\vertcomp \T_2^1 \simeq \Cone(\hocolim\, E_2^k \xrightarrow{s\circ\rho^k}\hocolim\, F_2^k).\]

From here, we note that this mapping cone is equivalent to taking the hocolim of a homotopy coherent diagram $\basedcube^{k+1} \xrightarrow{G} \Sp\TL_2$ defined using $E_2^k,F_2^k$ and $s\circ\rho^k$ as in Proposition \ref{prop:cones of hocolims} (using also Item \ref{it:cofinal preserves hocolim} of Proposition \ref{prop:hocolim properties}).  We claim that, for any choice of null-homotopy $\rho^k$ above, this functor $G$ \emph{is} a desired choice of simplified 2-twist functor $F_2^{k+1}$.  We can see directly that the non-trivial morphism $G(f_{k+1})=e_1\xrightarrow{s\circ\rho^k} e_1$ is precisely the map required for $F_2^{k+1}(f_{k+1})$, while the new null-homotopy for $F_2^k(f_k)\circ\rho^k$, which was disjoint from the `lower' copy of $e_1$, combines with the identity homotopies induced by $s$ to give a null-homotopy for $G$ of the correct form.  All of the other `length-2' null-homotopies for $G$ are directly inherited from $F_2^k$ as desired.   Finally, all higher compositions continue to be trivial (since they all involve at least one sheet with two or more dots).
\end{proof}

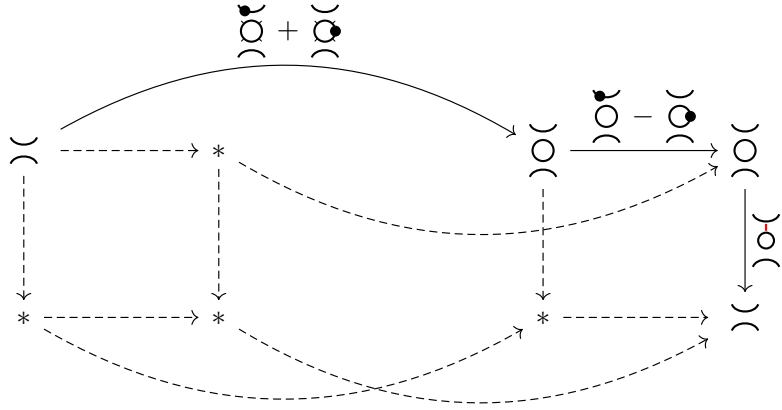
\begin{figure}
\[
\savebox{\mybox}{$\ILrhotop \, + \, \ILrhobot$}
\savebox{\myotherbox}{$\ILhreshrestop \, - \, \ILhreshresmid$}
\savebox{\mythirdbox}{$\ILhreshressad$}
\begin{tikzcd}[sep=huge]
\ILhres \ar[d,dashed] \ar[r,dashed] \ar[rrr,bend left,"\usebox\mybox"] & * \ar[d,dashed] \ar[rrr,bend right,dashed]
& &
\ILhreshres \ar[r,"\usebox\myotherbox"] \ar[d,dashed] & \ILhreshres \ar[d,"\usebox\mythirdbox"] \\
* \ar[r,dashed] \ar[rrr,bend right,dashed] & * \ar[rrr,bend right,dashed]
& &
* \ar[r,dashed] & \ILhres
\end{tikzcd}\]
    \caption{A diagram for the natural transformation $E_2^2 \xrightarrow{\rho^2} F_2^2\vertcomp e_1$.  The left square is $E_2^2$, and the right square is $F_2^2\vertcomp e_1$; the extra basepoint in $\basedcube^2$ is omitted to avoid clutter.  The dashed arrows are all trivial.  The curved arrows assemble to form the natural transformation $\rho^2$.  The tick marks on the circles indicate births.  There is one new non-trivial homotopy: a null-homotopy for the composition of the top curved solid with the top solid arrow.  This is obtained as a tensor product of a null-homotopy of $\sphere\xrightarrow{(1-1)}\sphere$ with an identity cobordism with one dot on the top sheet, and one dot on the circle.}
    \label{fig:simp P2 induction}
\end{figure}

\begin{theorem}\label{thm:T2k simp}
Any simplified 2-twist functor $F_2^k$ satisfies
\[\hocolim(F_2^k)\simeq \T_2^k.\]
Moreover, any choice of simplified infinite 2-twist functor $F_2^\infty$ gives rise to choices for each $F_2^k$ allowing the equivalences above to commute with the maps of Equations \eqref{eq:inf twist seq def} and \eqref{eq:simp P2 seq maps}.  That is, there is the following homotopy-coherent commutative diagram, where the rows have already been defined:
\[\begin{tikzcd}
    \q^{-1}\T_2^1 \ar[r] & \q^{-2}\T_2^2 \ar[r] & \q^{-3}\T_2^3 \ar[r] & \cdots\\
    \q^{-1} \hocolim F_2^1 \ar[u,"="] \ar[r] & \q^{-2}\hocolim F_2^2 \ar[u,"\simeq"] \ar[r] & \q^{-3}\hocolim F_2^3 \ar[u,"\simeq"] \ar[r] & \cdots
\end{tikzcd}.\]
In this way we can recover $\proj_2$ as
\begin{equation}\label{eq:P2 as hocolim of simplified twists}
\proj_2 \simeq \hocolim( \q^{-1}\hocolim F_2^1 \rightarrow \q^{-2}\hocolim F_2^2 \rightarrow \cdots).
\end{equation}

\end{theorem}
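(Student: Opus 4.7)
The plan is to prove the first statement by induction on $k$, bootstrap from it to build the compatible family in the middle statement, and then pass to the sequential homotopy colimit in the final statement.

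For the base case $k=1$, we observe that $F_2^1$ is (after discarding the basepoint) just the one-edge diagram $\q e_1 \xrightarrow{\ILvertsaddle} \Imod_2$. By Proposition \ref{prop:crossing cofib seq} applied to the single crossing of $\T_2$, the Khovanov spectrum $\T_2^1$ is precisely the mapping cone of the saddle $\q^2 \X(\ILvres)\xrightarrow{\X(s)}\q\X(\ILhres)$ (after the appropriate shift). Up to the conventional normalization, this mapping cone is exactly $\hocolim(F_2^1)$. For the inductive step, assuming $\hocolim(F_2^k)\simeq \T_2^k$, Lemma \ref{lem:simp P2 induction} furnishes an equivalence $\hocolim(F_2^{k+1}) \simeq \hocolim(F_2^k)\vertcomp \T_2^1 \simeq \T_2^k\vertcomp\T_2^1 = \T_2^{k+1}$, provided we start with a choice of $F_2^{k+1}$ extending the given $F_2^k$ on the face $\cube^k\times\{1\}\subset\basedcube^{k+1}$. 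The key content of Lemma \ref{lem:simp P2 induction} that needs to be invoked is not only the equivalence on hocolims but also the fact that the cone description arising from its proof automatically produces a functor $G$ satisfying Definition \ref{def:T2k simplified}; this was verified at the end of that lemma.

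Second, to obtain compatibility with a fixed $F_2^\infty$, I would use the nullhomotopy data of $F_2^\infty$ to choose the $F_2^k$ in a coherent way. The semisimplicial spectrum $F_2^\infty$ provides, for each $k$, its $k$-skeleton, whose geometric realization is naturally a homotopy colimit over $\basedcube^k$-shaped data (this is a general feature of semisimplicial objects --- the skeleton is built as an iterated cone along the $d_0$ face maps). Translating the skeletal filtration of $|F_2^\infty|$ into cubical diagrams yields candidates for $F_2^k$ whose underlying objects and edge maps are precisely as prescribed by Definition \ref{def:T2k simplified}\eqref{it:T2k simp terms}--\eqref{it:T2k simp edges}, and whose $2$-face nullhomotopies (item \eqref{it:T2k simp square faces}) are exactly those coming from $F_2^\infty$ by item \eqref{it:P2 nullhomotopies} of Definition \ref{def:P2 simplified}. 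Higher coherences vanish automatically by the $q$-degree mapping spectrum argument already used in Definition \ref{def:P2 simplified}\eqref{it:P2 nullhomotopies}. The natural inclusion of skeleta then gives the commuting squares
\[\begin{tikzcd}
\q^{-k}\hocolim F_2^k \ar[r] \ar[d,"\simeq"] & \q^{-(k+1)}\hocolim F_2^{k+1} \ar[d,"\simeq"]\\
\q^{-k}\T_2^k \ar[r] & \q^{-(k+1)}\T_2^{k+1}
\end{tikzcd}\]
at the level of homotopy classes, where the bottom map is the standard one from Equation \eqref{eq:inf twist seq def}. To check the squares commute, one compares both paths to the map induced by inserting a further crossing, which on the simplified side corresponds to extending the cube in one additional coordinate; this is the construction inside Lemma \ref{lem:simp P2 induction}.

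Finally, for Equation \eqref{eq:P2 as hocolim of simplified twists}, we apply Proposition \ref{prop:hocolim properties} item \eqref{it:hocolim functorial} to the equivalence of sequential diagrams assembled above, obtaining $\hocolim_k \q^{-k}\hocolim F_2^k \simeq \hocolim_k \q^{-k}\T_2^k = \T_2^\infty$. Combined with Theorem \ref{thm:inf twist is projector} and uniqueness of spectral projectors (Proposition \ref{prop:projector unique}), this identifies the left-hand side with $\proj_2$.

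The main technical obstacle is the bookkeeping in step two: ensuring that the nullhomotopy choices inherited from $F_2^\infty$ genuinely satisfy the cubical requirements of Definition \ref{def:T2k simplified} and that the maps $\hocolim F_2^k \to \hocolim F_2^{k+1}$ fit into a coherent sequential diagram whose hocolim can be identified with $|F_2^\infty|\simeq\proj_2$. Fortunately the $q$-degree obstructions repeatedly force higher homotopies to be essentially unique, which is what ultimately makes the coherence manageable without additional explicit construction.
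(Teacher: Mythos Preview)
Your inductive argument for the first claim is essentially the paper's: the base case $\hocolim F_2^1=\T_2^1$ holds by definition, and Lemma~\ref{lem:simp P2 induction} gives the step. One small point of phrasing: to prove the statement for an \emph{arbitrary} $F_2^{k+1}$, you should say ``given $F_2^{k+1}$, restrict to the face $\cube^k\times\{1\}$ to obtain an $F_2^k$, apply the inductive hypothesis, then apply Lemma~\ref{lem:simp P2 induction}'' rather than ``provided we start with a choice of $F_2^{k+1}$ extending the given $F_2^k$''.

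For the compatibility with $F_2^\infty$ you take a different route from the paper. The paper observes that the proof of Lemma~\ref{lem:simp P2 induction} produces $F_2^{k+1}$ from $F_2^k$ by adding exactly one new nullhomotopy (that of $F_2^k(f_k)\circ\rho^k$), and that this choice is, by inspection, precisely one of the nullhomotopies in item~\eqref{it:P2 nullhomotopies} of Definition~\ref{def:P2 simplified}. Hence a fixed $F_2^\infty$ supplies all the nullhomotopies needed to run the induction, and the commuting squares are immediate because both horizontal maps are induced by the same crossing exact triangle. Your approach instead extracts the $F_2^k$ from the skeletal filtration of $|F_2^\infty|$. This is conceptually fine, but your justification ``this is a general feature of semisimplicial objects --- the skeleton is built as an iterated cone along the $d_0$ face maps'' is not correct in general: skeleta of semisimplicial objects involve \emph{all} face maps. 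It happens to reduce to iterated $d_0$-cones here only because $F_2^\infty$ has $d_i=*$ for $i\neq 0$, and you should say so. Once that is acknowledged, the translation to a $\basedcube^k$-diagram satisfying Definition~\ref{def:T2k simplified} still needs a line of justification (matching the square nullhomotopies to the semisimplicial ones); the paper's approach avoids this translation entirely by working through the inductive construction already set up in Lemma~\ref{lem:simp P2 induction}.

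Your final step, passing to the sequential hocolim and invoking Theorem~\ref{thm:inf twist is projector}, is correct and matches the paper.
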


\begin{proof}
We have $\T_2^1=\hocolim F_2^1$ by definition.  From here, we can inductively build the desired equivalences using Lemma \ref{lem:simp P2 induction}.  The maps of Equations \eqref{eq:inf twist seq def} and \eqref{eq:simp P2 seq maps} are both defined via the exact triangle for the `new' crossing in $\T_2^1$, and thus the diagram shown will inductively commute.

At each inductive step in this process, we choose a new nullhomotopy (in the proof of Lemma \ref{lem:simp P2 induction} this is described as a choice of nullhomotopy for $F_2^k(f_k)\circ\rho^k$, which in turn determines a new nullhomotopy in the construction of $G=F_2^{k+1}$; all of the other nullhomotopies for our new $G=F_2^{k+1}$ match those of $F_2^k$).  We can see by inspection that these choices are precisely the choices of nullhomotopies in Item \eqref{it:P2 nullhomotopies} of Definition \ref{def:P2 simplified} for a simplified infinite 2-twist functor.  In this way, any choice of $F_2^\infty$ completely determines this inductive process choosing all of the finite $F_2^k$ which fit into the commuting diagram.  The last claim follows.

\end{proof}

Let us denote the right-hand side of Equation \eqref{eq:P2 as hocolim of simplified twists} by $\simpT_2^\infty$, which comes equipped with a natural non-negative filtration induced by the homotopy colimit.  We will denote this filtration by $\CKfilt$ in parallel with the Cooper-Krushkal filtration considered in Section \ref{sec:prove Pn simp}.  We can interpret Theorem \ref{thm:T2k simp}, together with the proof of Lemma \ref{lem:simp P2 induction}, as indicating that the filtration levels satisfy
\[\CKfilt^i(\simpT_2^\infty)\simeq \q^{-i}\T_2^i,\]
with associated graded levels (for $i>1$)
\[\CKfilt^i/\CKfilt^{i-1}(\simpT_2^\infty) = \Sigma \hocolim E_2^{i-1} \simeq \Sigma^{i} \q^{2i-1} e_1,\]
and attaching maps as determined by Equation \eqref{eq:simp P2 seq def} (here the notation $E_2^{i-1}$ comes from the proof of Lemma \ref{lem:simp P2 induction}).

We end this section with a recasting of Lemma \ref{lem:simp P2 induction} and Theorem \ref{thm:T2k simp} for the full twist $\T_2^2$ in particular.

\begin{corollary}\label{cor:T22 simplified as multicone}
There is a natural homotopy-coherent diagram, where the top row is the cofibration sequence \eqref{eq:crossing cofib seq} for the bottom crossing of the full twist spectrum $\T_2^2$:
\[\begin{tikzcd}
\T_2^2 \ar[r] & \Sigma\,\q^2\T_2^1\vertcomp e_1 \ar[d,"\simeq"] \ar[r,"\Sigma(\Id\vertcomp s)"] &
\Sigma\, \q\T_2^1\vertcomp\Imod\ar[d,"\simeq"] \\
&  \Sigma\Cone\left(\q^5 e_1 \rightarrow *\right)  \ar[r,"\Sigma({w,*})"] &
\Sigma\Cone\left(\q^3 e_1 \rightarrow \q^2\Imod\right)
\end{tikzcd}\]
where $s$ denotes the saddle map, and $(w,*)$ is the map induced on cones by the homotopy coherent square in Figure \ref{fig:simplified T22} (with $w$ denoting the difference of dotted identity maps).  In particular, $\T_2^2 \simeq \Cone(\Sigma(w,*))$.  
\end{corollary}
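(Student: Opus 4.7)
The plan is to combine the crossing cofibration sequence for the bottom crossing of $\T_2^2$ with the simplified $\basedcube^1$ model of $\T_2^1$, and then identify the resulting data with the $\basedcube^2$ hocolim picture of $\T_2^2$ provided by Theorem \ref{thm:T2k simp}. To produce the top row, I would apply Proposition \ref{prop:crossing cofib seq} to the bottom crossing of $\T_2^2$: its $0$-resolution is $T_0 = \T_2^1\vertcomp e_1$, its $1$-resolution is $T_1 = \T_2^1 \vertcomp \Imod = \T_2^1$, and the connecting map is $\Id\vertcomp s$ for $s$ the saddle $e_1 \to \Imod$. One rotation of this sequence, together with an overall shift by $\q$, yields exactly the top row of the claimed diagram.

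Next I would establish the two vertical equivalences. The right vertical follows immediately from Proposition \ref{prop:crossing cofib seq} applied to the single crossing of $\T_2^1$, giving $\T_2^1 \simeq \Cone(\q^2 e_1 \xrightarrow{s}\q \Imod)$ and hence $\q\T_2^1 \simeq \Cone(\q^3 e_1 \to \q^2\Imod)$. The left vertical is obtained by tensoring the same cone description of $\T_2^1$ with $e_1$: the resulting cone involves $e_1 \vertcomp e_1$, which as a spectrum splits as $\q e_1 \vee \q^{-1} e_1$ from the Frobenius structure on the unknot in the middle. The relevant saddle acts as the counit on the circle, which is an equivalence on the $\q^{-1}$-summand and zero on the $\q$-summand, so the cone collapses onto $\Sigma\q^3 e_1 \simeq \Cone(\q^3 e_1 \to *)$; shifting by $\q^2$ yields $\q^2 \T_2^1\vertcomp e_1 \simeq \Cone(\q^5 e_1 \to *)$.

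The main subtle point is identifying the induced map on cones with (the suspension of) $(w,\ast)$ and concluding the final cone description of $\T_2^2$. For this I would appeal to Theorem \ref{thm:T2k simp}, which presents $\T_2^2$ as $\hocolim F_2^2$, and then use Lemma \ref{lem:products of cubes} to decompose the $\basedcube^2$-hocolim as an iterated $\basedcube^1\times\basedcube^1$-hocolim. Reading Figure \ref{fig:simplified T22} column by column, the left column is a $\basedcube^1$-model for $\Cone(\q^5 e_1\to\ast)$ and the right column a $\basedcube^1$-model for $\Cone(\q^3 e_1\to \q^2\Imod)$; the horizontal arrows together with the null-homotopy filling the square assemble to a natural transformation between these two $\basedcube^1$-diagrams whose only non-trivial component is $w$. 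Proposition \ref{prop:cones of hocolims} then identifies $\hocolim F_2^2$ with the cone of the induced map on column-hocolims, which under the vertical identifications is exactly $\Sigma(w,\ast)$. Compatibility with the top-row cofibration sequence is automatic because one of the axes of the $\basedcube^2$ indexing $F_2^2$ is itself the resolution cube of the bottom crossing of $\T_2^2$, so both perspectives are computing the same iterated cone.
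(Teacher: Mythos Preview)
Your argument is correct and coincides with the paper's one-line proof, which simply observes that the corollary is a repackaging of Lemma~\ref{lem:simp P2 induction} in the case $k=1$; you access that same content through Theorem~\ref{thm:T2k simp} and the column-by-column reading of $F_2^2$. One small slip: the saddle on $e_1\vertcomp e_1$ is a merge of the circle with an arc (multiplication), not a counit, so it is the identity on the unit-labelled summand and a dot (not zero) on the other---but this does not affect the Gaussian-elimination step or your conclusion.
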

\begin{proof}
    This is a repackaging of Lemma \ref{lem:simp P2 induction} in the case $k=1$, as used to prove Theorem \ref{thm:T2k simp} for $\T_2^2$.
\end{proof}

\section{A Smaller Model for the Spectral Projector}\label{sec:CK-recursion}

In this section we prove Theorem \ref{intro thm:Pn simplified}.  The goal is to build a smaller model for the spectral projector $\proj_n$ out of diagrams involving $\proj_{n-1}$ as in the Cooper-Krushkal sequence \cite{Cooper-Krushkal} of Figure \ref{fig:CK sequence} for $P_n\in\TL_n$.  As in \cite{Hog_polyaction}, we will see in Section \ref{sec:Obstructing U_n^k} that this model is particularly well-suited to the search for certain endomorphisms of $\proj_n$ related to certain recursive properties of its presentation.

In the spectral setting, the Cooper-Krushkal sequence will require a reinterpretation via a non-negative filtration whose associated graded levels and attaching maps are determined by Figure \ref{fig:CK sequence}, and whose filtration levels will correspond to certain tangle diagrams involving $\proj_{n-1}$.  The case when $n=2$ (and $\proj_{n-1} = \proj_1 = \Imod_1$) is precisely Theorem \ref{thm:T2k simp} together with the discussion following the proof.  (The simpler approach of Theorem \ref{thm:P2 simp}, which presents $\proj_2$ via a semisimplicial model with no reference to any `finite stages' or filtration levels, does not easily generalize to $n>2$ due to the presence of nontrivial higher nullhomotopies.)  Thus for the remainder of this section we restrict to $n>2$.

\subsection{The spectral projector via the augmented infinite Jucys-Murphy spectrum $\augJM_n^\infty$}
\label{sec:augJMinf}

We begin with setting notation for two important families of Temperley-Lieb spectra.

\begin{definition}
\label{def:Jucys-Murphy}
The \emph{(left-handed) Jucys-Murphy spectrum $\JM_n$} (on $n$ strands) is the Khovanov spectrum assigned to the \emph{(left-handed) Jucys-Murphy braid}
\[\sigma_{n-1}^{-1} \cdots \sigma_{2}^{-1}\sigma_{1}^{-1} \sigma_{1}^{-1} \sigma_{2}^{-1} \cdots \sigma_{n-1}^{-1}\] 
shown in Figure \ref{fig:LHJM}.  Superscripts will be used to indicate vertical concatenations, as in
\[\JM_n^k := \JM_n\vertcomp\cdots\vertcomp\JM_n \quad \text{($k$ times)}.\]
The \emph{augmented Jucys-Murphy spectrum} is the vertical composition
\[\augJM_n:=(\cP_{n-1}\horizcomp \Imod_1) \vertcomp \JM_n\]
also shown in Figure \ref{fig:LHJM}, with superscripts used to indicate extra copies of $\JM$ within $\augJM$ as in
\[\augJM_n^k:=(\cP_{n-1}\horizcomp\Imod_1) \vertcomp \JM_n^k.\]
We also use the notation $\augJM_n^0:=\cP_{n-1}\horizcomp\Imod_1$.
\end{definition}

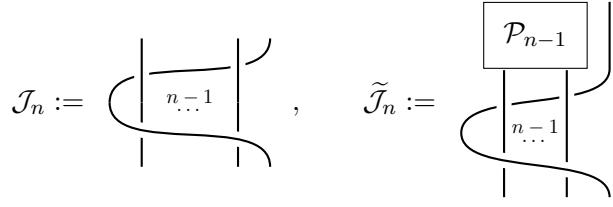
\begin{figure}
    \[
    \JM_n:=
    \vcenter{\hbox{
        \begin{tikzpicture}[xscale=.15mm,yscale=.3mm]
            \draw[thick] (0,0) -- (0,1);
            \draw[thick] (3,0) -- (3,1);
            \nstrandsalongpath[n-1]{0,1}{3,1}
            \drawover{
                (4,0) to[out=90,in=-90,distance=25] (-1,1);}
            \draw[thick] (-1,1) to[out=90,in=-90,distance=25] (4,2);
            \drawover{(0,1) -- (0,2);}
            \drawover{(3,1) -- (3,2);}
        \end{tikzpicture}
        }}
    \,,\quad\quad
    \augJM_n:=
    \vcenter{\hbox{
        \begin{tikzpicture}[xscale=.2mm,yscale=.3mm]
            \modbox{P}{0}{3}{\cP_{n-1}}
            \draw[thick] ( $(P box ne)+(1,0)$ ) -- ( $(P box se)+(1,0)$ ) to[out=-90,in=90,distance=20] ( $(P box sw)+(-1,-1)$ );
            \drawover{ (P box sw) -- +(0,-1); }
            \drawover{ (P box se) -- +(0,-1); }
            \draw[thick] ( $(P box sw)+(0,-1)$ ) -- +(0,-1);
            \draw[thick] ( $(P box se)+(0,-1)$ ) -- +(0,-1);
            \drawover{ ( $(P box sw)+(-1,-1)$ ) to[out=-90,in=90,distance=20] ( $(P box se)+(1,-2)$ ); }
            \nstrandsalongpath[n-1] { $(P box sw)+(0,-1)$ } { $(P box se)+(0,-1)$ }
        \end{tikzpicture}
        }}
    \]
    \caption{The Jucys-Murphy spectrum $\JM_n$ is the (Khovanov spectrum associated to) the braid on the left, where the right-most strand wraps around the other $n-1$ strands as shown.  The augmented $\augJM_n$ is similar, but includes a spectral projector $\cP_{n-1}$ as well.}
    \label{fig:LHJM}
\end{figure}

The $\cP_{n-1}$ present in the definition allows us to view $\augJM_n^k$ as an $\End_{\mathcal{H}_{2(n-1)}}(\mathcal{P}_{n-1})$-module.  In addition, the $\cP_{n-1}$ can slide above and below the looping strand, leading to Definition \ref{def:Jucys-Murphy} having some equivalent forms, as illustrated in the following lemma.

\begin{lemma}\label{lem:augJM equivalences}
The augmented Jucys-Murphy spectra satisfy
\begin{itemize}
    \item $\augJM_n \simeq \JM_n \vertcomp (\cP_{n-1}\horizcomp\Imod_1)$, and
    \item $\augJM_n^{k+\ell} \simeq \augJM_n^k\vertcomp\augJM_n^\ell \quad \text{(including for $\ell=0$)}$.
\end{itemize}
\end{lemma}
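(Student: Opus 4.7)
My plan is to prove the first bullet by showing that both sides are equivalent to the sandwich $(\cP_{n-1}\horizcomp\Imod_1)\vertcomp\JM_n\vertcomp(\cP_{n-1}\horizcomp\Imod_1)$, and to deduce the second bullet formally from the first. By symmetry, for the first bullet it suffices to prove the equivalence $\augJM_n\simeq(\cP_{n-1}\horizcomp\Imod_1)\vertcomp\JM_n\vertcomp(\cP_{n-1}\horizcomp\Imod_1)$. Consider the natural map induced by attaching $\iota_{n-1}\colon \Imod_{n-1}\to\cP_{n-1}$ (horizontally composed with $\Imod_1$) at the bottom of $\augJM_n$; its mapping cone is equivalent to
\[(\cP_{n-1}\horizcomp\Imod_1)\vertcomp\JM_n\vertcomp(\Cone(\iota_{n-1})\horizcomp\Imod_1).\]
Since $\Cone(\iota_{n-1})$ is (up to equivalence) a hocolim of flat $(n-1,n-1)$-tangle modules $\X(\delta)$ with $\tau(\delta)<n-1$, and the operations $\horizcomp$ and $\vertcomp$ commute with hocolims (Proposition \ref{prop:hocolim properties}), showing this cone is contractible reduces to proving
\[(\cP_{n-1}\horizcomp\Imod_1)\vertcomp\JM_n\vertcomp(\X(\delta)\horizcomp\Imod_1)\simeq *\]
for each such $\delta$.

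The hard part will be this contractibility. An elementary endpoint count shows that any flat $(n-1,n-1)$-tangle $\delta$ with $\tau(\delta)<n-1$ has equal and nonzero numbers of top- and bottom-turnbacks; in particular $\delta$ has a top-turnback at some pair of positions $(i,i+1)$ with $i\leq n-2$. When embedded as $\delta\horizcomp\Itang_1$ in $\Sp\TL_n$, this turnback sits at the bottom boundary of $\JM_n$ at positions not involving strand $n$. The key topological ingredient is that the wrapping strand of $\JM_n$ (strand $n$) can be isotoped, via a finite sequence of Reidemeister moves, around any turnback among strands $1,\dots,n-1$; by Theorem \ref{thm:X well-defined} each such move induces an equivalence of spectra (with any grading shifts from Reidemeister I moves absorbed consistently). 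Sliding the turnback up through $\JM_n$ in this way, it ends up at the bottom boundary of $\cP_{n-1}\horizcomp\Imod_1$ at positions still confined to $\{1,\dots,n-1\}$, where $\cP_{n-1}$ kills it by Proposition \ref{prop:killing turnbacks}. The main technical obstacle is rigorously executing this sliding at the spectral level, in particular keeping track of grading shifts and verifying compatibility with the cube-of-resolutions hocolim used to define $\X(\JM_n)$; I would handle this by induction on the number of crossings of $\JM_n$ that the turnback must pass, with each step reducing to a single local Reidemeister move combined with the planar-algebraic naturality of the Khovanov spectrum functor.

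For the second bullet, using the first bullet iteratively allows us to commute each copy of $\cP_{n-1}\horizcomp\Imod_1$ past the intervening $\JM_n$-factors, so that
\[\augJM_n^k\vertcomp\augJM_n^\ell = (\cP_{n-1}\horizcomp\Imod_1)\vertcomp\JM_n^k\vertcomp(\cP_{n-1}\horizcomp\Imod_1)\vertcomp\JM_n^\ell \simeq (\cP_{n-1}\horizcomp\Imod_1)\vertcomp(\cP_{n-1}\horizcomp\Imod_1)\vertcomp\JM_n^{k+\ell}.\]
Idempotence of $\cP_{n-1}$ (Proposition \ref{prop:projectors are idempotents}), together with the fact that $\horizcomp$ distributes over $\vertcomp$ via the planar algebra structure, then collapses the doubled projector to give $\augJM_n^{k+\ell}$ as desired. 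The case $\ell=0$ is immediate from the definition $\augJM_n^0=\cP_{n-1}\horizcomp\Imod_1$ combined with the first bullet applied to $\JM_n^k$.
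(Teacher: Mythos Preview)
Your proposal is correct and takes a genuinely different route from the paper's proof. The paper argues the first bullet by modelling $\cP_{n-1}$ via finite torus braids $\T_{n-1}^m$ (for large $m$) and observing that such braids on strands $1,\dots,n-1$ commute with $\JM_n$ by ordinary Reidemeister moves; passing to the limit gives the claim. Your approach is model-independent: you use only the defining properties of $\cP_{n-1}$ (the cone of $\iota_{n-1}$ has low through-degree, and $\cP_{n-1}$ kills turnbacks), together with the topological fact that a turnback among strands $1,\dots,n-1$ can be slid through the wrapping strand of $\JM_n$. Both approaches ultimately rest on the same Reidemeister-move content, but yours has the advantage of applying to any spectral projector without reference to the infinite-twist construction, while the paper's is shorter and closer to the geometric intuition that ``twists commute with the Jucys--Murphy wrapping.''

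Two minor remarks on your write-up. First, the grading shifts you flag as a technical obstacle are harmless here: once the turnback reaches $\cP_{n-1}$, the result is $\q^a\Sigma^b(\cP_{n-1}\vertcomp e_j)\vertcomp(\cdots)\simeq *$ regardless of $a,b$, so there is nothing to track. Second, there is no need to induct crossing-by-crossing; since the whole turnback-slide is a single tangle isotopy rel boundary, Theorem~\ref{thm:X well-defined} gives the required equivalence of spectra in one stroke. Your second bullet matches the paper's: the first bullet plus idempotence of $\cP_{n-1}$.
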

\begin{proof}
The first statement follows from modelling $\cP_{n-1}$ via $\T_n^m$ for large $m$, which can be seen to commute with $\JM_n$ via usual Reidemeister moves.  The second statement follows from the first together with the idempotence of $\cP_{n-1}$ (Proposition \ref{prop:projectors are idempotents}).
\end{proof}

Recall the notation $e_i$ for turnback diagrams introduced in Definition \ref{def:ei turnbacks}.  We establish properties for how the $\augJM_n^k$ behave when concatenated with turnbacks:

\begin{lemma}\label{lem:augmented JM and turnbacks}
Fix $n,k\in\N$.  Then we have
\begin{itemize}
    \item $\augJM_n^k \vertcomp e_i \simeq * \simeq e_i\vertcomp \augJM_n^k$ for all $i<n-1$,
    \item $\augJM_n^k \vertcomp e_{n-1} \simeq \q^{(4n-2)k}\Sigma^{(2n-2)k} \augJM_n^0 \vertcomp e_{n-1}$, and
    \item $e_{n-1} \vertcomp \augJM_n^k \simeq \q^{(4n-2)k}\Sigma^{(2n-2)k} e_{n-1}\vertcomp \augJM_n^0$.
\end{itemize}
\begin{proof}
The proof makes use of Lemma \ref{lem:augJM equivalences} throughout, and we do not call further attention to it.  The first point is a simple consequence of the turnback-killing property of $\cP_{n-1}$.  For the second point we induct on $k$, starting from the base case $k=0$ which is tautological.  The inductive step follows from the following sequence of equivalences:
\begin{align*}
\augJM_n^1\vertcomp e_{n-1}
=
\vcenter{\hbox{\begin{tikzpicture}[xscale=.25,yscale=.8]
    \draw[thick] (4,2)--(4,1.2) to[out=-90,in=90,looseness=.2] (-.5,.7);
    \drawover{(0,2)--(0,0);}
    \drawover{(2,2)--(2,0);}
    \drawover{(3,2)--(3,0);}
    \drawover{ (-.5,.7) to[out=-90,in=90,looseness=.4] (4,0);}
    \draw[thick] (4,0) to[out=-90,in=-90] (3,0);
    \whitebox{-.2,1.2}{3.2,1.7}
    \node[scale=.7] at (1.5,1.45) {$\cP_{n-1}$};
    \nstrandsalongpath[]{0,1.9}{2,1.9}
    \nstrandsalongpath[]{0,.2}{2,.2}
\end{tikzpicture}}}
\quad &\simeq \quad
\q^6\Sigma^2 \vcenter{\hbox{\begin{tikzpicture}[xscale=.25,yscale=.8]
    \draw[thick] (4,2)--(4,.7) to[out=-90,in=-90,looseness=.2] (-.5,.7);
    \drawover{(0,2)--(0,0);}
    \drawover{(2,2)--(2,0);}
    \drawover{ (3,2)--(3,1.2) to[out=-90,in=90,looseness=.2] (-.5,.7);}
    \whitebox{-.2,1.2}{3.2,1.7}
    \node[scale=.7] at (1.5,1.45) {$\cP_{n-1}$};
    \nstrandsalongpath[]{0,1.9}{2,1.9}
    \nstrandsalongpath[]{0,.2}{2,.2}
\end{tikzpicture}}}\\
&\simeq \quad
\q^6\Sigma^2 \q^{2(2n-4)}\Sigma^{2n-4}
\vcenter{\hbox{\begin{tikzpicture}[xscale=.25,yscale=.8]
    \draw[thick] (0,2)--(0,.5)
        (2,2)--(2,.5)
        (3,2)--(3,1) to[out=-90,in=-90] (4,1)--(4,2);
    \whitebox{-.2,1.2}{3.2,1.7}
    \node[scale=.7] at (1.5,1.45) {$\cP_{n-1}$};
    \nstrandsalongpath[]{0,1.9}{2,1.9}
    \nstrandsalongpath[]{0,.6}{2,.6}
\end{tikzpicture}}}\\ 
&= \quad \q^{4n-2}\Sigma^{2n-2} \augJM_n^0 \vertcomp e_{n-1},
\end{align*}
where the first equivalence comes from an isotopy involving Reidemeister III moves together with precisely two Reidemeister I moves, and the second equivalence comes from applying Proposition \ref{prop:projectors absorb crossings} to all $(2n-4)$ crossings present.  The third point is proved similarly.
\end{proof}
\end{lemma}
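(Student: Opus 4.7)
The plan is to handle item (1) directly via the turnback-killing property of $\cP_{n-1}$, and items (2) and (3) by induction on $k$ built on a single one-step geometric simplification.

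For item (1), fix $i < n-1$. For $e_i \vertcomp \augJM_n^k$, use the original definition $\augJM_n^k = (\cP_{n-1}\horizcomp\Imod_1) \vertcomp \JM_n^k$. Since $i < n-1$, the turnback $e_i$ factors as $\tilde{e}_i \horizcomp \Imod_1$ for some $\tilde{e}_i \in \TL_{n-1}$, so the composition localizes on the first $n-1$ strands to $\tilde{e}_i \vertcomp \cP_{n-1}$, which is contractible by Proposition \ref{prop:killing turnbacks}. Horizontal composition with $\Imod_1$ and vertical composition with $\JM_n^k$ preserve this, giving $e_i \vertcomp \augJM_n^k \simeq *$. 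For $\augJM_n^k \vertcomp e_i$, apply Lemma \ref{lem:augJM equivalences} to write $\augJM_n^k \simeq \JM_n^k \vertcomp (\cP_{n-1}\horizcomp\Imod_1)$ and run the same turnback-killing argument from the other side.

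For items (2) and (3), induct on $k$; the base case $k = 0$ is tautological. For the inductive step on item (2), use Lemma \ref{lem:augJM equivalences} to factor $\augJM_n^{k+1} \simeq \augJM_n^k \vertcomp \JM_n$, so that it suffices to show $\augJM_n \vertcomp e_{n-1}$ is equivalent to $\q^{4n-2}\Sigma^{2n-2}\,\augJM_n^0 \vertcomp e_{n-1}$. The geometric heart is a simplification of $(\cP_{n-1}\horizcomp\Imod_1) \vertcomp \JM_n \vertcomp e_{n-1}$: isotope the looping rightmost strand of $\JM_n$ so that it sweeps the cap of $e_{n-1}$ upward. This isotopy comprises a sequence of Reidemeister III moves (which do not change grading), followed by two Reidemeister I moves that cancel the freed loops and contribute a $\q^6\Sigma^2$ shift (using the grading conventions of Definition \ref{def:frac twist} and the cofibration sequence \eqref{eq:cofib seq for negcros}). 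What remains are $2n-4$ crossings between the strands of $\JM_n$ and the projector $\cP_{n-1}$ sitting above; these are absorbed by $\cP_{n-1}$ via Proposition \ref{prop:projectors absorb crossings}, contributing a total shift of $\q^{2(2n-4)}\Sigma^{2n-4}$. Multiplying: $\q^{6+4n-8}\Sigma^{2+2n-4} = \q^{4n-2}\Sigma^{2n-2}$, as required. Item (3) follows by the mirror argument, using the alternate form $\augJM_n^{k+1} \simeq \JM_n \vertcomp \augJM_n^k$ from Lemma \ref{lem:augJM equivalences}.

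The main obstacle is the careful bookkeeping of the grading shifts: verifying that the two Reidemeister I moves in the isotopy are positive (each giving $\q^3\Sigma$) rather than negative, and correctly identifying the handedness of each of the $2n-4$ crossings to be absorbed so that Proposition \ref{prop:projectors absorb crossings} gives the advertised $\q^2\Sigma$ factor per crossing. Once this bookkeeping is pinned down by drawing the relevant tangle diagrams explicitly (as in the picture-based proof style developed earlier in the paper), the inductive step propagates, and items (2) and (3) follow for all $k$.
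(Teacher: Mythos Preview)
Your proposal is correct and follows essentially the same route as the paper's proof: item (1) by turnback-killing of $\cP_{n-1}$, and items (2) and (3) by induction on $k$ with the inductive step carried out by the same isotopy (Reidemeister III moves plus two Reidemeister I moves contributing $\q^6\Sigma^2$) followed by absorbing the remaining $2n-4$ crossings into $\cP_{n-1}$ via Proposition~\ref{prop:projectors absorb crossings} for the $\q^{2(2n-4)}\Sigma^{2n-4}$ shift. The only minor wrinkle is that the factorization $\augJM_n^{k+1}\simeq \augJM_n^k\vertcomp \JM_n$ is definitional rather than a consequence of Lemma~\ref{lem:augJM equivalences}; to reduce to the claim about $\augJM_n^1\vertcomp e_{n-1}$ you actually want the second bullet of that lemma, $\augJM_n^{k+1}\simeq \augJM_n^k\vertcomp \augJM_n^1$, but this is purely cosmetic.
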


As was the case for the left-handed twist, we see that $\augJM_n$ has only left-handed crossings which allows us to use the left-hand map of Equation \eqref{eq:cofib seq for negcros} to build a sequence of maps similar to Equation \eqref{eq:inf twist seq def}

\begin{equation}\label{eq:inf augJM seq def}
(\cP_{n-1}\horizcomp \Imod_1) = \augJM_n^0 \rightarrow \q^{-2(n-1)}\augJM_n^1 \rightarrow \cdots \q^{-C(\JM_n^k)}\augJM_n^k \rightarrow \cdots,
\end{equation}
where $C(\JM_n^k):=2(n-1)k$ is the number of crossings in $\JM_n^k$, as before.  Similar to the infinite twist $\T_n^\infty$, we have the following definition.

\begin{definition}\label{def:infinite augJM}
The \emph{infinite augmented Jucys-Murphy spectrum}, denoted $\augJM_n^\infty$, is defined to be the homotopy colimit of the sequence \eqref{eq:inf augJM seq def}.
\end{definition}

With Definition \ref{def:infinite augJM} in place, our immediate goal is to prove the analogue of Theorem \ref{thm:inf twist is projector} for $\augJM_n^\infty$.

\begin{theorem}\label{thm:infinite augJM is a projector}
The homotopy colimit $\augJM_n^\infty$ (with the natural morphism $\Imod\xrightarrow{\iota} \augJM_n^\infty$ described in Equation \eqref{eq:Identity into augJMinf}) is a spectral projector.
\end{theorem}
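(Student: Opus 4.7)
The plan is to verify the two defining properties of a spectral projector for the pair $(\augJM_n^\infty, \iota)$ as in Definition \ref{def:spectral projector}, inducting on $n$ and using the hypothesis that $\cP_{n-1}$ is already known to be a spectral projector. The map $\iota\colon \Imod_n \to \augJM_n^\infty$ factors as
\[
\Imod_n \xrightarrow{\iota_{n-1}\horizcomp\id_1} \augJM_n^0 = \cP_{n-1}\horizcomp\Imod_1 \longrightarrow \augJM_n^\infty,
\]
where $\iota_{n-1}$ is the structure map of $\cP_{n-1}$ and the second arrow is the canonical inclusion into the hocolim defining $\augJM_n^\infty$.

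For Property~(1), I will invoke the cofibration sequence associated to this factorization to reduce to showing that both intermediate cones have through-degree $<n$. The first, $\Cone(\Imod_n \to \augJM_n^0) \simeq \Cone(\iota_{n-1})\horizcomp\Imod_1$, has through-degree $\leq(n-2)+1<n$ by the inductive hypothesis on $\cP_{n-1}$ together with the effect of $\horizcomp\Imod_1$ on through-degree. The second, $\Cone(\augJM_n^0 \to \augJM_n^\infty) \simeq \hocolim_k \Cone(\augJM_n^0 \to \q^{-C(\JM_n^k)}\augJM_n^k)$ (using that $\hocolim$ commutes with $\Cone$), will be analyzed finite stage by finite stage by decomposing each $\q^{-C(\JM_n^k)}\augJM_n^k$ via the cube of resolutions on the $2(n-1)k$ crossings of $\JM_n^k$ stacked below $\cP_{n-1}\horizcomp\Imod_1$. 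The all-vertical vertex reproduces $\augJM_n^0$ and is cancelled, while every other vertex carries at least one $T_0$-style (turnback-creating) resolution, which either is killed outright by the turnback-killing property of $\cP_{n-1}$ or drops the through-degree of the resulting flat-tangle factor to $\leq n-2$.

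For Property~(2), by Proposition \ref{prop:killing turnbacks} it suffices to check that $\augJM_n^\infty \vertcomp e_i \simeq *$ and $e_i \vertcomp \augJM_n^\infty \simeq *$ for each $1\leq i \leq n-1$. I will feed the three-part Lemma \ref{lem:augmented JM and turnbacks} into the hocolim description of $\augJM_n^\infty$. For $i<n-1$ the relation $\augJM_n^k \vertcomp e_i \simeq *$ already holds at each finite stage, and since $\vertcomp$ commutes with hocolims (Proposition \ref{prop:hocolim properties}\ref{it:smash preserves hocolim}), this passes to $\augJM_n^\infty \vertcomp e_i \simeq *$. For $i=n-1$, the lemma together with $C(\JM_n^k)=2(n-1)k$ yields
\[
\q^{-C(\JM_n^k)}\augJM_n^k \vertcomp e_{n-1} \simeq \q^{2nk}\Sigma^{(2n-2)k}\,\augJM_n^0\vertcomp e_{n-1},
\]
and since $\augJM_n^0 \vertcomp e_{n-1}$ is bounded below in quantum grading, in any fixed $q$-degree the underlying spectra of the hocolim sequence are eventually a point, so the sequential hocolim is contractible. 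The left-multiplication cases $e_i \vertcomp \augJM_n^\infty$ are handled symmetrically using the third bullet of Lemma \ref{lem:augmented JM and turnbacks}.

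The hard part will be rigorously verifying, in the second cone of Property~(1), that \emph{every} non-identity vertex in the cube-of-resolutions description of $\q^{-C(\JM_n^k)}\augJM_n^k$ produces a flat $(n,n)$-tangle whose composition with $\cP_{n-1}\horizcomp\Imod_1$ has through-degree strictly less than $n$ (or is $*$). This is the direct analog of the computation at the heart of Theorem \ref{thm:inf twist is projector}, but made mildly trickier by the need to track the interaction between the turnback-creating resolutions of $\JM_n$ (which always involve the $n$-th strand wrapping around the first $n-1$) and the projector $\cP_{n-1}$ sitting on top. Once this through-degree bookkeeping is carefully verified, everything else reduces to combining Proposition \ref{prop:killing turnbacks}, Lemma \ref{lem:augmented JM and turnbacks}, and the standard commutations between $\hocolim$ and $\vertcomp, \horizcomp, \Cone$ furnished by Proposition \ref{prop:hocolim properties}.
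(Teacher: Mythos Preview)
Your proposal is correct and follows essentially the same approach as the paper. The only cosmetic difference is that the paper packages your ``hard part'' (the cube-of-resolutions analysis of $\Cone(\augJM_n^0\to\q^{-C(\JM_n^k)}\augJM_n^k)$) into the separately stated Proposition~\ref{prop:inf augJM seq cones}, which it then cites; your proposal simply does this analysis inline, and the turnback-killing argument via Lemma~\ref{lem:augmented JM and turnbacks} is identical to the paper's.
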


Just as in the case of Theorem \ref{thm:inf twist is projector}, the proof will rely on understanding the cones of the maps involved.

\begin{proposition}\label{prop:inf augJM seq cones}
Let $f^k:\q^{-C(\JM_n^k)} \augJM_n^k \rightarrow \ q^{-C(\JM_n^{k+1})} \augJM_n^{k+1}$ denote the maps in the sequence \eqref{eq:inf augJM seq def}.  Then the cone of any $f^k$ can be written as
\begin{equation}\label{eq:inf augJM seq cones}
\Cone(f^k) \simeq \q^{(4n-2)k}\Sigma^{(2n-2)k} \augJMcone_n^k,
\end{equation}
where the spectrum $\augJMcone_n^k\in\Sp\TL_n$ is homotopy equivalent to a finite hocolim of basepoints and  terms of the form
\begin{equation}\label{eq:augJM cone terms of lower thru degree}
\q^a\,\,
\vcenter{\hbox{\begin{tikzpicture}[xscale=.2mm,yscale=.3mm]
    \modbox{P}{0}{3}{\cP_{n-1}}
    \draw[thick] (P box se) to[out=-90,in=-90] ( $(P box se) + (1,0)$ ) -- ( $(P box ne) + (1,0)$ );
    \node[inner sep=0, outer sep=0] (P box sse) at ( $(P box se) + (-.2,0)$ ){};
    \modbox[xscale=2.2]{t}{0.4}{1}{\delta}
    \draw[thick] (P box sse) to[out=-90,in=90] (t box ne);
    \draw[thick] (P box sw) to[out=-90,in=90] (t box nw);
    \nstrandsalongpath[n-2]{ $(P box.south) + (0,-.4)$} {t box.north}
    \draw[thick] (t box sw) -- +(0,-.5);
    \draw[thick] (t box se) -- +(0,-.5);
    \nstrandsalongpath{t box.south}{ $(t box.south) + (0,-.6)$ }
\end{tikzpicture}}}
\end{equation}
for some flat $(n-2,n)$ tangles $\delta$ of through-degree $n-2$, and with $q$-degree shift $a > 0$.
\end{proposition}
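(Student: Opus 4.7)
\emph{Plan.}  The plan is to isolate the extra copy of $\JM_n$ appearing in $\augJM_n^{k+1}$, decompose it via its cube of resolutions, and then simplify each term using Lemma~\ref{lem:augmented JM and turnbacks}.

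First, by Lemma~\ref{lem:augJM equivalences} we identify $\augJM_n^{k+1}\simeq \augJM_n^k\vertcomp \JM_n$, and observe that the map $f^k$ is induced by smashing the identity of $\augJM_n^k$ with the natural map $g\colon \Imod_n\to \q^{-2(n-1)}\JM_n$ obtained by composing the $2(n-1)$ copies of $\X(\ILvres)\to \q^{-1}\X(\ILnegcros)$ from Equation~\eqref{eq:cofib seq for negcros}, one for each crossing of $\JM_n$.  Since $\vertcomp$ commutes with homotopy colimits (and thus with mapping cones), this gives
\[
\Cone(f^k)\simeq \q^{-C(\JM_n^k)}\,\augJM_n^k \vertcomp \mathcal{C}, \qquad \mathcal{C}:=\Cone\bigl(\Imod_n\to \q^{-2(n-1)}\JM_n\bigr).
\]

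Next I would express $\mathcal{C}$ as a homotopy colimit over a based cube $\basedcube^{2(n-1)}$ parametrizing resolutions of the $2(n-1)$ crossings of $\JM_n$, with the all-$1$ vertex (which contributes the identity summand $\Imod_n$) replaced by the basepoint.  Each non-basepoint vertex $v$ contributes a shifted flat-tangle spectrum $\q^{a_v}\X(\delta_v)$, where $\delta_v$ is the flat $(n,n)$-tangle obtained by stacking the chosen resolutions and $a_v\geq 1$ for every non-basepoint vertex.

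The main step is to apply Lemma~\ref{lem:augmented JM and turnbacks} pointwise to the terms $\augJM_n^k \vertcomp \X(\delta_v)$.  Using Lemma~\ref{lem:augJM equivalences}, the $\cP_{n-1}\horizcomp \Imod_1$ factor of $\augJM_n^k$ can be slid past all $k$ copies of $\JM_n$ so that it lies directly above $\delta_v$.  If the uppermost turnback so encountered is some $e_i$ with $i<n-1$, then the first bullet of Lemma~\ref{lem:augmented JM and turnbacks} forces that term to be contractible.  Otherwise the top turnback is $e_{n-1}$, and the second bullet produces the common shift $\q^{(4n-2)k}\Sigma^{(2n-2)k}$ and replaces $\augJM_n^k\vertcomp e_{n-1}$ by $\augJM_n^0\vertcomp e_{n-1}$.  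Factoring the shift out of the cube yields the claimed equivalence $\Cone(f^k)\simeq \q^{(4n-2)k}\Sigma^{(2n-2)k}\augJMcone_n^k$, where $\augJMcone_n^k$ is the resulting simplified hocolim.

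Finally, for the structural claim I would analyze the shape of each surviving term: after extracting the $e_{n-1}$ turnback at the top of $\delta_v$, the two strands involved in $e_{n-1}$ are used up by the turnback, and what remains below is a flat $(n-2,n)$-tangle $\delta$ consisting of the other chosen resolutions.  This is exactly the picture of \eqref{eq:augJM cone terms of lower thru degree}, and $\delta$ has through-degree $n-2$ because those two strands are no longer available as through-strands.  The positivity $a>0$ comes from $a_v\geq 1$ above, since the unique vertex with $a_v=0$ is the all-$1$ vertex which has been replaced by the basepoint.

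The main obstacle I anticipate is the third paragraph: one must carry the termwise simplifications of Lemma~\ref{lem:augmented JM and turnbacks} out coherently across the entire cube, so that the edge maps (and higher-cell homotopies) of the cube assemble into a well-defined diagram whose hocolim really is $\augJMcone_n^k$, rather than matching only vertex-by-vertex.  Compatibility of these sliding and collapsing operations with the planar algebraic structure of $\X$, together with Proposition~\ref{prop:hocolim properties}\ref{it:hocolim functorial} and \ref{it:smash preserves hocolim}, should resolve this, but it is the technical heart of the argument.
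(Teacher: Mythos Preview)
Your approach is essentially the same as the paper's: resolve the extra copy of $\JM_n$ into a based cube, note that each nontrivial vertex carries a flat tangle $\gamma$ of through-degree $\le n-2$, kill those whose top turnback is $e_i$ with $i<n-1$ via Lemma~\ref{lem:augmented JM and turnbacks}, and for the survivors pull the $e_{n-1}$ through $\augJM_n^k$ to extract the common shift $\q^{(4n-2)k}\Sigma^{(2n-2)k}$.

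Two small points deserve tightening. First, your justification that $\tau(\delta)=n-2$ (``those two strands are no longer available'') only gives $\tau(\delta)\le n-2$; you still need the observation that if $\tau(\delta)<n-2$ then $\delta$ has a further top turnback which is absorbed by $\cP_{n-1}$ and contracts the term---this is the same Lemma~\ref{lem:augmented JM and turnbacks} mechanism you already invoked, applied once more. Second, the paper's proof singles out the specific resolutions $\gamma=e_{n-1}$ and $\gamma=e_{n-1}^2$; in the latter case a disjoint circle appears and splits as $\q\,\bS\vee\q^{-1}\bS$, which shifts the $q$-degree by $\pm1$ and slightly affects the bound on $a$ (indeed the paper's own proof concludes $a\ge0$). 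Your concern about carrying the simplifications coherently through the cube is legitimate but, as you note, is handled by the planar-algebraic compatibility and Proposition~\ref{prop:hocolim properties}.
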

\begin{proof}
We consider $\augJM_n^{k+1}=\augJM_n^k\vertcomp \JM_n$, and resolve the $2(n-1)$ crossings in $\JM_n$ to view $\augJM_n^{k+1}$ as a hocolim over the cube $\cube^{2(n-1)}$.  The all-one resolution of $\JM_n$ is the identity tangle, so this vertex in the cube corresponds to $\augJM_n^k$.  Thus, the desired $\Cone(f^k)$ is homotopy equivalent to the corresponding hocolim over $\cube^{2(n-1)}$ where the all-one vertex is assigned a basepoint.

Meanwhile, each other term in this hocolim has the form
\[ \q^b \augJM_n^k \vertcomp \gamma \]
for some $b\geq 1$ and some flat tangles $\gamma$ having through-degree $\tau(\gamma)\leq n-2$.  Any terms with $\tau(\gamma)< n-2$ are contractible using Lemma \ref{lem:augmented JM and turnbacks}.  Meanwhile the terms with $\tau(\gamma)=n-2$ also contract except for the cases where $\gamma=e_{n-1}$ or $\gamma=e_{n-1}^2$.  In these cases, Lemma \ref{lem:augmented JM and turnbacks} produces the extra shifts $\q^{(4n-2)k}\Sigma^{(2n-2)k}$.  Finally, if there was a disjoint circle present (i.e. if $\gamma=e_{n-1}^2$), then this term can be decomposed into a wedge sum with extra $\q$-shifts of $\pm 1$.  Then since $b\geq 1$, we have $a \geq 0$ in the statement of the proposition.
\end{proof}

\begin{corollary}\label{cor:augJM cones in projector}
For any $k$ there exists a morphism $\q^{-C(\JM_n^k)}\augJM_n^k \xrightarrow{F_n^k} \augJM_n^\infty$ with
\[\Cone(F_n^k)\simeq \q^{2nk}\Sigma^{(2n-2)k} \augJMcone_n^{\infty-k},\]
where $\augJMcone_n^{\infty-k}$ denotes a spectrum which can be written as a homotopy colimit of basepoints and terms of the form \eqref{eq:augJM cone terms of lower thru degree}.
\end{corollary}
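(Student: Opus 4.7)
I would take $F_n^k$ to be the canonical structure map from the $k$-th term of the defining telescope into the homotopy colimit $\augJM_n^\infty$. Writing $X_j := \q^{-C(\JM_n^j)}\augJM_n^j$ with structure maps $f^j\colon X_j\to X_{j+1}$ as in Proposition \ref{prop:inf augJM seq cones}, the standard identification of cones with quotients in a telescope gives
\[
\Cone(F_n^k) \;\simeq\; \hocolim\bigl(\ast \to \Cone(X_k\to X_{k+1}) \to \Cone(X_k\to X_{k+2})\to \cdots \bigr),
\]
and iteratively each $\Cone(X_k\to X_{k+j})$ is built from the individual cones $\Cone(f^{k+i})$ for $0\le i<j$ with connecting maps induced by the $f^{k+i}$. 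Using \ref{it:hocolim of product} and \ref{it:cofinal preserves hocolim} of Proposition \ref{prop:hocolim properties}, I can then present $\Cone(F_n^k)$ uniformly as a single homotopy colimit assembled out of the spectra $\Cone(f^{k+i})$ for $i\ge 0$.

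Next I would feed in Proposition \ref{prop:inf augJM seq cones} to identify each
\[
\Cone(f^{k+i}) \;\simeq\; \q^{(4n-2)(k+i)}\Sigma^{(2n-2)(k+i)}\,\augJMcone_n^{k+i},
\]
where $\augJMcone_n^{k+i}$ is a finite hocolim of basepoints and terms of the form \eqref{eq:augJM cone terms of lower thru degree} with internal $q$-shift $a>0$. The critical observation is that every such term already has a $\cP_{n-1}$ at the top, so it has through-degree $\le n-2$, and this is preserved by the connecting maps (which are induced by maps of Khovanov spectra of flat-tangle modules involving the top projector). Factoring out the common shift $\q^{2nk}\Sigma^{(2n-2)k}$, a typical term in level $i$ acquires internal $q$-shift
\[
(4n-2)(k+i) - 2nk + a \;=\; (2n-2)k + (4n-2)i + a,
\]
which is $>0$ whenever $a>0$ (and $n\ge 2$). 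I would then define $\augJMcone_n^{\infty-k}$ to be the hocolim of the resulting diagram, so that by construction
\[
\Cone(F_n^k) \;\simeq\; \q^{2nk}\Sigma^{(2n-2)k}\,\augJMcone_n^{\infty-k},
\]
with $\augJMcone_n^{\infty-k}$ a hocolim of basepoints and terms of the form \eqref{eq:augJM cone terms of lower thru degree}, all with positive $q$-shift, as required.

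The main obstacle I anticipate is the homotopy-coherent bookkeeping needed to assemble the iterated mapping cones into a single hocolim diagram whose objects are all (up to the factored shift) of the desired shape. In particular, one must check that the connecting maps between the various $\Cone(f^{k+i})$ induced by the $f^{k+i}$ respect the structure of \eqref{eq:augJM cone terms of lower thru degree}: this follows because these maps factor through compositions of flat-tangle spectra capped on top by $\cP_{n-1}$, and through-degree cannot increase under such composition, so no term of identity-tangle type can appear. Once these coherences are in place, the grading comparison above is essentially a book-keeping exercise and the statement falls out.
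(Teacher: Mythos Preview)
Your approach is correct and is essentially what the paper intends: the corollary is stated without proof, as a direct consequence of Proposition~\ref{prop:inf augJM seq cones} applied to each stage of the telescope past level $k$, together with the standard identification of $\Cone(F_n^k)$ with the sequential hocolim of the successive cones $\Cone(X_k\to X_{k+j})$. Your grading computation and the observation that the $\cP_{n-1}$ cap guarantees all terms retain through-degree $\le n-2$ are exactly the points needed; the only minor remark is that the statement of the corollary does not itself demand $a>0$ (it only asks for terms of the form \eqref{eq:augJM cone terms of lower thru degree}), so your positivity check, while useful downstream, is more than is strictly required here.
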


\begin{proof}[Proof of Theorem \ref{thm:infinite augJM is a projector}]
The proof that $\augJM_n^\infty$ is in $\Sp\TL_n$ is similar to the argument for $\T_n^\infty$.  The map $\iota$ is given by the composition
\begin{equation}\label{eq:Identity into augJMinf}
\Imod_n \xrightarrow{\iota_{n-1} \horizcomp 1} \proj_{n-1} \horizcomp \Imod_1 = \augJM_n^0 \xrightarrow{F_n^0} \augJM_n^\infty,
\end{equation}
and the cone of a composition fits into an exact triangle with the cones of the individual maps by the octahedral axiom.  The cones of $\iota_{n-1}\horizcomp 1$ and $F_n^0$ both have through-degree $<n$ (using Proposition \ref{prop:inf augJM seq cones} in the latter case), and so the cone of $F_n^0\circ (\iota_P\horizcomp 1)$ has through-degree $<n$ as well.  Thus $(\augJM_n^\infty,\iota)$ satisfies item \ref{itm:CK1} of Definition \ref{def:spectral projector}.

To show that $\augJM_n^\infty$ is killed by turnbacks (item \ref{itm:CK2} of Definition \ref{def:spectral projector}), we consider
\[e_i\vertcomp \augJM_n^\infty \simeq
\hocolim\left(e_i\vertcomp\augJM_n^0 \rightarrow \cdots \q^{-C(\JM_n^k)} e_i\vertcomp\augJM_n^k \rightarrow \cdots\right).\]
Lemma \ref{lem:augmented JM and turnbacks} shows that all of these terms are contractible for $i<n-1$, while for $i=n-1$ we have
\begin{align*}
e_{n-1}\vertcomp \augJM_n^\infty &\simeq
\hocolim\left(e_{n-1}\vertcomp\augJM_n^0 \rightarrow \cdots \q^{(4n-2)k-C(\JM_n^k)}\Sigma^{(2n-2)k} e_{n-1}\vertcomp\augJM_n^0 \rightarrow \cdots\right)\\
&\simeq
\hocolim\left(e_{n-1}\vertcomp\augJM_n^0 \rightarrow \cdots \q^{2nk}\Sigma^{(2n-2)k} e_{n-1}\vertcomp\augJM_n^0 \rightarrow \cdots\right),
\end{align*}
where we have used the count of crossings $C(\JM_n^k)=2(n-1)k$ in the last line.  Now as in Theorem \ref{thm:inf twist seq stabilizes}, the term $e_{n-1}\vertcomp\augJM_n^0$ has a finite lower bound for $q$-degrees beyond which all terms are contractible.  Thus this sequence must eventually become a sequence of contractible terms, and we are done.
\end{proof}

\subsection{Filtration levels of $\augJM_n^\infty$ and the proof of Theorem \ref{intro thm:Pn simplified}}
\label{sec:prove Pn simp}

Finally, we would like to relate our augmented Jucys-Murphy spectra with the Cooper-Krushkal sequence used by Hogancamp to study the projector $P_n\in\TL_n$, schematically illustrated in Figure \ref{fig:CK sequence}.  (Recall that, just as for spectral projectors, (chain) Cooper-Krushkal projectors are unique up to homotopy equivalence).  

\begin{figure}
\[\begin{tikzpicture}[xscale=2.8,yscale=2]
\node (term 0) at (5,5){$
    \begin{tikzpicture}[xscale=.25,yscale=.4]
        \draw[thick] (0,0) -- (0,2)
                (2,0) -- (2,2)
                (3,0) -- (3,2);
        \whitebox{-.2,.75}{2.2,1.25}
        \nstrandsalongpath[]{0,1.8}{2,1.8}
        \nstrandsalongpath[]{0,.2}{2,.2}
    \end{tikzpicture}$
    };
\node (term 1) at (5,4){ $\q\,\, \CKcolumnFIVE$};
\node (term 2) at (4,4){ $\q^2\,\, \CKcolumnFOUR$};
\node (dots 1) at (3,4) {$\cdots$};
\node (term n-2) at (2,4){ $\q^{n-2}\,\, \CKcolumnTWO$};
\node (term n-1) at (1,4){ $\q^{n-1}\,\, \CKcolumnONE$};
\node (term n) at (1,3){ $\q^{n+1}\,\, \CKcolumnONE$};
\node (term n+1) at (2,3){ $\q^{n+2}\,\, \CKcolumnTWO$};
\node (dots 2) at (3,3) {$\cdots$};
\node (term 2n-3) at (4,3) {$\q^{2n-2}\,\, \CKcolumnFOUR$};
\node (term 2n-2) at (5,3) {$\q^{2n-1}\,\, \CKcolumnFIVE$};
\node (term 2n-1) at (5,2) {$\q^{2n+1}\,\, \CKcolumnFIVE$};
\node (term 2n) at (4,2) {$\q^{2n+2}\,\, \CKcolumnFOUR$};
\node (dots 3) at (3,2) {$\cdots$};
\node (dots 4) at (3,1) {$\cdots$};
\node (term penultimate) at (4,1) {$\q^{2nk-2} \,\, \CKcolumnFOUR$};
\node (term ultimate) at (5,1) {$\q^{2nk-1} \,\, \CKcolumnFIVE$};
\node (dots end) at (5,0) {$\vdots$};
\foreach \dom/\ran in
{term 0/term 1, term 1/term 2, term 2/dots 1,
dots 1/term n-2, term n-2/term n-1, term n-1/term n,
term n/term n+1, term n+1/dots 2, dots 2/term 2n-3,
term 2n-3/term 2n-2, term 2n-2/term 2n-1, term 2n-1/term 2n,
term 2n/dots 3, dots 4/term penultimate, term penultimate/term ultimate, term ultimate/dots end}
    {\draw[<-] (\dom)--(\ran);}
\draw[<-] (dots 3) -- (1,2) -- (1,1) -- (dots 4);
\end{tikzpicture}\]
    \caption{The Cooper-Krushkal sequence \cite{Cooper-Krushkal} for $P_n\in\TL_n$ with $n>2$.  The white boxes indicate copies of $P_{n-1}$.  The horizontal maps are all saddle maps. The vertical maps are differences of dotted identity maps.  When lifting to spectra, we are forced to consider nullhomotopies for various compositions.}
    \label{fig:CK sequence}
\end{figure}
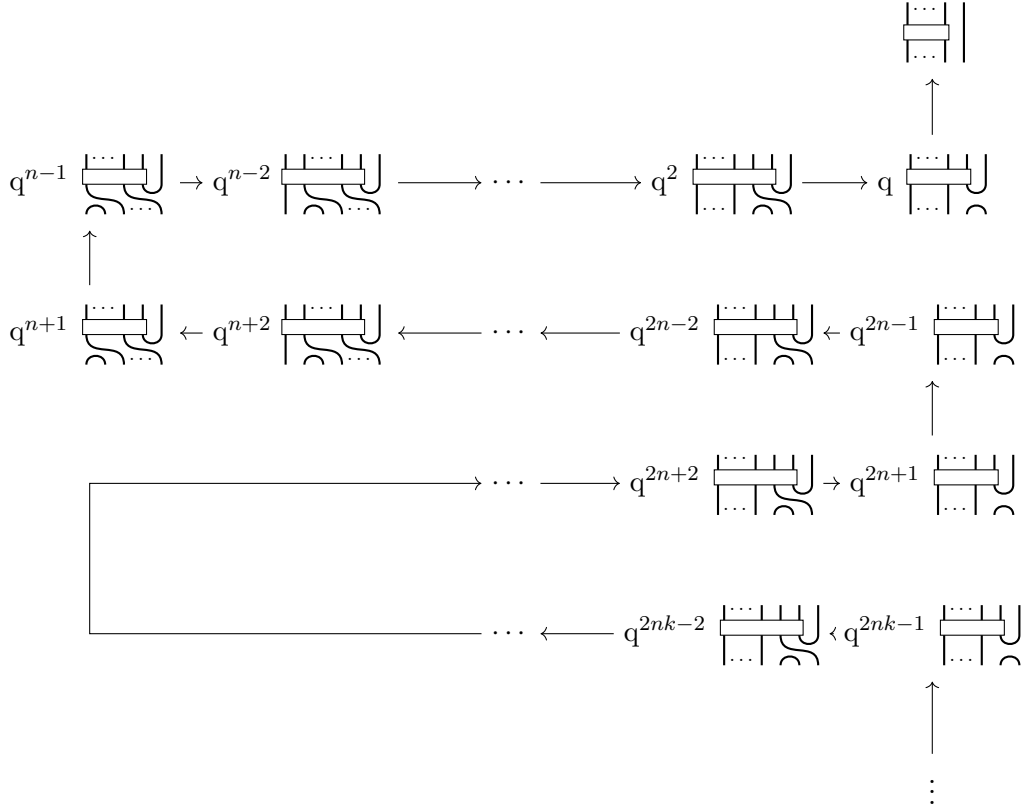

To begin, we establish some notation.  We will let $\CK(t)$ denote the Khovanov spectrum associated to the $t$-th term in Figure \ref{fig:CK sequence}, with $\CK(0)$ corresponding to the `ending' term $\augJM_n^0$, and increasing $t$ counting `backwards' along the sequence from there, with the $\q$-degree shifts included.  As in Section \ref{sec:augJMinf}, the presence of the projectors $\proj_{n-1}$ endows each $\CK(t)$ with a natural $\End_{\mathcal{H}_{2(n-1)}}(\proj_{n-1})$-module structure.  The reduced chain groups of these terms (with corresponding $\End_{H_{2(n-1)}}(P_{n-1})$-module structure) will be denoted $\CKch(t):=\chainsfunc(\CK(t))$.  The maps between these chain groups will be denoted
\[\CKch(t)\xrightarrow{f_t}\CKch(t-1).\]
When $t\not\equiv 1\mod n-1$, the map $f_t$ is induced by the obvious saddle map (this corresponds to the horizontal maps in Figure \ref{fig:CK sequence}).  The first vertical map $f_1$ is also induced by the obvious saddle map.  The other vertical maps $f_{m(n-1)+1}$ are induced by differences of dotted identity cobordisms (with dots on the lower turnback and the strand adjacent to it) when $m>0$.

In the homological setting, Cooper-Krushkal \cite{Cooper-Krushkal} built a convolution of Figure \ref{fig:CK sequence} and showed moreover that any convolution of Figure \ref{fig:CK sequence} gives a projector $P_n$.  Said differently: we interpret this convolution as providing a filtration $\CKchfilt$ on $P_n$ as an $\End_{H_{n-1}}(P_{n-1})$-module, whose associated graded levels and attaching maps are indicated in Figure \ref{fig:CK sequence}.

Now in order to lift this filtration to the spectral setting, we require one last bit of notation.  For $i\geq 0$ we let $\augJM_n^{<i>}$ denote the spectrum obtained by including the first $i$ crossings (from the top) of the diagram for $\augJM_n^k$ (for some $k>i$), so that $\augJM_n^{<0>}=\augJM_n^0$ and $\augJM_n^{<2n-2>}=\augJM_n^1$.  Applying Equation \eqref{eq:crossing cofib seq} to bottom-most crossings provides maps
\[\q^{-i}\augJM_n^{<i>} \rightarrow \q^{-(i+1)}\augJM_n^{<i+1>}\]
for all $i\geq 0$.  These maps compose to give the maps in the sequence \eqref{eq:inf augJM seq def} used to define $\augJM_n^\infty$, thereby giving a canonical equivalence
\begin{equation}\label{eq:inf augJM one crossing at a time}
\augJM_n^\infty \simeq \hocolim \left(
\augJM_n^{<0>} \rightarrow \q^{-1}\augJM_n^{<1>} \rightarrow \cdots
\q^{-i}\augJM_n^{<i>} \rightarrow \cdots \right).
\end{equation}
With this notation we are ready to restate Theorem \ref{intro thm:Pn simplified} in more detail.

\begin{theorem} \label{thm:Pn simplified in detail}
For $n>2$ the non-negative filtration $\CKfilt$, induced by the homotopy colimit \eqref{eq:inf augJM one crossing at a time}, on the spectral projector $\proj_n\simeq\augJM_n^\infty$ as an $\End_{\mathcal{H}_{2(n-1)}}(\proj_{n-1})$-module satisfies the following properties:
\begin{enumerate}
    \item \label{it:CKfilt levels} The filtration levels have canonical equivalences, of $\End_{\X(2n-2)}(\proj_{n-1})$-modules, $\CKfilt^i(\proj_n)\simeq \q^{-i}\augJM_n^{<i>}$ for all $i\geq 0$.
    \item \label{it:CKfilt ag levels} The associated graded levels have canonical equivalences $\CKfilt^i/\CKfilt^{i-1}(\proj_n)\to \Sigma^i\CK(i)$ for all $i\geq 0$.
    \item \label{it:CKfilt att maps} The attaching maps $\alpha_i$ on associated graded levels fit into a well-defined homotopy coherent diagram
    \[
    \begin{tikzpicture}[xscale=1.5]
        \node (a0) at (0,0) {$\CKfilt^i/\CKfilt^{i-1}(\proj_n)$};
        \node (a1) at (3,0) {$\Sigma\CKfilt^{i-1}/\CKfilt^{i-2}(\proj_n)$};
        \node (b0) at (0,-2) {$\Sigma^i\CK(i)$};
        \node (b1) at (3,-2) {$\Sigma^{i}\CK(i-1)$};

        \draw[->] (a0) -- (b0) node[pos=.5,left]{$\simeq$};
        \draw[->] (a1) -- (b1) node[pos=.5,right]{$\simeq$};
        \draw[->] (b0) -- (b1) node[pos=.5,above] {\scriptsize $\Sigma^{i}F_i$};
        \draw[->] (a0) -- (a1) node[pos=.5,anchor=south] {\scriptsize $\alpha_i$};
    \end{tikzpicture}
    \]
    where the vertical maps are the canonical equivalences of Item \eqref{it:CKfilt ag levels}, and the maps $\CK(i)\xrightarrow{F_i}\CK(i-1)$ are induced by saddles and differences of dotted identities in the same manner as the maps $f_i$ in Figure \ref{fig:CK sequence}.
\end{enumerate}
\end{theorem}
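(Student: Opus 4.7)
The plan is to prove the three items of the theorem in order, with the bulk of the work concentrated on item \ref{it:CKfilt att maps}. Item \ref{it:CKfilt levels} is essentially tautological: the filtration $\CKfilt$ is by definition the standard filtration induced by the sequential homotopy colimit presentation \eqref{eq:inf augJM one crossing at a time} (cf.\ the discussion after Theorem \ref{thm:T2k simp} in the $n=2$ case), so by construction $\CKfilt^i(\proj_n) \simeq \q^{-i}\augJM_n^{<i>}$, with the $\End_{\X(2n-2)}(\proj_{n-1})$-module structure inherited from the distinguished $\proj_{n-1}$ sitting atop each $\augJM_n^{<i>}$.

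For item \ref{it:CKfilt ag levels}, I would apply Proposition \ref{prop:crossing cofib seq} to the $i$-th crossing of $\JM_n$, counted from the top.  Its $1$-resolution reproduces the previous tangle $\augJM_n^{<i-1>}$, so the cofibration reads
\[
\CKfilt^{i-1}(\proj_n) \to \CKfilt^i(\proj_n) \to \Sigma\,\q^{-i+2}\,\X(T_0^{(i)}),
\]
where $T_0^{(i)}$ is obtained from $\augJM_n^{<i-1>}$ by replacing the $i$-th crossing with a horizontal turnback pair. The task is then to verify, via explicit planar isotopies (appealing to Theorem \ref{thm:X well-defined} and the sliding computations of Lemma \ref{lem:augmented JM and turnbacks}), that $T_0^{(i)}$ is ambient isotopic to the $i$-th Cooper-Krushkal diagram of Figure \ref{fig:CK sequence} and that the suspension and $\q$-shifts assemble into the desired equivalence $\Sigma\,\q^{-i+2}\,\X(T_0^{(i)}) \simeq \Sigma^i\,\CK(i)$.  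This amounts to tracking, within each block of $n-1$ crossings belonging to a single $\JM_n$ factor, how the newly-introduced turnback slides past the preceding crossings, and is largely bookkeeping.

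The heart of the proof is item \ref{it:CKfilt att maps}. The attaching map $\alpha_i$ is the connecting morphism of the extension $\CKfilt^{i-1}/\CKfilt^{i-2} \to \CKfilt^i/\CKfilt^{i-2} \to \CKfilt^i/\CKfilt^{i-1}$, and I would analyze it via the $2$-dimensional cube of resolutions for the $(i-1)$-th and $i$-th crossings taken together. When the corresponding braid generators are distinct, the two turnbacks are spatially disjoint and the attaching map reduces, after local isotopy, to a saddle cobordism — matching the horizontal maps of Figure \ref{fig:CK sequence} on the nose. The subtle case, which I expect to be the main obstacle, is when the two crossings are adjacent repetitions of a single generator $\sigma_j^{-1}$; this occurs precisely once every $n-1$ steps, namely at the $\sigma_1^{-1}\sigma_1^{-1}$ seam between the under- and over-halves of a single $\JM_n$ and at the $\sigma_{n-1}^{-1}\sigma_{n-1}^{-1}$ seam between consecutive $\JM_n$ factors. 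There the local structure on the two relevant strands is exactly the full twist $\T_2^2$, and I would invoke Corollary \ref{cor:T22 simplified as multicone} to extract the cone decomposition: the attaching map on that local subdiagram is the difference of dotted identity cobordisms, matching the vertical maps of Figure \ref{fig:CK sequence}. The main technical care needed is to verify that the choices of nullhomotopies appearing in Corollary \ref{cor:T22 simplified as multicone} propagate coherently through the tower, and that the overall suspension shift $\Sigma^i$ emerges correctly from the iterated cone constructions.
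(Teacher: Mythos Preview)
Your overall structure is right and you correctly single out Corollary~\ref{cor:T22 simplified as multicone} as the tool for the dotted-identity steps, but there is a genuine gap in your treatment of item~\eqref{it:CKfilt ag levels}. The tangle $T_0^{(i)}=\augJM_n^{<i-1>}\vertcomp e_{[i]}$ carries $i-1$ crossings, whereas $\CK(i)$ is a flat diagram; so no planar isotopy can identify them, and Lemma~\ref{lem:augmented JM and turnbacks} (which concerns only \emph{full} powers $\augJM_n^k$ with the specific turnback $e_{n-1}$) does not apply here. The equivalence $\q^{-i+2}\X(T_0^{(i)})\simeq\Sigma^{i-1}\CK(i)$ must instead come from $i-1$ successive applications of the crossing cofibration sequence, each producing one $\Sigma$, with one term of each cofibration vanishing. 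The vanishing input you are missing is Lemma~\ref{lem:Ji+1ei_1 is *}: the ``wrong'' turnback $e_{[i+1]}$ placed below $\augJM_n^{<i-1>}$ slides all the way up into $\proj_{n-1}$ and is killed (for $i\not\equiv 0\bmod n{-}1$). This is the engine that collapses each cofibration ladder, and without it neither your associated-graded identification nor the attaching-map analysis can be completed.

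The paper circumvents this by proving \eqref{it:CKfilt ag levels} and \eqref{it:CKfilt att maps} \emph{together} by induction on $i$: one resolves the bottom crossing of $\augJM_n^{<i>}$ simultaneously in the associated-graded term $\augJM_n^{<i>}\vertcomp e_{[i+1]}$ and in the target $\augJM_n^{<i>}$, obtaining a commutative ladder whose middle row, after applying Lemma~\ref{lem:Ji+1ei_1 is *} and the inductive hypothesis at level $i$ (or $i-1$), reads off both the new identification $\CKfilt^{i+1}/\CKfilt^i\simeq\Sigma^{i+1}\CK(i+1)$ and the attaching map $F_{i+1}$ at once. Note also that the paper separates \emph{three} cases modulo $n-1$, not two: the step with $i\equiv 1\pmod{n-1}$ (immediately following a full-twist step) also routes through Corollary~\ref{cor:T22 simplified as multicone}, since the cleanest inductive anchor there is at level $i-1$ rather than $i$.
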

The proof will use the notation $[i]$ to denote the braid placement of the $i^{\text{th}}$ (bottom) crossing of $\augJM_n^{<i>}$, with corresponding turnback $e_{[i]}$.  This notation allows the following lemma.
\begin{lemma}\label{lem:Ji+1ei_1 is *}
For any $i\not\equiv 0 \mod n-1$ we have $\augJM_n^{<i-1>} \vertcomp e_{[i+1]}\simeq *$.
\end{lemma}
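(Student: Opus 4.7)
The plan is to show that $\augJM_n^{<i-1>} \vertcomp e_{[i+1]}$ can be exhibited, via planar isotopy of the underlying tangle, as containing a turnback on two adjacent strands of the projector $\cP_{n-1}$ sitting on top; the turnback-killing property of $\cP_{n-1}$ (Proposition~\ref{prop:killing turnbacks}, Definition~\ref{def:spectral projector}(\ref{itm:CK2})) combined with isotopy invariance (Theorem~\ref{thm:X well-defined}) will then force the spectrum to be contractible.

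First I would trace the two strands at the cap of $e_{[i+1]}$ back through the braid $\augJM_n^{<i-1>}$ to identify the top inputs they connect to. When $1 \leq i \leq n-2$, the first $i-1$ crossings of $\JM_n$ are $\sigma_{n-1}^{-1},\ldots,\sigma_{n-i+1}^{-1}$, all at positions $\geq n-i+1$, whereas $e_{[i+1]}=e_{n-i-1}$ sits at positions $(n-i-1,n-i)$. Since these positions are strictly disjoint from all the crossings above, the turnback slides freely up to sit directly below $\cP_{n-1}\horizcomp\Imod_1$ with both endpoints at positions inside $\{1,\ldots,n-1\}$, and it is killed.

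For the more involved range $n \leq i \leq 2n-3$, $e_{[i+1]}=e_{i-n+2}$ lies at positions $(i-n+2,i-n+3)$, and tracking the pure-braid permutation shows its two strands originate from the projector strands $T_{i-n+1}$ and $T_{i-n+2}$. The turnback still slides freely past all second-half crossings of $\JM_n$ already present in $\augJM_n^{<i-1>}$ (they lie at positions $\leq i-n+1$), after which it must be pushed past the first-half crossings $\sigma_{i-n+1}^{-1},\sigma_{i-n+2}^{-1},\ldots,$ which do interact with it. The plan is to handle this by Reidemeister moves exploiting that the wrap-strand $T_n$, currently at position $i-n+1$ immediately adjacent to the turnback, can be exchanged across the turnback; carrying out these local planar isotopies yields a clean turnback on projector strands at positions $(i-n+1,i-n+2)$ just below $\cP_{n-1}$. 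The hypothesis $i\not\equiv 0\mod n-1$ is precisely what ensures both endpoints of this final turnback lie in the projector's range: in the range $1\leq i\leq 2n-3$ it excludes only $i=n-1$, and tracing in that excluded case would instead connect the projector strand $T_1$ to the identity strand $T_n$, preventing any such reduction.

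The hardest step will be the careful bookkeeping of the Reidemeister moves in the second range, since the turnback is not disjoint from the first-half crossings it must traverse. I expect to manage this by induction on $i-n$, using Propositions~\ref{prop:projectors are idempotents} and~\ref{prop:projectors absorb crossings} to simplify local pieces involving the projector together with the planar algebraic structure to package each intermediate isotopy as a stable equivalence.
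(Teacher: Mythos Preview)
Your approach is correct and is exactly what the paper does---its entire proof is the two-sentence remark that one pulls the cap of $e_{[i+1]}$ upward until it meets $\cP_{n-1}$, which then dies by turnback-killing.

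Two small comments. First, the second range is easier than you anticipate: no induction or appeal to Propositions~\ref{prop:projectors are idempotents}--\ref{prop:projectors absorb crossings} is needed. In the Jucys--Murphy braid the wrap strand (the one from top-position $n$) passes \emph{over} every other strand, and the cap arc (the two strands from top-positions $i-n+1,\,i-n+2$ together with the cap) meets only the wrap strand, in exactly two over-crossings; a single planar isotopy slides the wrap strand down and off the cap, leaving a clean $e_{i-n+1}^{\mathrm{top}}$ directly below $\cP_{n-1}$. Second, you treat only $1\le i\le 2n-3$, but the lemma is stated for all $i\not\equiv 0\bmod (n-1)$; this extends immediately since each full copy of $\JM_n$ is a pure braid, so the cap passes through it (via Reidemeister moves) without changing which top endpoints it connects.
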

\begin{proof}
    This can be seen by inspection.  The upper turnback of $e_{[i+1]}$ can be pulled upwards and eventually reaches the $\proj_{n-1}$, which is killed by turnbacks.
\end{proof}

\begin{proof}[Proof of Theorem \ref{thm:Pn simplified in detail}]
Indeed, a homotopy colimit always has a filtration, associated to a filtration of the diagram, so that (\ref{eq:inf augJM one crossing at a time}) induces a filtration by truncating the sequence at finite stages; Item \eqref{it:CKfilt levels} follows immediately from the definition of the filtration.  The associated graded levels $\CKfilt^i/\CKfilt^{i-1}$ (we will omit the $\proj_n$ from the notation to avoid clutter) can be determined using the cofibration sequence \eqref{eq:crossing cofib seq} of the $i$-thcrossing of $\augJM_n^{<i>}$.  Namely, there is a cofibration sequence
\[
\augJM_n^{<i-1>}\to \augJM_n^{<i>}\to \Sigma q^{-i+2}\augJM_n^{<i-1>}e_{[i]},
\]
so that
\[\CKfilt^i/\CKfilt^{i-1} \simeq \Sigma\,\q^{-i+2}\augJM_n^{<i-1>}\vertcomp e_{[i]}.\]

For Items \eqref{it:CKfilt ag levels} and \eqref{it:CKfilt att maps} we induct on the filtration level.  For the base case $i=0$ we have $\CK(0)=\augJM_n^{<0>}$ and there is nothing to prove.  The case $i=1$ uses the cofibration sequence \eqref{eq:crossing cofib seq} for the lone crossing to see
\[\q^{-1}\augJM_n^{<1>} \rightarrow \Sigma\, \q \augJM_n^0\vertcomp e_{[1]} \xrightarrow{\Sigma s} \Sigma \augJM_n^0.\]
Noting that $[1]=n-1$, we see by inspection
\[\CKfilt^1/\CKfilt^{0} = \Sigma\, \q \augJM_n^0 \vertcomp e_{[1]} = \Sigma \CK(1)\]
with attaching map to $\CK(0)$ determined by the saddle $s$ as desired.

For the induction step we will assume we have proven both Items \eqref{it:CKfilt ag levels} and \eqref{it:CKfilt att maps} up through level $i$, and seek to prove the corresponding statements for level $i+1$.  This proof will be split into three cases.

In the case that $i\not \equiv 0,1\mod n-1$, we follow a similar pattern.  We use the cofibration sequence \eqref{eq:crossing cofib seq} applied to the bottom crossing in $\augJM_n^{<i+1>}$ to obtain a cofibration sequence: 
\[\q^{-(i+1)} \augJM_n^{<i+1>} \rightarrow \Sigma\, \q^{-i+1} \augJM_n^{<i>} \vertcomp e_{[i+1]} \xrightarrow{\Sigma (\Id\vertcomp s)} \Sigma\, \q^{-i} \augJM_n^{<i>}\vertcomp\Imod. \]
We simplify the second term (i.e. the associated graded term) and the third term simultaneously by using the cofibration sequence \eqref{eq:crossing cofib seq} applied to the bottom crossing of $\augJM_n^{<i>}$.  That is, we have a diagram of cofibration sequences:

\[\begin{tikzcd}
\CKfilt^{i+1}/\CKfilt^i = \Sigma\, \q^{-i+1} \augJM_n^{<i>} \vertcomp e_{[i+1]}
\ar[rr, "\Sigma(\Id\vertcomp s)"] \ar[d] & &
    \Sigma\, \q^{-i} \augJM_n^{<i>} \vertcomp \Imod
     \ar[d] \\
\Sigma^2\, \q^{-i+3} \augJM_n^{<i-1>} \vertcomp e_{[i]} \vertcomp e_{[i+1]}
\ar[rr,"\Sigma^2(\Id\vertcomp\Id\vertcomp s)"] \ar[d,"\Sigma^2(\Id\vertcomp s \vertcomp\Id)"] & &
    \Sigma^2\, \q^{-i+2} \augJM_n^{<i-1>} \vertcomp e_{[i]} \vertcomp \Imod
    \ar[d,"\Sigma^2(\Id\vertcomp s \vertcomp \Id)"]\\
\Sigma^2\, \q^{-i+2} \augJM_n^{<i-1>}\vertcomp \Imod \vertcomp e_{[i+1]}
\ar[rr,"\Sigma^2(\Id\vertcomp\Id\vertcomp s)"] & &
    \Sigma^2\, \q^{-i+1} \augJM_n^{<i-1>} \vertcomp \Imod \vertcomp \Imod.
\end{tikzcd}\]

The bottom left term is contractible by Lemma \ref{lem:Ji+1ei_1 is *}, so the left-side arrow in the upper square is an equivalence.  In the middle row we see the term $\Sigma\, \q^{-i+2} \augJM_n^{<i-1>}\vertcomp e_{[i]} = \CKfilt^i/\CKfilt^{i-1}$ which is inductively assumed to be equivalent to $\Sigma^i\CK(i)$.  These equivalences transform the second row into
\[\Sigma^{i+1}\,\q \CK(i) \vertcomp e_{[i+1]}
\xrightarrow{\Sigma^{i+1}(\Id\vertcomp s)}
\Sigma^{i+1}\CK(i)\vertcomp\Imod,\]
and we see by inspection that $\q\CK(i)\vertcomp e_{[i+1]} = \CK(i+1)$, with attaching map determined by the saddle $s$ as desired for Item \eqref{it:CKfilt att maps}.  This completes the $i\neq 0,1 \bmod{n-1}$ case.

In the case when $i\equiv 1 \mod n-1$ (so that $[i-1]=[i]$), we begin with the same cofibration sequence
\[\q^{-(i+1)} \augJM_n^{<i+1>} \rightarrow \Sigma\, \q^{-i+1} \augJM_n^{<i>} \vertcomp e_{[i+1]} \xrightarrow{\Sigma (\Id\vertcomp s)} \Sigma\, \q^{-i} \augJM_n^{<i>}\vertcomp\Imod. \]
In this case we have $\augJM_n^{<i>}\simeq \augJM_n^{<i-2>}\vertcomp \T_{2,[i]}^2$, where $\T_{2,[i]}^2$ consists of a full twist on two strands (crossings in braid position $[i]$) together with $n-2$ vertical strands.  We apply the cofibration sequence of Corollary \ref{cor:T22 simplified as multicone} to both the second term (i.e. the associated graded term) and the third term above, taking advantage of the various equivalences in Corollary \ref{cor:T22 simplified as multicone}.
\[\begin{tikzcd}
\CKfilt^{i+1}/\CKfilt^i = \Sigma\, \q^{-i+1} \augJM_n^{<i>} \vertcomp e_{[i+1]} \ar[rr,"\Sigma(\Id\vertcomp s)"] \ar[d] & &
\Sigma\, \q^{-i}\augJM_n^{<i>}\vertcomp \Imod \ar[d] \\
\Sigma^3\, \q^{-i+6}\augJM_n^{<i-2>}\vertcomp e_{[i-1]} \vertcomp e_{[i+1]} \ar[rr,"\Sigma^3(\Id\vertcomp\Id\vertcomp s)"] \ar[d] & &
\Sigma^3\, \q^{-i+5}\augJM_n^{<i-2>}\vertcomp e_{[i-1]} \vertcomp \Imod \ar[d] \\
\Sigma\, \q^{-i+2} \augJM_n^{<i-1>}\vertcomp e_{[i+1]} \ar[rr] & &
\Sigma\, \q^{-i+1} \augJM_n^{<i-1>}\vertcomp\Imod
\end{tikzcd}\]
Just as in the previous case, the bottom left term is contractible by Lemma \ref{lem:Ji+1ei_1 is *} so that the top left arrow is an equivalence, while the middle row inductively simplifies to
\[\Sigma^{i+1}\q^3\CK(i-1)\vertcomp e_{[i+1]} \xrightarrow{\Sigma^{i+1} (\Id\vertcomp s)} \Sigma^{i+1}\q^2\CK(i-1)\vertcomp\Imod.\]
We see by inspection once again that $\q^3\CK(i-1)\vertcomp e_{[i+1]}=\CK(i+1)$ while $\q^2\CK(i-1)=\CK(i)$ with attaching map determined by the saddle $s$ as desired.

Finally we turn to the case when $i\equiv 0 \mod n-1$ (so that $[i]=[i+1]$).  This time we have $\augJM_n^{<i+1>}\simeq \augJM_n^{<i-1>} \vertcomp \T_{2,[i]}^2$, and we have a simplified cofibration sequence, coming from Corollary \ref{cor:T22 simplified as multicone}:

\[\begin{tikzcd}
\q^{-(i+1)}\augJM_n^{<i+1>} \ar[r] &
\Sigma\,\q^{-i+1}\augJM_n^{<i>}e_{[i+1]} \simeq \Sigma\Cone( \q^{-i+4} \augJM_n^{<i-1>}\vertcomp e_{[i]} \rightarrow *)
\ar[d,"\Sigma({\Id\vertcomp w,*})"]
\\
 & \Sigma\, \q^{-i} \augJM_n^{<i>} \simeq\Sigma\Cone( \q^{-i+2}\augJM_n^{<i-1>}\vertcomp e_{[i]} \xrightarrow{\Id\vertcomp s} \q^{-i+1}\augJM_n^{<i-1>}\vertcomp\Imod),
\end{tikzcd}\]
where $e_{[i]}\xrightarrow{w}e_{[i]}$ is the difference of dotted identity maps.  Then just as before we apply cofibration sequences for the second term (i.e., the associated graded term) and the third term simultaneously to see
\[\begin{tikzcd}
\CKfilt^{i+1}/\CKfilt^i=\Sigma\,\q^{-i+1}\augJM_n^{<i>}e_{[i+1]} \ar[rr,"\Sigma({\Id\vertcomp w,*})"] \ar[d] & &
\Sigma\, \q^{-i} \augJM_n^{<i>} \ar[d] \\
\Sigma^2 \q^{-i+4} \augJM_n^{<i-1>}\vertcomp e_{[i]} \ar[rr,"\Sigma^2(\Id\vertcomp w)"] \ar[d] & &
\Sigma^2 \q^{-i+2} \augJM_n^{<i-1>}\vertcomp e_{[i]} \ar[d,"\Sigma^2(\Id\vertcomp s)"] \\
* \ar[rr] & &
\Sigma^2\, \q^{-i+1}\augJM_n^{<i-1>}\vertcomp\Imod
\end{tikzcd}.\]
The top left arrow is again an equivalence, and this time the middle row inductively simplifies to
\[\Sigma^{i+1} \q^3\CK(i-1)\vertcomp e_{[i]} \xrightarrow{\Sigma^{i+1}(\Id\vertcomp w)} \Sigma^{i+1} \q^1\CK(i-1)\vertcomp e_{[i]},\]
and we see by inspection that $\q^3\CK(i-1)\vertcomp e_{[i]} = \CK(i+1)$ while $\q^1\CK(i-1)\vertcomp e_{[i]} = \CK(i)$, with attaching map determined by $w$ as desired.

\end{proof}

Figure \ref{fig:CK sequence} illustrates in what sense $\augJM_n^\infty$ can be regarded as a smaller model for $\cP_n$, assuming one takes the spectral projector $\cP_{n-1}$ as given.  The sequence appears to exhibit a type of symmetry and periodicity, but it should be reminded that the attaching maps $\CK(i)\xrightarrow{F_i}\CK(i-1)$ in Theorem \ref{thm:Pn simplified in detail}, which depend only on $i\mod{2n-2}$, are only the induced maps on associated graded levels.  The full attaching information $\CK(i)\rightarrow\CKfilt^{i-1}(\proj_n)$ is not completely tracked in the figure.

On the level of homology however, Hogancamp \cite{Hog_polyaction} used the apparent symmetry and periodicity of the figure to construct a map $U_n$ on $P_n$.  We will see that this construction does not always lift to spectra.

We also note that the constructions in the section induce a filtration $F_{CK}$ of $P_n$, given by $F_{CK}^i(P_n)=\mathcal{C}_h(\CKfilt^{i}(\proj_n))$.

\section{Obstructing Hogancamp's Polynomial Action on Spectral Projectors}
\label{sec:Obstructing U_n^k}

In \cite{Hog_polyaction}, Hogancamp constructs maps on projectors which, in our grading conventions, would be written as
\[ U_n : \q^{2n}\Sigma^{2n-2} P_n \rightarrow P_n.\]
We here present statements equivalent to the existence of a lift of such a map (and/or its compositions $U_n^k$) to the spectral category.  We will see that such lifts do not always exist.  Before we begin however, let us note that in the $n=2$ case, it is easy to see that spectral lifts of $U_2$ do exist without any further analysis, proving Theorem \ref{intro thm: n=3 computations} \eqref{intro thm item: yes U2}.

\begin{theorem}\label{thm:U2 lifts}
There exists a map $\mathcal{U}_2:\q^4\Sigma^2 \proj_2 \to \proj_2$ which lifts the map $U_2$ described in \cite{Hog_polyaction}.
\end{theorem}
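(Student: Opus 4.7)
The plan is to reduce the existence of $\mathcal{U}_2$ to a classical lifting question for a well-understood spectrum. First I would invoke Lemma~\ref{lem:END(P) = HOM(I,P)} together with Corollary~\ref{cor:END(P) = closure(P)}, which identify $\HOM_{\sarc_4}(\proj_2,\proj_2)$ with the full trace $\q^2\, T^2(\proj_2)$ of the spectral projector. Combined with Theorem~\ref{intro thm: spectral Pn is tori}, which identifies this closure with the Khovanov spectrum of the infinite torus link, this will give
\[
\HOM_{\sarc_4}(\proj_2,\proj_2)\;\simeq\; \q^2\,T^2(\proj_2)\;\simeq\; \q^2\,\X(T(2,\infty)).
\]
Under this equivalence, tracking gradings, producing a spectral lift of Hogancamp's endomorphism $U_2\colon \q^4\Sigma^2 P_2\to P_2$ becomes equivalent to producing an element of $\pi_2\bigl(\X(T(2,\infty))|_{q=2}\bigr)$ whose Hurewicz image in $\Kh_{2,2}(T(2,\infty))$ is the class $u_2$ of Theorem~\ref{thm:hog-polyaction}.

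Next I would invoke the second author's explicit computation of $\X(T(2,\infty))$ from \cite{Wil_TorusLinks}. In contrast with the higher-strand situation --- compare the Moore-space summands appearing in Corollary~\ref{intro cor:3-strand-torus-link} for $n=3$ --- for $n=2$ each quantum graded piece of $\X(T(2,\infty))$ has the stable homotopy type of a wedge of suspended spheres. In particular, at $q=2$ in homological degree $2$ the Hurewicz map $\pi_2\to H_2$ is surjective onto the summand containing $u_2$, so a lift of $u_2$ automatically exists; any such preimage defines the desired map $\mathcal{U}_2$, and one then checks by construction that $\chainsfunc(\mathcal{U}_2)\simeq U_2$.

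The main conceptual point --- and the reason this case can be dispatched without further machinery --- is that no Moore-space or $\mathbb{Z}/2$-torsion obstruction appears at $n=2$. In Section~\ref{sec:Obstructing U_n^k} this kind of argument will be formalized via the obstruction classes $\beta_{n,k}$, and it is precisely the non-vanishing of the analogous class $\beta_{3,1}$ --- whose target bidegree in $\X(T(3,\infty))$ does meet a Moore-space summand --- that will obstruct a spectral lift of $U_3$, even as $U_3^2$ is permitted to lift.
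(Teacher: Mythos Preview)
Your approach is essentially identical to the paper's: identify $\End_{\sarc_4}(\proj_2)\simeq \q^2\X(T(2,\infty))$ and then use Hurewicz surjectivity to lift $u_2$. One small correction: the quantum-graded pieces of $\X(T(2,\infty))$ are not in general wedges of spheres---$\Kh(T(2,\infty))$ has $\Z/2$-torsion, so Moore spectra appear. The paper's reasoning is that $T(2,k)$ is alternating, hence thin, so by \cite{LS_stablehtpytype} the spectrum is a wedge of \emph{Moore spectra}; for such wedges the Hurewicz map is still surjective, which is all that is needed.
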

\begin{proof}
By Theorem \ref{intro thm: spectral Pn is tori} we have
\[\q^{-2}\End(\proj_2)\simeq \X(T(2,\infty)).\]
The links $T(2,k)$ are alternating, and thus \cite{LS_stablehtpytype} the spectrum $\X(T(2,\infty))$ is a wedge sum of Moore spectra, for which the Hurewicz map $\pi^{st}(\X(T(2,\infty))) \rightarrow H_*(\X(T(2,\infty)))$ is a surjection.  Thus there is a lift of $U_2\in H_2(\X(T(2,\infty)))$ to an element of $\pi^{st}(\X(T(2,\infty)))$ as desired.
\end{proof}

Let us now consider the more general setting of $n\geq 3$.  We begin be considering the saddle cobordism $\Itang_n \xrightarrow{s_n} e_{n-1}$.  If we concatenate this with a shifted copy of $\augJM_n^0$, we can build the following diagram of maps in $\Sp\TL_n$:

\begin{equation}\label{eq:maps related to obstructing U_n^k before closure}
\vcenter{\hbox{\begin{tikzpicture}[xscale=1.7mm,yscale=1.4mm]
\node (JM0) at (0,1){
    $\q^{2nk}\Sigma^{(2n-2)k} \augJM_n^0$
    };
\node (JM0e) at (0,0){
    $\q^{2nk-1}\Sigma^{(2n-2)k}\,\,
    \LiftDiagBL$
    };
\node (JMk0res) at (1,0){
    $\q^{-2(n-1)k+2}\Sigma\,\,
    \LiftDiagBOT$
    };
\node (JMk1res) at (2,0){
    $\q^{-2(n-1)k+1} \Sigma\,\,
    \LiftDiagBR$
    };
\node (JMk) at (1.5,1){
    $\q^{-2(n-1)k} \augJM_n^k$
    };
\draw[->] (JM0) -- (JM0e) node[pos=.5,left]{$s_n^k$};
\draw[->] (JM0e) -- (JMk0res) node[pos=.5,above]{$\simeq$};
\draw[->] (JMk0res) -- (JMk1res) node[pos=.5,above]{$\Sigma \sigma_n^k$};
\draw[->] (JMk) -- (JMk0res);
\draw[->,dashed] (JMk1res) -- (JMk);
\draw[ultra thick,dashed,->,red] (JM0) -- (JMk) node[pos=.5,above]{$\exists$ lift $\widehat{s_n^k}$?};
\end{tikzpicture}}}
\end{equation}
Here we use the notation $s_n^k$ for the relevant saddle map on spectra in a slight abuse of notation.  The equivalence is found by using Lemma \ref{lem:augmented JM and turnbacks} together with a single Reidemeister 1 move.  The triangle on the right is the cofibration sequence \eqref{eq:crossing cofib seq} for the lowest crossing in the diagram for $\augJM_n^k$, with corresponding saddle map denoted $\sigma_n^k$; the third map is dashed to indicate that an extra suspension would be needed.  Given the diagram of non-dashed arrows only, one may then ask whether the map $s_n^k$ (after composing with the equivalence) lifts to give a map $\widehat{s_n^k}$, indicated by the thick (red) dashed arrow.

The diagram can be simplified with the adjunction of Theorem \ref{thm:adjunction}, which one can check sends $s_n^k\in\HOM_{\Sp\TL_n}(\augJM_n^0, \q^{-1}\augJM_n^0 e_{n-1})$ to $\Id\in\HOM_{\Sp\TL_{n-1}} (\cP_{n-1},\cP_{n-1})$, giving us the similar diagram:

\begin{equation}\label{eq:maps related to obstructing U_n^k one closure}
\vcenter{\hbox{\begin{tikzpicture}[xscale=1.7mm,yscale=1.4mm]
\node (Pn-1) at (0,1){
    $\q^{2nk}\Sigma^{(2n-2)k} \cP_{n-1}$
    };
\node (Pn-1again) at (0,0){
    $\q^{2nk}\Sigma^{(2n-2)k} \,\,
    \TLiftDiagBL$
    };
\node (JMk0res) at (1,0){
    $\q^{-2(n-1)k+3}\Sigma \,\,
    \TLiftDiagBOT$
    };
\node (JMk1res) at (2.15,0){
    $\q^{-2(n-1)k+2} \Sigma \,\,
    \TLiftDiagBR$
    };
\node (JMk) at (1.5,1){
    $\q^{-2(n-1)k+1} T(\augJM_n^k)$
    };
\draw[->] (Pn-1) -- (Pn-1again) node[pos=.5,left]{$\Id$};
\draw[->] (Pn-1again) -- (JMk0res) node[pos=.5,above]{$\simeq$};
\draw[->] (JMk0res) -- (JMk1res) node[pos=.5,above]{$\Sigma T(\sigma_n^k)$};
\draw[->] (JMk) -- (JMk0res);
\draw[->,dashed] (JMk1res) -- (JMk);
\draw[ultra thick,dashed,->,red] (JM0) -- (JMk) node[pos=.5,above]{$\exists$ lift $\widehat{\phi_n^k}$?};
\end{tikzpicture}}}
\end{equation}

\begin{lemma}\label{lem:lifting diagrams equivalent}
The lift $\widehat{s_n^k}$ of Equation \eqref{eq:maps related to obstructing U_n^k before closure} exists if and only if the lift $\widehat{\phi_n^k}$ of Equation \eqref{eq:maps related to obstructing U_n^k one closure} exists.
\end{lemma}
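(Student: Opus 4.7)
The plan is to apply the adjunction of Theorem \ref{thm:adjunction} uniformly to diagram \eqref{eq:maps related to obstructing U_n^k before closure}, observing that this translates the entire lifting problem into that of diagram \eqref{eq:maps related to obstructing U_n^k one closure}. Since the adjunction is a natural equivalence of morphism spectra, the existence of a lift in one diagram will correspond bijectively to the existence of a lift in the other.

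Set $\cM = \q^{2nk}\Sigma^{(2n-2)k}\proj_{n-1}$, so that the source in \eqref{eq:maps related to obstructing U_n^k before closure} is $\cM \horizcomp \Imod_1 = \q^{2nk}\Sigma^{(2n-2)k}\augJM_n^0$, precisely the form required to apply the first equivalence of Theorem \ref{thm:adjunction}. The first step is to verify object-by-object that for each target $\cN$ appearing in \eqref{eq:maps related to obstructing U_n^k before closure}, the adjunctive target $\q T(\cN)$ coincides with the corresponding object in \eqref{eq:maps related to obstructing U_n^k one closure}. This reduces to the compatibility of the partial trace $T$ with the vertical composition $\vertcomp$, together with tracking the $q$-grading shifts. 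For instance, the target $\q^{-2(n-1)k}\augJM_n^k$ is sent to $\q^{-2(n-1)k+1} T(\augJM_n^k)$, matching the target on the right of \eqref{eq:maps related to obstructing U_n^k one closure}; the intermediate cofibration-sequence terms admit analogous identifications.

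Next, I would verify that the maps transform correctly. The essential computation is that the saddle map $s_n^k$ corresponds under adjunction to the identity. Unwinding the explicit formula for $\phi$ from the proof of Theorem \ref{thm:adjunction}, $\phi(s_n^k)$ is a composition of a birth cobordism $\X(\ILbirth)$, the saddle $s_n^k$, and the canonical planar rearrangement of disks. The composition of the birth immediately followed by $s_n^k$ is isotopic (rel boundary) to the identity cobordism via a Reidemeister~I / death move, so $\phi(s_n^k) \simeq \Id$. This is the spectral analog of a standard identity in the Hogancamp chain-level calculus, and is in any case compatible with it through Remark \ref{rmk:adjunction-chain-compatibility}. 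The remaining maps in \eqref{eq:maps related to obstructing U_n^k before closure}—the equivalence produced from Lemma \ref{lem:augmented JM and turnbacks} and a Reidemeister~I move, the cofibration-sequence map $\Sigma\sigma_n^k$, and the structural arrow from $\q^{-2(n-1)k}\augJM_n^k$ to the $e_{n-1}$-resolution—each transform under $\phi$ into the maps labelled $\simeq$, $\Sigma T(\sigma_n^k)$, and the structural map from $\q^{-2(n-1)k+1}T(\augJM_n^k)$ appearing in \eqref{eq:maps related to obstructing U_n^k one closure} respectively.

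Finally, by naturality of the adjunction, the set of homotopy classes of dashed fillings of diagram \eqref{eq:maps related to obstructing U_n^k before closure} is in bijection with that for \eqref{eq:maps related to obstructing U_n^k one closure}, so $\widehat{s_n^k}$ exists if and only if $\widehat{\phi_n^k}$ does. The principal technical point is verifying $\phi(s_n^k) \simeq \Id$, which is fundamentally a cobordism isotopy; I would handle it either directly from the explicit formula for $\phi$ in the proof of Theorem \ref{thm:adjunction}, or by appealing to the chain-level version in \cite{Hog_polyaction} together with Remark \ref{rmk:adjunction-chain-compatibility}.
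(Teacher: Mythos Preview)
Your proposal is correct and follows essentially the same approach as the paper: the paper's proof is a one-liner citing functoriality of the adjunction of Theorem \ref{thm:adjunction}, and the observation that $s_n^k$ corresponds to $\Id$ under the adjunction is stated (without detail) in the text immediately preceding the lemma. You have simply spelled out more of the verification that the paper leaves implicit.
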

\begin{proof}
This follows from the functoriality of the adjunction Theorem \ref{thm:adjunction}.
\end{proof}

The proof of the following theorem will make use of the chains functor $\Sp\TL_n\xrightarrow{\chainsfunc}\TL_n$ considered in Section \ref{sec:spTL cat}.  

\begin{theorem}\label{thm:Unk equiv to lifts}
The following statements are equivalent.
\begin{enumerate}
    \item \label{it:partial trace saddle} The saddle map on partial traces $T(\sigma_n^k)$, as shown in Equation \eqref{eq:maps related to obstructing U_n^k one closure}, is null-homotopic.
    \item \label{it:full closure saddle} The corresponding saddle on full closures $T^n(\sigma_n^k)$ is null-homotopic.
    \item \label{it:lifts exist} The lifts $\widehat{s_n^k}$, $\widehat{\phi_n^k}$ shown in Equations \eqref{eq:maps related to obstructing U_n^k before closure}, \eqref{eq:maps related to obstructing U_n^k one closure} exist.
    \item \label{it:Unk exists} 
There exists a map of spectral bimodules $\projmap_n^k\colon \q^{2nk}\Sigma^{(2n-2)k} \proj_n \rightarrow \proj_n$ lifting the map $U_n^k$ described in \cite{Hog_polyaction}.
\end{enumerate}
\end{theorem}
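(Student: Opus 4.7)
The plan is to establish the four equivalences via three separate arguments: $(1) \Leftrightarrow (3)$ from the cofibration sequence inside the adjoint diagram \eqref{eq:maps related to obstructing U_n^k one closure}; $(1) \Leftrightarrow (2)$ from iterated application of the adjunction of Theorem \ref{thm:adjunction} together with $\cP_{n-1}$ absorption; and $(3) \Leftrightarrow (4)$ from the Cooper-Krushkal filtration $\CKfilt$ of Theorem \ref{thm:Pn simplified in detail} combined with Lemma \ref{lem:END(P) = HOM(I,P)}.

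For $(1) \Leftrightarrow (3)$, I would work directly in diagram \eqref{eq:maps related to obstructing U_n^k one closure}, whose right-hand column is the cofibration sequence for the bottom crossing of $\augJM_n^k$. The identity $\cP_{n-1} \to \cP_{n-1}$ in the top-left lifts across that cofibration sequence to $\widehat{\phi_n^k}: \cP_{n-1} \to \q^{-2(n-1)k+1}T(\augJM_n^k)$ if and only if the composition with $\Sigma T(\sigma_n^k)$ (after the indicated equivalence) is null-homotopic, which is precisely statement (1); Lemma \ref{lem:lifting diagrams equivalent} then converts this to the existence of $\widehat{s_n^k}$, giving (3). For $(1) \Leftrightarrow (2)$, I would apply Theorem \ref{thm:adjunction} $n-1$ additional times: the source and target of $\sigma_n^k$ each contain absorbing copies of $\cP_{n-1}$, so each iteration of the adjunction converts a null-homotopy question about $T^j(\sigma_n^k)$ into an equivalent one about $T^{j+1}(\sigma_n^k)$ on the relevant $\HOM$-spectrum, yielding $(1) \Leftrightarrow (2)$ by induction on $j$.

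For $(3) \Leftrightarrow (4)$, recall from Theorem \ref{thm:Pn simplified in detail} that $\cP_n \simeq \augJM_n^\infty$ carries a filtration with $\CKfilt^i(\cP_n) \simeq \q^{-i}\augJM_n^{<i>}$, and in particular $\CKfilt^{(2n-2)k}(\cP_n) \simeq \q^{-(2n-2)k}\augJM_n^k$. In the forward direction, given $\widehat{s_n^k}: \q^{2nk}\Sigma^{(2n-2)k}\augJM_n^0 \to \augJM_n^k$, I would postcompose with $\augJM_n^k \hookrightarrow \augJM_n^\infty \simeq \cP_n$ and precompose with $\iota_{n-1}\horizcomp\Id: \Imod_n \to \augJM_n^0$ to obtain a map $\Imod_n \to \q^{-2nk}\Sigma^{-(2n-2)k}\cP_n$; Lemma \ref{lem:END(P) = HOM(I,P)} then produces the desired endomorphism $\projmap_n^k$, and tracking Hogancamp's recursive chain-level construction in \cite{Hog_polyaction} identifies $\chainsfunc(\projmap_n^k) \simeq U_n^k$. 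Conversely, a spectral lift $\projmap_n^k$ gives via Lemma \ref{lem:END(P) = HOM(I,P)} a map $\Imod_n \to \q^{-2nk}\Sigma^{-(2n-2)k}\cP_n$; combining the $q$-degree constraint with the CK filtration and the chain-level location of $U_n^k$ forces this map to factor (up to homotopy) through $\CKfilt^{(2n-2)k}(\cP_n) \simeq \q^{-(2n-2)k}\augJM_n^k$ in a manner compatible with $s_n^k$, and unwinding the factorization yields $\widehat{s_n^k}$.

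The main obstacle will be the converse direction of $(3) \Leftrightarrow (4)$: verifying that the factorization of a hypothetical $\projmap_n^k$ through the CK filtration level $\augJM_n^k$ lands in the specific chain-homotopy class required to reconstruct $\widehat{s_n^k}$. This reduces to identifying $U_n^k$ explicitly with the chain class of the composition $\augJM_n^0 \xrightarrow{s_n^k} \cdots \to \augJM_n^k$ provided by the saddle map together with the natural inclusions, which requires careful bookkeeping through Hogancamp's recursive description at each level of the CK filtration.
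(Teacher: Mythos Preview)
Your outline matches the paper's proof closely: $(1)\Leftrightarrow(3)$ via the cofibration lifting property, $(1)\Leftrightarrow(2)$ via adjunction plus turnback absorption, and $(3)\Leftrightarrow(4)$ via the CK filtration together with Lemma~\ref{lem:END(P) = HOM(I,P)}. The only substantive differences are in how the paper sharpens the two steps you flag as needing ``careful bookkeeping''.

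For $(3)\Rightarrow(4)$, rather than postcomposing with the inclusion $\augJM_n^k\hookrightarrow\augJM_n^\infty$ and then invoking Lemma~\ref{lem:END(P) = HOM(I,P)}, the paper concatenates $\widehat{s_n^k}$ directly with $\Id_{\cP_n}$ and uses $\cP_n\vertcomp\augJM_n^0\simeq\cP_n\simeq\cP_n\vertcomp\augJM_n^k$ (up to shifts). The identification of the resulting endomorphism with $U_n^k$ is then handled not by tracking Hogancamp's recursion but by a uniqueness statement: in its bidegree, $U_n^k$ is the \emph{unique} homology class in $\End(P_n)$ whose restriction $\pi_*\iota^*$ to $\Hom(P_{n-1}\horizcomp I_1, P_{n-1}e_{n-1})$ is the saddle. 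This uniqueness follows from the explicit degrees of the generators $u_i,\xi_i$ in \cite{Hog_polyaction}, since $U_n$ strictly minimizes the ratio of homological to quantum degree among all algebra generators.

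For $(4)\Rightarrow(3)$, your factorization argument is correct (the paper phrases it as showing the composite $\Imod_n\to\augJMcone_n^{\infty-k}$ lands in a spectrum which is contractible in the relevant $q$-degree, via the form of the terms in Proposition~\ref{prop:inf augJM seq cones}). The step you correctly identify as the obstacle---showing the resulting lift $\widetilde{A_n^k}$ really is $\widehat{s_n^k}$---is resolved not by bookkeeping but by a Hurewicz argument: after adjunction, the composite $\widetilde{A_n^k}$ followed by projection to the associated graded lives in $\pi_0^{st}\big(T^{n-1}(\cP_{n-1})\atq[\big]{-(n-1)}\big)\cong\pi_0^{st}(\bS)\cong\Z$, where the Hurewicz map is an isomorphism. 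Hence the spectral question reduces to chains, and there one uses the same uniqueness characterization of $U_n^k$ as above. This Hurewicz reduction is the key mechanism you are missing.
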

\begin{proof}
First we show \eqref{it:partial trace saddle} $\Leftrightarrow$ \eqref{it:full closure saddle}.  Write $X,Y$ respectively for the lower entries in the cofibration sequence in \eqref{eq:maps related to obstructing U_n^k one closure}.  By construction, $X$ is a homotopy colimit built from $X_i$ consisting of a single copy of the identity module (with some shift, which we suppress from the notation), along with turnbacks.  The spectrum $Y$, similarly, annihilates turnbacks, by definition.  Then we have:
\begin{equation*}
\HOM(X,Y)=\HOM(\hocolim_i X_i,Y) \simeq \holim_i \HOM (X_i,Y)\simeq \HOM(\Imod_{n-1},Y).
\end{equation*}
By the adjunction Theorem \ref{thm:adjunction}, $\HOM(\Imod,Y)\simeq \HOM(\emptyset,q^{n-1}T^{n-1}(Y))$, so that $T(\sigma^k_n)$ is nullhomotopic if and only if $T^{n-1}(T(\sigma^k_n)$ is nullhomotopic.

To show that \eqref{it:partial trace saddle} $\Leftrightarrow$ \eqref{it:lifts exist}, we appeal to the homotopy lifting property of the cofibration sequence for the right triangle, which implies that \eqref{it:partial trace saddle} is equivalent to the lift $\widehat{\phi_n^k}$ existing.  Lemma \ref{lem:lifting diagrams equivalent} then gives $\widehat{s_n^k}$ as well.

To show that $\eqref{it:lifts exist}\Rightarrow\eqref{it:Unk exists}$, we consider the concatenation
\[\Id_{\cP_n}\vertcomp\widehat{s_n^k}: \q^{2nk}\Sigma^{(2n-2)k}\cP_n\vertcomp\augJM_n^0 \rightarrow \q^{-2(n-1)k}\cP_n\vertcomp\augJM_n^k.\]
Note that $\cP_n\vertcomp \augJM_n^0\simeq \cP_n$, while $\cP_n\vertcomp\augJM_n^k\simeq \cP_n\vertcomp\JM_n^k\simeq \q^{2(n-1)k}\cP_n$
by Propositions \ref{prop:projector absorbs smaller projectors} and \ref{prop:projectors absorb crossings}.  In this way our concatenation $\Id_{\cP_n}\vertcomp\widehat{s_n^k}$ induces a map 
\[\mathcal{V}\colon \q^{2nk}\Sigma^{(2n-2)k}\cP_n\rightarrow \cP_n.\]  

It is a consequence of the construction of $U_n^k$ in \cite{Hog_polyaction} that it lives in 
\[\HOM(q^{2nk}\Sigma^{(2n-2)k}P_{n},F_{CK}^{(2n-2)k}P_n),\]
i.e. that $U_n^k$ factors through the first $(2n-2)k$ steps of the CK filtration. We can  postcompose by $\pi\colon F_{CK}^{(2n-2)k}P_n\to (F_{CK}^{(2n-2)k}P_n)/F_{CK}^{(2n-2)k-1}P_n$ to land in the associated graded:

\[
\HOM(q^{2nk}\Sigma^{(2n-2)k}P_n,CK(2k(n-1)))
\]
We can further pull back along $\iota\colon P_{n-1}\horizcomp I_1\to P_{n}$
to obtain a map in 
\[
\HOM(q^{2nk}\Sigma^{(2n-2)k}(P_{n-1}\horizcomp I_1),CK(2k(n-1)))=
\HOM(q^{2nk}\Sigma^{(2n-2)k}(P_{n-1}\horizcomp I_1),P_{n-1}e_{n-1})
\]
The image of $U_n^k$ in this latter group of homomorphism is the saddle $P_{n-1}\horizcomp I_1\to P_{n-1}e_{n-1}$ along the rightmost two strands.  By abuse of notation, we refer to this process:
\[
\pi_*\iota^*\colon \HOM(P_n,F^{(2n-2)k}_{CK}P_n)\to \HOM(P_{n-1}\horizcomp I_1,P_{n-1}e_{n-1})
\]
as \emph{restriction}.

In fact, we claim that $U_n^k$ is the unique homology class whose restriction is the saddle, in $H_*(\HOM(P_{n},P_{n}))$.  This follows from \cite[Theorem 1.12]{Hog_polyaction}, with degrees given in Theorem 3.11 and Definition 3.27.  This may be seen by observing that $U_n$ strictly minimzes the (absolute value of the) ratio of homological grading to quantum grading among all (algebra) generators of $\HOM(P_n,P_n)$, so any monomial in the $\{U_i,\xi_i\}_i$ will have minimal homological to quantum ratio if and only if is a power of $U_n$.  

Thus, to show that $\mathcal{C}_h(\mathcal{V})\simeq U_n^k$, it suffices to show that the restriction of $\mathcal{V}$, as above, is the map on spectra associated to a saddle.  However, this follows directly from the definition of $\mathcal{V}$ in terms of $\widehat{s_n^k}$.  This completes the proof of $\eqref{it:lifts exist}\Rightarrow \eqref{it:Unk exists}$.

Finally, we prove that \eqref{it:Unk exists} $\Rightarrow$ \eqref{it:lifts exist}.  We begin with a map $\projmap_n^k$ (in spite of the notation, it is not asserted that $\projmap_n^k$ is the power of any other map) which we fit into the following diagram
\begin{equation}\label{eq:unk-truncation}
\begin{tikzpicture}[xscale=1.3mm,yscale=.7mm]
\node (shI) at (0,0) {$\q^{2nk}\Sigma^{(2n-2)k} \Imod_n$};
\node (shP) at (1,0) {$\q^{2nk}\Sigma^{(2n-2)k} \cP_n$};
\node (P) at (2,0) {$\cP_n \simeq \augJM_n^\infty$};
\node (J) at (1.5,-1) {$\q^{-2(n-1)k}\augJM_n^k$};
\node (K) at (2.5,-1) {$\q^{2nk}\Sigma^{(2n-2)k}\augJMcone_n^{\infty-k}$}; 
\draw[right hook->] (shI)--(shP) node[pos=.5,above]{$\iota'$};
\draw[->] (shP)--(P) node[pos=.5,above]{$\projmap_n^k$};
\draw[right hook->] (J)--(P);
\draw[->>] (P)--(K);
\end{tikzpicture}
\end{equation}
where the two diagonal maps on the right come from the cofibration sequence for the cone of Corollary \ref{cor:augJM cones in projector}.  The full composition from top-left to bottom-right gives a map in $\Hom(\Imod_n,\augJMcone_n^{\infty-k})$ which is, by the adjunction of Theorem \ref{thm:adjunction}, equivalent to $\Hom(\bS,\q^n T^n (\augJMcone_n^{\infty-k}) ) = \pi_0^{st} \left( T^n(\augJMcone_n^{\infty-k})\atq[\Big]{-n} \right)$.  According to Corollary \ref{cor:augJM cones in projector}, the spectrum $T^n(\augJMcone_n^{\infty-k})$ is built as a hocolimit of closures of terms of the form \eqref{eq:augJM cone terms of lower thru degree}, all of which are trivial at $q$-grading $-n$ since Theorem \ref{thm:inf twist seq stabilizes} shows that the minimal non-trivial $q$-grading of $\cP_{n-1}$ is precisely $-(n-1)$.  Thus our full composition must be trivial and we have a lift $\q^{2nk}\Sigma^{(2n-2)k}\Imod_n \xrightarrow{A_n^k} \q^{-2(n-1)k}\augJM_n^k$.  We then concatenate with $\augJM_n^0$ and use Lemma \ref{lem:augJM equivalences}, to arrive at a map $\q^{2nk}\Sigma^{(2n-2)k} \augJM_n^0 \xrightarrow{\widetilde{A_n^k}} \q^{-2(n-1)k}\augJM_n^k$.  We would like to conclude that this map $\widetilde{A_n^k}$ is precisely the desired lift $\widehat{s_n^k}$.

To see this, we consider the following composition taken from Equation \eqref{eq:maps related to obstructing U_n^k before closure} with $\widetilde{A_n^k}$ taking the place of the proposed lift $\widehat{s_n^k}$:
\[\q^{2nk}\Sigma^{(2n-2)k} \augJM_n^0 \xrightarrow{\widetilde{A_n^k}} \q^{-2(n-1)k}\augJM_n^k \twoheadrightarrow \q^{-2(n-1)k+2}\Sigma (\augJM_n^k)' \simeq \q^{2nk-1}\Sigma^{(2n-2)k} \augJM_n^0 e_{n-1},\]
where $(\augJM_n^k)'$ indicates (the spectrum associated to) $\augJM_n^k$ with the bottom-most crossing replaced by its 0-resolution; see the middle term on the bottom row of Equation \eqref{eq:maps related to obstructing U_n^k before closure}.  This composition is a map in
\begin{equation}\label{eq:ank-map}\Hom_{\Sp\TL_n}(\augJM_n^0, \q^{-1}\augJM_n^0 e_{n-1}) \simeq \Hom(\bS, \q^{n-1} T^{n-1}(\cP_{n-1}))\simeq \pi_0^{st}(T^{n-1}(\cP_{n-1})\atq[\big]{-(n-1)})\end{equation}
via Theorem \ref{thm:adjunction}, and Theorem \ref{thm:inf twist seq stabilizes} shows that $T^{n-1}(\cP_{n-1})\atq[\big]{-(n-1)}\simeq\bS$.

Thus our composition is identified with an element in $\pi_0^{st}(\bS)\cong\Z$.  Using the compatibility \eqref{eq:chains-adjunction} applied to \eqref{eq:ank-map}, together with the fact that the Hurewicz map $\pi_0^{st}(\bS)\to H_0(\bS)$ is an isomorphism, we see that $\widetilde{A_n^k}$ is a suitable lift $\widehat{s_n^k}$ if and only $\mathcal{C}_h(\widetilde{A_n^k})$ lifts $\mathcal{C}_h(s^k_n)$.     

By hypothesis, $\mathcal{U}_n^k$ lifts $U_n^k$.  As in the proof of $\eqref{it:lifts exist}\Rightarrow\eqref{it:Unk exists}$ we have that $U_n^k$ is uniquely specified, up to homotopy,  by the conditions that it is a map $P_{n}\to P_n$ so that:
\begin{enumerate}
    \item The map $U_n^k$ factors through $F_{CK}^{(2n-2)k}P_n\to P_n$
    \item \label{itm:uk-def} The composition $\pi_*\iota^*(U_n^k)$ is the saddle map.
\end{enumerate}
From the definition of $\mathcal{C}_h(\widetilde{A_n^k})$, we have that $P_n\vertcomp\mathcal{C}_h(\widetilde{A^k_n})$ (that is to say, the stacking with $P_n$) factors through $F_{CK}^{(2n-2)k}P_n\to P_n$

Write $p\colon q^{-2(n-1)k}\augJM_{n}^k\to F^{(2n-2)k}_{CK}(P_n)$ for the projection to the associated graded.  Note that this is exactly the left diagonal map in \eqref{eq:maps related to obstructing U_n^k before closure}.  Then, by construction, \[p\circ  \mathcal{C}_h(\widetilde{A^k_n})=p\circ ( U^k_n\circ \mathcal{C}_h(\iota)),\] where $\iota$ is the natural map $\augJM^0_n\to \mathcal{P}_n$.  By item \eqref{itm:uk-def}, the latter term is exactly $\mathcal{C}_h(s^k_n)$.   This completes the proof of $\eqref{it:Unk exists}\Rightarrow \eqref{it:lifts exist}$.
\end{proof}

We will now focus on the map \begin{equation*}\Sigma T(\sigma_n^k)\in [q^{-2(n-1)k+3}\Sigma (\augJM^{k-1}_n)',q^{-2(n-1)k+2}\Sigma (\augJM^{k-1}_n)''],\end{equation*}
where $(\augJM^k_n)',(\augJM^k_n)''$ are respectively the lower-left and lower-right entries of the triangle in \eqref{eq:maps related to obstructing U_n^k one closure}.   We can precompose with the natural map $\Imod_{n}\to \augJM^{k-1}_n$ (together with gluing) to obtain a map 
\begin{equation}\label{eq:closed-map}
\beta_{n,k}'=\Sigma T (\sigma_n^k)\circ \iota \colon \JM^1_{n-1}\to q^{-2(n-1)k+2}\Sigma (\augJM^{k-1}_n)'',
\end{equation}
where we use Reidemeister moves to identify the spectrum of $(\augJM^{k-1}_n)'$, where the subdiagram for $\augJM^{k-1}_n$ is replaced with the identity tangle, with $\JM^1_{n-1}$.  We write $\iota$ for the map $\JM^1_{n-1}\to (\augJM^{k-1}_n)'$ obtained from these Reidemeister moves.  Since $(\augJM^{k-1}_n)''$ kills turnbacks, an argument similar to the proof of Theorem \ref{thm:Unk equiv to lifts} shows that $\iota$ induces a bijection:
\begin{align*}
\iota^*\colon [q^{-2(n-1)k+3}\Sigma(\augJM^{k-1}_n)' &, q^{-2(n-1)k+2}\Sigma (\augJM^{k-1}_n)''] \to \\
 &[q^{-2(n-1)k+3}\Sigma\JM^1_{n-1}, q^{-2(n-1)k+2}\Sigma (\augJM^{k-1}_n)''].
\end{align*}

By Theorem \ref{thm:Unk equiv to lifts},  $\beta_{n,k}'$ is nullhomotopic if and only if $U_n^k$ lifts to spectra.  Let $M_\ell$ be a sequence of finite tangles, together with morphisms $\alpha_\ell\colon M_\ell \to \augJM^{k-1}_n$ so that the minimum $q$-shift in $\Cone(\alpha_\ell)$ goes to $\infty$ as $\ell\to \infty$ (for instance, by using left-handed twists as in Section \ref{sec:construction}).  Write $\beta_{n,k}$ for the map of spectra as in \eqref{eq:closed-map}, but replacing the $\augJM$ terms with their approximations, for some sufficiently large $\ell$ which we suppress from the notation.  By construction, the map $\beta_{n,k}$ is the Khovanov spectrum map of the cobordism of (not infinite) links given by the saddle in \eqref{eq:maps related to obstructing U_n^k one closure}, together with the sequence of Reidemeister moves from the usual presentation of $\JM$ to  the corresponding tangle in \eqref{eq:maps related to obstructing U_n^k one closure}.  By \cite{LLS_func} the homotopy type of the map $\beta_{n,k}$ (up to sign) is a well-defined invariant of this cobordism.  This discussion, together with Theorem \ref{thm:Unk equiv to lifts}, implies that the existence of spectral versions of the $U_n^k$ maps is equivalent to a statement on cobordism maps for Khovanov spectra (of links, as opposed to tangles or limits of Khovanov spectra):

\begin{corollary}\label{cor:interesting-cobordism-map}
    The cobordism map of Khovanov spectra $\beta_{n,k}$ is nullhomotopic if and only if $U_n^k$ lifts to a map of spectra $q^{2nk}\Sigma^{(2n-2)k}\mathcal{P}_n\to \mathcal{P}_n$.  
\end{corollary}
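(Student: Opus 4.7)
The plan is to deduce this corollary directly from Theorem \ref{thm:Unk equiv to lifts} together with the approximation discussion that precedes the corollary statement. By Theorem \ref{thm:Unk equiv to lifts}, the existence of a spectral lift $\mathcal{U}_n^k$ of $U_n^k$ is equivalent to the null-homotopy of the partial-trace saddle $T(\sigma_n^k)$ appearing in diagram \eqref{eq:maps related to obstructing U_n^k one closure}. The first step is therefore to translate the null-homotopy of $T(\sigma_n^k)$ into the null-homotopy of $\beta_{n,k}'$ as defined in \eqref{eq:closed-map}.

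For this translation, I would argue as follows. Precomposing with the natural map $\iota\colon \JM^1_{n-1}\to (\augJM^{k-1}_n)'$ (which arises from Reidemeister moves identifying $(\augJM^{k-1}_n)'$ with $\JM^1_{n-1}$ after replacing $\augJM^{k-1}_n$ by the identity tangle) yields $\beta_{n,k}'$. The fact that $\iota^*$ is a bijection on the relevant homotopy classes is the key input: since $(\augJM^{k-1}_n)''$ kills turnbacks (being constructed from $\cP_{n-1}$ composed with Jucys-Murphy factors), the same hocolim/holim argument as in the first paragraph of the proof of Theorem \ref{thm:Unk equiv to lifts} (replacing $\Cone(\iota)$ by a through-degree $<n-1$ spectrum) shows $\iota^*$ is an equivalence of morphism spectra. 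Therefore $T(\sigma_n^k)$ is null-homotopic if and only if $\beta_{n,k}'$ is null-homotopic.

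The second step is to pass from the infinite-twist object $\augJM^{k-1}_n$ appearing in $\beta_{n,k}'$ to the finite approximation used to define $\beta_{n,k}$. Given approximations $\alpha_\ell\colon M_\ell\to \augJM^{k-1}_n$ with $\Cone(\alpha_\ell)$ supported in $q$-degrees going to $+\infty$, we obtain replacements of $\beta_{n,k}'$ by $\beta_{n,k}$ (for a sufficiently large but fixed $\ell$). The target spectrum of $\beta_{n,k}'$, after taking hocolim in $\ell$, is equivalent to the target of $\beta_{n,k}'$ in any fixed bounded range of $q$-grading; since the domain of $\beta_{n,k}'$ lives in a fixed $q$-grading, $\beta_{n,k}'\simeq 0$ is equivalent to $\beta_{n,k}\simeq 0$ for $\ell$ large enough.

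The main point to check, and where I would spend the most care, is the third step: identifying $\beta_{n,k}$ with the Khovanov spectrum map of an honest cobordism of links. The map $\beta_{n,k}$ is, by construction, a composition of (i) the cobordism map of the saddle $\sigma_n^k$ in diagram \eqref{eq:maps related to obstructing U_n^k one closure}, (ii) the maps induced by the Reidemeister moves relating the standard diagram of $\JM^1_{n-1}$ to the diagram obtained by taking the $0$-resolution at the bottom crossing in $\augJM^k_n$, and (iii) the inclusion $M_\ell \to \augJM^{k-1}_n$. Each of these is the $\X$-image of an elementary cobordism (saddle, Reidemeister, or cup/cap composites arising from the twist approximation), so by the functoriality result of \cite{LLS_func} the composite is the spectrum-level cobordism map of a well-defined link cobordism, determined up to sign. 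Combining the three equivalences, $\beta_{n,k}\simeq 0$ if and only if $T(\sigma_n^k)\simeq 0$ if and only if $U_n^k$ lifts to spectra, which is exactly the statement of the corollary.
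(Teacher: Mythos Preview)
Your proposal is correct and follows essentially the same approach as the paper: the corollary is not given a separate proof there but is stated as an immediate consequence of Theorem \ref{thm:Unk equiv to lifts} together with the preceding discussion (the bijection $\iota^*$, the passage from $\beta_{n,k}'$ to the finite approximation $\beta_{n,k}$, and the identification of $\beta_{n,k}$ as a cobordism map via \cite{LLS_func}), which is exactly what you have spelled out in your three steps. One minor point: in Step~3, item~(iii) is slightly misstated---$\beta_{n,k}$ is obtained by replacing $\augJM^{k-1}_n$ with the finite approximation $M_\ell$ in both source and target, so there is no inclusion $M_\ell \to \augJM^{k-1}_n$ appearing in the map itself; the cobordism consists only of the saddle and the Reidemeister moves between honest finite link diagrams.
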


\section{The 3-strand Case}\label{sec:3-strands}


Let us now focus on the case of $n=3$ strands, for which we have access to a handful of helpful computations both for homology and for spectra.  We will use the over-bar notation to indicate the (Khovanov spectrum associated to the) standard closure of a tangle in $S^3$, which coincides with the full trace.  So in particular
\[\closure{\T_n^k}:=T^n(\T_n^k) = \X(T(n,k)),\]
where $T(n,k)$ denotes the $(n,k)$ torus link in $S^3$.  Similarly, we have
\[\closure{\proj_n} \simeq T^n(\T_n^\infty) =: \X_{col}(\mathscr{U}_n),\]
where $\X_{col}(\mathscr{U}_n)$ denotes the \emph{colored Khovanov spectrum} for the $n$-colored unknot $\mathscr{U}$, as defined in \cite{Wil_colored,LOS_colored}.

\subsection{The maps $U_3^k$}
\begin{theorem}\label{thm:U3 does not lift}
There does not exist any map of spectra $\projmap_3^1:\q^6\Sigma^4 \cP_3\rightarrow \cP_3$ which lifts the map $U_3^1$ described in \cite{Hog_polyaction}.
\end{theorem}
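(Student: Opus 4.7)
The proof will proceed by combining Theorem \ref{thm:Unk equiv to lifts} and Corollary \ref{cor:interesting-cobordism-map}: the existence of a lift $\projmap_3^1$ is equivalent to the cobordism map $\beta_{3,1}$ between finite Khovanov link spectra being null-homotopic. The plan is therefore to exhibit $\beta_{3,1}$ as a nontrivial stable homotopy class. The first step is to replace $\cP_2$ inside $\augJM_3^0 = \cP_2 \horizcomp \Imod_1$ by a sufficiently long finite twist $\T_2^m \horizcomp \Imod_1$ (using Theorem \ref{thm:inf twist seq stabilizes}), so that after closure the source and target of $\beta_{3,1}$ become explicit summands of $\X(T(3,k))$ for $k$ large in the relevant quantum degrees.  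The simplified $\cP_2$ model of Section \ref{sec:P2 simplified} and the Cooper-Krushkal recursion of Section \ref{sec:CK-recursion} make these summands tractable.

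Crucially, $\chainsfunc(\beta_{3,1})$ is null-homotopic on chains, since Hogancamp's $U_3$ exists at the chain level \cite{Hog_polyaction}. Non-vanishing of $\beta_{3,1}$ as a map of spectra must therefore be detected by a secondary stable homotopy invariant. The aim is to show that, on a well-chosen summand, $\beta_{3,1}$ represents a nonzero multiple of $\eta \in \pi_1^{st}(\sphere) = \mathbb{Z}/2$.  This is natural in light of the Lobb-Orson-Sch\"{u}tz computations \cite{LOS}: the tabulation of $\X^q(T(3,\infty))$ promised in Corollary \ref{intro cor:3-strand-torus-link} features Moore-type spectra $M(\mathbb{Z}/2)$ and two-cell spectra of the form $X(\eta, \cdot)$, whose attaching maps are exactly Hopf maps.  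These summands provide the natural `receptors' for an $\eta$-valued obstruction, and the chain-level vanishing of $\chainsfunc(\beta_{3,1})$ is consistent with $\beta_{3,1}$ landing in the image of an $\eta$-multiplication from such a Moore summand.

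The main obstacle will be pinning $\beta_{3,1}$ down as precisely this $\eta$-multiple. The route I would take is to track a canonical generator through the cofibration sequence displayed in Equation \eqref{eq:maps related to obstructing U_n^k one closure}: starting with a specified generator of the source (identified via Proposition \ref{prop:X lifts Kh} on homology and the LOS cell description), pushing it through the saddle, and matching the result against the explicit summand presentations of $\X(T(3,k))$.  Because the chain class is forced to be trivial, the only remaining degree of freedom lies in a $\mathbb{Z}/2$ generated by an $\eta$-attaching map in the Moore component; verifying that $\beta_{3,1}$ hits the nonzero element should then follow from a direct cell-level computation of the saddle cobordism on the relevant two-cell subcomplex.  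This would contradict null-homotopy of $\beta_{3,1}$ and hence, via Corollary \ref{cor:interesting-cobordism-map}, the existence of $\projmap_3^1$.
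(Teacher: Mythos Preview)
Your approach has the right broad strategy---use the equivalences of Theorem~\ref{thm:Unk equiv to lifts} and the Lobb--Orson--Sch\"utz computations to obstruct the lift---but the crucial step is left as a promise rather than an argument. You correctly reduce to showing that the obstruction class $\beta_{3,1}$ (equivalently, the saddle map $T^3(\sigma_3^1)$ on full closures) is not null-homotopic, and you observe that since its chain-level image vanishes the answer lives in a $\Z/2$ detected by $\eta$. But the sentence ``verifying that $\beta_{3,1}$ hits the nonzero element should then follow from a direct cell-level computation of the saddle cobordism'' is precisely the hard part, and you have not supplied any mechanism for carrying it out. Tracking a specific cobordism map through the Lawson--Lipshitz--Sarkar construction at the level of stable cells is genuinely delicate, and nothing in the toolkit you invoke gives a direct handle on which element of that $\Z/2$ you land on.

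The paper sidesteps this entirely. Rather than computing the map, it uses Item~\eqref{it:full closure saddle} of Theorem~\ref{thm:Unk equiv to lifts}: if the saddle were null-homotopic, the cofibration sequence for the bottom crossing of the closure of $\augJM_3^1$ would split, so that closure---which is the $(2,1)$-colored Hopf link spectrum---would decompose as a wedge of two shifted copies of $\closure{\cP_2}$. The latter are stable limits of $2$-strand torus link spectra, hence of alternating links, hence have trivial $Sq^2$ by \cite{LS_steenrod}; whereas \cite{LOS_colored} shows the $(2,1)$-colored Hopf link has nontrivial $Sq^2$ (already visible in the finite approximation $\closure{\T_3^3}\atq{9}$). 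This Steenrod-square mismatch rules out the splitting and hence the lift. Your $\eta$ intuition is entirely compatible with this---a nontrivial $Sq^2$ on a two-cell complex is exactly an $\eta$-attaching map---but the paper's argument never needs to name $\beta_{3,1}$ explicitly: it only needs an invariant of the \emph{cofiber} that distinguishes it from the wedge, and that invariant has already been computed in the literature.
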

\begin{proof}
We utilize Item \eqref{it:full closure saddle} of Theorem \ref{thm:Unk equiv to lifts}, which in the $n=3,k=1$ case shows that the existence of $\projmap_3^1$ is equivalent to the statement that 
\[
\vcenter{\hbox{\begin{tikzpicture}[xscale=.25,yscale=.6]
    \draw[thick] (-.5,.5) to[out=90,in=90,looseness=.3] (2.5,.5);
    \drawover{(.2,2)--(.2,0);}
    \drawover{(1.8,2)--(1.8,0);}
    \drawover{(-.5,.5) to[out=-90,in=-90,looseness=.3] (2.5,.5);}
    \whitebox{-.2,1.15}{2.2,1.85}
    \node[scale=.7] at (1,1.5) {$\cP_2$};
    \draw[thick]
        (.2,0) to[out=-90,in=-90,looseness=.2] (4,0)--(4,2) to[out=90,in=90,looseness=.2] (.2,2)
        (1.8,0) to[out=-90,in=-90,looseness=.2] (3,0)--(3,2) to[out=90,in=90,looseness=.2] (1.8,2);
\end{tikzpicture}}}
\quad \stackrel{?}{\simeq} \quad \q^2\Sigma\,\,
\vcenter{\hbox{\begin{tikzpicture}[xscale=.25,yscale=.6]
    \draw[thick] (-.5,.5) to[out=90,in=90,looseness=.3] (2.5,.5);
    \drawover{(.2,2)--(.2,0);}
    \drawover{(1.8,2)--(1.8,.5) to[out=-90,in=-90,looseness=.3] (-.5,.5);}
    \draw[thick] (2.5,.5) to[out=-90,in=90,looseness=1.1] (1.8,0);
    \whitebox{-.2,1.15}{2.2,1.85}
    \node[scale=.7] at (1,1.5) {$\cP_2$};
    \draw[thick]
        (.2,0) to[out=-90,in=-90,looseness=.2] (4,0)--(4,2) to[out=90,in=90,looseness=.2] (.2,2)
        (1.8,0) to[out=-90,in=-90,looseness=.2] (3,0)--(3,2) to[out=90,in=90,looseness=.2] (1.8,2);
\end{tikzpicture}}}
\quad\bigvee\quad \q\,\,
\vcenter{\hbox{\begin{tikzpicture}[xscale=.25,yscale=.6]
    \draw[thick] (-.5,.5) to[out=90,in=90,looseness=.3] (2.5,.5);
    \drawover{(.2,2)--(.2,0);}
    \drawover{(1.8,2)--(1.8,.5) to[out=-90,in=-90,looseness=.6] (2.5,.5);}
    \drawover{(-.5,.5) to[out=-90,in=90,looseness=.3] (1.8,0);}
    \whitebox{-.2,1.15}{2.2,1.85}
    \node[scale=.7] at (1,1.5) {$\cP_2$};
    \draw[thick]
        (.2,0) to[out=-90,in=-90,looseness=.2] (4,0)--(4,2) to[out=90,in=90,looseness=.2] (.2,2)
        (1.8,0) to[out=-90,in=-90,looseness=.2] (3,0)--(3,2) to[out=90,in=90,looseness=.2] (1.8,2);
\end{tikzpicture}}}\,\,.
\]
The two spectra on the right-hand side are both equivalent to shifted copies of the closed projector $\closure{\cP_2}$ via Reidemeister moves together with Proposition \ref{prop:projectors absorb crossings} (compare \cite[Proposition 3.51]{Hog_polyaction}).  Via Theorem \ref{thm:inf twist is projector}, these are stable limits of spectra for 2-strand torus links, which in particular are alternating and thus have trivial Steenrod square operations \cite{LS_steenrod}.

On the other hand, the left-hand side gives the spectrum of the $(2,1)$-colored Hopf link, computed in \cite{LOS_colored}, and which has non-trivial Steenrod $Sq^2$ operations.  (Indeed we can use stabilization arguments as in \cite{Wil_colored} to see that, in $q$-degree 7, this stable homotopy type is computed in finite approximation by $\closure{\T_3^3}\atq{9}$, which is precisely the first wedge summand of $\closure{\T_3^\infty}$ containing non-trivial $Sq^2$).

Thus Item \eqref{it:full closure saddle} of Theorem \ref{thm:Unk equiv to lifts} fails to hold and the map $U_3^1$ cannot have a lift.
\end{proof}

\begin{remark}\label{rmk:no sym proj or eigenmap}
Theorem \ref{thm:U3 does not lift} can be interpreted as showing that there does not exist any Temperley-Lieb spectrum $\mathcal{Q}_3$ that satisfies the spectral analog of the definition \cite[Definition 3.9]{Hog_polyaction} of a \emph{symmetric projector}.  This does not imply \emph{a priori} that there does not exist any object $\mathcal{Q}_3$ of $\Sp\TL_3$ so that $\chainsfunc(\mathcal{Q}_3)=Q_3$, where $Q_3$ is as described in \cite[Definitions 3.5, 3.9]{Hog_polyaction}, though this seems natural to conjecture.  In the language of categorical diagonalization \cite{cat-diag}, this also indicates that one of the eigenmaps for the full twist on three strands does not lift to spectra, and thus $\FT_3$ cannot be categorically diagonalized in the same manner as $\chainsfunc(\FT_3)$ can be.
\end{remark}


\begin{theorem}\label{thm:U3^2 lifts}
There exists a map $\projmap_3^2:\q^{12}\Sigma^8 \proj_3\rightarrow\proj_3$ which lifts the map $U_3^2$ described in \cite{Hog_polyaction}.
\end{theorem}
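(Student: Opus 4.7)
The plan is to apply Theorem \ref{thm:Unk equiv to lifts} to convert the existence of $\projmap_3^2$ into a Hurewicz lifting problem on a specific wedge summand of $\X(T(3,\infty))$, and then to verify the lift using the Khovanov-spectrum computations of Lobb-Orson-Sch\"utz \cite{LOS} for finite $3$-strand torus links.

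Combining Theorem \ref{thm:Unk equiv to lifts}\eqref{it:Unk exists} with the equivalence $\q^{-3}\End_{\sarc_6}(\proj_3)\simeq \X(T(3,\infty))$ from Theorem \ref{intro thm: spectral Pn is tori} reduces the problem to the following claim: the class $u_3^2 \in H_8(\X^9(T(3,\infty)))$, corresponding to the Hogancamp bigrading $(h,q)=(8,12)$ in $\End(P_3)$ after the $\q^{-3}$ shift, lies in the image of the stable Hurewicz map. To analyze this target, I would apply the closed-link analog of the stabilization result of Theorem \ref{thm:inf twist seq stabilizes} (obtained by applying $T^3$ to the stabilizing sequence, as in \cite{Wil_TorusLinks}) in order to identify $\X^9(T(3,\infty))$, in the relevant range of gradings, with the corresponding piece of $\X(T(3,k))$ for $k$ sufficiently large. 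The partial computations of \cite{LOS} should then let us identify this piece as containing an $S^8$ wedge summand with all remaining wedge summands contributing only in homological degrees strictly below $8$.

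Granted such an identification, $H_8(\X^9(T(3,\infty))) \cong \Z$ is generated by the fundamental class of the $S^8$ summand, and since the Hurewicz map $\pi_8(S^8) \to H_8(S^8)$ is an isomorphism, any class in $H_8$ lifts canonically. Thus the construction of $\projmap_3^2$ reduces to showing that $u_3^2$ is nonzero, and hence a generator up to a unit, in $H_8(\X^9(T(3,\infty)))$. The main obstacle will be matching Hogancamp's algebraic description of $u_3^2$ given by Theorem \ref{thm:hog-polyaction} with the geometric wedge decomposition coming from \cite{LOS}. The simplified spectral Cooper-Krushkal model for $\proj_3$ built in Theorem \ref{thm:Pn simplified in detail} is the natural tool here, since it tracks the CK filtration levels explicitly and allows one to follow $u_3^2$ through the associated-graded pieces into the finite torus-link model, where the decomposition of \cite{LOS} can be applied directly.
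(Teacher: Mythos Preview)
Your core idea is the same as the paper's: reduce to a Hurewicz lifting problem on $\closure{\proj_3}\atq{9}$ and use the Lobb--Orson--Sch\"utz computation to see that an $S^8$ wedge summand splits off, so that the Hurewicz map surjects onto $H_8\cong\Z$. Two simplifications are worth noting.

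First, Theorem~\ref{thm:Unk equiv to lifts} is a detour here. The paper goes directly through the adjunction of Theorem~\ref{thm:adjunction} and idempotency (Proposition~\ref{prop:projectors are idempotents}) to identify $\Hom(\q^{12}\Sigma^8\proj_3,\proj_3)\simeq \pi_8^{st}\bigl(\closure{\proj_3}\atq{9}\bigr)$, and then quotes the computation $\closure{\proj_3}\atq{9}\simeq X(\eta 2,5)\vee S^8$ from \cite{LOS_colored} (not \cite{LOS}). Second, and more importantly, the ``main obstacle'' in your last paragraph does not exist. Once you know that the Hurewicz map surjects onto $H_8\cong\Z$, \emph{every} element of $H_8$ lifts to $\pi_8^{st}$; there is no need to match Hogancamp's description of $u_3^2$ against the geometric wedge decomposition, and the Cooper--Krushkal model of Theorem~\ref{thm:Pn simplified in detail} plays no role. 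One small correction: the lift is \emph{not} canonical, since $\pi_8^{st}(X(\eta 2,5))$ contributes additional ambiguity; the paper points this out explicitly.
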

\begin{proof}
Using the adjunctions of Theorem \ref{thm:adjunction} and \cite[Proposition 5.11]{LLS_func} together with the idempotency of Proposition \ref{prop:projectors are idempotents}, we have
\[\Hom(\q^{12}\Sigma^8 \proj_3, \proj_3) \simeq \Hom(\q^9\Sigma^8 \mathbb{S}, \closure{\proj_3}).\]
As mentioned earlier, the spectrum $\closure{\proj_3}$ is precisely the colored Khovanov spectrum for the 3-colored unknot $\mathscr{U}_3$, which was studied in low quantum degrees in \cite[Section 4.4]{LOS_colored}.  Using the computer computations cited there, we see that our hom space is equivalent to $\pi_8^{st}\left( \X_{col}(\mathscr{U}_3)\atq{9} \right) \simeq \pi_8^{st}(X(\eta 2 ,5)\vee \Sigma^8\mathbb{S})$.  The Hurewicz map 
\[
\pi_8^{st}(X(\eta 2 ,5)\vee \Sigma^8\mathbb{S}) \to H_8(X(\eta 2, 5) \vee \Sigma^8\mathbb{S})=\mathbb{Z}
\]
is an isomorphism from the $\mathbb{Z}=\pi_8^{st}(\Sigma^8\bS)$ factor of $\pi_8^{st}(X(\eta 2 ,5)\vee \Sigma^8\mathbb{S})$ to $\mathbb{Z}$.  (The notation $X(\eta 2, 5)$ is from \cite{LOS_colored}; the appearance of this spectrum is only relevant to the calculation in so far as the highest-dimensional cell $\Sigma^8\bS$ of $\X_{col}(\mathscr{U}_3)$ splits off).  In particular, there is a lift of $U_3^2\in H_{8}(\closure{\cP_3}\atq{9})$ to an element of $\pi_8(\X_{col}(\mathscr{U}_3))$, as needed.  Note that the set of lifts is in correspondence with $\pi_8^{st}(X(\eta 2, 5))$; we do not assert that the lift is canonical.  
\end{proof}

Interestingly, if one attempts to find (or obstruct) a lift for $U_4^1$ in similar fashion for $U_3^2$ (respectively $U_3$), the required quantum degree of the 4-colored unknot corresponds (in the finite approximation of the infinite twist) to understanding the spectrum $\closure{\T_4^5}\atq{19}$, which is precisely the example cited in \cite[Section 9]{LOS_flowcats} (see also the computations in \cite{JLS_slNmatched}) for which the authors' calculus of framed flow categories is insufficient to decide whether there is an unattached sphere (which would guarantee the existence of $\projmap_4^1$) or an attaching map giving a more complicated stable homotopy type.  
It is known that $Sq^2$ vanishes in that grading, however the Khovanov spectrum in that quantum grading is either a wedge of three spheres, or the wedge of a single sphere and the cone of the nontrivial element in $\varepsilon \in \pi_2^{st}(S^0)$.  (See \cite{LOS} for more details).  Applying the same strategy as in Theorem \ref{thm:U3 does not lift}, we obtain:

\begin{corollary}
    The map $U_4\colon P_4\to P_4$ lifts to a map $\mathcal{U}_4\colon \mathcal{P}_4\to\mathcal{P}_4$ if and only if $\X^{19}(T(4,5))$ is a wedge of spheres.
\end{corollary}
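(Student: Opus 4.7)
The plan is to adapt the strategy of Theorem \ref{thm:U3 does not lift}, reducing the existence of $\mathcal{U}_4$ to a concrete stable-homotopy question about $\X^{19}(T(4,5))$. By Theorem \ref{thm:Unk equiv to lifts}\eqref{it:full closure saddle}, the lift exists if and only if the full-closure saddle $T^4(\sigma_4^1)$ is null-homotopic, and this map is the connecting morphism in a cofibration sequence of spectra obtained by applying $T^4$ to the cofibration sequence \eqref{eq:crossing cofib seq} for the ``new'' bottom crossing of $\augJM_4^1$. Writing $X := T^4((\augJM_4^0)')$, $Y := T^4((\augJM_4^0)'')$, and $Z := T^4(\augJM_4^1)$ for the three full closures of the relevant tangle pieces (with grading shifts as in \eqref{eq:maps related to obstructing U_n^k one closure} extended to the full trace), the lift exists iff $Z \simeq Y \vee \Sigma X$.

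First I would simplify $X$ and $Y$ using Reidemeister moves together with Proposition \ref{prop:projectors absorb crossings}: as in the proof of Theorem \ref{thm:U3 does not lift}, the internal $\cP_3$ absorbs its neighboring crossings, identifying $X$ and $Y$ with shifts of $\closure{\cP_3}=\X_{col}(\mathscr{U}_3)$. From the computations of \cite{LOS_colored}, in the relevant quantum grading these are wedges of spheres. Next I would replace $\cP_3$ by a finite truncation $\T_3^m$ for large $m$ and truncate the ambient infinite twist that defines $\cP_4$. The stabilization estimate of Theorem \ref{thm:inf twist seq stabilizes} makes this precise, and as noted in the paragraph preceding the corollary the grading in which our obstruction sits stabilizes already at $\T_4^5$, so that $Z$ becomes (up to the appropriate shift) the summand $\X^{19}(T(4,5))$.

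At this point the cofibration $X \to Y \to Z$ splits iff $Z$ is (stably equivalent to) a wedge of $Y$ and $\Sigma X$, iff $Z$ is a wedge of spheres --- since $X$ and $Y$ already are. Combined with the trichotomy recorded in \cite{LOS} and restated before the corollary --- that $Sq^2$ vanishes on $\X^{19}(T(4,5))$ and the spectrum is either a wedge of three spheres or of the form $S^a \vee \Cone(\varepsilon)$ for $\varepsilon \in \pi_2^{st}$ --- this gives the claimed equivalence: the lift exists exactly in the wedge-of-spheres case, while the $\Cone(\varepsilon)$ alternative genuinely obstructs the splitting.

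The main obstacle is the explicit bookkeeping: verifying the quantum and homological shifts so that the middle term of the cofibration lines up with $\X^{19}(T(4,5))$ on the nose, and confirming that the wedge-of-spheres hypothesis on $Z$ actually forces the cofibration to split (ruling out hidden attaching maps living in the stable homotopy groups between the outer terms and $Z$). The first is a matter of tracking degrees through the trace functors and the crossing-absorption, and the second reduces to a direct computation using the explicit wedge-of-spheres description of $X$ and $Y$ from \cite{LOS_colored}, together with the fact that $\Cone(\varepsilon)$ is not stably equivalent to a wedge of spheres.
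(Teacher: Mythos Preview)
Your cofibration setup via Theorem~\ref{thm:Unk equiv to lifts}\eqref{it:full closure saddle} is correct, but the central identification fails: the middle term $Z = T^4(\augJM_4^1)$ is the closure of $(\cP_3\horizcomp\Imod_1)\vertcomp\JM_4$, i.e.\ the $(3,1)$-colored Hopf link spectrum, \emph{not} the torus-link spectrum $\X^{19}(T(4,5))$.  Your sentence ``truncate the ambient infinite twist that defines $\cP_4$'' conflates two distinct approximation schemes for $\cP_4$: the $\augJM_4^k$ filtration of Section~\ref{sec:augJMinf} (in which your $Z$ is the $k=1$ level) and the $\T_4^m$ filtration of Section~\ref{sec:construction} (which produces $T(4,m)$ upon closure).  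These filtrations are genuinely different, and the $\augJM$ filtration has \emph{not} yet stabilized at $k=1$ in the quantum degree where $U_4$ lives --- indeed, the cone $\q^{8}\Sigma^{6}\augJMcone_4^{\infty-1}$ from Corollary~\ref{cor:augJM cones in projector} is supported precisely from that degree onward.

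The paper's argument, despite the reference to Theorem~\ref{thm:U3 does not lift}, actually runs through the Hurewicz route of Theorem~\ref{thm:U3^2 lifts}.  By Corollary~\ref{cor:END(P) = closure(P)} one has $[\q^{8}\Sigma^{6}\cP_4,\cP_4]\cong\pi_6^{st}\big(\closure{\cP_4}\atq[\big]{4}\big)$, and Theorem~\ref{thm:inf twist seq stabilizes} identifies $\closure{\cP_4}\atq[\big]{4}\simeq \X^{19}(T(4,5))$.  The homology of this spectrum (from Theorem~\ref{thm:hog-polyaction}) is $\Z$ in homological degrees $3,4,6$, with $u_4$ the degree-$6$ generator.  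In the wedge-of-spheres case the Hurewicz map is surjective and $U_4$ lifts; in the $S^4\vee\Cone(\eta^2)$ case the top cell of $\Cone(\eta^2)$ carries $u_4$, and the Puppe sequence for $S^3\to\Cone(\eta^2)\to S^6$ shows that the Hurewicz image in $H_6\cong\Z$ is $2\Z$, so only $2U_4$ lifts (cf.\ the Remark following the Corollary).

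As a smaller point, your claim that $X$ and $Y$ are wedges of spheres also requires care: they are shifts of $\closure{\cP_3}$, and the table in Theorem~\ref{thm:3colored unknot} shows $\closure{\cP_3}$ is \emph{not} a wedge of spheres in quantum degrees $q\equiv 3\pmod 6$, so you would need to check exactly which degrees arise.
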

\begin{remark}
    It is a corollary of the argument from Section \ref{sec:Obstructing U_n^k} that $2U_4$ lifts to a morphism of spectra (similarly, $2U_3$ lifts).  In general, if $r\in\mathbb{Z}$ annihilates $\pi^{st}_{(2n-2)k-1}(\closure{\cP_n}\atq{2nk})$, then $rU_n^k$ lifts.  Note that that stable homotopy group is finite, since the rational homology in that degree vanishes by \cite{Hog_polyaction}).  
\end{remark}

\subsection{Computation of the Khovanov spectrum of the 3-colored unknot}

The existence of the map $\projmap_3^2$ on the spectral projector $\proj_3$ allows us to verify a conjecture of Lobb-Orson-Sch\"{u}tz on the Khovanov spectrum of the 3-colored unknot.  Using \cite[Theorem 1.2]{Hog_polyaction} together with the bounded representative for $Q_3$ given in that paper as Example 3.41, it is not hard to show that the maps $U_3^k$ induce isomorphisms on 3-colored homology beyond the earliest $q$-gradings.  We can use this to complete the following computation.

\begin{theorem}[{{\cite[Conjecture 4.1]{LOS_colored}}}]
\label{thm:3colored unknot}
The colored Khovanov spectrum $\X:=\X_{col}(\mathscr{U}_3)$ (or equivalently, either the closed projector $\closure{\proj_3}\simeq\closure{\T_3^\infty}$ or the endomorphism spectrum $\End(\proj_3,\proj_3)$) has summands $\X\atq{j}$ as given in the following table:
\begin{center}
\setlength{\extrarowheight}{2pt}{
\begin{tabular}{||C|C||}
\hline
j & \X\atq{j} \\
\hline
\hline
-3 & \bS \\
\hline
-1 & \bS \\
\hline
1 & \Sigma^2\bS \\
\hline
3 & \Sigma^{-1} \bR P^5/\bR P^2\\
\hline
6m+5 \quad (m\geq 0) & \Sigma^{3+4m}\bS \vee \Sigma^{4+4m}\bS \\
\hline
6m+1 \quad (m\geq 0) & \Sigma^{1+4m}\bS \vee \Sigma^{2+4m}\bS \\
\hline
12m-3 \quad (m\geq 0) & X(\eta 2,8m-3) \vee \Sigma^{8m}\bS \\
\hline
12m+3 \quad (m\geq 0) & \Sigma^{8m+1}\bS \vee X(2\eta,8m+2) \\
\hline
\end{tabular}
}
\end{center}
where we have utilized the notation of \cite{LOS_colored}.
\end{theorem}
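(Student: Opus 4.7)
The plan is to establish the two remaining families $j = 12m\pm 3$ (for $m\geq 1$) by promoting the known homological $u_3^2$-periodicity of $\closure{\proj_3}$ to a spectrum-level periodicity, using the lift $\projmap_3^2$ constructed in Theorem \ref{thm:U3^2 lifts}. All other rows of the table, namely $j\in\{-3,-1,1,3\}$ together with the $j = 6m+1$ and $j = 6m+5$ families, are already established in \cite{LOS_colored} via finite approximation, so only the $12m\pm 3$ rows are new content.

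First I would pass $\projmap_3^2$ through the closure functor (equivalently, apply the identification of Corollary \ref{cor:END(P) = closure(P)}) to obtain a self-map
\[\bar{\projmap}_3^2 \colon \q^{12}\Sigma^8 \closure{\proj_3} \longrightarrow \closure{\proj_3}.\]
By construction this is a spectral realization of the multiplication-by-$u_3^2$ action of Theorem \ref{thm:hog-polyaction}, and decomposing by quantum grading yields, for each $j\in\Z$, a map of spectra $\Sigma^8\,\X\atq{j-12} \to \X\atq{j}$.

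Next I would invoke the fact noted just before the theorem statement (using \cite[Theorem 1.2]{Hog_polyaction} together with the bounded representative for the symmetric projector $Q_3$) that multiplication by $u_3^2$ induces an isomorphism on $H_*(\closure{\proj_3}\atq{j})$ once $j$ exceeds a small explicit threshold $j_0$. Since each $\X\atq{j}$ is bounded below, Whitehead's theorem (Theorem \ref{thm:Whitehead for spectra}) then upgrades these homology isomorphisms grading-by-grading to homotopy equivalences
\[\Sigma^8\,\X\atq{j-12} \xrightarrow{\ \simeq\ } \X\atq{j}\qquad (j>j_0).\]
Iterating reduces the $12m-3$ family to the base case $\X\atq{9}$ and the $12m+3$ family to $\X\atq{15}$, each of which can be read off from the finite-twist computations $\closure{\T_3^k}$ of \cite{LOS} via the stabilization $\closure{\proj_3} \simeq \hocolim_k \closure{\T_3^k}$ of Theorem \ref{thm:inf twist is projector} combined with the grading-wise stabilization bound of Theorem \ref{thm:inf twist seq stabilizes}. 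The identified homotopy types $X(\eta 2,5)\vee \Sigma^8\bS$ (at $j=9$) and $\Sigma^9\bS\vee X(2\eta,10)$ (at $j=15$) then transport under the $\Sigma^8$-periodicity to yield the full $12m\pm 3$ rows.

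The main obstacle will be the bookkeeping at the base cases: extracting the decorated $X(\eta 2,\cdot)$ and $X(2\eta,\cdot)$ summands with their correct attaching-map structure from the finite framed flow categories of \cite{LOS}, and confirming that $j_0$ is small enough that both $j=9$ and $j=15$ (together with all larger $12m\pm 3$ values) lie above it. By contrast, the non-uniqueness of $\projmap_3^2$ (a torsor over $\pi_8^{st}(X(\eta 2,5))$, as observed in the proof of Theorem \ref{thm:U3^2 lifts}) is not an obstacle: any lift realizes the same homology action, and hence yields the same equivalence via Whitehead.
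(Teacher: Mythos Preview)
Your proposal is correct and follows essentially the same route as the paper: pass the lift $\projmap_3^2$ to the closure, observe that on homology it realises the $u_3^2$-action which is an isomorphism beyond a small threshold, upgrade via Whitehead to a $(\Sigma^8,\q^{12})$-periodicity, and then reduce the $12m\pm 3$ families to the base cases $j=9$ and $j=15$ already handled by the computer computations of Lobb--Orson--Sch\"utz. The paper makes the threshold explicit ($j\geq 7$) and additionally remarks that for $j\not\equiv 3\pmod 6$ the homotopy type is forced to be a Moore space by the homology alone, but otherwise the argument is the same as yours.
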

\begin{proof}
The map $\projmap_3^2$ is a stable homotopy equivalence by Whitehead's theorem, providing a periodicity $\Sigma^8\closure{\proj_3}\atq{j} \simeq \closure{\proj_3}\atq{j+12}$ for $j\geq 7$.  For all $j\leq 7$, the computation of the remaining cases had already been worked out by \cite{LOS_colored}, but we include a partial summary of it here.  For all $j\not\equiv 3 \mod 6$, the stable homotopy type is easily determined to be a Moore space for the corresponding homology.  Thus the only $q$-degrees which need to be computed by hand are 3, 9, and 15.  Degree 3 was already known in \cite{Wil_TorusLinks} via the computation in \cite{LS_steenrod}, while degrees 9 and 15 are handled via computer in \cite{LOS_colored} which led those authors to their conjecture.

This completes the proof.
\end{proof}

\begin{remark}
It should be emphasized that this result required no new computations.  Instead, it relied on the adjunction (ignoring shifts) $\End(\proj_3,\proj_3)\simeq \pi^{st}(\closure{\proj_3})$ allowing us to use the existing computations for $\closure{\proj_3}=\X_{col}(\mathscr{U}_3)$ to find an endomorphism that was known to give isomorphisms on homology.  Through Whitehead's theorem, this endomorphism then provided the periodicity which could extend the known computations.
\end{remark}

\section{Topological Hochschild homology does not give an invariant for links in $S^1\times S^2$}\label{sec:THH}

In Section \ref{sec:spTL cat} we viewed $(n,n)$-tangles as $T\in\Tang(\varnothing;2n)$ with corresponding Temperley-Lieb spectrum $\X(T)\in\SpmMod(\varnothing; \sarc_{2n})$ viewed as a module over the spectral arc algebra $\sarc_{2n}$ (and similarly for their homology $\Kh(T)$).  This notation is in accordance with the planar algebraic framework utilized in \cite{LLS_func}.  In this section however we will be concerned with the spectral $(\sarc_{n},\sarc_{n})$-bimodule associated to such $T$ as defined in \cite{LLS-Burnside}, where the notation $\X(T)$ is also used.  Here to avoid confusion we will denote such bimodules by $\X(T)_{n,n}$ (and similarly their homology will be denoted $\Kh(T)_{n,n}$).

In \cite{Roz_S1S2}, Lev Rozansky showed that the Hochschild homology of the Khovanov bimodules $Kh(T)_{2n,2n}$ for $(2n,2n)$-tangles $T$ could be used to define Khovanov homology groups for links in $S^1\times S^2$ formed as the $S^1$-closure of such $T$.  Rozansky also described how to compute this Hochschild homology via a different sort of limiting statement for infinite twists, and in \cite{Wil_S1S2} the second author expanded upon this limiting version to define Khovanov homology groups for (2-divisible) links in connect sums of $S^1\times S^2$ before using the approach to define a Lee deformation and Rasmussen $s$-invariant for such links in \cite{MMSW_S1S2sinvt}.

In \cite{LLS-Burnside}, Lawson-Lipshitz-Sarkar conjectured that the topological Hochschild homology of $\X(T)_{2n,2n}$ would similarly define a Khovanov spectrum for such links, and if this were the case one might hope to use this spectrum to improve the genus bounds coming from the $s$-invariant as in \cite{LS_sinvt}.  In this section however, we show that the computations of \cite{LOS_colored} used in the proof of Theorem \ref{thm:U3 does not lift} imply that topological Hochschild homology does \emph{not} give such an invariant.  

\begin{theorem}\label{thm:THH is not S1S2 invt}
The following topological Hochschild homologies are not equivalent:
\begin{equation}\label{eq:THH is not S1S2 invt}
\THH\left(\sarc_2; \X\left(
\vcenter{\hbox{\begin{tikzpicture}[xscale=.25,yscale=.8]
    \draw[thick] (-.5,.5) to[out=90,in=90,looseness=.3] (2.5,.5);
    \drawover{(.2,1)--(.2,0);}
    \drawover{(1.8,1)--(1.8,0);}
    \drawover{(-.5,.5) to[out=-90,in=-90,looseness=.3] (2.5,.5);}
\end{tikzpicture}}} \right)_{2,2}\right)
\not\simeq
\THH\left(\sarc_2; \X\left(
\vcenter{\hbox{\begin{tikzpicture}[xscale=.25,yscale=.8]
    \draw[thick] (.4,1)--(.4,0);
    \draw[thick] (1.8,1)--(1.8,0);
    \draw[thick] (-.5,.5) to[out=90,in=90,looseness=.5] (-2.5,.5) to[out=-90,in=-90,looseness=.5] (-.5,.5);
\end{tikzpicture}}} \, \, \right)_{2,2}\right),
\end{equation}
despite the fact that the corresponding links in $S^1\times S^2$ are isotopic.  
\end{theorem}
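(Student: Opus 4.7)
The plan is to identify the two topological Hochschild homology spectra in \eqref{eq:THH is not S1S2 invt} with spectra whose non-equivalence was already established in the course of proving Theorem~\ref{thm:U3 does not lift}.

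The first step is to establish a spectral analog of Rozansky's identification of Hochschild homology with the closure of a projector-inserted tangle. Concretely, for a $(2,2)$-tangle $T$, I will show an equivalence
\[
\THH(\sarc_2; \X(T)_{2,2}) \simeq T^2\bigl(\X(T) \vertcomp \proj_2\bigr),
\]
where $T^2$ is the full planar closure of Definition~\ref{def:partial trace}. The justification is that $\proj_2$ serves as a spectral resolution of the identity bimodule in the sense relevant to THH: the cone of $\iota\colon \Imod_2 \to \proj_2$ is built from flat-tangle modules of through-degree less than $2$ (Definition~\ref{def:spectral projector}), and tracing any bimodule of the form $\X(T)_{2,2}$ against such modules yields contractible spectra, by turnback-killing together with the adjunction of Theorem~\ref{thm:adjunction}. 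This lifts Rozansky's chain-level projector-insertion argument to the spectral setting.

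The second step is to apply this identification to the two tangles in \eqref{eq:THH is not S1S2 invt} and to recognize the resulting spectra as the two sides of the (non-)equivalence studied in the proof of Theorem~\ref{thm:U3 does not lift}. For the left tangle (two strands wrapped by a circle), the projector-inserted closure is the $(2,1)$-colored Hopf link spectrum, which was shown there (via a finite-approximation calculation using \cite{LOS_colored}) to carry a non-trivial $Sq^2$ operation. For the right tangle (identity on two strands disjoint union with an unknot), the unknot factor splits off and the closure reduces, up to grading shifts, to a smash product of $\closure{\proj_2} = \X_{col}(\mathscr{U}_2)$ with the Khovanov spectrum of an unknot. Since $\closure{\proj_2}$ is a stable limit of Khovanov spectra of the alternating torus links $T(2,k)$, it is a wedge sum of shifted Moore spectra by \cite{LS_stablehtpytype}, and in particular all of its Steenrod squares are trivial; smashing with a wedge of sphere spectra preserves triviality of Steenrod squares.

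Since Steenrod $Sq^2$ is a stable homotopy invariant, the two THH spectra are not stably equivalent, proving the main assertion. The claim that the two corresponding links $\hat{T} \subset S^1 \times S^2$ are isotopic follows from a standard handle-slide argument using the $0$-framed surgery description of Figure~\ref{fig:s1s2}: the wrapping circle on the left can be slid across the surgery curve to become a disjoint unknot. The main obstacle will be the first step, namely the spectral lift of the projector-insertion formula for THH; this requires comparing the bar construction model for THH with the planar-algebraic closure with $\proj_2$ inserted, using the universal properties of $\proj_2$ developed in Section~\ref{sec:spTL cat}. Once this is in place, the remainder of the proof is essentially a reinterpretation of the proof of Theorem~\ref{thm:U3 does not lift}.
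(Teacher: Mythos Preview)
Your second step (distinguishing the two $\proj_2$-closures via $Sq^2$) is essentially the argument the paper uses, and is fine. The problem is your first step.

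Your claimed identification $\THH(\sarc_2;\X(T)_{2,2})\simeq T^2(\X(T)\vertcomp\proj_2)$ is not correct. The object that replaces the bar resolution of the identity bimodule is not $\proj_2$ but rather $\proj_{2,0}:=\Cone(\Imod_2\to\proj_2)$, which is built entirely from the turnback module $e_1$. This is the content of Lemma~\ref{lem:P20 computes THH} in the paper (and is the spectral analog of Rozansky's statement that the \emph{lowest}-weight projector, not the highest-weight projector, computes Hochschild homology). Your stated justification, that ``tracing any bimodule $\X(T)_{2,2}$ against turnback modules yields contractible spectra,'' is simply false: for instance if $T=\Itang_2$, tracing against $e_1$ gives the Khovanov spectrum of an unknot. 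Turnback-killing is a property of the projector, not of an arbitrary tangle module; and in any case, if your justification were correct it would force $\THH\simeq T^2(\X(T)\vertcomp\Imod_2)$, not the formula you wrote.

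The fix is exactly what the paper does. First prove (this is the nontrivial Lemma~\ref{lem:P20 computes THH}) that $\THH\simeq T^2(\X(T)\vertcomp\proj_{2,0})$; the paper does this by comparing explicit semisimplicial models for the bar complex and for $\proj_{2,0}$. Then use the exact triangle $\Imod_2\to\proj_2\to\proj_{2,0}$: after closing with $T$, the $\Imod_2$-term is the Khovanov spectrum of a finite link and hence contractible in all but finitely many $q$-degrees, so your (correct) inequivalence of the $\proj_2$-closures transfers to an inequivalence of the $\proj_{2,0}$-closures in infinitely many $q$-degrees, which is what you need.
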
 

The proof relies on the following definition, as well as a lemma which we will prove in Section \ref{sec:Proof of Lemma p20 computes THH}.

\begin{definition}\label{def:P20 not semi simp}
Let $\proj_{2,0}$ denote the following cone
\[\proj_{2,0}:=\Cone\left(\Imod_2\hookrightarrow \proj_2\right),\]
where the map is the obvious inclusion of $\Imod_2=\augJM_2^0$ into $\proj_2\simeq\augJM_2^\infty$.
\end{definition}

\begin{lemma}\label{lem:P20 computes THH}
For any $(2,2)$-tangle $T$, the topological Hochschild homology of $\sarc_2$ with coefficients in $\X(T)$ can be computed as
\begin{equation}\label{eq:P20 computes THH}
\THH\left(\sarc_2; \X\left(
\vcenter{\hbox{\begin{tikzpicture}[xscale=.25,yscale=.8]
    \draw[thick] (.2,1)--(.2,0);
    \draw[thick] (1.8,1)--(1.8,0);
    \whitebox{-.2,.15}{2.2,.85}
    \node[scale=.8] at (1,.5) {$T$};
\end{tikzpicture}}} \right)_{2,2} \right)
\simeq
\X\left(
\vcenter{\hbox{\begin{tikzpicture}[xscale=.25,yscale=.6]
    \draw[thick] (.2,2)--(.2,0);
    \draw[thick] (1.8,2)--(1.8,0);
    \whitebox{-.2,.15}{2.2,.85}
    \node[scale=.7] at (1,.5) {$T$};
    \whitebox{-.2,1.15}{2.2,1.85}
    \node[scale=.7] at (1,1.5) {$\proj_{2,0}$};
    \draw[thick]
        (.2,0) to[out=-90,in=-90,looseness=.2] (4,0)--(4,2) to[out=90,in=90,looseness=.2] (.2,2)
        (1.8,0) to[out=-90,in=-90,looseness=.2] (3,0)--(3,2) to[out=90,in=90,looseness=.2] (1.8,2);
\end{tikzpicture}}} \right).
\end{equation}
\end{lemma}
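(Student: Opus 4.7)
The plan is to adapt to the spectral setting Rozansky's chain-level identification of Hochschild homology with a specific horizontal closure involving the infinite twist. Denote by $T^2$ the full partial trace in $\Sp\TL_2$, which by the planar algebraic framework of Section \ref{sec:planar algebraic spectra} is a left-derived functor and hence commutes with homotopy colimits and mapping cones. Since $\proj_{2,0}=\Cone(\Imod_2\to\proj_2)$, the right-hand side of \eqref{eq:P20 computes THH} may be rewritten as
\[
T^2\bigl(\X(T)\vertcomp\proj_{2,0}\bigr)\simeq \Cone\bigl(T^2(\X(T)\vertcomp\Imod_2)\to T^2(\X(T)\vertcomp\proj_2)\bigr).
\]
Hence it suffices to produce a natural cofiber sequence
\[
T^2(\X(T)\vertcomp\Imod_2)\to T^2(\X(T)\vertcomp\proj_2)\to \THH(\sarc_2;\X(T)_{2,2}),
\]
in which the first map is induced by the inclusion $\iota\colon\Imod_2\to\proj_2$.

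First, I would realize $\THH(\sarc_2;\X(T)_{2,2})$ as the geometric realization of the cyclic bar construction $B^{cy}(\sarc_2;\X(T)_{2,2})$, whose $n$-simplices are $\X(T)\wedge\sarc_2^{\wedge n}$ with face maps implementing (left, right, and internal) multiplication. Using the planar algebraic framework, each $n$-simplex can be interpreted as $T^2$ applied to $\X(T)$ composed with a flat $(2,2)$-tangle built from $n$ copies of the elementary turnback $e_1$, and the face maps correspond to standard saddle cobordisms merging adjacent turnbacks.

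Second, I would identify the portion of this cyclic bar construction with $n\geq 1$ with the simplified infinite $2$-twist functor $F_2^\infty$ of Definition \ref{def:P2 simplified}, stacked on top of $\X(T)$ and then closed via $T^2$. The edge maps matching saddles and signed dotted-identity maps, as well as the higher nullhomotopies, must be checked to align; see the discussion of the main obstacle below. By Theorem \ref{thm:P2 simp}, the geometric realization of this sub-diagram is then $T^2(\X(T)\vertcomp\proj_2)$. The inclusion of the $0$-simplex $\X(T)$ (corresponding to $T^2(\X(T)\vertcomp\Imod_2)$) into the geometric realization then yields the desired cofiber sequence, whence via the cone characterization of $\proj_{2,0}$ we conclude $\THH(\sarc_2;\X(T)_{2,2})\simeq T^2(\X(T)\vertcomp\proj_{2,0})$.

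The main obstacle will be the second step: matching the face maps and chosen nullhomotopies of the cyclic bar construction with those of $F_2^\infty$. On the chain level, this is essentially Rozansky's Hochschild computation \cite{Roz_S1S2}. In the spectral setting the nullhomotopies of Definition \ref{def:P2 simplified}\eqref{it:P2 nullhomotopies}, which are built from a chosen nullhomotopy of $1-1\colon\sphere\to\sphere$ as in Section \ref{sec:sums-and-differences}, must be matched coherently with those arising naturally from the bar resolution. We expect this identification to be possible because, as was used throughout Section \ref{sec:P2 simplified}, the relevant morphism spectra between our simplicial levels are highly connected by $q$-degree considerations.
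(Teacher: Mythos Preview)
Your approach has a genuine gap in the second step, where you attempt to identify the positive-degree part of the cyclic bar construction with $F_2^\infty$. These two semisimplicial objects are \emph{not} the same, even levelwise: at level $n\geq 1$ the bar construction has $\X(T)\wedge\sarc_2^{\wedge n}$, which after planar interpretation is (the closure of $\X(T)$ composed with) the flat tangle $e_1^{\,n+1}$ containing $n$ disjoint circles, while $F_2^\infty([n])$ is a single shifted copy of $e_1$. Likewise the face maps are different in kind: the bar construction has $n{+}1$ nontrivial face maps, all saddles merging adjacent circles, whereas $F_2^\infty$ has only $d_0$ nontrivial, given by dotted identities. One cannot simply ``check that these align''---they are different diagrams, and bridging them requires constructing an actual map of semisimplicial spectra and proving it is an equivalence. (Your proposed cofiber sequence is also miswired: $\THH$ is the \emph{entire} realization $|B^{cy}|$, not the cofiber of the inclusion of the $0$-simplices, so the sequence $T^2(\X(T))\to T^2(\X(T)\vertcomp\proj_2)\to\THH$ does not arise from the skeletal filtration of $B^{cy}$ in the way you suggest.)

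The paper handles exactly this gap by working with $\proj_{2,0}$ directly rather than via the cone on $\proj_2$. It builds two semisimplicial $\sarc_4$-modules $\ssimpproj_{2,0}$ and $\ssimpBar_2$---the first with all levels equal to $e_1$ and face maps $d_0,d_n$ given by top/bottom dotted identities (so that $|\ssimpproj_{2,0}|\simeq\proj_{2,0}$), the second with level $n$ the tangle of $n$ circles between two turnbacks and all face maps given by saddles (so that $|\ssimpBar_2\otimes_{\sarc_4}\X(M_T)|$ realizes the bar construction for $\THH$). The crucial step is an explicit map $F\colon\ssimpproj_{2,0}\to\ssimpBar_2$ that births $n$ \emph{dotted} circles at level $n$; one checks this intertwines the face maps (dotted identities become saddles-with-dots, and double dots give the trivial correspondence). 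That $|F|$ is an equivalence is then verified on chains by exhibiting a chain-level inverse $G$ (capping the circles) and invoking Whitehead. Your $q$-connectivity argument for higher coherences does not substitute for this: the issue is not about filling higher cells, but about producing the comparison map in the first place and showing it is a levelwise (hence global) equivalence on homology.
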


Compare Definition \ref{def:P20 not semi simp} with the definition of the so-called \emph{lowest weight projector} in \cite{Roz_S1S2}, and Lemma \ref{lem:P20 computes THH} with the corresponding statement \cite[Equation 6.4]{Roz_S1S2}  about computing Hochschild homology for Khovanov bimodules (Rozansky's notation $\mathbf{P}_n$ there corresponds to our $\proj_{2,0}$ here).  

\begin{proof}[Proof of Theorem \ref{thm:THH is not S1S2 invt} assuming Lemma \ref{lem:P20 computes THH}]
In \cite[Section 4.3]{LOS_colored} it is shown that
\begin{equation}\label{eq:P2 cannot unlink meridian}
\X^q \left(
\vcenter{\hbox{\begin{tikzpicture}[xscale=.25,yscale=.6]
    \draw[thick] (-.5,.5) to[out=90,in=90,looseness=.3] (2.5,.5);
    \drawover{(.2,2)--(.2,0);}
    \drawover{(1.8,2)--(1.8,0);}
    \drawover{(-.5,.5) to[out=-90,in=-90,looseness=.3] (2.5,.5);}
    \whitebox{-.2,1.15}{2.2,1.85}
    \node[scale=.7] at (1,1.5) {$\proj_2$};
    \draw[thick]
        (.2,0) to[out=-90,in=-90,looseness=.2] (4,0)--(4,2) to[out=90,in=90,looseness=.2] (.2,2)
        (1.8,0) to[out=-90,in=-90,looseness=.2] (3,0)--(3,2) to[out=90,in=90,looseness=.2] (1.8,2);
\end{tikzpicture}}}\,
\right)\not\simeq \X^q \left(
\vcenter{\hbox{\begin{tikzpicture}[xscale=.25,yscale=.6]
    \draw[thick] (.2,2)--(.2,0)
            (1.8,2)--(1.8,0);
    \whitebox{-.2,1.15}{2.2,1.85}
    \node[scale=.7] at (1,1.5) {$\proj_2$};
    \draw[thick]
        (.2,0) to[out=-90,in=-90,looseness=.2] (4,0)--(4,2) to[out=90,in=90,looseness=.2] (.2,2)
        (1.8,0) to[out=-90,in=-90,looseness=.2] (3,0)--(3,2) to[out=90,in=90,looseness=.2] (1.8,2);
    \draw[thick]
        (-.5,.5) to[out=90,in=90,looseness=.6] (-2.5,.5) to[out=-90,in=-90,looseness=.6] (-.5,.5);
\end{tikzpicture}}}\,
\right)
\end{equation}
for all $q\geq 7$.  Meanwhile, the exact triangle
\[
\Imod_2 \to \cP_2\to \cP_{2,0}
\]
gives, after gluing, two further exact triangles:
\begin{gather*}
\X\left(\vcenter{\hbox{\begin{tikzpicture}[xscale=.25,yscale=.6]
			\draw[thick] (-.5,.5) to[out=90,in=90,looseness=.3] (2.5,.5);
			\drawover{(.2,2)--(.2,0);}
			\drawover{(1.8,2)--(1.8,0);}
			\drawover{(-.5,.5) to[out=-90,in=-90,looseness=.3] (2.5,.5);}
			\whitebox{-.2,1.15}{2.2,1.85}
			\node[scale=.7] at (1,1.5) {$\Imod_2$};
			\draw[thick]
			(.2,0) to[out=-90,in=-90,looseness=.2] (4,0)--(4,2) to[out=90,in=90,looseness=.2] (.2,2)
			(1.8,0) to[out=-90,in=-90,looseness=.2] (3,0)--(3,2) to[out=90,in=90,looseness=.2] (1.8,2);
\end{tikzpicture}}}\right)
\to
\X\left(\vcenter{\hbox{\begin{tikzpicture}[xscale=.25,yscale=.6]
			\draw[thick] (-.5,.5) to[out=90,in=90,looseness=.3] (2.5,.5);
			\drawover{(.2,2)--(.2,0);}
			\drawover{(1.8,2)--(1.8,0);}
			\drawover{(-.5,.5) to[out=-90,in=-90,looseness=.3] (2.5,.5);}
			\whitebox{-.2,1.15}{2.2,1.85}
			\node[scale=.7] at (1,1.5) {$\proj_2$};
			\draw[thick]
			(.2,0) to[out=-90,in=-90,looseness=.2] (4,0)--(4,2) to[out=90,in=90,looseness=.2] (.2,2)
			(1.8,0) to[out=-90,in=-90,looseness=.2] (3,0)--(3,2) to[out=90,in=90,looseness=.2] (1.8,2);
\end{tikzpicture}}}\right)
\to
\X\left(\vcenter{\hbox{\begin{tikzpicture}[xscale=.25,yscale=.6]
			\draw[thick] (-.5,.5) to[out=90,in=90,looseness=.3] (2.5,.5);
			\drawover{(.2,2)--(.2,0);}
			\drawover{(1.8,2)--(1.8,0);}
			\drawover{(-.5,.5) to[out=-90,in=-90,looseness=.3] (2.5,.5);}
			\whitebox{-.2,1.15}{2.2,1.85}
			\node[scale=.7] at (1,1.5) {$\proj_{2,0}$};
			\draw[thick]
			(.2,0) to[out=-90,in=-90,looseness=.2] (4,0)--(4,2) to[out=90,in=90,looseness=.2] (.2,2)
			(1.8,0) to[out=-90,in=-90,looseness=.2] (3,0)--(3,2) to[out=90,in=90,looseness=.2] (1.8,2);
\end{tikzpicture}}}\right)
\\
\X\left(\vcenter{\hbox{\begin{tikzpicture}[xscale=.25,yscale=.6]
			\draw[thick] (.2,2)--(.2,0)
			(1.8,2)--(1.8,0);
			\whitebox{-.2,1.15}{2.2,1.85}
			\node[scale=.7] at (1,1.5) {$\Imod_2$};
			\draw[thick]
			(.2,0) to[out=-90,in=-90,looseness=.2] (4,0)--(4,2) to[out=90,in=90,looseness=.2] (.2,2)
			(1.8,0) to[out=-90,in=-90,looseness=.2] (3,0)--(3,2) to[out=90,in=90,looseness=.2] (1.8,2);
			\draw[thick]
			(-.5,.5) to[out=90,in=90,looseness=.6] (-2.5,.5) to[out=-90,in=-90,looseness=.6] (-.5,.5);
\end{tikzpicture}}}\right)
\to
\X\left(\vcenter{\hbox{\begin{tikzpicture}[xscale=.25,yscale=.6]
			\draw[thick] (.2,2)--(.2,0)
			(1.8,2)--(1.8,0);
			\whitebox{-.2,1.15}{2.2,1.85}
			\node[scale=.7] at (1,1.5) {$\proj_2$};
			\draw[thick]
			(.2,0) to[out=-90,in=-90,looseness=.2] (4,0)--(4,2) to[out=90,in=90,looseness=.2] (.2,2)
			(1.8,0) to[out=-90,in=-90,looseness=.2] (3,0)--(3,2) to[out=90,in=90,looseness=.2] (1.8,2);
			\draw[thick]
			(-.5,.5) to[out=90,in=90,looseness=.6] (-2.5,.5) to[out=-90,in=-90,looseness=.6] (-.5,.5);
\end{tikzpicture}}}\right)
\to
\X\left(\vcenter{\hbox{\begin{tikzpicture}[xscale=.25,yscale=.6]
			\draw[thick] (.2,2)--(.2,0)
			(1.8,2)--(1.8,0);
			\whitebox{-.2,1.15}{2.2,1.85}
			\node[scale=.7] at (1,1.5) {$\proj_{2,0}$};
			\draw[thick]
			(.2,0) to[out=-90,in=-90,looseness=.2] (4,0)--(4,2) to[out=90,in=90,looseness=.2] (.2,2)
			(1.8,0) to[out=-90,in=-90,looseness=.2] (3,0)--(3,2) to[out=90,in=90,looseness=.2] (1.8,2);
			\draw[thick]
			(-.5,.5) to[out=90,in=90,looseness=.6] (-2.5,.5) to[out=-90,in=-90,looseness=.6] (-.5,.5);
\end{tikzpicture}}}\right)
\end{gather*}
The terms with $\Imod_2$ are Khovanov spectra of (finite) links, and so are contractible in all but finitely many $q$-degrees.  Thus, in all but finitely many $q$-degrees we have the terms with $\proj_2$ equivalent to the terms with $\proj_{2,0}$. Combining this with Equation \eqref{eq:P2 cannot unlink meridian} implies
\begin{equation}\label{eq:P20 cannot unlink meridian}
\X^q \left(
\vcenter{\hbox{\begin{tikzpicture}[xscale=.25,yscale=.6]
    \draw[thick] (-.5,.5) to[out=90,in=90,looseness=.3] (2.5,.5);
    \drawover{(.2,2)--(.2,0);}
    \drawover{(1.8,2)--(1.8,0);}
    \drawover{(-.5,.5) to[out=-90,in=-90,looseness=.3] (2.5,.5);}
    \whitebox{-.2,1.15}{2.2,1.85}
    \node[scale=.7] at (1,1.5) {$\proj_{2,0}$};
    \draw[thick]
        (.2,0) to[out=-90,in=-90,looseness=.2] (4,0)--(4,2) to[out=90,in=90,looseness=.2] (.2,2)
        (1.8,0) to[out=-90,in=-90,looseness=.2] (3,0)--(3,2) to[out=90,in=90,looseness=.2] (1.8,2);
\end{tikzpicture}}}\,
\right) \not\simeq \X^q \left(
\vcenter{\hbox{\begin{tikzpicture}[xscale=.25,yscale=.6]
    \draw[thick] (.2,2)--(.2,0)
            (1.8,2)--(1.8,0);
    \whitebox{-.2,1.15}{2.2,1.85}
    \node[scale=.7] at (1,1.5) {$\proj_{2,0}$};
    \draw[thick]
        (.2,0) to[out=-90,in=-90,looseness=.2] (4,0)--(4,2) to[out=90,in=90,looseness=.2] (.2,2)
        (1.8,0) to[out=-90,in=-90,looseness=.2] (3,0)--(3,2) to[out=90,in=90,looseness=.2] (1.8,2);
    \draw[thick]
        (-.5,.5) to[out=90,in=90,looseness=.6] (-2.5,.5) to[out=-90,in=-90,looseness=.6] (-.5,.5);
\end{tikzpicture}}}\,
\right)
\end{equation}
for infinitely many $q$ degrees as well.  Lemma \ref{lem:P20 computes THH} then completes the proof.
\end{proof}

As noted in \cite{LOS_colored}, the proof above does not contradict the results in \cite{Roz_S1S2,Wil_S1S2} precisely because the inequivalence of Equation \eqref{eq:P2 cannot unlink meridian} becomes an equivalence in homology.  That is to say, one can show that
\[Kh\left(
\vcenter{\hbox{\begin{tikzpicture}[xscale=.25,yscale=.6]
    \draw[thick] (-.5,.5) to[out=90,in=90,looseness=.3] (2.5,.5);
    \drawover{(.2,2)--(.2,0);}
    \drawover{(1.8,2)--(1.8,0);}
    \drawover{(-.5,.5) to[out=-90,in=-90,looseness=.3] (2.5,.5);}
    \whitebox{-.2,1.15}{2.2,1.85}
    \node[scale=.7] at (1,1.5) {$P_2$};
    \draw[thick]
        (.2,0) to[out=-90,in=-90,looseness=.2] (4,0)--(4,2) to[out=90,in=90,looseness=.2] (.2,2)
        (1.8,0) to[out=-90,in=-90,looseness=.2] (3,0)--(3,2) to[out=90,in=90,looseness=.2] (1.8,2);
\end{tikzpicture}}}\,
\right) \cong Kh \left(
\vcenter{\hbox{\begin{tikzpicture}[xscale=.25,yscale=.6]
    \draw[thick] (.2,2)--(.2,0)
            (1.8,2)--(1.8,0);
    \whitebox{-.2,1.15}{2.2,1.85}
    \node[scale=.7] at (1,1.5) {$P_2$};
    \draw[thick]
        (.2,0) to[out=-90,in=-90,looseness=.2] (4,0)--(4,2) to[out=90,in=90,looseness=.2] (.2,2)
        (1.8,0) to[out=-90,in=-90,looseness=.2] (3,0)--(3,2) to[out=90,in=90,looseness=.2] (1.8,2);
    \draw[thick]
        (-.5,.5) to[out=90,in=90,looseness=.6] (-2.5,.5) to[out=-90,in=-90,looseness=.6] (-.5,.5);
\end{tikzpicture}}}\,
\right)\]
in all but finitely many $q$ degrees, which are precisely the degrees in which $Kh(\Imod_2)$ is  non-trivial.

\subsection{Proof of Lemma \ref{lem:P20 computes THH}}
\label{sec:Proof of Lemma p20 computes THH}
We begin by presenting the simplified model for $\proj_2$ prescribed by Theorem \ref{thm:P2 simp}, as follows.  We will return to the convention of section \ref{sec:Obstructing U_n^k}, usually suppressing the Khovanov spectrum functor $\X$ from the notation.    
\begin{equation}\label{eq:p2-restated}
\proj_2 \simeq \left( \cdots
\xrightarrow{\ILhresDotT \, - \, \ILhresDotB}
\ILhres[scale=.5]
\xrightarrow{\ILhresDotT \, + \, \ILhresDotB}
\ILhres[scale=.5]
\xrightarrow{\ILhresDotT \, - \, \ILhresDotB}
\ILhres[scale=.5]
\xrightarrow{\ILvertsaddle}
\ILvres[scale=.5]
\right).
\end{equation}
Recall that the right-hand side of \eqref{eq:p2-restated} is interpreted as (the realization of) a semisimplicial object, where there is a unique nontrivial face map between adjacent objects.  We write $\ssimpproj_2$ for the semisimplicial object appearing on the right-hand side.  There is a map $\iota\colon \Imod_2\rightarrow \proj_2$ coming from including $\Imod_2$ as the last term in this sequence.  In fact, $\iota$ is the realization of a map $\iota^{\mathrm{ssimp}}$ of semisimplicial spectra, from $\Imod_2 \to \ssimpproj_2$ where we abuse notation and write $\Imod_2$ for the semisimplicial spectrum whose only nontrivial entry is at $[0]$, at which it is $\Imod_2$.   The cone $\mathrm{Cone}(\iota)$ is then the realization:
\[
\mathrm{Cone}(\iota) \simeq | \mathrm{Cone}\, (\iota^{\mathrm{ssimp}})|.
\] 
Using that the cone of $\iota^{\mathrm{ssimp}}$ is contractible at $[0]\in \Delta_{inj}$,  we can present the cone from a semisimplicial object:
\[
\mathrm{Cone}\, (\iota^{\mathrm{ssimp}})\simeq \left( \cdots
\xrightarrow{\ILhresDotT \, - \, \ILhresDotB}
\ILhres[scale=.5]
\xrightarrow{\ILhresDotT \, + \, \ILhresDotB}
\ILhres[scale=.5]
\xrightarrow{\ILhresDotT \, - \, \ILhresDotB}
\ILhres[scale=.5]
\xrightarrow{}
*
\right).
\] 
As in \eqref{eq:p2-restated}, the right-hand semisimplicial spectrum has a unique nontrivial face map between adjacent objects, at $d_0$.  By inspection, this semisimplicial spectrum is (cf. \cite[III.5]{Goerss-Jardine}) the suspension $\Sigma \proj_{2,0}^{min}$ of:
\begin{equation}\label{eq:P20}
\proj_{2,0}^{min} :=\left( \cdots
\xrightarrow{\ILhresDotT \, - \, \ILhresDotB}
\ILhres[scale=.5]
\xrightarrow{\ILhresDotT \, + \, \ILhresDotB}
\ILhres[scale=.5]
\xrightarrow{\ILhresDotT \, - \, \ILhresDotB}
\ILhres[scale=.5]
\right).
\end{equation}
We write $\proj_{2,0}$ for the realization of $\proj_{2,0}^{min}$. In the expression \eqref{eq:P20}, the arrows indicate $d_0$ of the semisimplicial object (all other $d_i$ are trivial).  The superscript indicates that $\proj_{2,0}^{min}$ is a smaller presentation of $\proj_{2,0}$ than the semisimplicial object $\proj_{2,0}^{ssimp}$, which we introduce below.

\begin{remark} The spectrum $\proj_{2,0}$ is a spectrification of a projector (in the category of chain complexes over $H_4$) $P_{2,0}$; and the presentations of $\proj_{2}$ and $\proj_{2,0}$ as semisimplicial spectra lift the standard presentations of $P_2$ and $P_{2,0}$ as chain complexes.  Indeed, more generally there are many `projectors' in $\TL_n$ for larger $n$; see \cite{Cooper-Hogancamp} for the details.  We conjecture that all of these projectors have well-defined (under suitable hypotheses) lifts to spectral modules. 
\end{remark}

More relevant for our current problem of considering THH, in the chain setting, Rozansky proves \cite[Theorem 6.5]{Roz_S1S2} stating that $P_{2,0}\simeq \chainsfunc(\proj_{2,0})$ can be used to compute the Hochschild homology of $(2,2)$-tangle bimodules by showing that $P_{2,0}$ provides a projective resolution of the identity $(H_2,H_2)$-bimodule (and more generally for other projectors $P_{2n,0}$).

In the setting of spectra, in place of the notion of projective resolutions, one uses cofibrant/fibrant replacements.  Instead of directly choosing a model category structure on our categories of module spectra to perform cofibrant/fibrant replacement, we use the bar construction.

That is, we are in the setting of \cite[Definition 7.2]{LLS_tangles}, so that $\THH(\sarc_2,M)$, for $M$ a $(\sarc_2,\sarc_2)$-bimodule, is given by the homotopy colimit of the diagram
\begin{equation}\label{eq:thh}
\rightfourarrow M\wedge \sarc_2\wedge \sarc_2 \rightthreearrow M \wedge \sarc_2 \rightrightarrows M.
\end{equation}
Here, the diagram is simpler than that of \cite[Definition 7.2]{LLS_tangles}, because there is a unique idempotent for $H_2$; see Remark \ref{rmk:comparing our THH with LLS THH} below.
The arrows are given by the multiplication map of $\sarc_2\wedge \sarc_2\to \sarc_2$, together with the right-action of the leftmost $\sarc_2$ on $M$, and the left action of the rightmost $\sarc_2$ on $M$.   The homotopy colimit in \eqref{eq:thh} is exactly the smash product of the bar resolution of the identity $(\sarc_2,\sarc_2)$-bimodule, $\sarc_2$, with $M$.  Alternatively, \eqref{eq:thh} is the tensor product $|B(\sarc_2)\otimes_{\sarc_2\otimes \sarc_2^{op}} M|$, where $B(\sarc_2)$ is the bar resolution.  

For $T$ a $(2,2)$-tangle, we continue to use $\X(T)_{2,2}$ to denote the Khovanov $(\sarc_2,\sarc_2)$-bimodule associated to $T$; this is not the same as the Temperley-Lieb spectrum $\X(T)\in\SpmMod(\varnothing;\sarc_4)$ viewing $T$ as a $(\varnothing ;4)$-tangle.  We note that $T$ also determines a $(4;\varnothing)$-tangle $M_T$ as shown in Figure \ref{fig:M as H4;0 module}, with corresponding Khovanov module $\X(M_T)\in\SpmMod(\sarc_4;\varnothing)$.

\begin{figure}
    \[
    M_T=
    \vcenter{\hbox{\begin{tikzpicture}[xscale=.15mm,yscale=.25mm]
		\emptydisk{E}{0}{.8}
		\modbox{T}{0}{-.8}{\text{T}}
        \node[align=center,fit={(E disk)(T disk)(3.5,0)}, draw,circle,xscale=.7,yscale=1.2,dashed] (ET) {};
        \draw[thick] (T box.\disksw) to[out=-90,in=-90,looseness=.8] (3,-1.2)--(3,1.2) to[out=90,in=90,looseness=.8] (E disk.\disknw);
        \draw[thick] (T box.\diskse) to[out=-90,in=-90,looseness=.8] (2,-1.2)--(2,1.2) to[out=90,in=90,looseness=.8] (E disk.\diskne);
        \draw[thick] (E disk.\disksw)--(T box.\disknw);
        \draw[thick] (E disk.\diskse)--(T box.\diskne);
    \end{tikzpicture}}}
    \]
    \caption{The $(4;\emptyset)$-tangle $M_T$ associated to the $(2,2)$-tangle $T$.}
    \label{fig:M as H4;0 module}
\end{figure}
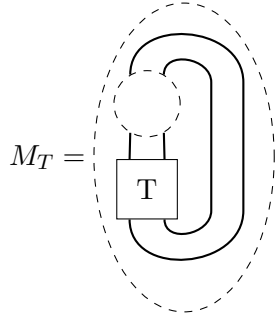

In what follows, we give an explicit recipe for the diagram \eqref{eq:thh} as the realization:
\[
|\ssimpproj_{2,0}\otimes_{\sarc_4}\X(M_T) |
\]
In the appendix, we also prove, for independent interest, Lemma \ref{lem:induction-spectrum}, which gives:
\[
\ssimpproj_{2,0}\simeq B(\sarc_2)\otimes_{\sarc_2\otimes \sarc_2^{op}} \sarc_4,
\] 
from which it follows formally that $THH(\sarc_2,M)=|B(\sarc_2)\otimes_{\sarc_2\otimes \sarc_2^{op}}^{\mathbb{L}} \X(T)_{2,2}|\simeq |\ssimpproj_{2,0}\otimes_{\sarc_4}^{\mathbb{L}}\X(T)|$ viewing $\X(T)$ as simply a spectral $\sarc_4$-module.

We first give a new semisimplicial description of $\proj_{2,0}$ as the realization of a semi-simplicial $(\varnothing;\sarc_4)$-module which we denote $\ssimpproj_{2,0}$, to move closer to a bar resolution.  

Next we will construct a second semi-simplicial $(\varnothing;\sarc_4)$-module $\ssimpBar_2$ such that the realization $\ssimpBar_2\otimes_{\sarc_4} \X(M_T)$  will give the bar construction appearing in \eqref{eq:thh} for $M=\X(T)_{2,2}$.  

Finally we will construct a map of semi-simplicial $(\varnothing;\sarc_4)$-modules
\[\ssimpproj_{2,0} \xrightarrow{F} \ssimpBar_2,\]
and show that the map $|F|$ on the realizations induces an isomorphism on homology, so that Whitehead's theorem implies that $|F|$ is a stable equivalence.  That gives 
\begin{equation}\label{eq:thh-steps}
THH(\sarc_2,\X(T)_{2,2})=\left| \ssimpBar_{2}\otimes_{\sarc_4} \X(M_T) \right|\simeq \left| \ssimpBar_{2}\right|\otimes_{\sarc_4}^{\mathbb{L}}\X(M_T)\simeq  \left|\ssimpproj_{2,0}\right| \otimes_{\sarc_4}^{\mathbb{L}} \X(M_T).
\end{equation}
The gluing theorem for tangle invariants identifies the last term with the right-hand side of Lemma \ref{lem:P20 computes THH}, which will complete the proof of the lemma.  

To begin on this path, we define $\ssimpproj_{2,0}$ to be the semisimplicial $(\varnothing;\sarc_4)$-module whose terms are all copies of the tangle module for the split tangle $\ILhres$, and whose face maps on the $n$th level are all zero (basepoint maps) besides $d_0=\ILhresDotT$ and $d_n=\ILhresDotB$.  It is direct to check that this object $\ssimpproj_{2,0}$ satisfies the semisimplicial identitites.

\begin{remark}\label{rmk:strict}
We are being slightly imprecise, in that we need to identify $\X(\ILhresDotT)\circ \X(\ILhresDotB)$ with $\X(\ILhresDotB)\circ \X(\ILhresDotT)$ for the semisimplicial identities to hold.  However, in \cite[Section 2.11]{LLS_tangles} the Khovanov-Burnside functor is constructed as a functor from a certain \emph{embedded cobordism category} $\mathrm{Cob}_e$ of circles in $(0,1)^2$, to the Burnside category, so that these composites agree up to coherent homotopy, but are not identical.  The morphisms $\X(\ILhresDotT),\X(\ILhresDotB)$ are both associated to particular embedded cobordisms (namely, the dot map is the same as a birth followed by a merge.  Note, these are not viewed as embedded cobordisms in a 4-manifold, but as embedded cobordisms in $\mathbb{R}^3$), and in fact we can extend the embedded cobordisms pictured in \eqref{eq:p2-ssimp-definition} to a homotopy-coherent semi-simplicial object in $\mathrm{Cob}_e$.  The semi-simplicial object $\ssimpproj_{2,0}$ is then obtained by applying the machinery of \cite{LLS_tangles} to that diagram in $\mathrm{Cob}_e$.
\end{remark}
We represent $\ssimpproj_{2,0}$ graphically as
\begin{equation}\label{eq:p2-ssimp-definition}
\ssimpproj_{2,0}:=\left(
\ILtikzpic[xscale=3]{
\node(dots) at (.5,0) {$\cdots$};
\node(res3) at (1,0) {$\ILhres[scale=1]$};
\node(res2) at (2,0) {$\ILhres[scale=1]$};
\node(res1) at (3,0) {$\ILhres[scale=1]$};
\node(enddots) at (3.5,0) {$\cdots$};
\foreach \p/\q in {res3/res2, res2/res1} {
    \draw[->] (\p) to[out=45,in=180-45,looseness=1.25] node[pos=.5,above=.2](\p\q mapT){} (\q);
    \node at (\p\q mapT) {$d_0=\ILhresDotT$};
    \draw[->,dashed] (\p) to node[pos=.5,above]{$\varnothing$} (\q);
    \draw[->] (\p) to[out=-45,in=180+45,looseness=1.25] node[pos=.5,below=.2](\p\q mapB){} (\q);
    }
\node at (res3res2mapB) {$d_n=\ILhresDotB$};
\node at (res2res1mapB) {$d_{n-1}=\ILhresDotB$};
}\right).
\end{equation}
We have that $|\ssimpproj_{2,0}|\simeq |\proj_{2,0}^{min}|$ using $|\proj_{2}^{min}|\simeq |\ssimpproj_2|$ together with (coherent) uniqueness of the defining map $\Imod_2\to \proj_2$ of a projector.  

Next we define $\ssimpBar_2$ to be the semisimplicial $(\varnothing;\sarc_4)$-module represented graphically as
\begin{equation}\label{eq:bsimp-def}
\ssimpBar_2 := \left(
\ILtikzpic[xscale=4]{
\node(dots) at (.5,0) {$\cdots$};
\node(res3) at (1,0) {$\ILtikzpic[scale=1]{
                    \draw[thick] (0,2) to[out=-70,in=250] (1,2);
                    \draw[thick] (0,-1) to[out=70,in=110] (1,-1);
                    \draw[thick] (.5,0) circle (.35);
                    \draw[thick] (.5,1) circle (.35);
                    }$};
\node(res2) at (2,0) {$\ILtikzpic[scale=1]{
                    \draw[thick] (0,1) to[out=-70,in=250] (1,1);
                    \draw[thick] (0,-1) to[out=70,in=110] (1,-1);
                    \draw[thick] (.5,0) circle (.35);
                    }$};
\node(res1) at (3,0) {$\ILhres[scale=1]$};
\draw[->] (res2.north east) to[out=0,in=135] node[pos=.5,above,arrows=-] {$d_0=\ILtikzpic[scale=.35]{
                    \draw[thick] (0,1) to[out=-70,in=250] node[pos=.5,inner sep=0](top){} (1,1);
                    \draw[thick] (0,-1) to[out=70,in=110] (1,-1);
                    \draw[thick] (.5,0) circle (.35);
                    \draw[thick,red] (.5,.35)--(top);
                    }$} (res1);
\draw[->] (res2.south east) to[out=0,in=-135] node[pos=.5,below,arrows=-] {$d_1=\ILtikzpic[scale=.35]{
                    \draw[thick] (0,1) to[out=-70,in=250] (1,1);
                    \draw[thick] (0,-1) to[out=70,in=110] node[pos=.5,inner sep=0](bot){} (1,-1);
                    \draw[thick] (.5,0) circle (.35);
                    \draw[thick,red] (.5,-.35)--(bot);
                    }$} (res1);
\draw[->] (res3.north east) to[out=0,in=120] node[pos=.5,above,arrows=-] {$d_0=\ILtikzpic[scale=.35]{
                    \draw[thick] (0,2) to[out=-70,in=250] node[pos=.5,inner sep=0](top){} (1,2);
                    \draw[thick] (0,-1) to[out=70,in=110] (1,-1);
                    \draw[thick] (.5,0) circle (.35);
                    \draw[thick] (.5,1) circle (.35);
                    \draw[thick,red] (.5,1.35)--(top);
                    }$} (res2);
\draw[->] (res3) to[out=0,in=180] node[pos=.4,above,arrows=-] {$d_1=\ILtikzpic[scale=.35]{
                    \draw[thick] (0,2) to[out=-70,in=250] (1,2);
                    \draw[thick] (0,-1) to[out=70,in=110] (1,-1);
                    \draw[thick] (.5,0) circle (.35);
                    \draw[thick] (.5,1) circle (.35);
                    \draw[thick,red] (.5,.35)--(.5,.65);
                    }$} (res2);
\draw[->] (res3.south east) to[out=0,in=-120] node[pos=.5,below,arrows=-] {$d_2=\ILtikzpic[scale=.35]{
                    \draw[thick] (0,2) to[out=-70,in=250] (1,2);
                    \draw[thick] (0,-1) to[out=70,in=110] node[pos=.5,inner sep=0](bot){} (1,-1);
                    \draw[thick] (.5,0) circle (.35);
                    \draw[thick] (.5,1) circle (.35);
                    \draw[thick,red] (.5,-.35)--(bot);
                    }$} (res2);
}\right).
\end{equation}
In words, the $n$th term in $\ssimpBar_2$ consists of $n$ circles arranged in a line between a pair of turnbacks at the top and bottom.  The face maps then consist of the various saddle maps between adjacent circles/turnbacks.  A similar discussion as in Remark \ref{rmk:strict} is used to construct the diagram.

The gluing theorem gives that $\X(\ILhres)\otimes_{\sarc_4}^{\mathbb{L}}\X(M_T)$ recovers the underlying spectrum of the bimodule $\X(T)_{2,2}$, which is by definition the Khovanov spectrum of the (unique) closure of $T$ as a $(2,2)$-tangle.  More generally, the $n$-th term of Equation \eqref{eq:thh} agrees with $\ssimpBar_2([n])\otimes^{\mathbb{L}}_{\sarc_4} \X(M_T)$.  To see that the arrows of \eqref{eq:thh} agree with those coming from \eqref{eq:bsimp-def}, we note that roughly speaking these arrows come from the same cobordisms.  To make this more precise, we can identify the Burnside functors underlying \eqref{eq:thh} and $\ssimpBar_2\otimes_{\sarc 4} \X(M_T)$ directly; a more complicated instance of a similar argument is carried out in the appendix.  With the underlying Burnside functors identified, it follows that \eqref{eq:thh}, for $M=\X(T)_{2,2}$, agrees with $\left| \ssimpBar_{2}\otimes_{\sarc_4}\X(M_T)\right|$.

\begin{remark}\label{rmk:comparing our THH with LLS THH}
Compare this to the construction used for computing $\THH$ of (a single summand of) the spectral platform algebra in \cite[Section 5]{LLS_CK_Spectra}, where various crossingless matchings of the points on both the top and bottom must be considered.  Here with only two points on either the top or the bottom, there is only one crossingless matching available.
\end{remark}

Finally we define the map $\ssimpproj_{2,0}\xrightarrow{F}\ssimpBar_2$ on the $n$th term to be the map induced by birthing $n$ dotted circles.  By definition, $F$ will commute with $d_0$ and $d_n$, while commutation with the other $d_i$ relies on the fact that having two dots on a connected component induces the basepoint map (via the empty correspondence in the relevant Burnside category).

To show that $|F|$ gives an equivalence on realizations, we consider the effect of $|F|$ on homology.  We will abuse notation, and write $\Kc(\left| \ssimpBar_2\right|)$ (and similarly for $\ssimpproj_{2,0}$ ) for the Khovanov chain complex of the diagram in $\mathrm{Cob}_e$ in equation \eqref{eq:bsimp-def}.  To be more precise, recall that the chain functor applied to $\left| \ssimpBar_2\right|$ is quasi-isomorphic to this Khovanov complex, though not identical to it, so that the complex (rather than its homotopy type) $\Kc(\left| \ssimpBar \right|)$ is not determined by the spectrum $\ssimpBar$.  Upon passing to the relevant chain complexes, we can find a reverse map $\Kc^*(\left|\ssimpBar_2\right|)\xrightarrow{G} \Kc^*(\left|\ssimpproj_{2,0}\right|)$ defined by capping off each of the disjoint circles (these caps are not dotted).  Some computation shows that these capping maps do define a chain map $G$, and then it is clear that $G\circ \Kc(|F|)$ is the identity on chains, and similarly for $\Kc(|F|)\circ G$, so that $\Kc(|F|)$ gives an isomorphism on homology.  The quasi-isomorphisms $\mathcal{C}_h(\left|\ssimpBar_2\right|) \to \Kc (\left|\ssimpproj_2\right| )$, etc., are compatible with the maps $\mathcal{C}_h (|F|)$ and $\Kc(F)$ (again having abused notation), so that we also have that $\mathcal{C}_h(|F|)$ induced an isomorphism on homology.  By Whitehead's Theorem, $|F|$ is a stable equivalence of spectral $\sarc_4$-modules:

\begin{equation}\label{eq:simpB2 equiv to simp P20}
\left|\ssimpBar_2\right|\simeq\left| \ssimpproj_{2,0}\right|.
\end{equation}

Applying equation \eqref{eq:thh-steps}, together with
\[
\left|\ssimpproj_{2,0}\right| \otimes^{\mathbb{L}}_{\sarc_4} M_T \\
\simeq \X\left(
\vcenter{\hbox{\begin{tikzpicture}[xscale=.25,yscale=.6]
    \draw[thick] (.2,2)--(.2,0);
    \draw[thick] (1.8,2)--(1.8,0);
    \whitebox{-.2,.15}{2.2,.85}
    \node[scale=.7] at (1,.5) {$T$};
    \whitebox{-.2,1.15}{2.2,1.85}
    \node[scale=.7] at (1,1.5) {$\proj_{2,0}$};
    \draw[thick]
        (.2,0) to[out=-90,in=-90,looseness=.2] (4,0)--(4,2) to[out=90,in=90,looseness=.2] (.2,2)
        (1.8,0) to[out=-90,in=-90,looseness=.2] (3,0)--(3,2) to[out=90,in=90,looseness=.2] (1.8,2);
\end{tikzpicture}}} \right),
\]
completes the proof of Lemma \ref{lem:P20 computes THH}.

\subsection{Sliding dots past crossings, and an explanation for what went wrong}
\label{sec:what went wrong for THH}
Given the perhaps surprising Theorem \ref{thm:THH is not S1S2 invt}, one might wonder where the proof of $S^1\times S^2$-invariance for the corresponding homology groups fails in the spectral setting.  We will see that one problem arises from the homotopies required to `slide dots past crossings', indicating that constructions such as \cite{batson-seed} may be complicated in the spectral setting.  We will only sketch the arguments here; the details are similar to those of Section \ref{sec:Proof of Lemma p20 computes THH}.  

We illustrate the problem by considering the dotted identity maps
\[f: 
q^2\ILtikzpic[xscale=.25,yscale=.6]{
    \draw[thick] (-.5,.8) to[out=90,in=90,looseness=.3] (2.5,.8);
    \drawover{(.2,0)--(.2,1) to[out=90,in=90] (1.8,1) --(1.8,0);}
    \drawover{(-.5,.8) to[out=-90,in=-90,looseness=.3] (2.5,.8);}
    }
\xrightarrow{
\ILtikzpic[xscale=.25,yscale=.6]{
    \draw[thick] (-.5,.8) to[out=90,in=90,looseness=.3] (2.5,.8);
    \drawover{(.2,0)--(.2,1) to[out=90,in=90] node[pos=.7,fill=black,circle,scale=.4]{} (1.8,1) --(1.8,0);}
    \drawover{(-.5,.8) to[out=-90,in=-90,looseness=.3] (2.5,.8);}
    }
}
\ILtikzpic[xscale=.25,yscale=.6]{
    \draw[thick] (-.5,.8) to[out=90,in=90,looseness=.3] (2.5,.8);
    \drawover{(.2,0)--(.2,1) to[out=90,in=90] (1.8,1) --(1.8,0);}
    \drawover{(-.5,.8) to[out=-90,in=-90,looseness=.3] (2.5,.8);}
    },
\]
\[g: 
q^2\ILtikzpic[xscale=.25,yscale=.6]{
    \draw[thick] (-.5,.8) to[out=90,in=90,looseness=.3] (2.5,.8);
    \drawover{(.2,0)--(.2,1) to[out=90,in=90] (1.8,1) --(1.8,0);}
    \drawover{(-.5,.8) to[out=-90,in=-90,looseness=.3] (2.5,.8);}
    }
\xrightarrow{
\ILtikzpic[xscale=.25,yscale=.6]{
    \draw[thick] (-.5,.8) to[out=90,in=90,looseness=.3] (2.5,.8);
    \drawover{(.2,0)--(.2,1) to[out=90,in=90] (1.8,1) to[out=-90,in=90] node[pos=.8,fill=black,circle,scale=.4]{} (1.8,0);}
    \drawover{(-.5,.8) to[out=-90,in=-90,looseness=.3] (2.5,.8);}
    }
}
\ILtikzpic[xscale=.25,yscale=.6]{
    \draw[thick] (-.5,.8) to[out=90,in=90,looseness=.3] (2.5,.8);
    \drawover{(.2,0)--(.2,1) to[out=90,in=90] (1.8,1) --(1.8,0);}
    \drawover{(-.5,.8) to[out=-90,in=-90,looseness=.3] (2.5,.8);}
    },
\]
and
\[h:
q^2\,\ILtikzpic[xscale=.25,yscale=.4]{
    \draw[thick] (.2,0)--(.2,1) to[out=90,in=90] (1.8,1) --(1.8,0);
    \draw[thick] (-.3,1) to[out=90,in=90,looseness=.8] (-1.7,1) to[out=-90,in=-90,looseness=.8] (-.3,1);
    }
\xrightarrow{
\ILtikzpic[xscale=.25,yscale=.4]{
    \draw[thick] (.2,0)--(.2,1) to[out=90,in=90] (1.8,1) to[out=-90,in=90] node[pos=.7,fill=black,circle,scale=.4]{} (1.8,0);
    \draw[thick] (-.3,1) to[out=90,in=90,looseness=.8] (-1.7,1) to[out=-90,in=-90,looseness=.8] (-.3,1);}
    }
\ILtikzpic[xscale=.25,yscale=.4]{
    \draw[thick] (.2,0)--(.2,1) to[out=90,in=90] (1.8,1) --(1.8,0);
    \draw[thick] (-.3,1) to[out=90,in=90,looseness=.8] (-1.7,1) to[out=-90,in=-90,looseness=.8] (-.3,1);
    }.
\]

These maps are interpreted as maps between the corresponding Khovanov spectra, associated to merging on an `$x$'-labeled circle at the location of the dot.  

By definition $f\circ f$, $g\circ g$, and $h\circ h$ are all zero maps (via the empty correspondence in the relevant Burnside category).  In particular, the fact that $f\circ f$ is canonically nullhomotopic (as opposed to $f\circ f$ being simply null-homotopic) is part of the data used to construct the homotopy coherent diagram whose hocolim defines the left-hand side of Equation \eqref{eq:P20 cannot unlink meridian}.  Similarly, the fact $h\circ h=*$ is part of the data used to construct the right-hand side of Equation \eqref{eq:P20 cannot unlink meridian}.

We will be interested in four semi-simplicial objects, denoted $F,G,H,K$.  Here $T$ will denote the dot map on the top turnback, and $k$ stands for the dot map on the rightmost strand at the bottom of $K$.  The maps shown are $d_0$; all other $d_i$ for $i\neq 0$ are basepoint maps.

\begin{equation}\label{eq:f-definition}
F:=\left(
\ILtikzpic[xscale=3]{
\node(dots) at (.25,0) {\scriptsize $\cdots$};
\node(res3) at (1,0) {$q^5\ILfres$};
\node(res2) at (2,0) {$q^3\ILfres$};
\node(res1) at (3,0) {$q\ILfres$};
\draw[->] (res2)--(res1) node[pos=.5,anchor=south]  {\scriptsize $T-f$};
\draw[->] (res3)--(res2) node[pos=.5,anchor=south] {\scriptsize $T+f$};
\draw[->] (dots)--(res3) node[pos=.5,anchor=south] {\scriptsize $T-f$};
}\right).
\end{equation}
\begin{equation}\label{eq:g-definition}
G:=\left(
\ILtikzpic[xscale=3]{
\node(dots) at (.25,0) {\scriptsize $\cdots$};
\node(res3) at (1,0) {$q^5\ILfres$};
\node(res2) at (2,0) {$q^3\ILfres$};
\node(res1) at (3,0) {$q\ILfres$};
\draw[->] (res2)--(res1) node[pos=.5,anchor=south]  {\scriptsize $T-g$};
\draw[->] (res3)--(res2) node[pos=.5,anchor=south] {\scriptsize $T+g$};
\draw[->] (dots)--(res3) node[pos=.5,anchor=south] {\scriptsize $T-g$};
}\right).
\end{equation}
\begin{equation}\label{eq:h-definition}
H:=\left(
\ILtikzpic[xscale=3]{
\node(dots) at (.25,0) {\scriptsize $\cdots$};
\node(res3) at (1,0) {$q^5\ILdisres$};
\node(res2) at (2,0) {$q^3\ILdisres$};
\node(res1) at (3,0) {$q\ILdisres$};
\draw[->] (res2)--(res1) node[pos=.5,anchor=south]  {\scriptsize $T-h$};
\draw[->] (res3)--(res2) node[pos=.5,anchor=south] {\scriptsize $T+h$};
\draw[->] (dots)--(res3) node[pos=.5,anchor=south] {\scriptsize $T-h$};
}\right).
\end{equation}
\begin{equation}\label{eq:k-definition}
K:=\left(
\ILtikzpic[xscale=3.5]{
\node(dots) at (.25,0) {\scriptsize $\cdots$};
\node(res3) at (1,0) {$q^5\ILkres$};
\node(res2) at (2,0) {$q^3\ILkres$};
\node(res1) at (3,0) {$q\ILkres$};
\draw[->] (res2)--(res1) node[pos=.5,anchor=south]  {\scriptsize $T-k$};
\draw[->] (res3)--(res2) node[pos=.5,anchor=south] {\scriptsize $T+k$};
\draw[->] (dots)--(res3) node[pos=.5,anchor=south] {\scriptsize $T-k$};
}\right).
\end{equation}

To see the semisimplicial structure of these objects, we begin with $F$ which is defined as the tensor product of $\ssimpproj_{2,0}$ with $\left(
\vcenter{\hbox{\begin{tikzpicture}[xscale=.25,yscale=.6]
    \draw[thick] (.2,.5)--(.2,0);
    \draw[thick] (1.8,.5)--(1.8,0);
    \draw[thick] (-.5,.6) to[out=90,in=90,looseness=.3] (2.5,.6);
    \drawover{ (2.5,.6) to[out=-90,in=-90,looseness=.3] (-.5,.6);}
    \drawover{ (.2,.5)--(.2,1);}
    \drawover{ (1.8,.5)--(1.8,1);}
\end{tikzpicture}}} \,  \right)$.  We next construct the semisimplicial object $K$ in the same manner as $\ssimpproj_{2,0}$.  That is to say, for any double composite $d_0\circ d_0$ in $\Delta_{inj}^{op}$, we choose a nullhomotopy of $(T\pm k)\circ (T\mp k)$ by a nullhomotopy of $(1-1)\colon \mathbb{S}\to \mathbb{S}$.  Higher composites are then canonically nullhomotopic due to the $q$-degree shifts (note that the underlying mapping spectra in $K$ are equivalent to the underlying mapping spectra in $\ssimpproj_{2,0}$ since the inner boundary of $K$ has only one possible closure).  With $K$ in hand, we define the semisimplicial object $G$ to be the tensor product of $K$ with $\left(\ILtikzpic[xscale=.25,yscale=.6]{
    \draw[thick] (-.5,.8) to[out=90,in=90,looseness=.3] (2.5,.8);
    \drawover{(.2,.2)--(.2,1) to[out=90,in=90] (1.8,1) --(1.8,.2);}
    \drawover{(-.5,.8) to[out=-90,in=-90,looseness=.3] (2.5,.8);}
}\right)$.  Finally, we define $H$ as the tensor product of $\left(\ILtikzpic[xscale=.25,yscale=.4]{
    \draw[thick] (.2,0)--(.2,1) to[out=90,in=90] (1.8,1) --(1.8,0);
    \draw[thick] (-.3,1) to[out=90,in=90,looseness=.8] (-1.7,1) to[out=-90,in=-90,looseness=.8] (-.3,1);
}\right)$ with $K$.  We note that as semisimplicial objects, $F,G,H,K$ all depend on these choices of nullhomotopies (which are not reflected in the notation).   Finally, and most crucially, assuming we have chosen the same nullhomotopies for $K$ that we did for $\ssimpproj_{2,0}$, we have that $H$ is the tensor product of $\ssimpproj_{2,0}$ with a disjoint circle; in particular $H$ is $q\ssimpproj_{2,0}\vee q^{-1}\ssimpproj_{2,0}$.  

We will consider the relationship between $F,G,$ and $H$.  On the one hand, if we view $H$ as a tensor product with $K$, then we see that $G$ and $H$ are equivalent semisimplicial spectra, since the two spectra $\left(\ILtikzpic[xscale=.25,yscale=.6]{
    \draw[thick] (-.5,.8) to[out=90,in=90,looseness=.3] (2.5,.8);
    \drawover{(.2,.2)--(.2,1) to[out=90,in=90] (1.8,1) --(1.8,.2);}
    \drawover{(-.5,.8) to[out=-90,in=-90,looseness=.3] (2.5,.8);}
}\right)$ and $\left(\ILtikzpic[xscale=.25,yscale=.4]{
    \draw[thick] (.2,0)--(.2,1) to[out=90,in=90] (1.8,1) --(1.8,0);
    \draw[thick] (-.3,1) to[out=90,in=90,looseness=.8] (-1.7,1) to[out=-90,in=-90,looseness=.8] (-.3,1);
}\right)$ are equivalent.  On the other hand, if we view $H$ as a tensor product with $\ssimpproj_{2,0}$, then Equation \eqref{eq:P20 cannot unlink meridian} shows that $F$ and $H$ do not have equivalent realizations, and thus $F$ and $G$ cannot have equivalent realizations.  However, the identity map $F([i])\to G([i])$ homotopy commutes with $f$ by Proposition 7.1 of \cite{LLS_func} (and homotopy commutes with $T$ automatically), so that we have the following \emph{homotopy commutative} diagram  where the vertical arrows are equivalences.  (This is not to be interpreted as a diagram of semisimplicial spectra).

\begin{equation}\label{eq:fg}
\ILtikzpic[xscale=3]{
\node(dots) at (.25,0) {\scriptsize $\cdots$};
\node(res3) at (1,0) {$q^5\ILfres$};
\node(res2) at (2,0) {$q^3\ILfres$};
\node(res1) at (3,0) {$q\ILfres$};
\draw[->] (res2)--(res1) node[pos=.5,anchor=south]  {\scriptsize $T-f$};
\draw[->] (res3)--(res2) node[pos=.5,anchor=south] {\scriptsize $T+f$};
\draw[->] (dots)--(res3) node[pos=.5,anchor=south] {\scriptsize $T-f$};

\node(dotsb) at (.25,-2) {\scriptsize $\cdots$};
\node(res3b) at (1,-2) {$q^5\ILfres$};
\node(res2b) at (2,-2) {$q^3\ILfres$};
\node(res1b) at (3,-2) {$q\ILfres$};
\draw[->] (res2b)--(res1b) node[pos=.5,anchor=south]  {\scriptsize $T-g$};
\draw[->] (res3b)--(res2b) node[pos=.5,anchor=south] {\scriptsize $T+g$};
\draw[->] (dotsb)--(res3b) node[pos=.5,anchor=south] {\scriptsize $T-g$};

\draw[->] (res1)--(res1b);
\draw[->] (res2) -- (res2b);
\draw[->] (res3) -- (res3b);
}
\end{equation}

We also have that there is a morphism $F_{\leq 2}\to G_{\leq 2}$, constructed as follows, where we write $F_{\leq 2}$, etc., for the semisimplicial object obtained by restricting to simplices $[k]$ for $k\leq 2$ of $F$ (etc.).  Indeed, the maps defined in \eqref{eq:fg} give a prism:

\begin{equation}\label{eq:prism}
\ILtikzpic[xscale=3]{
\node(res3) at (1,-1.5) {$q^5\ILfres$};
\node(res2) at (2,0) {$q^3\ILfres$};
\node(res1) at (3,0) {$q\ILfres$};
\draw[->] (res2)--(res1) node[pos=.5,anchor=south]  {\scriptsize $T-f$};
\draw[->] (res3)--(res2) node[pos=.5,anchor=south east] {\scriptsize $T+f$};
\draw[->] (res3)--(res1) node[pos=.7,anchor=north west]  {\scriptsize $0$};

\node(res3b) at (1,-4) {$q^5\ILfres$};
\node(res2b) at (2,-2.5) {$q^3\ILfres$};
\node(res1b) at (3,-2.5) {$q\ILfres$};
\draw[->] (res2b)--(res1b) node[pos=.5,anchor=south]  {\scriptsize $T-g$};
\draw[->] (res3b)--(res2b) node[pos=.5,anchor=south east] {\scriptsize $T+g$};
\draw[->] (res3b)--(res1b) node[pos=.5,anchor=north west] {\scriptsize $0$};

\draw[->] (res1)--(res1b);
\draw[->] (res2) -- (res2b);
\draw[->] (res3) -- (res3b);
}
\end{equation}

We have already defined the length $1$-maps in \eqref{eq:prism}.  We wish to extend these to a homotopy-coherent diagram from the prism (the prism `category' is $\cube\times \{0,1,2\}$, where $\{0,1,2\}$ is the category from the totally ordered poset $\{0,1,2\}$) to graded spectra.  The back and left face extensions are defined by Proposition 7.1 of \cite{LLS_func}, although \cite{LLS_func} does not claim that the homotopy type of that filling is well-defined.  The top face is filled by the definition of $F$.  

By the inner Kan condition, the prism may be filled to a homotopy-coherent diagram, and using that the top face is obtained by a tensor product of a nullhomotopy of $(1-1)\colon \mathbb{S}\to\mathbb{S}$ with $fT$, it is possible to be more specific, and fill so that the bottom face is the same homotopy of $(1-1)\colon \mathbb{S}\to\mathbb{S}$, tensored with $gT$.  That is, for one choice of nullhomotopy in $G$, $F_{\leq 2}$ is equivalent to $G_{\leq 2}$ as semisimplicial spectra  (namely, if $F$ is defined from a choice of nullhomotopy in $\ssimpproj_{2,0}$, the correpsonding nullhomotopy in the definition of $K$ will be the one for which \eqref{eq:prism} can be filled).

If we had fixed the nullhomotopy already in the definition of $G$, the 2-dimensional faces of \eqref{eq:prism} would define a map $A_2\colon \Sigma F([2])\to G([0])$; the nonvanishing of the map $A_2$ is the obstruction to building an equivalence $F_{\leq 2}\to G_{\leq 2}$ extending the choices of the length $1$ arrows and \cite{LLS_func} homotopies.  

There are similar prisms for each pair of adjacent nonzero arrows in the definition of $F$; they may all be filled the same way.  In particular, we obtain a partially-defined diagram $A_{F,G}\colon \Delta_{inj,\leq 3}^{op}\times \cube\to \gSp$ using the homotopies constructed in \eqref{eq:prism} and the same prism, going between $F([3])$ and $G([1])$.  To be more specific, $A_{F,G}|_{\Delta_{inj,\leq 3}^{op}\times \{i\}}$ is $F$ for $i=0$ and $G$ for $i=1$.  There may be several choices of $A_{F,G}$ up to homotopy, corresponding to different choices of the homotopies used from Proposition 7.1 of \cite{LLS_func}, we call any $A_{F,G}$ constructed by such dot-sliding homotopies \emph{good}.  There is an obstruction to extending $A_{F,G}$ to a homotopy-coherent diagram (instead of only being partly defined).  We will call this obstruction $A_3$; it is a homotopy class of map from $\Sigma^2 F([3])\to G([0])$.  

As noted above, if there is a morphism $F_{\leq 3}\to G_{\leq 3}$, then it would extend to a morphism $F\to G$ of semisimplicial spectra (strictly, we need to show that subject to the choices made so far, the other length-3 segments of $F,G$ will be equivalent if and only if $F_{\leq 3}\simeq G_{\leq 3}$, which we leave to the reader).  However, Section \ref{sec:Proof of Lemma p20 computes THH} gives that there is no such morphism $F\to G$.  We have then proved:

\begin{corollary}\label{cor:bad-map-from-dots}
The obstruction map $A_3$ defined above is nonzero, for any good diagrams $A_{F,G}$ obtained by Lawson-Lipshitz-Sarkar dot-sliding homotopies.  
 \end{corollary}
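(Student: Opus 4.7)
The plan is to argue by contradiction. Suppose $A_3\simeq 0$ for some good diagram $A_{F,G}$ built from Lawson-Lipshitz-Sarkar dot-sliding homotopies. The goal is to promote $A_{F,G}$ to a morphism of semisimplicial spectra $F\to G$ whose realization is a stable equivalence, and then to derive a contradiction with the inequivalence \eqref{eq:P20 cannot unlink meridian} using Lemma~\ref{lem:P20 computes THH}.

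First, I would show that the vanishing of $A_3$ lets us extend $A_{F,G}$ from its partially-defined form on $\Delta_{inj,\leq 3}^{op}\times\cube$ to a fully homotopy-coherent diagram on $\Delta_{inj,\leq 3}^{op}\times\cube$. Next I would extend inductively up the simplicial degree. For each $k\geq 4$, a $k$-fold composite of the face maps $T\pm f$ (in $F$) and $T\pm g$ (in $G$) expands into a sum of words in $T$ and $f$ (resp.\ $g$); any word containing $T^2$ or $f^2$ (resp.\ $g^2$) is canonically null via the double-dot relation in the Burnside category, and the surviving alternating words like $TfT$ or $fTf$ vanish by the same double-dot argument. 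Thus all higher obstructions are controlled by combinations of the dot-sliding homotopy of Proposition~7.1 of \cite{LLS_func} together with the nullhomotopies $\mathbb{S}\xrightarrow{1-1}\mathbb{S}$ already used at level two, and these can be filled compatibly with the fillings constructed at level three.

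The upshot is a natural transformation $F\to G$ of semisimplicial spectra whose component at each $[k]$ is the identity on the (level-wise equal) underlying $\sarc_4$-modules, as in \eqref{eq:fg}. Geometric realization then yields a stable equivalence $|F|\simeq|G|$ by the filtration-by-skeleta argument, using that the mapping spectra in each fixed quantum degree involve only finitely many simplicial levels.

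Finally, I would smash both sides with $\X(M_T)$ over $\sarc_4$ for a suitable $(2,2)$-tangle $T$. Since $G$ was constructed as $K$ tensored with the split two-turnback spectrum, and $H$ as $K$ tensored with the equivalent cap-and-circle spectrum, we have $|G|\simeq |H|\simeq q\proj_{2,0}\vee q^{-1}\proj_{2,0}$. Applying Lemma~\ref{lem:P20 computes THH} identifies the smash product of $|F|$ with $\X(M_T)$ and the smash product of $|G|$ with $\X(M_T)$ as the two sides of \eqref{eq:P20 cannot unlink meridian}, which are already known to be inequivalent from the Lobb-Orson-Sch\"utz computation. This contradiction forces $A_3\not\simeq 0$. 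The main obstacle is Step two: one must check that the dot-sliding homotopies of Proposition~7.1 of \cite{LLS_func} combine with the chosen nullhomotopies of $1-1$ so as to define the higher fillings in a way consistent with the choices already made at levels one through three.
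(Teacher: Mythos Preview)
Your proposal is correct and follows the same contradiction strategy as the paper: vanishing of $A_3$ would give a levelwise-identity morphism $F\to G$ of semisimplicial spectra, hence $|F|\simeq|G|$, contradicting the inequivalence already established from $|G|\simeq|H|$ together with equation~\eqref{eq:P20 cannot unlink meridian}. One small point: the final step needs neither Lemma~\ref{lem:P20 computes THH} nor any further smashing with $\X(M_T)$---the paper has already identified $|F|$ and $|H|$ directly with the two sides of~\eqref{eq:P20 cannot unlink meridian} before stating the corollary, so the contradiction is immediate once the morphism $F\to G$ is in hand.
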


We interpret Corollary \ref{cor:bad-map-from-dots} as saying that the dot-sliding maps in spectra have a more complicated interaction with Reidemeister moves than in the chain complex setting (the chain analog of Corollary \ref{cor:bad-map-from-dots} is false).  We think it is an interesting question if there is a simpler proof of Corollary \ref{cor:bad-map-from-dots}; here the proof depends on calculating $\ssimpproj_{2,0}$, but we ask if Corollary \ref{cor:bad-map-from-dots} can be seen more directly.

\appendix

\section{Induced Khovanov bimodules}

The goal of this appendix is to prove Lemma \ref{lem:induction-spectrum}, which gives a strengthening of Lemma \ref{lem:P20 computes THH} (used in the proof of Theorem \ref{thm:THH is not S1S2 invt}). Notation will be as in section \ref{sec:Proof of Lemma p20 computes THH}.  We note that for any $n,m>0$, there is a somewhat tautological morphism  of ring spectra $C\colon \sarc_{2n}\otimes \sarc_{2m}^{op}\to \sarc_{2(n+m)}$ (induced by a morphism of Burnside functors).  Similarly, we recall that a $(2n,2m)$-tangle $T$ may be viewed either as a $(2n,2m)$-tangle, or as a $(0,2(n+m)$-tangle, giving two different Khovanov spectra $\X(T)_{2n,2m}$ or $\X(T)_{2(n+m)}$, where the former is a bimodule, and the latter is a module over $\sarc_{2(n+m)}$, each suitable for different gluing statements (the subscripts not usually being written).  The machinery of \cite{LLS_tangles} implies that $\X(T)_{2n,2m}$ is naturally equivalent to $C^*(\X(T)_{2(n+m)})$.  

\begin{lemma}\label{lem:induction-spectrum}
    Let $(\ssimpBar_{2})'$ denote the $\sarc_2\otimes \sarc_2^{op}$-spectral module associated to the diagram \eqref{eq:bsimp-def} (viewing each tangle diagram as a $(\sarc_2,\sarc_2)$-bimodule, instead of as a $\sarc_4$-module).  There is a homotopy equivalence, of semi-simplicial modules:
    \[
    (\ssimpBar_2)'\otimes_{\sarc_2\otimes\sarc_2^{op}} \sarc_4\simeq \ssimpBar_2.
    \]
\end{lemma}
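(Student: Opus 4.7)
\textbf{Proof plan for Lemma \ref{lem:induction-spectrum}.}

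The plan is to verify the equivalence term-by-term and then check that the identifications are compatible with the semi-simplicial face maps.  At each level $[n]$, the $n$-th term of $\ssimpBar_2$ is $\X(T_n)_4$, where $T_n$ is the $(0,4)$-tangle consisting of $n$ disjoint circles between two turnbacks (connecting boundary points $(1,2)$ and $(3,4)$), while the $n$-th term of $(\ssimpBar_2)'$ is the bimodule $\X(T_n)_{2,2}$.  By the discussion preceding the lemma, $\X(T_n)_{2,2} \simeq C^{*}(\X(T_n)_4)$; what I need to show is that the induction in the other direction,
\[
\X(T_n)_{2,2}\otimes_{\sarc_2\otimes \sarc_2^{op}}\sarc_4\xrightarrow{\mu}\X(T_n)_4,
\]
given by the module action of $\sarc_4$, is a weak equivalence.

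The key observation underlying this is that $T_n$ has both of its boundary closures forced to be the ``horizontal'' crossingless matching $m$ pairing $(1,2)$ and $(3,4)$, so the $\sarc_4$-module $\X(T_n)_4$ is concentrated on the idempotent $e_m\in\sarc_4$ corresponding to this matching.  Meanwhile, the map $C\colon\sarc_2\otimes\sarc_2^{op}\to\sarc_4$ hits precisely the sub-algebra $e_m\sarc_4 e_m$, and this inclusion is an equivalence.  Standard formal arguments for extension of scalars along such an inclusion (applied to a module supported on $e_m$) then give that $\mu$ is an equivalence.  Concretely, one can verify this at the level of Burnside functors: the tangle-shape multicategory description of \cite{LLS_tangles} expresses $\X(T_n)_4$ as assembled from a Burnside functor whose only nontrivial values are on the objects involving the idempotent $m$, and the same Burnside data underlies $\X(T_n)_{2,2}$ together with the multiplication by $\sarc_4$.

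To upgrade the term-wise equivalences to a map of semi-simplicial spectra, I construct a natural transformation $(\ssimpBar_2)'\otimes_{\sarc_2\otimes \sarc_2^{op}}\sarc_4\to \ssimpBar_2$ via the multiplication maps $\mu$ above.  The face maps in $\ssimpBar_2$ (and $(\ssimpBar_2)'$) are induced by the saddle cobordisms of \eqref{eq:bsimp-def}, all of which are supported in the interior of the tangle picture, disjoint from the boundary region where the $\sarc_4$- versus $\sarc_2\otimes\sarc_2^{op}$-module structures differ.  Locality of these cobordisms at the Burnside-functorial level, together with the simple discussion of Remark \ref{rmk:strict} (which shows that the higher-homotopy coherences used to build the semi-simplicial structures come from the same embedded cobordism category $\mathrm{Cob}_e$ in both cases), ensures that the natural transformation is compatible with all face maps and higher coherences.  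Since this natural transformation is a level-wise equivalence, the realizations are equivalent as well.

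The main obstacle is Step 2 above: verifying, at the level of spectra (rather than chains), that the induction map $\mu$ is an equivalence for each $T_n$.  In chains this is a bookkeeping exercise with idempotents in $H_4$, but spectrally one must argue through the Burnside functor presentation and track how the identifications $\X(T_n)_4\simeq e_m\X(T_n)_4 e_m$ and $C(\sarc_2\otimes\sarc_2^{op})\simeq e_m\sarc_4 e_m$ interact with the derived tensor product.  Everything in the statement is supported on a single idempotent, so this should reduce to a check that change of rings along an equivalence of ring spectra preserves modules, but the precise form of $C$ given in \cite{LLS_func,LLS_tangles} needs to be examined to make this rigorous.
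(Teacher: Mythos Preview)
Your overall architecture matches the paper's: construct the comparison map levelwise from the underlying Burnside data, check it is a levelwise equivalence, and then argue that the face maps (being induced by cobordisms supported away from the boundary) are compatible with the comparison, so that one obtains a map of semi-simplicial spectra that is a levelwise, hence realization-level, equivalence.  The paper implements step one by building a dedicated \emph{gluing multicategory} \`a la \cite[Section~5]{LLS_tangles}, obtaining the comparison map $I$ as part of the multifunctor data, and then checks that $I$ is an equivalence by passing to chains and invoking Whitehead; the semi-simplicial upgrade is handled by enlarging the gluing multicategory exactly as you suggest.

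There is, however, a genuine error in your proposed shortcut for the levelwise equivalence.  You assert that the $\sarc_4$-module $\X(T_n)_4$ is ``concentrated on the idempotent $e_m$'' corresponding to the horizontal matching $(1,2),(3,4)$.  This is false: closing the turnback tangle with the \emph{other} crossingless matching $m'$ (pairing $(1,4)$ and $(2,3)$) also produces a nontrivial closed $1$-manifold, so $\X(T_n)_4(m')$ is nontrivial.  Consequently your ``extension of scalars along an equivalence $\sarc_2\otimes\sarc_2^{op}\simeq e_m\sarc_4 e_m$ applied to a module supported on $e_m$'' argument does not apply as stated.  The induction map $\mu$ is still an equivalence, but this genuinely uses the specific structure of the tangle (roughly, that $e_1$ \emph{factors} as a gluing of an object of $\TL^2_0$ with one of $\TL^0_2$, even though the resulting $\sarc_4$-module is not supported on a single object); the paper sidesteps any direct module-theoretic analysis by simply verifying the equivalence at the level of Khovanov chain complexes and appealing to Whitehead.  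If you want to avoid the gluing-multicategory machinery, you should replace the idempotent argument with this chain-level check.
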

The lemma says that $\ssimpBar_2$ is induced up from a bimodule (in general, one may not expect that the module invariant of a tangle is determined from its bimodule invariant).  In particular, the lemma allows us to write $\ssimpBar_2$ without the superscript, with the module structure (over $\sarc_4$ or $(\sarc_2\otimes \sarc_2^{op})$) interpreted from context.  We note that the claim analogous to the lemma \emph{for individual terms} in the description \eqref{eq:bsimp-def} follows from the gluing theorem of \cite{LLS_tangles}, since each term can be obtained as a gluing of diagrams in $TL^2_0$ and $TL_0^2$.  However, the pictured diagram of cobordisms is not obtained as a gluing of diagrams in $TL_0^2$ and $TL^2_0$, and so the usual gluing theorem does not apply.  However, it turns out that the ideas of the usual gluing argument go through essentially without change - this is sketched in the following argument.
\begin{proof}
    The general strategy of the proof is along the lines of arguments in \cite{LLS_tangles}; the details happen at the level of Burnside functors, and so we provide only a sketch in order to avoid diving into the details of the construction of the Khovanov Burnside functor (see \cite[Section 2.11]{LLS_tangles}).

    First, write $e_1$ for the $\sarc_4$-module of the turnback $e_1\in \TL_2$, and write $e_1'$ for the associated $(\sarc_2,\sarc_2)$-bimodule.  The first step of the proof is to construct an equivalence of $\sarc_4$-modules:
    \begin{equation}\label{eq:induction-morph}
    I\colon e_1'\otimes_{\sarc_2\otimes \sarc_2^{op}}^{\bL}\sarc_4\to e_1.
    \end{equation}
    The second step is to show that the morphisms in the definition \eqref{eq:bsimp-def} are compatible with the morphism $I$ in \eqref{eq:induction-morph}.

    To construct $I$, we follow the strategy of \cite[Section 5]{LLS_tangles} quite closely.  First, we define a \emph{gluing multicategory} $\mathcal{U}^0$.  The objects are:
    \begin{enumerate}[label=(U-\arabic*)]
        \item \label{itm:match-1} Pairs $(b_1,b_2)$ of pairs of crossingless matchings on $2$ points.  More explicitly, let $e_1\in TL_2$ intersect the upper half of the boundary circle in $p_1,p_1'$ and the lower half of the boundary circle in $q_1,q_1'$.  Then $b_1=b_2$ will each be the pair of crossingless matchings $((p_1,p_1'),(q_1,q_1'))$.
        \item \label{itm:match-2} Pairs $(c_1,c_2)$ of crossingless matchings of the points $\{p_1,p_1',q_1,q_1'\}$.
        \item Pairs $(T_1,b)$, where $T_1$ is a placeholder, and $b$ is a pair of crossingless matchings as in Item \ref{itm:match-1}. \label{itm:match-3}
        \item Triples $(b,T_2,c)$, where $b$ is a pair of crossingless matchings of two points as in the previous items, $T_2$ is interpreted as a placeholder, and $c$ is a crossingless matching of $4$ points $\{p_1,p_1',q_1,q_1'\}$.  \label{itm:match-4}
        \item Pairs $(T_1T_2,c)$ where $c$ is a crossingless matching of $4$ points $\{p_1,p_1',q_1,q_1'\}$. \label{itm:match-5} 
    \end{enumerate}
    There is a unique multimorphism for any string
    \[
    (T_1,b_1),(b_1,b_2),\ldots,(b_{j-1},b_j),(b_j,T_2,c_1),(c_1,c_2),\ldots,(c_{k-1},c_k)\to (T_1T_2,c_k).
    \]
    (Some of the substrings may not occur, e.g. $k=1$ in the above formula is allowed).  These multimorphisms are called the \emph{basic multimorphisms} of the \emph{thickened gluing multicategory} (enriched in groupoids) $\mathcal{U}$, which has the same objects as $\mathcal{U}^0$.  Following the discussion around \cite[Definition 3.6]{LLS_tangles}, an object of the multimorphism groupoid of $\mathcal{U}$ is a tree of basic multimorphisms of $\mathcal{U}^0$, composable in the natural sense. 

    Lemma 5.2 of \cite{LLS_tangles} continues to hold in our context.  Said very loosely, there is a multifunctor $\underline{G}\colon \mathcal{U}\to \widetilde{\underline{\mathrm{Cob}}}_d$ (the target being a version of the cobordism category of surfaces in $\mathbb{R}^3$, with extra data), that is compatible with the functors to cobordism categories used in the construction of $\X(2)$ and $\X(4)$.  Applying the Khovanov-Burnside functor to $\underline{G}$, we have a functor $G_{\mathrm{Burn}}\colon \mathcal{U}\to \gBurn$.  In turn, postcomposing with Elmendorff-Mandell $K$-Theory and strictifying (see \cite[Section 4.1]{LLS_tangles}), we obtain what we will call the \emph{gluing functor}
    \[
    Gl\colon \mathcal{U}^0\to \underline{\gSp}.
    \]
The underline on $\Sp$ denotes the \emph{multicategory} of (graded) spectra, which has the same objects and $1$-input morphisms as the category of (graded) spectra; $Gl$ is a multifunctor between multicategories.  To be slightly more specific, $Gl$ restricted to the subcategory from objects of types \ref{itm:match-1} and \ref{itm:match-3} is exactly the functor to spectra used to define $e_1'$, as a bimodule over $\sarc_2$.  Similarly, $Gl$ restricted to items \ref{itm:match-2}-\ref{itm:match-4} is the multifunctor defining $\sarc_4$, \emph{as a $(\sarc_2\otimes \sarc_2^{op},\sarc_4)$-bimodule}.

Part of the data of $Gl$ is, for each multimorphism $(T_1,b),(b,T_2,c)\to (T_1T_2,c)$,  a map
\[
Gl((T_1,b),(b,T_2,c)\to (T_1T_2,c))\colon Gl(T_1,b)\wedge Gl(b,T_2,c)\to Gl((T_1T_2,c)).
\]
These in fact reduce to a map of right $\sarc_4$-modules (cf. \cite[Lemma 5.4]{LLS_tangles}):
\begin{equation}\label{eq:i-def}
I\colon e_1'\otimes_{\sarc_2\otimes\sarc_2^{op}}^{\bL}\sarc_4\to e_1.
\end{equation}
An argument taking place at the level of chains then suffices to show that $I$ is an equivalence.  Indeed, the constructions above also produce a map between Khovanov chain complexes; this map on Khovanov chain complexes is quasi-isomorphic to $\mathcal{C}_h(I)$, the chains functor $\mathcal{C}_h$ applied to the functor $I$.  One can directly check that the Khovanov chain complex analog of $I$ in \eqref{eq:i-def} is a homotopy equivalence, giving that $I$ is also an equivalence by Whitehead's theorem.  

Indeed, we note that the entire argument, up to \eqref{eq:i-def}, did not use anything about the tangle $e_1\in TL_2$; we only need to know the form of $e_1\in TL_2$ in order for the map in \eqref{eq:i-def} to be an equivalence.

To show that $\ssimpBar_2\simeq (\ssimpBar_2)'$ (instead of just an agreement at each object in the description of these two spectra), one repeats the same argument, but replacing the gluing multicategory $\mathcal{U}$ with a larger gluing multicategory, encoding the semi-simplicial structure maps in the description \eqref{eq:bsimp-def}; this category is completely analogous to the category $\underline{2}^{N_1\mid N_2}\tilde{\times} \mathcal{U}$ in \cite{LLS_tangles}.  The result will be a morphism $J\colon (\ssimpBar_2)'\otimes_{\sarc_2\otimes\sarc_2^{op}}\sarc_4\to \ssimpBar_2$, as semi-simplicial objects, and by the preceding argument, $J$ is a levelwise homotopy equivalence (Namely, on each level, a copy of $I$).  This completes the proof.

\end{proof}

Recall that we are interested in the description of THH given in \eqref{eq:thh}; here we complete the proof of Lemma \ref{lem:P20 computes THH} from Lemma \ref{lem:induction-spectrum}.

Because $\sarc_2$ is the same as the Khovanov spectrum of a disjoint circle, with multiplication in $\sarc_2$ induced by saddle maps, we can identify, for any $(2,2)$-tangle $T$:
\begin{equation}\label{eq:THH via Bar}
\THH\left(\sarc_2;\X\left(
\vcenter{\hbox{\begin{tikzpicture}[xscale=.25,yscale=.8]
    \draw[thick] (.2,1)--(.2,0);
    \draw[thick] (1.8,1)--(1.8,0);
    \whitebox{-.2,.15}{2.2,.85}
    \node[scale=.8] at (1,.5) {$T$};
\end{tikzpicture}}} \right) \right)
\simeq
\left|\ssimpBar_2\otimes_{\sarc_2\otimes\sarc_2^{op}}\X\left(
\vcenter{\hbox{\begin{tikzpicture}[xscale=.25,yscale=.8]
    \draw[thick] (.2,1)--(.2,0);
    \draw[thick] (1.8,1)--(1.8,0);
    \whitebox{-.2,.15}{2.2,.85}
    \node[scale=.8] at (1,.5) {$T$};
\end{tikzpicture}}} \right)  \right|
\end{equation}
The right-hand side denotes the realization of the semi-simplicial spectrum specified by the tensor product on the right.  
By associativity of the tensor product, together with Lemma \ref{lem:induction-spectrum}, we have
\[
\left|\ssimpBar_2\otimes_{\sarc_4}^{\bL} \X\left(
\vcenter{\hbox{\begin{tikzpicture}[xscale=.25,yscale=.8]
    \draw[thick] (.2,1)--(.2,0);
    \draw[thick] (1.8,1)--(1.8,0);
    \whitebox{-.2,.15}{2.2,.85}
    \node[scale=.8] at (1,.5) {$T$};
\end{tikzpicture}}} \right)  \right|\simeq \left|(\ssimpBar_2\otimes_{\sarc_2\otimes\sarc_2^{op}}\sarc_4)\otimes_{\sarc_4}^{\bL} \X\left(
\vcenter{\hbox{\begin{tikzpicture}[xscale=.25,yscale=.8]
    \draw[thick] (.2,1)--(.2,0);
    \draw[thick] (1.8,1)--(1.8,0);
    \whitebox{-.2,.15}{2.2,.85}
    \node[scale=.8] at (1,.5) {$T$};
\end{tikzpicture}}} \right)  \right|\simeq \left|\ssimpBar_2\otimes_{\sarc_2\otimes\sarc_2^{op}}  \X\left(
\vcenter{\hbox{\begin{tikzpicture}[xscale=.25,yscale=.8]
    \draw[thick] (.2,1)--(.2,0);
    \draw[thick] (1.8,1)--(1.8,0);
    \whitebox{-.2,.15}{2.2,.85}
    \node[scale=.8] at (1,.5) {$T$};
\end{tikzpicture}}} \right)  \right|.
\]
It is interesting to note that the above equivalences are completely tautological in (for example) the setting of \cite{Hog_polyaction}, where computations directly with tangle diagrams are allowed.

Moreover, using that homotopy colimits commute with the monoidal structure:
\[
\left|\ssimpBar_2\otimes_{\sarc_4}^{\bL} \X\left(
\vcenter{\hbox{\begin{tikzpicture}[xscale=.25,yscale=.8]
    \draw[thick] (.2,1)--(.2,0);
    \draw[thick] (1.8,1)--(1.8,0);
    \whitebox{-.2,.15}{2.2,.85}
    \node[scale=.8] at (1,.5) {$T$};
\end{tikzpicture}}} \right)  \right|
\simeq
\left|\ssimpBar_2\right|\otimes_{\sarc_4}^{\bL} \X\left(
\vcenter{\hbox{\begin{tikzpicture}[xscale=.25,yscale=.8]
    \draw[thick] (.2,1)--(.2,0);
    \draw[thick] (1.8,1)--(1.8,0);
    \whitebox{-.2,.15}{2.2,.85}
    \node[scale=.8] at (1,.5) {$T$};
\end{tikzpicture}}} \right)  .
\]

This completes the alternative proof of Lemma \ref{lem:P20 computes THH}.
\bibliographystyle{alpha}
\bibliography{references}

\end{document}